\newtheorem{theorem}{Theorem}[section]
\newtheorem{lemma}[theorem]{Lemma}
\newtheorem{proposition}[theorem]{Proposition}
\newtheorem{corollary}[theorem]{Corollary}
\newtheorem{definition}[theorem]{Definition}
\theoremstyle{definition}
\newtheorem{remark}[theorem]{Remark}
\newtheorem{example}[theorem]{Example}
\numberwithin{equation}{section}
\newcommand{\A}{\mathbb{A}}
\newcommand{\Z}{\mathbb{Z}}
\newcommand{\KMW}{\mathrm{K}^\mathrm{MW}}
\newcommand{\KM}{\mathrm{K}^\mathrm{M}}
\newcommand{\tbb}[1]{\widetilde{\mathbb{#1}}}
\newcommand{\wt}[1]{\widetilde{#1}}
\newcommand{\Spec}{\mathrm{Spec}\ }
\newcommand{\af}{\mathbb{A}}
\newcommand{\afnz}[1]{\mathbb{A}^{#1}\setminus \{0\}}
\newcommand{\bZ}{\mathbb{Z}}
\newcommand{\bfZ}{\mathbf{Z}}
\newcommand{\tbZ}{\tbb{Z}}
\newcommand{\Gm}{\mathbb{G}_m}
\newcommand{\Daba}[1]{D(Ab_{\af^1}(#1))}
\newcommand{\DM}{\mathrm{DM}}
\newcommand{\DMt}{\widetilde{\mathrm{DM}}}
\newcommand{\M}{\mathrm{M}}
\newcommand{\Mt}{\wt{\mathrm{M}}}
\newcommand{\Meta}{\wt{\mathrm{M}}_{\eta}}
\newcommand{\aBra}[1]{\left<#1\right>}
\newcommand{\llBra}[1]{\llbracket #1 \rrbracket
}
\newcommand{\Ker}{\mathrm{Ker}}
\newcommand{\xr}[1]{\xrightarrow{#1}}
\newcommand{\AZ}{\mathbb{A}\mathcal{Z}}
\newcommand{\bfZa}{\mathbf{Z}_{\mathbb{A}^1}}
\newcommand{\tauO}{\tau^{\odot}}
\title{Cellular $\mathbb{A}^1$-Homology of Smooth Toric Varieties}
\author{Haoyang Liu, Keyao Peng}
\address{}
\email{}
\thanks{}
\begin{document}
\begin{abstract}
In this paper, we present the calculations of cellular $\mathbb{A}^1$-homology for smooth toric varieties, along with an explicit description of pure shellable cases. Consequently, we derive the (Milnor-Witt) motivic decomposition for these pure shellable cases. Furthermore, we obtain an additive basis for the Chow groups of general smooth toric varieties.
\end{abstract}
\maketitle
\tableofcontents
\section{Introduction}
Toric varieties represent a significant class of algebraic varieties, entirely characterized by fans, which are forms of combinatorial data. In the smooth case, a fan can be represented as $\Sigma = (K, \lambda)$, where it consists of two components: a simplicial complex $ K $ with $ m $ vertices, and a linear morphism $ \lambda: \mathbb{Z}^m \to \mathbb{Z}^n $.

A simplicial complex $ K $ on a finite set $ V $ is defined as a collection of subsets of $ V $ satisfying the following conditions:
\begin{enumerate}
  \item If $ v \in V $, then $ \{v\} \in K $.
  \item If $ \sigma \in K $ and $ \tau \subset \sigma $, then $ \tau \in K $.
\end{enumerate}

For a given toric variety $ X_\Sigma $, the complex points $ X_\Sigma(\mathbb{C}) $ and the real points $ X_\Sigma(\mathbb{R}) $ are topologically distinct spaces and exhibit differing properties in their cohomology groups. In general, the cohomology groups of the real points are more intricate compared to those of the complex points \cite{cai2021integral}.

Recent advancements in motivic $ \af^1 $-homotopy \cite{Mor10}\cite{bachmann2020milnor} have introduced new frameworks for understanding algebraic varieties through homotopy theory. Notably, these advancements enable the recovery of the homotopy types of both the complex and real points \cite{bachmann2018motivic}. A significant algebraic invariant associated with these varieties is the Chow group $ \mathrm{CH}^*(X_\Sigma) $, which gives rise to the cycle class map to $ H^*(X_\Sigma(\mathbb{C}), \mathbb{Z}) $ and $ H^*(X_\Sigma(\mathbb{R}), \mathbb{Z}/2) $. An emerging development within algebraic geometry is the Chow-Witt group $ \wt{\mathrm{CH}}^*(X_\Sigma) $, which serves as a finer invariant and includes a real class map to $ H^*(X_\Sigma(\mathbb{R}), \mathbb{Z}) $ \cite{hornbostel2021real}.

Inspired by \cite{cai2021integral}, this work aims to compute the cellular $ \af^1 $-homology of smooth toric varieties. The concept of cellular $ \af^1 $-homology, defined in \cite{MOREL2023109346}, is particularly useful for explicitly calculating the (co)homology in $ \af^1 $-homotopy. This concept parallels the role of cellular homology in classical topology, facilitating computations of various cohomology theories, including motivic cohomology, Chow groups, MW-motivic cohomology, and Chow-Witt groups. The cellular $ \af^1 $-chain complex $ C^{cell}_*(X_\Sigma) $ can be constructed by gluing oriented ``cubes" along their faces. Consequently, when considering rational motives in $ \DM(k)_{\mathbb{Q}} $, it follows that $ \M(X_\Sigma)_{\mathbb{Q}} $ is represented as the direct sum of several $ \mathbb{Q}(q)[p] $ (see Corollary \ref{rationDecomp}). To achieve explicit computations, it is imperative to comprehend the orientation of gluing and the boundary map.

A notable advantage of representing a fan as $ \Sigma = (K, \lambda) $ is that a smooth toric variety $X_\Sigma$ (or more generally, simplicial varieties) can be identified the quotient $ \AZ_K / \Ker(\exp(\lambda)) $ \cite[Theorem 2.1]{cox1995homogeneous}, where $ \AZ_K $ is a subvariety of $ \mathbb{A}^m $. All cells and boundaries of $ C^{cell}_*(\AZ_K) $ are well oriented. Additionally, the exponential map $ \exp(\lambda): \Gm^m \to \Gm^n $ is induced from the morphism $ \lambda $.

We can approach the problem of determining orientation by examining the action of the toric group $\Ker(\exp(\lambda))$ on cubical cells of $\AZ_K$, as detailed in Proposition \ref{tActMain}. We define a complex $C^{can}_*(\AZ_K)$ that consists of canonical cells, which include at most one point from each orbit of the $\Ker(\exp(\lambda))$ action. It follows that $C^{can}_*(\AZ_K) \cong C^{cell}_*(X_\Sigma)$ (see Proposition \ref{canIso}). Moreover, we can introduce the subcomplex $\overline{C}^{can}_*(\AZ_K) \subset C^{can}_*(\AZ_K)$, which serves as a retraction (refer to Proposition \ref{restrQis}). The complex $\overline{C}^{can}_*(\AZ_K)$ can be constructed from combinatorial data through manual calculations.

When focusing on \textbf{pure shellable} toric varieties, we can achieve more explicit computations.  Consider an order on maximal subsets $ K_{\max} = \{\sigma_1, \ldots, \sigma_g\} \subset K $. We say $K$ is pure of dimension $d$, if we have $\forall i,\ |\sigma_i|=d$, and a fan $\Sigma = (K, \lambda) $ is pure, if $K$ is pure of dimension $n$. Then we define the sets of subsets $ \min(\sigma_i) = \{ \tau \subset \sigma_i \mid \tau \text{ is minimal such that } \tau \not\subset \sigma_j \text{ for all } j < i\} $. In the cases that $K$ is shellable, there exists an order such $ \min(\sigma_i)$ consists of a single subset $r(\sigma_i) \subset \sigma_i$.

Let $\mathrm{row}\lambda$ denote the mod-$2$ row set associated with $\lambda$ (see definition in Section \ref{sec:shellable}). For a fixed $\lambda$, let $K_\omega$ represent a specific subcomplex of $K$ formed by intersecting with $\omega \in \mathrm{row}$. We define $G^{\lambda}_{i} = \bigoplus_{\omega \in \mathrm{row}\lambda}\widetilde{H}_{i}(|K_\omega|)$ as the direct sum of the reduced homology groups. The basis $B(0) \subset K_{\max}$ forms the free part of $G^{\lambda}_{*,free}$, while for $l>1$, the basis $B(l) \subset K_{\max}$ corresponds to the $l$-torsion part $G^{\lambda}_{*,l-tor}$. Similarly, we denote the bases for the groups $G^{\lambda}_{i,free/l-tor}$ as $B(l)_i$.

Let $\KMW_* \in \Daba{k}$ represent the Milnor-Witt K-theory, which plays an analogous role to $\mathbb{Z}$ in classical topology (as further defined later). For $l \in \mathbb{N}^+$, we define $ \KMW_i \sslash l \eta := [\KMW_{i+1} \xr{l \eta} \KMW_{i}] \in \Daba{k}$, and set $\KMW_i \sslash 0 = \KMW_i$. Consequently, we obtain the following decomposition:

\begin{theorem}[Corollary \ref{mainA1cellular}]\label{homology}

In the category $\Daba{k}$, for a smooth pure shellable toric variety $X_{\Sigma}$, we have a quasi-isomorphism:
\[
C^{cell}_*(X_\Sigma) \cong \bigoplus_{l \in \mathbb{N}} \bigoplus_{\sigma \in B(l)} \KMW_{|r(\sigma)|} \sslash l \eta [|r(\sigma)|],
\]
for a certain subset $B(1) \subset K_{\max}$, which can be derived from the complex $\bigoplus_{\omega \in \mathrm{row}\lambda} \overline{C}^{cri}_i(K_\omega)$.

Specifically, we establish the correspondences between:
\begin{itemize}
  \item the free summands of $G^{\lambda}_{i-1,free}$ and the free summands of $ \KMW_i[i] $.
  \item the torsion summands of $G^{\lambda}_{i-1,tor}$ and the torsion summands $ \KMW_i \sslash l \eta [i] $ for $l > 1$.
\end{itemize}
As a result, the cellular $\mathbb{A}^1$-homology of $X_\Sigma$ is given by
\[
\mathbf{H}^{cell}_i(X_{\Sigma}) = \bigoplus_{l \in \mathbb{N}} \bigoplus_{B(l)_{i-1}} \KMW_i / l \eta \oplus \bigoplus_{l \in \mathbb{N}^+} \bigoplus_{B(l)_{i-2}} (_{l\eta}\KMW_i).
\]
Moreover, we can establish correspondences between:
\begin{itemize}
  \item the free summands of $G^{\lambda}_{i-1,free}$ and the free summands of $ \KMW_i $.
  \item the torsion summands of $G^{\lambda}_{i-1,tor}$ and the torsion summands of $\KMW_i / l \eta $ for $l > 1$. 
  \item the torsion summands of $G^{\lambda}_{i-2,tor}$ and the torsion summands of ${_{\l \eta}\KMW_i}$ for $l > 1$. 
\end{itemize}
\end{theorem}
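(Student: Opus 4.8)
The plan is to reduce the assertion to a small complex of free Milnor--Witt sheaves depending only on combinatorial data, and then to diagonalize it. First, by Proposition~\ref{canIso} we may replace $C^{cell}_*(X_\Sigma)$ by $C^{can}_*(\AZ_K)$, and by Proposition~\ref{restrQis} by its retract $\overline{C}^{can}_*(\AZ_K)$: a bounded complex whose terms are finite direct sums of shifts $\KMW_q[p]$ and whose differential is dictated by $(K,\lambda)$ together with the action of $\Ker(\exp(\lambda))$ on the cubical cells of $\AZ_K$ (Proposition~\ref{tActMain}). The two natural specializations of this complex are already understood: inverting $\eta$ returns, up to the standard real-\'etale comparison \cite{bachmann2018motivic}, the cellular complex of the real locus $X_\Sigma(\R)$ with homology $\bigoplus_{\omega\in\mathrm{row}\lambda}\widetilde H_*(|K_\omega|)$, that is $G^\lambda_*$ \cite{cai2021integral}, while reducing modulo $\eta$ returns the (torsion-free, since $X_\Sigma$ is smooth and complete) complex governing $X_\Sigma(\C)$.

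Next I would use shellability to contract the complex. A shelling order on $K_{\max}$ with restriction faces $r(\sigma_i)$ induces, in the manner of a discrete Morse matching, an algebraic deformation retraction of $\overline{C}^{can}_*(\AZ_K)$ onto a subcomplex with a single free generator $\KMW_{|r(\sigma)|}[|r(\sigma)|]$ for each $\sigma\in K_{\max}$, each cancelled matched pair contributing a split acyclic two-term summand with invertible differential; purity of $\Sigma$ is what forces every surviving generator into the expected homological degree $|r(\sigma)|$. The $\Ker(\exp(\lambda))$-action, organized through $\mathrm{row}\lambda$, then splits this subcomplex as $\bigoplus_{\omega\in\mathrm{row}\lambda}\overline{C}^{cri}_*(K_\omega)$, where $\overline{C}^{cri}_*(K_\omega)$ is free on the restriction faces lying in $K_\omega$ with differential governed by the incidences of $|K_\omega|$.

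It then remains to put each $\overline{C}^{cri}_*(K_\omega)$ in normal form. The crucial structural claim is that every entry of its differential over $\KMW_*$ is $0$, a unit of $\mathrm{GW}(k)$, or an integer multiple of $\eta$: the unit and integer parts reproduce the integral incidence numbers of $|K_\omega|$, while the $\eta$-parts come from the sign twists indexed by $\omega$ and from the monomial relations introduced in the quotient $\AZ_K\to X_\Sigma$. Granting this, Gaussian elimination clears all units (splitting off more acyclic two-term summands), and since $\eta\langle u\rangle=\eta$ for every unit $u$, the residue is diagonal with entries $l\eta$, $l\ge1$; thus $\overline{C}^{cri}_*(K_\omega)\cong\bigoplus\KMW_q[q]\oplus\bigoplus_{l\ge1}\KMW_{q'}\sslash l\eta[q']$. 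Comparing the $\eta$-inverted specialization against the structure theorem for the finitely generated groups $\widetilde H_*(|K_\omega|)$ identifies the $\KMW_q[q]$ summands with $G^\lambda_{q-1,free}$ and, for $l>1$, the $\KMW_{q'}\sslash l\eta[q']$ summands with $G^\lambda_{q'-1,l-tor}$; the residual $l=1$ pieces are precisely the $\pm1$-pivots cleared from $\bigoplus_\omega\overline{C}^{cri}_*(K_\omega)$, which is the definition of $B(1)$. Summing over $\omega$ and $l$ gives the decomposition of $C^{cell}_*(X_\Sigma)$, and taking homology of each indecomposable — $H_*(\KMW_q[q])=\KMW_q$ in degree $q$, and $H_*(\KMW_q\sslash l\eta[q])$ being $\KMW_q/l\eta$ in degree $q$ and ${}_{l\eta}\KMW_{q+1}$ in degree $q+1$ — together with $q=|r(\sigma)|$ yields the stated value of $\mathbf{H}^{cell}_i(X_\Sigma)$.

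I expect the crucial structural claim to be the main obstacle: one must control the orientations with which the cubical cells of $\AZ_K$ are glued and the way $\Ker(\exp(\lambda))$ permutes and twists them (Proposition~\ref{tActMain}), so as to exclude any genuine Grothendieck--Witt coefficient such as $h$ or $\langle a\rangle-\langle b\rangle$ and leave only $0$, units, and $\Z$-multiples of $\eta$ — the appearance of $\eta$ rather than such coefficients tracing back to the fact that equating a $\Gm$-coordinate with a monomial in the others contributes to the $\af^1$-degree only through terms $\langle u\rangle-1=\eta[u]$. A secondary, purely homological-algebraic difficulty is that the ambient coefficients admit no Smith normal form in general, so the splitting into the listed indecomposables is not automatic from the homology alone; here shellability (beyond mere purity) is essential, as it renders the matched complex block-triangular of the required shape, so that distinct torsion orders $l$ do not mix and the diagonalization terminates.
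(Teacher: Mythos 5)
Your reduction chain (Proposition \ref{canIso}, Proposition \ref{restrQis}, splitting by $\mathrm{row}\lambda$ into combinatorial complexes, then diagonalization) is the same route the paper takes, but the step you explicitly defer --- your ``crucial structural claim'' about the differential of $\bigoplus_{\omega}\overline{C}^{cri}_*(K_\omega)$ --- is precisely the paper's main technical content, so as it stands the proposal has a genuine gap rather than a complete argument. The paper proves a cleaner and stronger statement: Proposition \ref{cellTrans} (resting on Proposition \ref{tActMain} and Corollary \ref{tActSupp}), Lemma \ref{phiMor} and Theorem \ref{mainComplex} produce an isomorphism of complexes $(\bigoplus_{\omega\in\mathrm{row}\lambda}\overline{C}^{cri}_*(K_\omega)\otimes\KMW_*,\ \eta\,\overline{\partial}^{cri})\cong(\overline{C}^{can}_*(\AZ_K),\overline{\partial})$, i.e.\ after applying the canonical transformations $T_e$ the differential is \emph{exactly} $\eta$ times the integral incidence matrix of the critical complexes; no unit pivots remain and no Gaussian elimination over $\mathrm{GW}(k)$ is required. (Unit entries could not occur between canonical generators anyway: each generator sits in homological degree equal to its weight $|r(\sigma)|$, so every differential entry is a morphism $\KMW_i\to\KMW_{i-1}$, an element of $\KMW_{-1}(k)\cong W(k)$.) The substantive issue your heuristic about $\aBra{u}-1=\eta[u]$ does not settle is why the coefficient is an \emph{integer} multiple of $\eta$ rather than $\eta$ times a general Grothendieck--Witt class; that is exactly where the row sets $\omega_\kappa$, the defining equations of $T_e$, and the mod-$2$ computation $\sum_{j\in\sigma_e}r_{ij}\equiv 1$ in the proof of Proposition \ref{cellTrans} enter, and without it the rest of your plan cannot start.

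Two auxiliary steps also need repair. The identity $\eta\aBra{u}=\eta$ is false for a general unit $u$: in $\KMW_{-1}(k)\cong W(k)$, $\eta\aBra{u}$ is the Witt class of $\aBra{u}$. What holds, and what the paper uses, is $\eta h=0$, hence $\eta\aBra{-1}=-\eta$, and only powers of $\aBra{-1}$ (never general $\aBra{u}$) occur because all transition data are monomial with integer exponents --- this is what Proposition \ref{tActMain} makes precise. Second, your identification of the summands via the $\eta$-inverted and mod-$\eta$ specializations invokes completeness of $X_\Sigma$ (``torsion-free since smooth and complete''), which is not a hypothesis: pure shellable fans need not be complete, and no realization comparison is needed. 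Once the differential is known to be $\eta$ times an integer matrix, the Smith-normal-form decomposition of $\bigoplus_\omega\overline{C}^{cri}_*(K_\omega)$ over $\Z$ transports verbatim to the $\KMW$-complex (integer base changes act on $\KMW_i^{\oplus r}$), yielding the summands $\KMW_{|r(\sigma)|}[|r(\sigma)|]$ and $\KMW_{|r(\sigma)|}\sslash l\eta[|r(\sigma)|]$, the identification with $G^{\lambda}_{*}$, and $B(1)$ as the $l=1$ elementary pieces invisible in $G^{\lambda}$ but not in the complex; this also removes your worry about distinct torsion orders mixing, which was only a symptom of the missing integrality statement.
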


If $K$ is the boundary complex of a simple $n$-polytope (for example, if $X_\Sigma$ is projective), we can apply a similar argument as in \cite[Corollary 1.6]{cai2021integral} to conclude that when the dimension $n \leq 4$, the group $\mathbf{H}^{cell}_i(X_\Sigma)$ contains only $\eta$-torsion elements.

Recall that $\KMW_i / \eta \cong \KM_i$ and ${_\eta\KMW_i} \cong 2\KM_i$. Let $b_i = \mathrm{rk}(G^{\lambda}_{i, free})$ denote the $i$-th total reduced Betti number, and denote the number of vertices of $K$ as $m$. Additionally, let $X^d_\Sigma$ represent a space of dimension $d$. More precisely, we have the following results for various dimensions:

\[
\begin{array}{|c|c|c|} 
\hline 
\mathbf{H}^{cell}_i(X^2_\Sigma) & \text{Orientable} & \text{Non-orientable} \\ 
\hline 
i = 0 & \mathbb{Z} & \mathbb{Z} \\ 
i = 1 & (\KMW_1)^{m-2} & (\KMW_1)^{m-3} \oplus \KM_2 \\ 
i = 2 & \KMW_2 & 2\KM_2 \\ 
\hline 
\end{array} 
\]

\[
\begin{array}{|c|c|c|} 
\hline 
\mathbf{H}^{cell}_i(X^3_\Sigma) & \text{Orientable} & \text{Non-orientable} \\ 
\hline 
i = 0 & \mathbb{Z} & \mathbb{Z} \\ 
i = 1 & (\KMW_1)^{b_0} \oplus (\KM_1)^{m-3-b_0} & (\KMW_1)^{b_0} \oplus (\KM_1)^{m-3-b_0} \\ 
i = 2 & (\KMW_2)^{b_0} \oplus (2\KM_2)^{m-3-b_0} & (\KMW_2)^{b_0-1} \oplus \KM_2 \oplus (2\KM_2)^{m-3-b_0} \\ 
i = 3 & \KMW_3 & 2\KM_3 \\ 
\hline 
\end{array} 
\]

\[
\begin{array}{|c|c|c|} 
\hline 
\mathbf{H}^{cell}_i(X^4_\Sigma) & \text{Orientable} & \text{Non-orientable} \\ 
\hline 
i=0 & \bZ & \bZ \\ 
i=1 & (\KMW_1)^{b_0} \oplus (\KM_1)^{m-4-b_0} & (\KMW_1)^{b_0} \oplus (\KM_1)^{m-4-b_0} \\ 
i=2 & (\KMW_2)^{b_1} \oplus (\KM_2)^{m-4-b_0} \oplus (2\KM_2)^{m-4-b_0} & (\KMW_2)^{b_1} \oplus (\KM_2)^{m-5-b_2} \oplus (2\KM_2)^{m-4-b_0} \\ 
i=3 & (\KMW_3)^{b_0} \oplus (2\KM_3)^{m-4-b_0} & (\KMW_3)^{b_2} \oplus \KM_3 \oplus (2\KM_3)^{m-5-b_2} \\ 
i=4 & \KMW_4 & 2\KM_4 \\
\hline 
\end{array} 
\]

It is also worth mentioning that one can utilize Theorem \ref{homology} to determine the structure of cellular $\af^1$-homology groups of Coxeter toric varieties (See Remark \ref{Coxeter}).

By applying the functor $\mathbf{L}\tilde{\gamma}^*$ to the category $\DMt(k)$ \cite[\S 1.3 (MW4)]{bachmann2020milnor}, we can obtain an MW-motivic decomposition relative to the cones $ C(l\eta) = \tbZ \sslash l \eta $, as studied in \cite{fasel2023tate}.

\begin{corollary}
  
In the category $\DMt(k)$, for a smooth pure shellable toric variety $ X_{\Sigma} $, we have the following MW-motivic decomposition:
\[
  \Mt(X_\Sigma) \cong \bigoplus_{l \in \mathbb{N}} \bigoplus_{\sigma \in B(l)} \tbZ \sslash l \eta (|r(\sigma)|)[2|r(\sigma)|],
\]
where $ B(1) \subset K_{\max} $ is a subset that can be derived from the complex $ \bigoplus_{\omega \in \mathrm{row} \lambda} \overline{C}^{cri}_i(K_\omega) $. In particular, this establishes correspondences between:
\begin{itemize}
  \item the free summands of $ G^{\lambda}_{i-1, free} $ and the free summands $ \tbZ(i)[2i] $,
  \item the torsion summands of $ G^{\lambda}_{i-1, tor} $ and the torsion summands $ \tbZ \sslash l \eta (i)[2i] $ for $ l > 1 $.
\end{itemize}
\end{corollary}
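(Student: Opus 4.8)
The plan is to deduce the corollary from Theorem~\ref{homology} by transporting the quasi-isomorphism
\[
C^{cell}_*(X_\Sigma)\;\cong\;\bigoplus_{l\in\mathbb{N}}\bigoplus_{\sigma\in B(l)}\KMW_{|r(\sigma)|}\sslash l\eta[|r(\sigma)|]
\]
in $\Daba{k}$ along the monoidal left adjoint $\mathbf{L}\tilde\gamma^*\colon\Daba{k}\to\DMt(k)$ of \cite[\S1.3 (MW4)]{bachmann2020milnor}. Since $K$ is a finite simplicial complex, $K_{\max}$ is finite and $B(l)=\emptyset$ for all but finitely many $l$, so every direct sum in sight is finite.

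First I would isolate the formal properties of $F:=\mathbf{L}\tilde\gamma^*$ that the argument uses. It is triangulated and a left adjoint, hence preserves direct sums and mapping cones; it commutes with the simplicial suspension, $F(C[n])\simeq F(C)[n]$; it is symmetric monoidal; and on the Tate objects it carries the $j$-th Tate object of $\Daba{k}$ (written $\KMW_j[j]$ in the notation of Theorem~\ref{homology}) to $\tbZ(j)[2j]$, taking the multiplication-by-$l\eta$ map between consecutive Tate objects to the corresponding Milnor-Witt motivic map $l\eta$. Granting these, for each pair $(l,\sigma)$ one gets
\[
F\big(\KMW_{|r(\sigma)|}\sslash l\eta[|r(\sigma)|]\big)=F\big([\KMW_{|r(\sigma)|+1}\xr{l\eta}\KMW_{|r(\sigma)|}][|r(\sigma)|]\big)\simeq \tbZ\sslash l\eta(|r(\sigma)|)[2|r(\sigma)|],
\]
because $F$ turns the two-term complex into the cone of $F(l\eta)=l\eta\colon\tbZ(|r(\sigma)|+1)[2|r(\sigma)|+1]\to\tbZ(|r(\sigma)|)[2|r(\sigma)|]$ and commutes with the outer shift. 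Applying $F$ to the whole right-hand side therefore produces $\bigoplus_{l\in\mathbb{N}}\bigoplus_{\sigma\in B(l)}\tbZ\sslash l\eta(|r(\sigma)|)[2|r(\sigma)|]$.

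Second I would identify the image of the left-hand side: $F\big(C^{cell}_*(X_\Sigma)\big)\simeq\Mt(X_\Sigma)$. Here I would invoke that the cellular $\A^1$-chain complex represents the $\A^1$-homological object of $X_\Sigma$ in $\Daba{k}$ — the defining property of cellular $\A^1$-homology in \cite{MOREL2023109346}, applicable because $X_\Sigma$ carries the $\A^1$-cellular structure discussed in the introduction — together with the fact that $F$ sends the free strictly $\A^1$-invariant sheaf on a smooth scheme to its Milnor-Witt motive and is compatible with the cofiber sequences of the skeletal filtration, hence intertwines the cellular decomposition of $C^{cell}_*(X_\Sigma)$ with the corresponding cellular decomposition of $\Mt(X_\Sigma)$. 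Combining the two computations gives the asserted decomposition. Finally, the stated correspondences between the free (resp.\ $l$-torsion, $l>1$) summands of $G^{\lambda}_{i-1}$ and the summands $\tbZ(i)[2i]$ (resp.\ $\tbZ\sslash l\eta(i)[2i]$) are obtained by applying the additive functor $F$ to the corresponding correspondences in Theorem~\ref{homology}: $F$ neither alters the indexing sets $B(l)$ nor mixes summands, so the bookkeeping is inherited verbatim.

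The step I expect to be the main obstacle is the identification $F\big(C^{cell}_*(X_\Sigma)\big)\simeq\Mt(X_\Sigma)$ together with the precise behaviour of $F$ on Tate objects and on $\eta$. One must check both that Morel's cellular $\A^1$-chain complex genuinely computes the $\A^1$-homological motive in $\Daba{k}$ for the toric $X_\Sigma$ at hand, and that $\mathbf{L}\tilde\gamma^*$ respects the skeletal filtration, carrying each ``$\A^{j}$-cell'' to $\tbZ(j)[2j]$ compatibly with the attaching maps — so that, in particular, the boundary maps built from the $\Ker(\exp(\lambda))$-action in the construction of $C^{cell}_*(X_\Sigma)$ are sent to the matching motivic boundary maps, and the $\eta$-multiplications surviving in the reduced complex $\bigoplus_{\omega}\overline{C}^{cri}_i(K_\omega)$ are sent to the motivic $\eta$. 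Once these compatibilities are in place, the remainder of the argument is purely formal.
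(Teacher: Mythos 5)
Your proposal is correct and follows essentially the same route as the paper: the paper obtains this corollary exactly by applying the functor $\mathbf{L}\tilde{\gamma}^*$ of \cite[\S 1.3 (MW4)]{bachmann2020milnor} to the quasi-isomorphism of Corollary \ref{mainA1cellular}, using that it preserves cones, shifts and finite sums, sends $\KMW_j[j]$ to $\tbZ(j)[2j]$ and $\eta$ to the motivic $\eta$, and identifies the image of $C^{cell}_*(X_\Sigma)$ with $\Mt(X_\Sigma)$. Your additional discussion of the compatibility of $\mathbf{L}\tilde{\gamma}^*$ with the skeletal filtration is in fact more explicit than what the paper records.
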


When passing to the category where $ \eta $ is invertible, there are no terms from $ B(1) $, indicating that the decomposition depends solely on $ G^{\lambda}_{*} $.

\begin{corollary}
  
In the category $ \DMt(k)[\eta^{-1}] $, for a smooth pure shellable toric variety $ X_{\Sigma} $, we can express the decomposition as follows:
\[
  \Meta(X_\Sigma) \cong \bigoplus_{1 \neq l \in \mathbb{N}} \bigoplus_{\sigma \in B(l)} \tbZ \sslash l (|r(\sigma)|)[2|r(\sigma)|],
\]
\end{corollary}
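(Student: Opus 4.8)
The plan is to deduce this statement directly from the previous corollary by applying the $\eta$-inversion functor $L_\eta \colon \DMt(k) \to \DMt(k)[\eta^{-1}]$, under which $\Meta(X_\Sigma)$ is by definition the image of $\Mt(X_\Sigma)$. First I would recall that $L_\eta$ is triangulated and symmetric monoidal, so it commutes with finite direct sums, with the Tate twists $(q)$ and shifts $[p]$, and it sends the distinguished triangle defining the cone $\tbZ \sslash l\eta = C(l\eta)$ to the triangle defining the cone of $L_\eta(l\eta)$ in $\DMt(k)[\eta^{-1}]$. Applying $L_\eta$ termwise to
\[
\Mt(X_\Sigma) \cong \bigoplus_{l \in \mathbb{N}} \bigoplus_{\sigma \in B(l)} \tbZ \sslash l\eta\,(|r(\sigma)|)[2|r(\sigma)|]
\]
therefore reduces everything to identifying $L_\eta(\tbZ \sslash l\eta)$ for each $l$.

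The core of the argument is a three-case analysis. For $l = 0$ we have $\tbZ \sslash 0 = \tbZ$, which is unchanged. For $l = 1$ the morphism $\eta$ becomes invertible in $\DMt(k)[\eta^{-1}]$, so its cone is $0$ there; hence every summand indexed by $\sigma \in B(1)$ is sent to zero, and the value $l = 1$ drops out of the direct sum altogether. For $l > 1$, write $l\eta = \eta \cdot (l \cdot \mathrm{id}_{\tbZ})$ as a composite of endomorphisms of $\tbZ$; since $L_\eta(\eta)$ is an automorphism, the cones of $l\eta$ and of $l \cdot \mathrm{id}$ become canonically isomorphic, so $L_\eta(\tbZ \sslash l\eta) \cong \tbZ \sslash l = C(l)$. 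Collecting the three cases, reindexing over $\{\, l \in \mathbb{N} : l \neq 1 \,\}$, and using the convention $\tbZ \sslash 0 = \tbZ$, we obtain
\[
\Meta(X_\Sigma) \cong \bigoplus_{1 \neq l \in \mathbb{N}} \bigoplus_{\sigma \in B(l)} \tbZ \sslash l\,(|r(\sigma)|)[2|r(\sigma)|],
\]
as claimed.

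I would then note that the correspondences with $G^\lambda_*$ carry over verbatim: the $l = 0$ part still matches the free part of $G^\lambda_{i-1}$ and the $l > 1$ part still matches its $l$-torsion, whereas the single family of summands indexed by $B(1)$ — the only piece of the $\DMt(k)$-decomposition not governed by the reduced homology of the $|K_\omega|$ — has been annihilated. This is precisely the content of the remark preceding the statement, that after inverting $\eta$ the decomposition depends solely on $G^\lambda_*$.

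The only point requiring care — and the one I expect to be the main, if minor, obstacle — is the cone identification $L_\eta(\tbZ \sslash l\eta) \cong \tbZ \sslash l$ for $l > 1$: one must verify the factorization $l\eta = \eta \cdot (l \cdot \mathrm{id})$ at the level of morphisms in $\DMt(k)$, so that $l\eta$ is genuinely the $l$-fold additive multiple of $\eta$ rather than some $\epsilon$-twisted variant that would localize differently, and then check that the evident map of two-term complexes built from $L_\eta(\eta)$ on one term and $\mathrm{id}$ on the other is an isomorphism with the shift and sign conventions in force (cf. \cite{fasel2023tate}). Everything else follows formally from the exactness and monoidality of $L_\eta$.
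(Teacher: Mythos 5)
Your proposal is correct and follows exactly the route the paper (implicitly) takes: apply the $\eta$-inverting localization to the $\DMt(k)$-decomposition, kill the $B(1)$ summands because the cone of an invertible map vanishes, and identify the cone of $l\eta$ with $\tbZ\sslash l$ for $l>1$ since $\eta$ becomes invertible. Your flagged worry about an $\epsilon$-twisted multiple is harmless here, since the paper's definition of $\KMW_i\sslash l\eta$ (and hence of $\tbZ\sslash l\eta$) uses the plain additive multiple $l\cdot\eta$, so the factorization $l\eta=\eta\circ(l\cdot\mathrm{id})$ holds on the nose.
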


Upon considering the category where $ \eta = 0 $, the complex can be simplified, allowing for a motivic decomposition via the aforementioned quasi-isomorphism. The following corollary serves as a generalization of \cite[\S 5.2, Theorem p.102-104]{fulton1993introduction} concerning motives.

\begin{corollary}

In the category $\DM(k)$, for a smooth pure shellable toric variety $ X_{\Sigma} $, we obtain the following motivic decomposition:

\[
  \M(X_\Sigma) \cong \bigoplus_{\sigma \in K_{\max}} \bZ( |r(\sigma)| )[ 2|r(\sigma)| ]
\]

The generators are represented by $ [e^{r( \sigma )}] $, corresponding to the regular expanding sequence of $ K $.

In particular, we have the decomposition of motivic cohomology (and, consequently, the Chow group):

\[
  \mathrm{H}^{*,*}_{\mathrm{M}}(X_\Sigma) \cong \bigoplus_{\sigma \in K_{\max}} \mathrm{H}^{*-2|r(\sigma)|,*-|r(\sigma)|}_{\mathrm{M}}(k) [e^{r( \sigma )}]
\]

\end{corollary}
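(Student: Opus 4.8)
The plan is to read the statement off from the Milnor--Witt cellular computation of Theorem~\ref{homology} by specializing to the category $\DM(k)$, in which $\eta$ acts as zero; the point is that the \emph{entire} cellular differential of a toric variety is divisible by $\eta$.

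First I would record the model of the cellular complex. For a pure shellable fan the reductions of the earlier sections (Propositions~\ref{canIso} and \ref{restrQis}, together with the regular expanding sequence of $K$ used in Theorem~\ref{homology}) present $C^{cell}_*(X_\Sigma)$ as quasi-isomorphic to a complex with exactly one cell $[e^{r(\sigma)}]$ for each $\sigma\in K_{\max}$, sitting in homological degree $|r(\sigma)|$ and built from $\KMW_{|r(\sigma)|}$; in particular every cell is diagonal, with weight equal to its homological degree. Producing this $|K_{\max}|$-cell model is the combinatorial counterpart of Fulton's stratification of a toric variety into affine cells, one per maximal cone, which is why \cite[\S 5.2]{fulton1993introduction} is the classical shadow of the statement.

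Because every cell is diagonal, the boundary map carries a cell in homological degree $n$ (hence weight $n$) to cells in degree $n-1$ (weight $n-1$), so each of its entries lies in $\mathrm{Hom}(\KMW_n,\KMW_{n-1})\cong\KMW_{-1}(k)=W(k)$, which is precisely the image of multiplication by $\eta\colon\KMW_0(k)\to\KMW_{-1}(k)$; concretely these entries are the integral $\eta$-multiples that Proposition~\ref{tActMain} extracts from $\bigoplus_{\omega\in\mathrm{row}\lambda}\overline{C}^{cri}_*(K_\omega)$, and they are already visible in Theorem~\ref{homology}, whose summands $\KMW_q\sslash l\eta$ encode exactly those surviving $\eta$-multiples. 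Now pass to $\DM(k)$: the canonical realization $\Daba{k}\to\DM(k)$ (the integral version of the one giving Corollary~\ref{rationDecomp}) represents $C^{cell}_*(X_\Sigma)$ by $\M(X_\Sigma)$, sends a diagonal cell built from $\KMW_n$ in degree $n$ to the Tate motive $\bZ(n)[2n]$, and kills $\eta$. Hence the boundary map vanishes and $\M(X_\Sigma)$ is the direct sum of the realized cells, namely $\bigoplus_{\sigma\in K_{\max}}\bZ(|r(\sigma)|)[2|r(\sigma)|]$ with generators $[e^{r(\sigma)}]$. (Equivalently, setting $\eta=0$ directly in Theorem~\ref{homology}: each two-term summand $\KMW_q\sslash l\eta[q]$ with $l\ge1$ then splits as $\bZ(q)[2q]\oplus\bZ(q+1)[2q+2]$, two \emph{even} Tate motives, so that although the $\Daba{k}$-level homology decomposition has fewer than $|K_{\max}|$ summands, the motivic one has exactly $|K_{\max}|$.)

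Finally, applying $\mathrm{Hom}_{\DM(k)}(-,\bZ(q)[p])$ to the decomposition yields $\mathrm{H}^{p,q}_{\mathrm{M}}(X_\Sigma)\cong\bigoplus_{\sigma\in K_{\max}}\mathrm{H}^{p-2|r(\sigma)|,\,q-|r(\sigma)|}_{\mathrm{M}}(k)\,[e^{r(\sigma)}]$, and the special case $(p,q)=(2c,c)$ gives $\mathrm{CH}^c(X_\Sigma)\cong\bigoplus_{\sigma\,:\,|r(\sigma)|=c}\bZ\cdot[e^{r(\sigma)}]$, i.e.\ the asserted additive basis of the Chow ring. Since Theorem~\ref{homology} is already in hand, the only real work lies in the passage to $\DM(k)$; the point to get right there is that the surviving Tate summands reassemble into exactly one per maximal cone with the correct twist --- equivalently, that the weight bookkeeping between the $\KMW_q\sslash l\eta$ pieces and $K_{\max}$ comes out right, which is guaranteed by the structure of the regular expanding sequence. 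Everything else is formal, the essential mechanism being simply that $\eta$-divisible differentials are invisible over $\DM(k)$.
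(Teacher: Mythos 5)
Your proof is correct and follows the paper's own route: the paper deduces this corollary from Theorem \ref{mainComplex}, which presents $C^{cell}_*(X_\Sigma)$ as the complex $\bigoplus_{\sigma\in K_{\max}}[r(\sigma)]\,\KMW_{|r(\sigma)|}$ with differential $\eta\,\overline{\partial}^{cri}$, and then simply notes that $\eta=0$ in $\DM(k)$, so the differential vanishes and the motive splits into exactly one Tate summand $\bZ(|r(\sigma)|)[2|r(\sigma)|]$ per maximal cone. Your a priori observation that every morphism $\KMW_n\to\KMW_{n-1}$ lies in $\KMW_{-1}(k)=W(k)=\eta\cdot \mathrm{GW}(k)$ is a clean restatement of why the paper's explicit $\eta$-divisibility is automatic, but the substance of the argument is the same.
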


For smooth fans that are not pure or not shellable, as discussed in Section \ref{exotic}, the results presented in Corollary \ref{mainA1cellular} do not hold. However, if we focus solely on the Chow group, we can still derive a basis. The following proposition can be viewed as a generalization of \cite[\S 5.2, Theorem p.102-104]{fulton1993introduction} for non-complete cases.

\begin{proposition}[Proposition \ref{chowDecomp}]
  For a smooth toric variety $ X_{\Sigma} $, consider an order on $ K_{\max} = \{\sigma_1, \ldots, \sigma_g\} $. Recall the definition of the sets $ \min(\sigma_i) = \{ \tau \subset \sigma_i \mid \tau \text{ is minimal such that } \tau \not\subset \sigma_j \text{ for all } j < i\} $. We then have the following decomposition of the Chow group:

\[
  \mathrm{CH}^*(X_\Sigma) \cong \bigoplus_{\sigma \in K_{\max}} \bigoplus_{\tau \in \min(\sigma)} \bZ [e^{\tau}]
\]

The generators are given by $ [e^{\tau}] \in \mathrm{CH}^{|\tau|}(X_\Sigma) $.
\end{proposition}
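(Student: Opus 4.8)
The plan is to run the localization exact sequence for Chow groups along the open cover $X_\Sigma=\bigcup_i U_{\sigma_i}$, organized by the chosen order on $K_{\max}$; this is Fulton's strategy for the complete case, adapted to keep track of the torus factors that survive when $|\sigma_i|<n$. I will use the orbit--cone dictionary: $X_\Sigma=\bigsqcup_{\tau\in K}O_\tau$ with $O_\tau\cong\Gm^{n-|\tau|}$; for a smooth cone $\sigma$ one has $U_\sigma=\bigsqcup_{\tau\subseteq\sigma}O_\tau\cong\af^{|\sigma|}\times\Gm^{n-|\sigma|}$ and $U_{\sigma_i}\cap U_{\sigma_j}=U_{\sigma_i\cap\sigma_j}$; and $\overline{O_\tau}=V(\tau)=\bigsqcup_{\tau\subseteq\gamma}O_\gamma$ is the toric subvariety of codimension $|\tau|$. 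Put $W_0=\emptyset$ and $W_i=U_{\sigma_1}\cup\cdots\cup U_{\sigma_i}$, an increasing chain of open subschemes with $W_g=X_\Sigma$ since every face of $K$ lies in a maximal one.

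First I would describe the successive complements. From $W_i=W_{i-1}\cup U_{\sigma_i}$ one gets $Z_i:=W_i\setminus W_{i-1}=U_{\sigma_i}\setminus\bigcup_{j<i}U_{\sigma_i\cap\sigma_j}=\bigsqcup_{\tau\in S_i}O_\tau$, where $S_i=\{\tau\subseteq\sigma_i:\tau\not\subseteq\sigma_j\ \forall j<i\}$ is an up-set of $2^{\sigma_i}$ whose minimal elements are exactly $\min(\sigma_i)$. Fixing an order $\min(\sigma_i)=\{\rho_{i,1},\dots,\rho_{i,m_i}\}$, I would stratify $Z_i$ by $E_{\rho_{i,l}}:=\bigsqcup O_\tau$ over $\tau$ with $\rho_{i,l}\subseteq\tau\subseteq\sigma_i$ and $\rho_{i,a}\not\subseteq\tau$ for $a<l$. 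Inside $U_{\sigma_i}\cong\af^{|\sigma_i|}\times\Gm^{n-|\sigma_i|}$ the stratum $E_{\rho_{i,l}}$ is the coordinate subspace $\{x_v=0:v\in\rho_{i,l}\}$ intersected with finitely many principal opens, hence a dense open subscheme of $\af^{|\sigma_i|-|\rho_{i,l}|}\times\Gm^{n-|\sigma_i|}$; in particular it is irreducible of dimension $n-|\rho_{i,l}|$ and $\mathrm{CH}_*(E_{\rho_{i,l}})=\bZ\cdot[E_{\rho_{i,l}}]$ is concentrated in top degree. The combinatorial key is the closure identity $\overline{E_{\rho_{i,l}}}\cap Z_i=\bigsqcup_{\rho_{i,l}\subseteq\tau\subseteq\sigma_i}O_\tau$, which meets $E_{\rho_{i,l'}}$ only for $l'\le l$; so adding the strata of $Z_i$ in the order $E_{\rho_{i,m_i}},\dots,E_{\rho_{i,1}}$ interpolates $W_{i-1}\subset W_i$ by opens with closed, irreducible successive complements (only closedness inside each $W_i$ is needed, so it is harmless that $\overline{E_{\rho_{i,l}}}$ may meet later charts). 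Splicing these over $i$ produces an increasing filtration $\emptyset=V_0\subset V_1\subset\cdots\subset V_N=X_\Sigma$ by open subschemes whose graded strata are the $E_\tau$, $\tau\in\bigsqcup_i\min(\sigma_i)$, each with closure $V(\tau)$ of codimension $|\tau|$ in $X_\Sigma$. Iterating the right-exact sequence $\mathrm{CH}_*(E_\tau)\to\mathrm{CH}_*(V_p)\to\mathrm{CH}_*(V_{p-1})\to 0$ together with the computation of $\mathrm{CH}_*$ of a dense open of $\af^a\times\Gm^b$ then shows at once that the classes $[e^\tau]:=[V(\tau)]$, $\tau\in\bigsqcup_i\min(\sigma_i)$, generate $\mathrm{CH}^*(X_\Sigma)$.

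The hard part will be linear independence: equivalently, that every graded piece of this filtration is infinite cyclic — not a proper quotient of $\bZ$ — after which freeness is automatic, an extension with free quotient being split. This cannot be seen on any single chart $U_{\sigma_i}$, where $[V(\tau)]$ dies for codimension reasons, so it must be extracted globally. The route I would take is to feed the filtration through the $\af^1$-cellular chain complex already in hand: $C^{cell}_*(X_\Sigma)\cong\overline{C}^{can}_*(\AZ_K)$ is a complex whose terms are copies of $\KMW_*$, and after applying $\mathbf{L}\tilde{\gamma}^*$ and realizing in $\DM(k)$ it becomes a complex of Tate motives $\bigoplus\bZ(q)[2q]$ that splits: its differentials vanish, being — by the same weight and $\eta$-torsion analysis that underlies the decompositions above — morphisms of Tate motives living in $\mathrm{Hom}_{\DM(k)}(\bZ,\bZ(d)[e])=\mathrm{H}^{e,d}_{\mathrm{M}}(k)$ with $d<0$, or multiples of $\eta$ in the sense of Theorem \ref{homology}. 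Hence $\mathrm{CH}^*(X_\Sigma)$ is free on the cells of $\overline{C}^{can}_*(\AZ_K)$, and it remains to match those cells — via the $\Ker(\exp(\lambda))$-action of Proposition \ref{tActMain} and the explicit combinatorial description of $\overline{C}^{can}_*$ — with $\bigsqcup_i\min(\sigma_i)$, respecting the grading $\tau\mapsto|\tau|$ and sending the cell class to the orbit-closure class $[V(\tau)]$. This matching — in particular checking that, for a general (non-shellable) order, there are neither too few nor too many surviving cells — is where the genuine work lies; I expect it to be the main obstacle. If one prefers to stay within intersection theory, a low-tech substitute for the independence step is to tensor with $\Q$ and invoke Corollary \ref{rationDecomp} (or the $\Q$-version of the filtration above) to see that $\dim_\Q\mathrm{CH}^c(X_\Sigma)_\Q=\#\{\tau\in\bigsqcup_i\min(\sigma_i):|\tau|=c\}$, so that the generating set produced above has exactly the right size in each codimension and is therefore a $\bZ$-basis.
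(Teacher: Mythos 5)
Your first half --- generation --- is sound and is genuinely different from what the paper does: the paper never runs the localization sequence explicitly, but instead extracts generators from an $ag$-reduction of the canonical complex $\overline{C}^{can}_*(\AZ_K)$. Your stratification of $W_i\setminus W_{i-1}$ by the up-set $S_i$, partitioned according to the first minimal element $\rho_{i,l}$ contained in $\tau$, is correct: since any $\gamma\supseteq\rho_{i,l}$ satisfies $\gamma\not\subseteq\sigma_j$ for $j<i$, one gets $\overline{E_{\rho_{i,l}}}\cap W_{i-1}=\emptyset$ and $\overline{E_{\rho_{i,l}}}\cap Z_i\subseteq\bigsqcup_{l'\le l}E_{\rho_{i,l'}}$, so each $E_{\rho_{i,l}}$ is indeed closed in the relevant $V_p$, and right-exactness of localization gives that the classes $[V(\tau)]$, $\tau\in\bigsqcup_i\min(\sigma_i)$, generate $\mathrm{CH}^*(X_\Sigma)$ in the stated codimensions. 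This is a clean, self-contained argument for that half.

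The gap is the independence step, and it sits exactly where you say you expect trouble: neither of your two proposed routes closes it. (i) Your primary route --- reduce to a complex of Tate motives indexed by $\bigsqcup_i\min(\sigma_i)$ with vanishing differentials --- \emph{is} the paper's proof: one must upgrade Proposition \ref{restrQis} to a quasi-isomorphism $\overline{C}^{can}_*(\AZ_K)_{ag}\subset C^{can}_*(\AZ_K)_{ag}$ after discarding the summands $\KMW_q[p]$ with $q>p$, identify the surviving cells with $\bigsqcup_i\min(\sigma_i)$, and use Proposition \ref{cellTrans} to see that the induced differentials die in $\DM(k)$; the vanishing $\mathrm{H}^{i,j}_{\mathrm{M}}(k)=0$ for $2j<i$ then yields freeness. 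You explicitly defer precisely this matching. (ii) The fallback does not work as stated. Corollary \ref{rationDecomp} only says $\M(X_\Sigma)_{\Q}$ is a sum of $\Q(q)[p]$'s without specifying multiplicities, and rationally an $l$-extension with $l\neq 0$ contracts a pair of Tate summands, so the number of $\Q(c)[2c]$'s is a priori smaller than the number of cells --- computing it again requires the reduction in (i). The $\Q$-version of your filtration is no better: localization is only right-exact, so it yields only $\dim_{\Q}\mathrm{CH}^c(X_\Sigma)_{\Q}\le\#\{\tau:|\tau|=c\}$, and the connecting maps $\mathrm{CH}^c(V_{p-1},1)\to\mathrm{CH}^0(E_\tau)\cong\bZ$ are genuinely nonzero in general (units on the torus factors of the open strata surject onto orders of vanishing along boundary divisors --- this is exactly the mechanism by which $[V(\tau)]$ dies on a single chart, as you observe). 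So no lower bound on the rank comes out of the filtration, rationally or integrally, and the freeness claim remains unproven in your write-up.
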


\subsection*{Acknowledgement}
The authors express their gratitude to Fangzhou Jin for hosting them at Tongji University, which provided the opportunity to initiate this work.
\subsection*{Notations}

Let $ k $ denote a field. We refer to the big Nisnevich site of smooth schemes of finite type over $ k $ as $ Sm_k $, and we denote the category of simplicial Nisnevich sheaves of sets over $ Sm_k $ by $ \Delta^{op}Shv_{Nis}(Sm_k) $. This latter category is also termed the \emph{category of spaces}. The category of spaces possesses a \emph{Nisnevich local injective model structure}.

The associated homotopy category is referred to as the \emph{simplicial homotopy category}, denoted by $ \mathcal{H}_s(k) $. The left Bousfield localization of the Nisnevich local injective model structure with respect to the collection of all projection morphisms $ \mathcal{X} \times \A^1 \to \mathcal{X} $ (as $ \mathcal{X} $ varies over all simplicial sheaves) is termed the $ \A^1 $-model structure. The corresponding homotopy category is called the $ \A^1 $-homotopy category and is denoted by $ \mathcal{H}(k) $. Additionally, there exists a pointed version of the construction of $ \mathcal{H}(k) $, for which we begin by considering the category of pointed spaces, whose objects are pairs $(\mathcal{X}, x)$, where $ x: \Spec k \to \mathcal{X} $ serves as a base point, and we end up with the pointed $\A^1$-homotopy category, denoted as $\mathcal{H}_{\bullet}(k)$.

For a field $ k $, we denote the Milnor-Witt K-theory by $\KMW_*(k) = \bigoplus_{n \in \mathbb{Z}} \KMW_n(k)$. This is the associated graded ring generated by a symbol $\eta$ of degree -1 and symbols $[u]$ of degree 1 for each $ u \in k^\times $. The relations governing these symbols are as follows: $[u][1-u] = 0$, $[uv] = [u] + [v] + \eta [u][v]$, $\eta[u] = [u]\eta$, and $\eta(2 + \eta[-1]) = 0$ for any $ u, v \in k^\times $. Additionally, we denote the symbol $\langle u \rangle$ as $1 + \eta [u] \in \KMW_0(k) = \text{GW}(k)$, and $h := 1 + \aBra{-1} = 2 + \eta[-1]$.

Let $\mathrm{K}^\mathrm{M}$ be the $ n $-th unramified Milnor K-theory sheaf and $\KMW$ be the $ n $-th unramified Milnor-Witt K-theory sheaf, as discussed in \cite{Mor10}. The sheaves $\mathrm{K}^\mathrm{M}$ and $\KMW$ are characterized as strictly $\A^1$-invariant sheaves in the sense defined in \cite{Mor10} over any field.

The character function $ \chi: \mathbb{Z} \to \{0,1\} $ is defined by $ n \mapsto i $ for $ i \equiv n \mod 2 $. Let $ \llBra{n,m} := \{i\in \bZ \mid n \leq i \leq m \} $ and $ \llBra{n} := \llBra{1, n} $.

\section{Cellular $\af^1$-homology}
We proceed by revisiting the construction and basic properties of the cellular $\A^1$-homology developed in \cite{MOREL2023109346}.

\begin{definition}
    A Nisnevich sheaf $\mathcal{F}$ of abelian groups on $ Sm_k $ is termed \emph{$\A^1$-invariant} if the projection map $ U \times \A^1 \to U $ induces a bijection
    \begin{equation*}
      \mathcal{F}(U) \to \mathcal{F}(U \times \A^1)
    \end{equation*}
    for every $ U \in Sm_k $. Furthermore, $\mathcal{F}$ is called \emph{strictly $\A^1$-invariant} if for every integer $ i \geq 0 $, the projection map $ U \times \A^1 \to U $ induces a bijection
    \begin{equation*}
      H^i_{Nis}(U, \mathcal{F}) \to H^i_{Nis}(U \times \A^1, \mathcal{F})
    \end{equation*}
    for every $ U \in Sm_k $.
\end{definition}

We denote the abelian category of Nisnevich sheaves of abelian groups on $ Sm_k $ as $ Ab(k) $ and the category of strictly $\A^1$-invariant sheaves of abelian groups as $ Ab_{\A^1}(k) $. Notably, the latter category is indeed an abelian category and possesses a symmetric monoidal structure, represented by $\otimes$ \cite{Mor05}.

In our work with complexes of Nisnevich sheaves of abelian groups on $ Sm_k $, we adhere to standard homological conventions. Let $ Ch_{\geq 0}(Ab(k)) $ denote the category of chain complexes $ C_* $ consisting of objects in $ Ab(k) $ with differentials of degree -1, such that $ C_n = 0 $ for all $ n < 0 $. The normalized chain complex functor $ C_*: \Delta^{op}Shv_{Nis}(Sm_k) \to Ch_{\geq 0}(Ab(k)) $ induces a functor $ C_*: \mathcal{H}_s(k) \to D(Ab(k)) $. Upon $\A^1$-localization, this yields a functor $ C^{\A^1}_*: \mathcal{H}(k) \to D_{\A^1}(k) $, where $ D_{\A^1}(k) $ is the full subcategory of $ D(Ab(k)) $ consisting of $\A^1$-local complexes. The inclusion of $ D_{\A^1}(k) $ into $ D(Ab(k)) $ admits a left adjoint, specifically the $\A^1$-localization functor $ L_{\A^1} $. Thus, for a space $ \mathcal{X} \in \Delta^{op}Shv_{Nis}(Sm_k) $, one has $ C^{\A^1}_*(\mathcal{X}) = L_{\A^1}(C_*(\mathcal{X})) $. If $ \mathcal{X} $ is a pointed space, the reduced chain complex $ \tilde{C}_*(\mathcal{X}) $ is defined as the kernel of the canonical morphism $ C_*(\mathcal{X}) \to \mathbb{Z} $. Furthermore, there exists a canonical morphism $ C_*(\mathcal{X}) \cong \mathbb{Z} \oplus \tilde{C}_*(\mathcal{X}) $. The same argument holds after taking $ \A^1 $-localization, allowing us to define the reduced $ \A^1 $-chain complex as $ \tilde{C}^{\A^1}_*(\mathcal{X}) = L_{\A^1}(\tilde{C}_*(\mathcal{X})) $.

\begin{definition}
   For any $ \mathcal{X} \in \Delta^{op}Shv_{Nis}(Sm_k) $ and $ n \in \mathbb{Z} $, the $ n $-th $ \A^1 $-homology sheaf of $ \mathcal{X} $ is defined to be the $ n $-th homology sheaf of the $ \A^1 $-chain complex $ C^{\A^1}_*(\mathcal{X}) $. If $ \mathcal{X} $ is pointed, we define the $ n $-th reduced $ \A^1 $-homology sheaf of $ \mathcal{X} $ as $ \tilde{\mathbf{H}}^{\A^1}_n (\mathcal{X}) := \mathbf{H}^{\A^1}_n (\tilde{C}^{\A^1}_*(\mathcal{X})) $.
\end{definition}

Since $ \mathbf{H}^{\A^1}_0 (\Spec k) \cong \mathbb{Z} $ and $ \mathbf{H}^{\A^1}_n (\Spec k) = 0 $ for $ n \neq 0 $, we have an isomorphism
\[
  \mathbf{H}^{\A^1}_* (\mathcal{X}) \cong \mathbb{Z} \oplus \tilde{\mathbf{H}}^{\A^1}_* (\mathcal{X})
\]
of graded abelian sheaves. For any pointed space $ \mathcal{X} $ and any $ n \in \mathbb{Z} $, there exists a canonical isomorphism
\[
  \tilde{\mathbf{H}}^{\A^1}_n (\mathcal{X}) \cong \tilde{\mathbf{H}}^{\A^1}_{n+1} (S^1 \wedge \mathcal{X})
\]
as the $ \A^1 $-localization functor commutes with the simplicial suspension functor in $ D(Ab(k)) $. Additionally, by the $ \A^1 $-connectivity theorem \cite{Mor10}, it is known that for every space $ \mathcal{X} $ and every integer $ n $, the $ \A^1 $-homology sheaves $ \mathbf{H}^{\A^1}_n (\mathcal{X}) $ are strictly $ \A^1 $-invariant sheaves that vanish if $ n < 0 $.

For any Nisnevich sheaf of sets $ \mathcal{F} $ on $ Sm_k $, we define 
\[
  \mathbf{Z}_{\mathbb{A}^1}[\mathcal{F}] := \mathbf{H}^{\A^1}_0 (\mathcal{F}).
\]
For a pointed Nisnevich sheaf of sets $ \mathcal{F} $ on $ Sm_k $, we define 
\[
  \mathbf{Z}_{\mathbb{A}^1}(\mathcal{F}) := \tilde{\mathbf{H}}^{\A^1}_0 (\mathcal{F}).
\]
The sheaf $ \mathbf{Z}_{\mathbb{A}^1}[\mathcal{F}] $ is regarded as the \emph{free strictly $ \A^1 $-invariant sheaf} on $ \mathcal{F} $; specifically, it satisfies the property that there exists a canonical bijection 

\[
\text{Hom}_{Ab(k)}(\bfZ_{\A^1}[\mathcal{F}], M)\stackrel{\sim}{\to} \text{Hom}_{{Shv_{Nis}}(Sm_k)}(\mathcal{F}, M)
\]
for every $ M \in Ab(k) $. A similar bijection also holds for pointed sheaves.

\begin{remark}\cite{Mor10}
  With the above notations, the canonical morphism $ \Z((\Gm)^{\wedge n}) \to \KMW_n $ induces an isomorphism 
  \[
  \bfZ_{\A^1}(\Gm^{\wedge n}) \cong \KMW_n.
  \]
\end{remark}

If $ M $ is a strictly $ \A^1 $-invariant sheaf, there exists a bijection
\[
H^n_{Nis}(\mathcal{X}, M) \stackrel{\sim}{\to} \text{Hom}_{D(Ab(k))}(C_*^{\A^1}(\mathcal{X}), M[n]).
\]

Next, we introduce the concept of a cellular structure for $ X \in Sm_k $ and discuss orientations of a vector bundle on a smooth $ k $-scheme.

\begin{definition}
    An object $ X \in Sm_k $ is said to be \emph{cohomologically trivial} if the cohomology groups $ H^n_{Nis}(X, M) $ vanish for all positive degrees and for every strictly $ \mathbb{A}^1 $-invariant sheaf $ M \in Ab_{\mathbb{A}^1}(k) $.
\end{definition}

\begin{remark}
    By definition, $ X $ is cohomologically trivial if and only if the sheaf $ \bfZ_{\mathbb{A}^1}[X] = \mathbf{H}^{\A^1}_0 (X) $ is a projective object in the abelian category $ Ab_{\A^1}(k) $. Examples of cohomologically trivial schemes include $ \mathbb{A}^1 $, $ \mathbb{G}_m $, and, more generally, open subschemes of $ \mathbb{A}^1 $. Moreover, it is straightforward to demonstrate that the product of two cohomologically trivial smooth $ k $-schemes is also cohomologically trivial.
\end{remark}

With the preceding definitions, we can now present the formal definition of a cellular structure:

\begin{definition}\label{cell}
    An $ n $-dimensional smooth scheme $ X $ possesses a \emph{cellular structure} if it admits an increasing filtration by open subschemes
    \[
    \emptyset = \Omega_{-1}(X) \subset \Omega_0(X) \subset \Omega_1(X) \subset \cdots \subset \Omega_{s-1}(X) \subset \Omega_s(X) = X,
    \]
    such that for each $ i \in \{0, \ldots, s\} $, the reduced induced closed subscheme $ \Omega_i(X) \setminus \Omega_{i-1}(X) $ is a finite disjoint union of schemes that are $ k $-smooth, affine, have codimension $ i $, and are cohomologically trivial. A scheme that has a cellular structure is referred to as a \emph{cellular scheme}.
\end{definition}

\begin{example}\label{projective}
    The projective space $\mathbb{P}^n$ possesses a cellular structure defined by a full flag $\mathbb{P}^0\subset\mathbb{P}^1\subset\cdots\subset\mathbb{P}^n$, with open sets given by $\Omega_i(\mathbb{P}^n) := \mathbb{P}^n \setminus \mathbb{P}^{n-i-1}$. The relationship $\Omega_i(\mathbb{P}^n) \setminus \Omega_{i-1}(\mathbb{P}^n) = \mathbb{P}^{n-i} \setminus \mathbb{P}^{n-i-1} \cong \mathbb{A}^{n-i}$ establishes that the sequence of open subschemes
    \[
    \emptyset = \Omega_{-1}(\mathbb{P}^n) \subset \Omega_0(\mathbb{P}^n) \subset \Omega_1(\mathbb{P}^n) \subset \cdots \subset \Omega_n(\mathbb{P}^n) = \mathbb{P}^n
    \]
    provides a cellular structure on $\mathbb{P}^n$.
\end{example}

We now proceed to define an oriented cellular structure on a smooth $k$-scheme $X$.

\begin{definition}
    Let $ r \geq 1 $, and let $ \xi $ be a rank $ r $ vector bundle over a smooth $ k $-scheme $ X $. We say that $ \xi $ is \emph{orientable} if the line bundle $ \wedge^r(\xi) $ is trivial. An orientation of $ \xi $ is defined as a choice of an isomorphism $ \theta: \mathbb{A}^1_X \cong \wedge^r(\xi) $, where $ \mathbb{A}^1_X $ denotes the trivial vector bundle of rank 1 over $ X $. We denote by $ Or(\xi) $ the set of orientations of the vector bundle $ \xi $.
\end{definition}

\begin{remark}\label{trivial}
    It is noteworthy that a rank $ r $ vector bundle $ \xi $ over a smooth, cohomologically trivial $ k $-scheme $ X $ is inherently orientable. This result follows from the fact that the line bundle $ \wedge^r(\xi) $ corresponds, up to isomorphism, to an element in the group $ H^1(X, \mathbb{G}_m) $, which is trivial.
\end{remark}

For instance, a trivialization of the vector bundle $ \xi $ over $ X $ induces an orientation of $ \xi $. Given $ \theta \in Or(\xi) $ and a unit $ \alpha \in \mathcal{O}(X)^{\times} $, the expression $ \alpha \cdot \theta $ also represents an orientation of $ \xi $. Consequently, the group of units $ \mathcal{O}(X)^{\times} $ acts naturally on $ Or(\xi) $.

\begin{definition}\label{orientedcell}
    Let $ X $ be a smooth $ k $-scheme. An \emph{oriented cellular structure} on $ X $ consists of a cellular structure on $ X $,
    \[
    \emptyset = \Omega_{-1}(X) \subset \Omega_0(X) \subset \Omega_1(X) \subset \cdots \subset \Omega_{s-1}(X) \subset \Omega_s(X) = X,
    \]
    together with an orientation $ o_i $ of the normal bundle $ \nu_i $ of the closed immersion $ X_i := \Omega_i \setminus \Omega_{i-1} \subset \Omega_i $ for each $ i $. The collection of choices of the orientations $ o_i $ constitutes an orientation of the given cellular structure.
\end{definition}

\begin{example}\label{orientedproj}
    Recall the cellular structure of the $ n $-dimensional projective space $ \mathbb{P}^n $ over $ k $ as detailed in Example \ref{projective}. If $ X_0, \ldots, X_n $ denote the homogeneous coordinates on $ \mathbb{P}^n $, then we have $ \Omega_i(\mathbb{P}^n) = \bigcup_{j=0}^i D_+(X_j) $, where $ D_+(X_j) $ is the complement of the hyperplane in $\mathbb{P}^n$ defined by $X_j$. Thus, we have that 

\[\Omega_i(\mathbb{P}^n)\setminus \Omega_{i-1}(\mathbb{P}^n)=\mathbb{P}^{n-i}\setminus\mathbb{P}^{n-i-1}\] 

is the zero locus on $\Omega_i(\mathbb{P}^n)$ of the rational functions 

\[\frac{X_0}{X_i}, \ldots, \frac{X_{i-1}}{X_i}.\]

Since this sequence forms a regular sequence, the normal bundle $\nu_i$ is trivialized by this sequence, which induces an orientation. Consequently, this provides us with a canonical oriented cellular structure on $\mathbb{P}^n$.

\end{example}

\begin{remark}
   As established in \cite[Lemma 2.13]{MOREL2023109346}, the normal bundles $\nu_i$ in Definition \ref{orientedcell} of an oriented cellular structure on a smooth $k$-scheme are always trivial. The emphasis is not solely on orientability, but rather on the choice of a specific orientation.
\end{remark}

In particular, we have the Thom isomorphism theorem for oriented bundles.

\begin{lemma}[{\cite[Lemma 2.23]{MOREL2023109346}}]\label{thom}
  Let $\xi$ be a trivial vector bundle of rank $r$ over a smooth $k$-scheme $X$ equipped with an orientation $o \in Or(\xi)$. There exists an isomorphism $\phi:\xi \cong \mathbb{A}^r_X$ of vector bundles over $X$, which preserves the orientations. Furthermore, if $X$ is cohomologically trivial, then the induced pointed 
  $\mathbb{A}^1$-homotopy class 

\[
Th(\xi) \cong_{\mathbb{A}^1} \mathbb{A}^r/(\mathbb{A}^r\setminus\{0\}) \wedge X_+ 
\] 

depends only on $o \in Or(\xi)/(\mathcal{O}(X)^{\times})^{\wedge 2}$.
\end{lemma}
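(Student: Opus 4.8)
The plan is to construct $\phi$ by hand from an arbitrary trivialization of $\xi$, and then to determine the $\af^1$-homotopy class of the resulting Thom equivalence by analyzing the residual $\GL_r(\mathcal{O}(X))$-ambiguity. For the first assertion, since $\xi$ is trivial I fix an isomorphism $\psi\colon\xi\xrightarrow{\ \sim\ }\af^r_X$ and let $\theta_{\mathrm{can}}\in Or(\af^r_X)$ be the canonical orientation attached to the standard basis (the tautological trivialization of $\wedge^r\af^r_X\cong\af^1_X$), with transport $\psi^\ast\theta_{\mathrm{can}}\in Or(\xi)$. Since $Or(\xi)$ is nonempty (it contains $o$) and is acted on simply transitively by $\mathcal{O}(X)^\times$ — two orientations of $\xi$ differ by an automorphism of $\af^1_X$, i.e. by a unit — there is a unique $\alpha\in\mathcal{O}(X)^\times$ with $o=\alpha\cdot\psi^\ast\theta_{\mathrm{can}}$. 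I then set $\phi:=D_\alpha\circ\psi$ with $D_\alpha:=\mathrm{diag}(\alpha,1,\dots,1)\in\GL_r(\mathcal{O}(X))$; since $\wedge^r D_\alpha$ is multiplication by $\alpha$, one gets $\phi^\ast\theta_{\mathrm{can}}=\alpha\cdot\psi^\ast\theta_{\mathrm{can}}=o$, so $\phi$ preserves the orientations. Applying $Th(-)$ and using the canonical identification $Th(\af^r_X)\cong(\af^r/(\af^r\setminus\{0\}))\wedge X_+$ for a trivial bundle, $\phi$ induces the desired pointed $\af^1$-equivalence $Th(\xi)\cong_{\af^1}(\af^r/(\af^r\setminus\{0\}))\wedge X_+$.

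For the dependence statement I would compare two isomorphisms $\phi_1,\phi_2\colon\xi\xrightarrow{\ \sim\ }\af^r_X$ with $\phi_i^\ast\theta_{\mathrm{can}}=o_i$ and $o_2=\mu^2\,o_1$ for some $\mu\in\mathcal{O}(X)^\times$ (the case $\mu=1$ covers two choices of $\phi$ for a single $o$). With $g:=\phi_2\circ\phi_1^{-1}\in\GL_r(\mathcal{O}(X))$, the identity $\wedge^r\phi_2=(\det g)\cdot\wedge^r\phi_1$ gives $o_2=(\det g)^{-1}o_1$, so $(\det g)^{-1}=\mu^2$ and $\det g$ is a square. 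The two Thom equivalences differ by the self-map $Th(g)$ of $(\af^r/(\af^r\setminus\{0\}))\wedge X_+$ induced by the fibrewise action of $g$, so the whole statement reduces to showing $Th(g)\simeq_{\af^1}\mathrm{id}$ whenever $\det g$ is a square. The key point is that $g\mapsto Th(g)$ factors through the unpointed homotopy set $[X,\GL_r]_{\mathcal{H}(k)}$: indeed $Th(g)$ is assembled in $\mathcal{H}_\bullet(k)$ from the diagonal of $X$, the twist by $g$, and the fixed pointed action map $(\af^r/(\af^r\setminus\{0\}))\wedge(\GL_r)_+\to\af^r/(\af^r\setminus\{0\})$, so $\af^1$-homotopic matrices induce $\af^1$-homotopic Thom self-maps.

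I then factor $g=D_{\mu^2}\cdot h$ with $D_{\mu^2}=\mathrm{diag}(\mu^2,1,\dots,1)$ and $h:=D_{\mu^2}^{-1}g\in\SL_r(\mathcal{O}(X))$, so $Th(g)=Th(D_{\mu^2})\circ Th(h)$, and handle the two factors. For $h$: $\SL_r$ is $\af^1$-connected with $\pi_n^{\af^1}(\SL_r)$ strictly $\af^1$-invariant for $n\ge1$, and $X$ cohomologically trivial forces $H^i_{Nis}(X,M)=0$ for $i\ge1$ and every strictly $\af^1$-invariant $M$; running obstruction theory up the $\af^1$-Postnikov tower of $\SL_r$ yields $[X,\SL_r]_{\mathcal{H}(k)}=\ast$, hence $[h]=[e]$ in $[X,\GL_r]_{\mathcal{H}(k)}$ and $Th(h)\simeq\mathrm{id}$. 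For $D_{\mu^2}$: it acts on $\af^r_X$ as scaling by $\mu^2$ on the first coordinate, so $Th(D_{\mu^2})$ is the scaling-by-$\mu^2$ self-map of $\af^1/(\af^1\setminus\{0\})$ smashed with an identity; under Morel's $\mathrm{GW}$-description of the $\af^1$-self-maps of the Tate sphere this scaling is multiplication by $\langle\mu^2\rangle$, and since the rank-one form $\langle u^2\rangle$ is isometric to $\langle1\rangle$ over any commutative ring (via $x\mapsto ux$) we get $\langle\mu^2\rangle=\langle1\rangle$, so $Th(D_{\mu^2})\simeq\mathrm{id}$ and therefore $Th(g)\simeq\mathrm{id}$.

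The steps I expect to be delicate are: (i) the vanishing $[X,\SL_r]_{\mathcal{H}(k)}=\ast$ for cohomologically trivial $X$, which requires the strict $\af^1$-invariance of $\pi^{\af^1}_{\ge1}(\SL_r)$ and a careful bookkeeping of the Postnikov obstructions — note one cannot instead write $h$ as a product of elementary matrices, since $\SL_r(\mathcal{O}(X))\neq E_r(\mathcal{O}(X))$ already for $X=\af^2$, $r=2$; and (ii) the identification of the scaling-by-$\mu^2$ self-map of the Tate sphere with the identity, which is really the source of the ``modulo squares'' in the statement and is not formal, resting on the explicit $\mathrm{GW}$-description of $\af^1$-self-maps of $\af^1/(\af^1\setminus\{0\})$. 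Everything else — the torsor structure on $Or(\xi)$, the multiplicativity of $\wedge^r$, and the functoriality of $Th(-)$ — is routine.
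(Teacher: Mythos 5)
The paper does not actually prove this lemma --- it is quoted verbatim from Morel--Sawant \cite[Lemma 2.23]{MOREL2023109346} and used as a black box --- so there is no in-paper proof to compare against. Your reconstruction is correct and follows essentially the same route as the original source: reduce the ambiguity to an automorphism $g\in\GL_r(\mathcal{O}(X))$ of the trivial bundle whose determinant is a square, observe that $Th(-)$ only depends on the unpointed $\A^1$-homotopy class of $g:X\to\GL_r$, kill the $\SL_r$-component using cohomological triviality of $X$ together with the $\A^1$-connectedness of $\SL_r$ and the strict $\A^1$-invariance of its higher homotopy sheaves (Morel--Sawant phrase this as: the determinant induces a bijection $[X,\GL_r]_{\mathcal{H}(k)}\cong\mathcal{O}(X)^{\times}$ for cohomologically trivial $X$, which is exactly your obstruction-theoretic step), and dispose of the diagonal factor via $\aBra{\mu^2}=\aBra{1}$ in $\mathrm{GW}$ under Morel's identification of scaling self-maps of the Tate object. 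You correctly flag the two genuinely non-formal inputs. The only blemishes are sign/inverse bookkeeping in the torsor action of $\mathcal{O}(X)^{\times}$ on $Or(\xi)$ (your $\phi=D_\alpha\circ\psi$ satisfies $\phi^{*}\theta_{\mathrm{can}}=\alpha^{-1}\cdot\psi^{*}\theta_{\mathrm{can}}$, and correspondingly $\det g=\mu^{-2}$ rather than $\mu^{2}$); since these quantities are squares exactly when their inverses are, nothing in the argument is affected.
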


Next, we will introduce the notions of cellular $\mathbb{A}^1$-chain complexes, oriented cellular $\mathbb{A}^1$-chain complexes, and the corresponding cellular $\mathbb{A}^1$-homology. Let $X$ be a smooth 
$k$-scheme with a cellular structure given by 

\[
\emptyset=\Omega_{-1}(X) \subset \Omega_0(X) \subset \Omega_1(X) \subset \cdots \subset \Omega_{s-1}(X) \subset \Omega_s(X)=X
\] 

as defined in Definition \ref{cell}. Define 

\[
\Omega_i(X)\setminus \Omega_{i-1}(X)=\coprod_{j\in J_i} Y_{ij}
\] 

to be the decomposition of $\Omega_i(X)\setminus \Omega_{i-1}(X)$ into irreducible components. By applying the motivic homotopy purity theorem \cite{MV99}, we obtain a canonical pointed $\mathbb{A}^1$-weak equivalence 

\[
\Omega_i(X)/\Omega_{i-1}(X) \cong_{\mathbb{A}^1} Th(\nu_i), 
\] 

where $\nu_i$ denotes the normal bundle of the closed immersion $X_i \hookrightarrow \Omega_i(X)$ of smooth $k$-schemes.

Now, consider the cofibration sequence 

\[
\Omega_{i-1}(X) \to \Omega_i(X) \to \Omega_i(X)/\Omega_{i-1}(X).
\]

In the long exact sequence of reduced $\mathbb{A}^1$-homology sheaves, we have 

\[
\cdots \to \tilde{\mathbf{H}}^{\mathbb{A}^1}_n (\Omega_{i-1}(X)) \to \tilde{\mathbf{H}}^{\mathbb{A}^1}_n (\Omega_i(X)) \to \tilde{\mathbf{H}}^{\mathbb{A}^1}_n (\Omega_i(X)/\Omega_{i-1}(X)) \stackrel{\delta}{\to} \tilde{\mathbf{H}}^{\mathbb{A}^1}_{n-1} (\Omega_{i-1}(X)) \cdots
\]

The purity theorem provides us with the identification 

\[
\tilde{\mathbf{H}}^{\mathbb{A}^1}_n (\Omega_i(X)/\Omega_{i-1}(X)) \cong \tilde{\mathbf{H}}^{\mathbb{A}^1}_n (Th(\nu_i)). 
\]

We define $\partial_n$ to be the composite

\begin{equation*}
  \tilde{\mathbf{H}}^{\A^1}_n (Th(\nu_n)) \stackrel{\delta}{\to} \tilde{\mathbf{H}}^{\A^1}_{n-1} (\Omega_{n-1}(X)) \to \tilde{\mathbf{H}}^{\A^1}_{n-1} (Th(\nu_{n-1}))
\end{equation*}
where the second morphism is induced by the cofibration sequence previously mentioned. By definition, we have $\partial_{n-1} \circ \partial_n = 0$ for every $ n $, which leads to the establishment of a chain complex in $ Ab_{\A^1}(k) $:
\begin{equation*}
  C_*^{cell}(X) := (\tilde{\mathbf{H}}^{\A^1}_n (Th(\nu_n)), \partial_n)_n.
\end{equation*}
The chain complex $ C_*^{cell}(X) $ is referred to as the cellular $\A^1$-chain complex associated with the cellular structure on $ X $, and its homology sheaves, denoted $ H_n(C^{cell}_*(X)) $, are termed the cellular $\A^1$-homology sheaves of $ X $.

In the scenario where $ X $ possesses an oriented cellular structure, the selected orientations, in conjunction with the pointed $\A^1$-equivalence as established in Lemma \ref{thom} and the canonical $\A^1$-weak equivalence $ \A^i / (\A^i \setminus \{0\}) \cong_{\A^1} S^i \wedge \Gm^{\wedge i} $, yield canonical pointed $\A^1$-weak equivalences:
\begin{equation*}
  \Omega_i(X) / \Omega_{i-1}(X) \cong_{\A^1} Th(\nu_i) \cong_{\A^1} S^i \wedge \Gm^{\wedge i} \wedge \left(\vee_{j \in J_i} (Y_{ij})_+\right).
\end{equation*}
Transitioning to the corresponding reduced $\A^1$-homology sheaves, we obtain the following identification for each $ n $:
\begin{equation*}
  \tilde{\mathbf{H}}^{\A^1}_n \left( \Omega_n(X) / \Omega_{n-1}(X) \right) \cong \tilde{\mathbf{H}}^{\A^1}_n (Th(\nu_n)) \cong \bigoplus_{j \in J_n} \tilde{\mathbf{H}}^{\A^1}_n (S^n \wedge \Gm^{\wedge n} \wedge (Y_{nj})_+).
\end{equation*}
Utilizing the suspension isomorphism of $\A^1$-homology alongside the identity $ \tilde{\mathbf{H}}^{\A^1}_0 (\Gm^{\wedge n}) = \mathbf{Z}_{\mathbb{A}^1}(\Gm^{\wedge n}) = \KMW_n $, we derive isomorphisms of strictly $\A^1$-invariant sheaves:
\begin{equation*}
  \bigoplus_{j \in J_n} \tilde{\mathbf{H}}^{\A^1}_n (S^n \wedge \Gm^{\wedge n} \wedge (Y_{nj})_+) \cong \bigoplus_{j \in J_n} \tilde{\mathbf{H}}^{\A^1}_0 (\Gm^{\wedge n} \wedge (Y_{nj})_+) \cong \bigoplus_{j \in J_n} \KMW_n \otimes \mathbf{Z}_{\mathbb{A}^1}[Y_{nj}].
\end{equation*}
In this context, the tensor product $ \otimes $ is understood to occur within $ Ab_{\A^1}(k) $. With the orientation considerations, the cellular chain complex $ C^{cell}_*(X) $ thus becomes isomorphic to the chain complex whose terms are represented by $ \bigoplus_{j \in J_n} \KMW_n \otimes \mathbf{Z}_{\mathbb{A}^1}[Y_{nj}] $ in degree $ n $.

Let us denote $ \tilde{\partial}_n $ as the composite:

\begin{equation*}
\oplus_{j\in J_n} \KMW_{n}\otimes\bfZ_{\mathbb{A}^1}[Y_{nj}] \cong \tilde{\mathbf{H}}^{\A^1}_n (Th(\nu_n)) \stackrel{\partial}{\to} \tilde{\mathbf{H}}^{\A^1}_{n-1} (Th(\nu_{n-1})) \cong \oplus_{j'\in J_{n-1}} \KMW_{n-1} \otimes \bfZ_{\mathbb{A}^1}[Y_{n-1, j'}]
\end{equation*}

\begin{definition}
    The chain complex, which resides in the category of strictly $\A^1$-invariant sheaves $Ab_{\A^1}(k)$, is defined as follows:
    \begin{equation*}
        \tilde{C}^{cell}_*(X) := \left( \oplus_{j\in J_n} \KMW_{n}\otimes \bfZ_{\mathbb{A}^1}[Y_{nj}], \tilde{\partial_n}\right)_n.
    \end{equation*}
    This complex is referred to as the oriented cellular $\A^1$-chain complex associated with the oriented cellular structure on $X$. The $n$-th homology sheaf of this complex will be designated as the $n$-th oriented cellular $\A^1$-homology of $X$. After selecting the orientations, it can be demonstrated that the $n$-th oriented cellular $\A^1$-homology of $X$ is isomorphic to the cellular $\A^1$-homology sheaves $\mathbf{H}^{cell}_n(X)$.
\end{definition}

\begin{remark}
    Let $X$ be a cellular smooth $k$-scheme. In the absence of a choice of orientations, the cellular $\A^1$-chain complex $C^{cell}_*(X)$ becomes nearly uncomputable. On the other hand, the oriented version, $\tilde{C}^{cell}_*(X)$, which depends on the specified orientations, is often more amenable to explicit computation. We will leverage this as a robust tool to advance our calculations.
\end{remark}

Given a sheaf of abelian groups $M$ on $Sm_k$, we can construct the cohomological cellular cochain complex $C^*_{cell}(X; M)$, which represents the (cochain) complex of abelian groups with terms defined by  
$C^n_{cell}(X;M):=\text{Hom}_{Ab_{\mathbb{A}^1}(k)}(C^{cell}_n(X), M)$, accompanied by the induced differentials.

\begin{proposition}[{\cite[Prop 2.27]{MOREL2023109346}}]
\label{cellCohomology}
    Let $X$ be a cellular smooth scheme over $k$. For any strictly $\mathbb{A}^1$-invariant sheaf $M$ on $Sm_k$, we have the following natural isomorphisms:
    \begin{equation*}
        H^n_{Nis}(X, M) \stackrel{\sim}{\to} H^n(C^*_{cell}(X; M)) \stackrel{\sim}{\to} \text{Hom}_{D(Ab_{\mathbb{A}^1}(k))}(C^{cell}_*(X), M[n]).
    \end{equation*}
\end{proposition}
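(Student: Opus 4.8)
The plan is to derive both isomorphisms from the cellular filtration $\Omega_\bullet(X)$, using repeatedly two facts recorded above: a smooth $k$-scheme $Z$ is cohomologically trivial if and only if $\bfZ_{\A^1}[Z]$ is projective in $Ab_{\A^1}(k)$, and a finite product of cohomologically trivial smooth $k$-schemes is again cohomologically trivial. I will also use the isomorphism $H^n_{Nis}(\mathcal{X},M)\cong \mathrm{Hom}_{D(Ab(k))}(C^{\A^1}_*(\mathcal{X}),M[n])$ recalled above, its reduced analogue for pointed spaces, and the long exact sequences in reduced cohomology attached to cofiber sequences.

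The first step is to compute the reduced cohomology of the successive quotients. By the motivic purity theorem \cite{MV99} and the triviality of the normal bundles $\nu_i$ (Lemma 2.13 of \cite{MOREL2023109346}, the cells being cohomologically trivial), there are $\A^1$-weak equivalences $\Omega_i(X)/\Omega_{i-1}(X)\cong_{\A^1} Th(\nu_i)\cong_{\A^1} S^i\wedge \Gm^{\wedge i}\wedge\bigl(\vee_{j\in J_i}(Y_{ij})_+\bigr)$. For each $j$ the scheme $\Gm^{\times i}\times Y_{ij}$ is a product of cohomologically trivial schemes, hence cohomologically trivial, so $\bfZ_{\A^1}[\Gm^{\times i}\times Y_{ij}]$ is projective in $Ab_{\A^1}(k)$; since $\KMW_i\otimes\bfZ_{\A^1}[Y_{ij}]=\bfZ_{\A^1}(\Gm^{\wedge i}\wedge (Y_{ij})_+)$ is a direct summand of it, it too is projective and $\tilde{C}^{\A^1}_*(\Gm^{\wedge i}\wedge (Y_{ij})_+)$ is quasi-isomorphic to this sheaf concentrated in degree $0$. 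Combining this with the suspension isomorphism and the $\mathrm{Hom}$-description of cohomology, we obtain $\tilde{H}^p(\Omega_i/\Omega_{i-1},M)=0$ for $p\neq i$ and a natural isomorphism $\tilde{H}^i(\Omega_i/\Omega_{i-1},M)\cong \mathrm{Hom}_{Ab_{\A^1}(k)}(C^{cell}_i(X),M)=C^i_{cell}(X;M)$.

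For the first isomorphism I would run the cohomological spectral sequence of the finite filtration $\emptyset=\Omega_{-1}(X)\subset\cdots\subset\Omega_s(X)=X$ built from the cofiber sequences $\Omega_{i-1}(X)\to\Omega_i(X)\to\Omega_i(X)/\Omega_{i-1}(X)$; its $E_1$-term in filtration degree $p$ is $\tilde{H}^{p+q}(\Omega_p/\Omega_{p-1},M)$. By the previous step this vanishes unless $q=0$, where it equals $C^p_{cell}(X;M)$, and the $d_1$-differential is induced by the same connecting maps that define the boundary $\partial_n$, hence is the $\mathrm{Hom}_{Ab_{\A^1}(k)}(-,M)$-transpose of $\partial_{p+1}$, i.e. the differential of $C^*_{cell}(X;M)$. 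Since the $E_2$-page is concentrated in the row $q=0$ we have $E_2=E_\infty$, and convergence of the (finite) filtration spectral sequence yields the first isomorphism $H^n_{Nis}(X,M)\cong H^n(C^*_{cell}(X;M))$, naturally in $M$.

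For the second isomorphism, recall from the second step that each term $C^{cell}_n(X)=\bigoplus_{j\in J_n}\KMW_n\otimes\bfZ_{\A^1}[Y_{nj}]$ is projective in $Ab_{\A^1}(k)$; since $C^{cell}_*(X)$ is moreover bounded it is $K$-projective, so $\mathrm{RHom}_{Ab_{\A^1}(k)}(C^{cell}_*(X),M)$ is computed by the naive $\mathrm{Hom}$-complex, whose $n$-th cohomology is by definition $H^n(C^*_{cell}(X;M))$; this gives $\mathrm{Hom}_{D(Ab_{\A^1}(k))}(C^{cell}_*(X),M[n])\cong H^n(C^*_{cell}(X;M))$, again naturally. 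I expect the main obstacle to be the compatibility used in the third step: one must trace the connecting homomorphisms through the purity equivalences $\Omega_i/\Omega_{i-1}\cong_{\A^1}Th(\nu_i)\cong_{\A^1}S^i\wedge\Gm^{\wedge i}\wedge(\vee_j(Y_{ij})_+)$ and verify that the $d_1$-differential of the spectral sequence is precisely the transpose of the map $\partial_n$ defining $C^{cell}_*(X)$; once this is in place, the remaining verifications (convergence, naturality, the direct-summand and projectivity claims) are routine.
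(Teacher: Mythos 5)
This proposition is quoted in the paper from \cite[Prop.\ 2.27]{MOREL2023109346} without an in-paper proof, so there is nothing internal to compare against; your route (filtration spectral sequence for the cellular filtration, identification of the $E_1$-page and $d_1$ with $C^*_{cell}(X;M)$, then projectivity of the terms of $C^{cell}_*(X)$ in $Ab_{\A^1}(k)$ to compute the derived-category $\mathrm{Hom}$ by the naive $\mathrm{Hom}$-complex) is the natural and expected argument, and the second isomorphism and the convergence/compatibility steps you outline are fine.

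There is, however, one inference in your first step that does not hold as stated: from projectivity of $\KMW_i\otimes\bfZ_{\A^1}[Y_{ij}]$ in $Ab_{\A^1}(k)$ you conclude that $\tilde{C}^{\A^1}_*(\Gm^{\wedge i}\wedge (Y_{ij})_+)$ is quasi-isomorphic to this sheaf placed in degree $0$. Projectivity of the degree-zero homology in $Ab_{\A^1}(k)$ gives no control over the higher $\A^1$-homology sheaves of $\Gm^{\times i}\times Y_{ij}$; cohomological triviality is a statement about maps into strictly $\A^1$-invariant sheaves (i.e.\ about $\mathrm{Hom}_{D(Ab(k))}(C^{\A^1}_*(-),M[n])$), and it does not imply that the $\A^1$-chain complex is discrete. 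Fortunately the concentration claim is not needed: what your $E_1$-computation requires is only that $\tilde{H}^p(\Omega_i/\Omega_{i-1},M)=0$ for $p\neq i$ and $\cong\mathrm{Hom}_{Ab_{\A^1}(k)}(C^{cell}_i(X),M)$ for $p=i$. Vanishing for $p<i$ follows from connectivity of the (suspended) reduced chain complex; the case $p=i$ is the general fact that $\mathrm{Hom}_{D(Ab(k))}(C,M)=\mathrm{Hom}(\mathbf{H}_0(C),M)$ for a connective $C$; and vanishing for $p>i$ follows because, after desuspending, the relevant group is a direct summand of $H^{p-i}_{Nis}(\Gm^{\times i}\times Y_{ij},M)$, which is zero by cohomological triviality of the product (the splitting of $\bfZ_{\A^1}[\Gm^{\times i}\times Y_{ij}]$ into smash summands is compatible with the chain level). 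With the argument rerouted this way, the rest of your proof, including the identification of $d_1$ with the transpose of $\partial$ via naturality of the connecting maps and the $K$-projectivity argument for the second isomorphism, goes through.
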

  
\begin{remark}
    It is established in \cite{Mor10} that for any strictly $\A^1$-invariant sheaf $M$ on $Sm_k$ and any smooth $k$-scheme $X$, the comparison isomorphism $ H^n_{Zar}(X, M) = H^n_{Nis}(X, M) $ holds as an isomorphism for all $ n \geq 0 $. Based on Proposition \ref{cellCohomology}, we can use cellular $\A^1$-chain complexes to compute specific sheaf cohomology groups of a smooth cellular $k$-scheme $X$. Notably, we can take $ M $ to be $ \KMW $ in order to calculate the Chow-Witt groups or, more generally, the Milnor-Witt motivic cohomology groups for smooth cellular schemes, including varieties such as toric varieties.
\end{remark}

With the cellular structure defined in Example \ref{orientedproj} for projective space $ \mathbb{P}^n $, the following computation is known:

\begin{corollary}{\cite[Corollary 2.51]{MOREL2023109346}}
For any integer $ n \geq 1 $, the oriented cellular $\mathbb{A}^1$-chain complex $\tilde{C}_*^{cell}(\mathbb{P}^n)$ can be expressed in the following form:

\begin{equation*}
    \KMW_n \xrightarrow{\partial_n} \cdots \xrightarrow{\partial_{i+1}} \KMW_i \xrightarrow{\partial_{i}} \cdots \KMW_1 \xrightarrow{\partial_{1}} \mathbb{Z}
\end{equation*}

where the boundary maps $\partial_i$ are defined as follows:

\begin{equation*}
    \partial_i =
    \begin{cases}
        0, & \text{if $ i $ is odd;} \\
        \eta, & \text{if $ i $ is even.}
    \end{cases}
\end{equation*}

Consequently, the cellular homology groups $ H_i^{cell}(\mathbb{P}^n) $ have the structure:

\begin{equation*}
  \mathbf{H}_i^{cell}(\mathbb{P}^n) =
    \begin{cases}
        \mathbb{Z}, & \text{if } i = 0; \\
        \KMW_i / \eta, & \text{if } i < n \text{ and is odd; } \\
        _{\eta} \KMW_i, & \text{if } i < n \text{ and is even.}
    \end{cases}
\end{equation*}

Furthermore, we have the following identities for the top-dimensional homology:

\begin{equation*}
    \mathbf{ H }_n^{cell}(\mathbb{P}^n) =
    \begin{cases}
       \KMW_n, & \text{if } n \text{ is odd;} \\
        _{\eta} \KMW_n, & \text{if } n \text{ is even.}
    \end{cases}
\end{equation*}
\end{corollary}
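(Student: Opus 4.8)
The plan is to identify the oriented cellular $\A^1$-chain complex of $\mathbb{P}^n$ with (a truncation of) the normalized bar complex of the multiplicative group $\Gm$ in $\Daba{k}$, and then to compute the latter by hand using the product relation in Milnor-Witt K-theory. To begin, the oriented cellular structure of Example~\ref{orientedproj} exhibits $\mathbb{P}^n$, up to $\A^1$-equivalence, as the $n$-skeleton of the simplicial bar construction $B\Gm=|[p]\mapsto\Gm^{\times p}|$, the linear flag $\mathbb{P}^0\subset\cdots\subset\mathbb{P}^n$ corresponding to the skeletal filtration; accordingly $\Omega_i(\mathbb{P}^n)/\Omega_{i-1}(\mathbb{P}^n)\cong_{\A^1} Th(\nu_i)\cong_{\A^1} S^i\wedge\Gm^{\wedge i}$ is the contribution of the non-degenerate $i$-simplices $\Gm^{\wedge i}$. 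Using the suspension isomorphism together with $\bfZa(\Gm^{\wedge i})=\KMW_i$, the degree-$i$ term of $\tilde{C}^{cell}_*(\mathbb{P}^n)$ is $\tilde{\mathbf{H}}^{\A^1}_i(S^i\wedge\Gm^{\wedge i})=\KMW_i$ for $1\le i\le n$, while the degree-$0$ term is $\bfZa[\A^n]=\Z$; this already yields the asserted shape, and, up to the chosen orientations, the differentials are the bar differentials.

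For the differentials: in homological degree $k$ the bar differential is the alternating sum $\sum_{i=0}^k(-1)^i(d_i)_*$ of the face maps, and I would evaluate it on the summand $\KMW_k=\KMW_1^{\otimes k}$ of $\bfZa[\Gm^{\times k}]=(\Z\oplus\KMW_1)^{\otimes k}$. The two outer faces $d_0$ (drop the first factor) and $d_k$ (drop the last) apply the augmentation $\bfZa[\Gm]\to\Z$ to the corresponding $\KMW_1$-slot, hence vanish on $\KMW_k$. Each inner face $d_j$ ($1\le j\le k-1$) multiplies the $j$-th and $(j{+}1)$-th factors and induces $\mathrm{id}^{\otimes(j-1)}\otimes\mu_*\otimes\mathrm{id}^{\otimes(k-j-1)}$, where $\mu_*\colon\KMW_2\to\KMW_1$ is induced by $\Gm\wedge\Gm\to\Gm$. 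Writing the class of $(u,v)$ in $\bfZa[\Gm\times\Gm]$ as $(1+[u])\otimes(1+[v])$ and pushing it forward, the relation $[uv]=[u]+[v]+\eta[u][v]$ in $\KMW_1$ forces $\mu_*([u][v])=\eta[u][v]$, i.e.\ $\mu_*$ is multiplication by $\eta$; by centrality of $\eta$ every inner face then induces the same map $\KMW_k\xrightarrow{\eta}\KMW_{k-1}$. Hence $\partial_k=\big(\sum_{j=1}^{k-1}(-1)^j\big)\eta$, which is $0$ for $k$ odd and $-\eta=\langle-1\rangle\eta$ for $k$ even; absorbing the unit $\langle-1\rangle$ into the orientation (exactly what the choice in Example~\ref{orientedproj} accomplishes, after rescaling a generator) gives $\partial_k=\eta$ for $k$ even, while $\partial_1=0$ already on the nose since its two faces $\Gm\to\Spec k$ coincide.

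With $\partial_k=0$ for $k$ odd and $\partial_k=\eta$ for $k$ even, the homology $\mathbf{H}^{cell}_i(\mathbb{P}^n)=\ker\partial_i/\mathrm{im}\,\partial_{i+1}$ reads off at once: $H_0=\Z$; for $0<i<n$ with $i$ odd, $\partial_i=0$ and $\partial_{i+1}=\eta$, so $H_i=\KMW_i/\eta$; for $0<i<n$ with $i$ even, $\partial_i=\eta$ and $\partial_{i+1}=0$, so $H_i={}_{\eta}\KMW_i$; and $H_n=\ker\partial_n$ equals $\KMW_n$ for $n$ odd and ${}_{\eta}\KMW_n$ for $n$ even. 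This is precisely the stated computation.

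The substantive work is concentrated in the differential computation, and the delicate points are: (i) verifying that the outer faces act by zero and the inner ones by $\mu_*=\eta\cdot$ on the reduced chains $\KMW_k\subset\bfZa[\Gm^{\times k}]$, which rests on the compatibility of $\bfZa[-]$ with finite products of cohomologically trivial schemes ($\bfZa[\Gm]^{\otimes k}=\bfZa[\Gm^{\times k}]$, $\bfZa[\Gm]=\Z\oplus\KMW_1$) together with the product relation in $\KMW_1$; and (ii) the bookkeeping identifying the $\Omega_\bullet$-filtration and orientations of Example~\ref{orientedproj} with the skeletal filtration and bar-complex signs, so that the differential comes out as $+\eta$ rather than $\langle-1\rangle\eta$. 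A route avoiding the bar model is to compute $\partial_i$ directly from the cofibration sequence $\Omega_{i-1}(\mathbb{P}^i)\to\mathbb{P}^i\to Th(\nu_i)$: purity identifies $\partial_i$ with the effect on $\tilde{\mathbf{H}}^{\A^1}$ of $\afnz{i}\xrightarrow{q}\mathbb{P}^{i-1}\xrightarrow{c}Th(\nu_{i-1})$ with $q$ the standard projection (the Hopf map for $i=2$, which gives $\partial_2=\eta$), but then proving $\partial_{2j+1}=0$ for $j\ge1$ over an arbitrary field becomes the obstacle — the scaling $\Gm$-action only yields $(\langle\lambda\rangle^i-1)\partial_i=0$, which is too weak over, e.g., finite fields — and one is pushed back either to the bar argument above or to matching against the known Chow--Witt groups of $\mathbb{P}^n$.
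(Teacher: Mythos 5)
The paper does not prove this corollary: it is quoted verbatim from \cite[Corollary 2.51]{MOREL2023109346}, whose method (generalized here by Lemma \ref{tActSigma} and Proposition \ref{tActMain}) is a direct computation of the connecting maps from the transition functions $X_j/X_i$ of the standard charts. Your algebraic core is in fact the same computation in bar-complex clothing: the identity $\mu_*([u]\otimes[v])=[uv]-[u]-[v]=\eta[u][v]$ on the summand $\KMW_1^{\otimes 2}\subset\bfZa[\Gm\times\Gm]$, the vanishing of the outer faces on $\KMW_1^{\otimes k}$, centrality of $\eta$, and the alternating sum $\sum_{j=1}^{k-1}(-1)^j$ producing $0$ or $-\eta$ by parity are all correct, and since $\aBra{-1}\eta=-\eta$ with $\aBra{-1}$ a unit, the discrepancy of sign is harmless for the homology read-off, which you carry out correctly.

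The genuine gap is your opening claim that the flag filtration $\mathbb{P}^0\subset\cdots\subset\mathbb{P}^n$ of Example \ref{orientedproj} coincides, as a filtered object of $\mathcal{H}_{\bullet}(k)$, with the $n$-truncated skeletal filtration of $|B_\bullet\Gm|$. You file this under ``bookkeeping,'' but it is the load-bearing step: agreement of the associated graded pieces (both are $S^i\wedge\Gm^{\wedge i}$) does not identify the connecting maps, and without an equivalence of \emph{filtered} spaces the bar differential cannot be transported to $\partial_i$. Motivically this is the comparison of the Milnor-join model $(\A^{i+1}\nonZero)/\Gm$ with the bar skeleta — true, but requiring an argument of roughly the same weight as the direct computation it is meant to replace — and on top of that the statement fixes specific orientations (the regular sequences $X_0/X_i,\ldots,X_{i-1}/X_i$), so obtaining $\partial_i=\eta$ on the nose rather than up to a unit requires matching those trivializations against the bar generators. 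Your closing paragraph correctly diagnoses why the naive direct route fails (the $\Gm$-action only yields $(\aBra{\lambda}^i-1)\partial_i=0$); the resolution in the literature is precisely the local-coordinate computation underlying Lemma \ref{tActSigma}, applied chart by chart, rather than a retreat to the bar model. As written, your proof is therefore an essentially correct reduction to an unproven, nontrivial input.
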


By applying Proposition \ref{cellCohomology} with $ M = \KM_{n} $ and $ \KMW_{n} $, we can reproduce the following result found in \cite{fasel2013projective}:

\begin{corollary}
    The Chow group of $\mathbb{P}^n$ is given by:

\begin{equation*}
    \mathrm{ CH }^i(\mathbb{P}^n) = \mathbb{Z}.
\end{equation*}

For $0 \leq i \leq n$, the Chow-Witt group of $\mathbb{P}^n$ is given as follows:

\begin{equation*}
  \widetilde{\mathrm{CH}}^i(\mathbb{P}^n) =
    \begin{cases}
        GW(k), & \text{if } i=0 \text{ or } i=n \text{ with } n \text{ odd;} \\
        \Z, & \text{if } i \text{ is even and } i \neq 0; \\
        2\Z, & \text{if } i \text{ is odd and } i \neq 0.
    \end{cases}
\end{equation*}
\end{corollary}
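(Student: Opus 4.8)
The plan is to run Proposition~\ref{cellCohomology} against the oriented cellular $\A^1$-chain complex of $\mathbb{P}^n$ computed in the preceding corollary, together with the standard identifications $\mathrm{CH}^i(\mathbb{P}^n) = H^i_{Nis}(\mathbb{P}^n,\KM_i)$ and $\widetilde{\mathrm{CH}}^i(\mathbb{P}^n) = H^i_{Nis}(\mathbb{P}^n,\KMW_i)$. By Proposition~\ref{cellCohomology}, for a strictly $\A^1$-invariant sheaf $M$ one has $H^i_{Nis}(\mathbb{P}^n,M)\cong H^i\big(\mathrm{Hom}_{Ab_{\A^1}(k)}(C^{cell}_*(\mathbb{P}^n),M)\big)$, and by the preceding corollary $C^{cell}_*(\mathbb{P}^n)$ is (after the chosen orientation) the complex $\KMW_n\xrightarrow{\partial_n}\cdots\xrightarrow{\partial_1}\mathbb{Z}$ with $\partial_j=0$ for $j$ odd and $\partial_j=\eta$ for $j$ even. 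So the whole problem reduces to computing the cohomology of the cochain complex obtained by applying $\mathrm{Hom}_{Ab_{\A^1}(k)}(-,M)$ to this, for $M=\KM_i$ and $M=\KMW_i$.

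The next step is to identify the terms and differentials of that cochain complex. Using the isomorphism $\KMW_j\cong\bfZ_{\A^1}(\Gm^{\wedge j})$ and the pointed universal property of $\bfZ_{\A^1}(-)$ recorded above, $\mathrm{Hom}_{Ab_{\A^1}(k)}(\KMW_j,M)\cong\mathrm{Hom}_{Shv_{Nis,\bullet}(Sm_k)}(\Gm^{\wedge j},M)\cong M_{-j}(k)$, the $j$-th contraction of $M$ evaluated at $\Spec k$, while $\mathrm{Hom}_{Ab_{\A^1}(k)}(\mathbb{Z},M)\cong M(k)$. Morel's contraction computations give $(\KM_i)_{-j}=\KM_{i-j}$ and $(\KMW_i)_{-j}=\KMW_{i-j}$, and (since $\eta$ is central) the map dual to multiplication by $\eta$ at the sheaf level becomes multiplication by $\eta\in\KMW_{-1}(k)$ on the graded pieces $\KMW_*(k)$, resp.\ $\KM_*(k)$. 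Hence $\mathrm{Hom}_{Ab_{\A^1}(k)}(C^{cell}_*(\mathbb{P}^n),\KMW_i)$ is, in degrees $0,1,\dots,n$, the complex
\begin{equation*}
\KMW_i(k)\xrightarrow{\ 0\ }\KMW_{i-1}(k)\xrightarrow{\ \eta\ }\KMW_{i-2}(k)\xrightarrow{\ 0\ }\KMW_{i-3}(k)\xrightarrow{\ \eta\ }\cdots,
\end{equation*}
and likewise with $\KM$ replacing $\KMW$.

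For the Chow group this finishes at once: $\eta$ acts as $0$ on $\KM_*=\KMW_*/\eta$, so every differential of the $\KM$-version vanishes and $\mathrm{CH}^i(\mathbb{P}^n)=H^i\big(\mathrm{Hom}(C^{cell}_*(\mathbb{P}^n),\KM_i)\big)=\KM_0(k)=\mathbb{Z}$ for $0\le i\le n$ (with the degree-$0$ term being $\KM_0(k)=\mathbb{Z}$ when $i=0$). For the Chow-Witt group I would read off $H^i$ of the displayed complex at the spot in degree $i$, where the term is $\KMW_0(k)=GW(k)$: the incoming differential is $\eta\colon\KMW_1(k)\to GW(k)$ if $i$ is even and $0$ if $i$ is odd, and the outgoing differential, present exactly when $i<n$, is $\eta\colon GW(k)\to\KMW_{-1}(k)=W(k)$ if $i$ is odd and $0$ if $i$ is even. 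The only inputs needed are $\mathrm{coker}\big(\eta\colon\KMW_1(k)\to GW(k)\big)=GW(k)/I(k)\cong\mathbb{Z}$ (the fundamental ideal $I(k)$ being exactly the image of $\eta$, with the iso induced by the rank map) and $\ker\big(\eta\colon GW(k)\to W(k)\big)=\mathbb{Z}\cdot h$, which the rank map carries isomorphically onto $2\mathbb{Z}$ since $h$ has rank $2$ and $\eta h=0$. Splitting into the cases $i=0$, $i$ even with $i>0$, $i$ odd with $0<i<n$, and $i=n$, one obtains precisely the four rows of the asserted table.

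The case analysis and the two $GW(k)$-level computations are routine; the step that genuinely needs care — and which I regard as the crux — is the identification in the second paragraph, namely that $\mathrm{Hom}_{Ab_{\A^1}(k)}(\KMW_j,M)$ is the contraction $M_{-j}(k)$ and that the dual of the boundary map $\eta$ is again multiplication by $\eta$. This is a matter of tracking the canonical isomorphisms $\bfZ_{\A^1}(\Gm^{\wedge j})\cong\KMW_j$ and Morel's contraction isomorphisms naturally through the functor $\mathrm{Hom}_{Ab_{\A^1}(k)}(-,M)$; granting that naturality, everything else is formal.
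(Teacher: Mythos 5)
Your proposal is correct and follows exactly the route the paper indicates: apply Proposition~\ref{cellCohomology} with $M=\KM_i$ and $M=\KMW_i$ to the oriented cellular complex $\KMW_n\xrightarrow{\partial}\cdots\xrightarrow{\partial}\mathbb{Z}$ of the preceding corollary, identify $\mathrm{Hom}_{Ab_{\A^1}(k)}(\KMW_j,M)$ with the contraction $M_{-j}(k)$, and read off the cohomology using $GW(k)/I(k)\cong\mathbb{Z}$ and $\ker(GW(k)\to W(k))=\mathbb{Z}h\cong 2\mathbb{Z}$. The paper leaves these details to the cited reference, so your write-up simply supplies the computation it sketches.
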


Notice that projective spaces are toric, this example motivates us to extend our computations to smooth toric varieties.

\subsection{Cubical Cells}
To enhance the clarity of our computations, we now define explicit cells. We will assume that $Y_j \cong \Gm^{n_j} = \{(x_1, \ldots, x_{n_j}) \mid x_i \neq 0\}$. Denote $\mathbf{H} := \bfZa[\Gm] \cong \KMW_1 \oplus \mathbb{Z}$; hence, $\bfZa[Y_j] \cong \mathbf{H}^{\otimes n}$.

Next, we define a cubical cell $e$ as a subvariety in $Y_j$ such that

\[
e = e_1 \times \cdots \times e_{n_j}, \quad \text{where } \Gm \supset e_i = \{x_i = 1\} \text{ or } \{x_i \neq 0\}.
\]

We further define the following sets:

\[
\tau_e^1 = \{i \mid e_i = \{x_i = 1\}\}, \quad \tauO_e = \{i \mid e_i = \{x_i \neq 0\}\}, \quad \tau_e = \tau_e^1 \sqcup \tauO_e.
\]

Let the twist degree of $e$ be denoted as $t_e := |\tauO_e|$. It follows that $e \cong \Gm^{t_e}$ and the cubical cell $e$ can be interpreted as an embedding $\iota_e: \Gm^{t_e} \hookrightarrow Y_j$. This cubical cell induces a splitting injection (i.e., a direct summand) $[e]$ of $\bfZa[Y_j]$ given by:

\[
[e]: \KMW_{t_e} \to \bfZa[\Gm^{t_e}] \xr{\iota} \bfZa[Y_j].
\]

This construction provides a decomposition of $\bfZa[Y_j]$ in terms of cubical cells:

\[
\bfZa[Y_j] \cong \bigoplus_{e \subset Y_j \text{ cubical}} [e](\KMW_{t_e}).
\]

Finally, we define the oriented cubical cell. Let $\theta: \mathbb{A}^n/\afnz{n} \times Y_j \to Th(\nu_n|_{Y_j})$ be an orientation of the Thom space of the normal bundle $Th(\nu_n)$. This orientation induces an oriented cell.

\[
[e,\theta]: \KMW_{n} \otimes \KMW_{t_e} \to \wt{\mathbf{H}}^{\af^1}_n(Th(\nu_n))
\]
This map is induced by the framed embedding $ \wt{\iota_{e,\theta}}: \af^n/ \afnz{n} \times \Gm^{t_e} \hookrightarrow Th(\nu_n|_{Y_j}) $. A similar decomposition can be expressed as follows:

\[
C^{cell}_n(X)=\wt{\mathbf{H}}^{\af^1}_n(Th(\nu_n)) \cong \bigoplus_{e \subset Y_{nj} \text{ cubical}} [e,\theta_n](\KMW_n \otimes \KMW_{t_e})
\]

When the orientation is clear, we may sometimes omit $\theta$, denoting $ [e] = [e,\theta] $, and let $ |e| = n $ signify the dimension of the cell. We can interpret $ [e] $ as an analogue of a generator.

If $ |e| + t_e > 0 $, for any integer $ i \in \bZ $ satisfying $ i \leq |e| + t_e $, there exists a natural action of $ \KMW_i $ on $ [e] $. Specifically, for $ \kappa \in \KMW_i $:

\[
\kappa[e]: \KMW_{|e|+t_e-i} \to C^{cell}_{|e|}(X), \quad \lambda \mapsto [e](\kappa\lambda)
\]

\begin{example}[Cubical (Koszul) Cellular Structure]
\label{cubicalAn}
We can endow the affine space $ \af^n $ with a canonically oriented cellular structure. We define:

\[
\Omega_i(\af^n) := \{ (x_1, \ldots, x_n)\in \af^n \mid \exists J\subset \llBra{1,n}, |J|=n-i, \forall j\in J, x_j \neq 0 \}
\]

Thus, the closed complement is given by:

\[
\Omega_i \setminus \Omega_{i-1} = \{ (x_1, \ldots, x_n)\in \af^n \mid \exists J\subset \llBra{1,n}, |J|=n-i, \forall j\in J, x_j \neq 0, \forall k \in \llBra{1,n}\setminus J, x_k = 0 \}
\]

It is evident that:

\[
\Omega_i \setminus \Omega_{i-1} = \bigsqcup_{J_i\subset \llBra{1,n}, |J|=n-i} Y_{J_i}
\]

where each component $ Y_{J_i} $ is isomorphic to $ \Gm^{n-i} $. Additionally, for each $ Y_{J_i} $, we have the canonical orientation of the normal bundle given by $ \aBra{\ldots,x_k,\ldots} $ where $ k \in \llBra{1,n}\setminus J $.

Consequently, we establish the cellular chain complex:

\[
\wt{C}^{cell}_i (\af^n) = \bigoplus_{J_i\subset \llBra{1,n}, |J|=n-i} \KMW_i \otimes \bfZa[Y_{J_i}]
\]

The oriented cubical cells of $ \af^n $ are represented as:

\[
[e]=[e_1]\otimes \cdots \otimes [e_n], \quad [e_i]= [\{x_i=1\}], [\{x_i\neq 0\}] \text{ or } \aBra{x_i}
\]

Here, $ \aBra{x_i}: \af^1 /\Gm \to Th(\nu_i) $ represents the orientation induced by $ x_i $. 

\begin{center}

\tikzset{every picture/.style={line width=0.75pt}} %set default line width to 0.75pt        

\begin{tikzpicture}[x=0.75pt,y=0.75pt,yscale=-1,xscale=1]
%uncomment if require: \path (0,300); %set diagram left start at 0, and has height of 300

%Shape: Square [id:dp9012725205619494] 
\draw  [draw=none][fill={rgb, 255:red, 74; green, 144; blue, 226 }  ,fill opacity=0.5 ] (10,20) -- (140,20) -- (140,150) -- (10,150) -- cycle ;
%Shape: Path Data [id:dp40390996741510077] 
\draw  [draw=none][fill={rgb, 255:red, 74; green, 144; blue, 226 }  ,fill opacity=0.5 ] (310,20) -- (310,150) -- (180,150) -- (180,20) -- (310,20) -- cycle (240.42,85) .. controls (240.42,87.53) and (242.47,89.58) .. (245,89.58) .. controls (247.53,89.58) and (249.58,87.53) .. (249.58,85) .. controls (249.58,82.47) and (247.53,80.42) .. (245,80.42) .. controls (242.47,80.42) and (240.42,82.47) .. (240.42,85) -- cycle ;
%Shape: Path Data [id:dp9548154787085565] 
\draw  [draw=none][fill={rgb, 255:red, 74; green, 144; blue, 226 }  ,fill opacity=0.5 ] (480,20) -- (480,80) -- (420,80) -- (420,20) -- (480,20) -- cycle (350,80) -- (350,20) -- (410,20) -- (410,80) -- (350,80) -- cycle (480,150) -- (420,150) -- (420,90) -- (480,90) -- (480,150) -- cycle (350,150) -- (350,90) -- (410,90) -- (410,150) -- (350,150) -- cycle ;
%Shape: Circle [id:dp524044578069808] 
\draw  [draw=none][fill={rgb, 255:red, 74; green, 144; blue, 226 }  ,fill opacity=0.5 ] (150,200) .. controls (150,194.48) and (154.48,190) .. (160,190) .. controls (165.52,190) and (170,194.48) .. (170,200) .. controls (170,205.52) and (165.52,210) .. (160,210) .. controls (154.48,210) and (150,205.52) .. (150,200) -- cycle ;
%Shape: Circle [id:dp014987676009199369] 
\draw  [fill={rgb, 255:red, 0; green, 0; blue, 0 }  ,fill opacity=1 ] (162.33,200) .. controls (162.33,198.71) and (161.29,197.67) .. (160,197.67) .. controls (158.71,197.67) and (157.67,198.71) .. (157.67,200) .. controls (157.67,201.29) and (158.71,202.33) .. (160,202.33) .. controls (161.29,202.33) and (162.33,201.29) .. (162.33,200) -- cycle ;

%Shape: Path Data [id:dp4229387681277377] 
\draw  [draw=none][fill={rgb, 255:red, 74; green, 144; blue, 226 }  ,fill opacity=0.5 ] (340,140) -- (340,180) -- (380,180) -- (380,190) -- (340,190) -- (340,230) -- (330,230) -- (330,190) -- (290,190) -- (290,180) -- (330,180) -- (330,140) -- (340,140) -- cycle (330,180) -- (330,190) -- (340,190) -- (340,180) -- (330,180) -- cycle ;
%Straight Lines [id:da8802409234424113] 
\draw    (290,185) -- (330,185) ;
%Straight Lines [id:da9081312617261827] 
\draw    (340,185) -- (380,185) ;

%Straight Lines [id:da28332788137847764] 
\draw    (335,140) -- (335,180) ;
%Straight Lines [id:da23715991983002072] 
\draw    (335,190) -- (335,230) ;

% Text Node
\draw (61,162.4) node [anchor=north west][inner sep=0.75pt]    {$\Omega _{n}$};
% Text Node
\draw (147,74.4) node [anchor=north west][inner sep=0.75pt]  [font=\Large]  {$\supset $};
% Text Node
\draw (317,74.4) node [anchor=north west][inner sep=0.75pt]  [font=\Large]  {$\supset $};
% Text Node
\draw (233,162.4) node [anchor=north west][inner sep=0.75pt]    {$\Omega _{n-1}$};
% Text Node
\draw (403,162.4) node [anchor=north west][inner sep=0.75pt]    {$\Omega _{n-2}$};
% Text Node
\draw (131,231.4) node [anchor=north west][inner sep=0.75pt]    {$\Omega _{n} /\Omega _{n-1}$};
% Text Node
\draw (296,232.4) node [anchor=north west][inner sep=0.75pt]    {$\Omega _{n-1} /\Omega _{n-2}$};

\end{tikzpicture}

\end{center}

In a similar vein, we define the sets as follows: 
\[
  \sigma_e=\{i \mid [e_i]=\aBra{x_i}\}, \quad \tau^1_e = \{i \mid e_i = \{x_i=1\}\}, \quad \tauO_e= \{i \mid e_i = \{x_i\neq 0\}\}, \quad \tau_e= \tau^1_e \sqcup \tauO_e
\]

Let the dimension of $[e]$ be denoted by $ |e|=| \sigma_e | $, and the twist degree of $[e]$ be defined as $ t_e:=|\tauO_e | $. It is evident that $ [e] $ is associated with an embedding $ \iota_e : \Gm^{t_e} \hookrightarrow \af^n $. We will denote the image of this embedding as $ e_{\tau} $.

Furthermore, $ [e] $ provides a direct summand given by
\[
  [e]: \KMW_{|e|+t_e} \to C^{cell}_{|e|}(\af^n)
\]
which is induced by the framed embedding $ \wt{\iota_e} $. Additionally, we have the following decomposition:
\[
  C^{cell}_{i}(\af^n) \cong \wt{C}^{cell}_i (\af^n) = \bigoplus_{[e] \text{ oriented cubical, } |e|=i } [e](\KMW_{i+t_e} ) 
\]

Under this orientation, we are able to express the differentials $ \wt{\partial} $ with respect to $ [e] $:
\begin{align*}
  & \wt{\partial}[e]= \sum_{j\in \sigma_e} \aBra{-1}^{(\sigma_e,j)} \epsilon^{(\tauO_e,j)} \partial_j[e] \\
  & = \sum_{j\in \sigma_e} \aBra{-1}^{(\sigma_e,j)} \epsilon^{(\tauO_e,j)}[e_1] \otimes \cdots \otimes [e_{j-1}] \otimes [\{x_j \neq 0\}] \otimes [e_{j+1}] \otimes \cdots \otimes [e_n],
\end{align*}

where $\aBra{-1} = 1 + \eta[-1] \in \KMW_0$, $\epsilon = -\aBra{-1}$, and for a set $S_e$,
\[
  (S_e,j) := | \{ k \in S_e \mid k < j \} |.
\]
 
\end{example}

Through simple computation, and for reasons analogous to those observed in classical cubical complexes, we find that the complex $\wt{C}^{cell}_*(\af^n) \cong \mathbb{Z}$. However, this complex possesses a richer structure.

Even though the cellular homology of $\af^n$ does not yield any new information (as we already know that it is contractible), this cellular structure serves as a fundamental building block for the cellular structure of toric varieties. The term ``cubical" is used because it analogously corresponds to the cellular structure of $\mathbb{D}^n$, which is defined by $n$-dimensional cubes.

From this point onward, we will not differentiate between $C^{cell}_{*}(X)$ and $\wt{C}^{cell}_{*}(X)$.

\begin{example}[Moment-Angle Complex]
  \label{moAng}
  Let $K$ be a simplicial complex on $\llBra{m} = \{1, \ldots, m\}$. The subspace $\AZ_{K} \subset \af^{m}$ is defined as the polyhedral product $(\af^1, \Gm)^{K}$, expressed as follows:
\[
  (\af^1, \Gm)^{K}:= \bigcup_{\sigma \in K} \{ (x_1, \ldots, x_m) \in \af^m \mid x_i \in \Gm \text{ if } i \notin \sigma \}
\]
This space inherits a natural cubical cellular structure from $\af^m$. More specifically, an oriented cubical cell $[e]$ of $\af^m$ belongs to $\AZ_{K}$ if $\sigma_e \in K$. Consequently, the cellular $\af^1$-chain complex is described by the following equation:

\[
  C^{cell}_{i}(\AZ_K) = \bigoplus_{[e] \text{ oriented cubical, } \sigma_e \in K, |e| = i} [e](\KMW_{i+t_e}) 
\]
with differentials identical to those of $\af^m$. The cellular $\af^1$-homology of moment-angle complex is computed in \cite[Theorem C]{hornslien2024}.
\end{example}

\subsection{Toric Action on Cubical Cells}

We will now continue our exploration of the cubical cellular structure of $\af^n$ as detailed in Example \ref{cubicalAn}.

There exists a natural toric action $ \Gm^n \times \af^n \to \af^n $ on $\af^n$. For an oriented cubical cell $[e]$, we consider a group section $ g: \Gm^{t_e} \to \Gm^n $, which can be represented by a $ t_e \times n \ \mathbb{Z}$-matrix $\{r_{ij} = \log g_{ij}\}_{i \in \tau^{\odot}_e, j \in \llBra{n}}$. Let $M_{ij}$ denote the basis of the group sections, corresponding to the matrix $ E_{ij} $. In this context, $g$ acts on the framed morphism $\wt{\iota_e}$, thereby inducing $g_*[e]$. 

We note that $g = g_{\tau} \times g_{\sigma} $, where $g_{\sigma}: \Gm^{t_e} \to \Gm^{|e|} $ acts non-trivially only on the coordinates within $\sigma_e$, among others. For the sake of brevity, we will use the notation $ g_{\sigma} = g_{\sigma} \times \mathrm{id} $ when referring to the action. Consequently, we have $ g_*[e] = g_{\tau*} g_{\sigma*}[e] $.

For a subset $\omega \subset \tau_e$, the notation $[e_{\omega}]$ denotes the cell that shares the same $\sigma_e = \sigma_{e_\omega}$ but satisfies $ \tauO_{e_\omega} = \omega $. We will first focus on the action of $g_{\sigma}$. The following lemma generalizes \cite[Lemma 2.48]{MOREL2023109346}:

\begin{lemma}
\label{tActSigma}
Let $ g_{\sigma}: \Gm^{t_e} \to \Gm^{|e|} $ be a group section. Then we have  
\[
g_{\sigma*}[e] = \sum_{\omega \subset \tau^\odot_e} \aBra{-1}^{\sum_{i \in \omega} \sum_{j \in \sigma_e} r_{ij}} \left( \prod_{i \in \tau^\odot_e \setminus \omega} \chi\left( \sum_{j \in \sigma_e} r_{ij} \right) \right) \eta^{t_e - |\omega|} [e_{\omega}]
\]
where the character function $ \chi: \mathbb{Z} \to \{0,1\} $ is defined by $ n \mapsto i $ for $ i \equiv n \mod 2 $.

\end{lemma}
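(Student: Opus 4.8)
The plan is to reduce the identity, by multiplicativity, to a single–parameter computation that is essentially the rank-one content of \cite[Lemma 2.48]{MOREL2023109346}, and then to recover the general formula by expanding a product of Milnor--Witt units and tracking Koszul-type signs. Throughout I write $N_i := \sum_{j \in \sigma_e} r_{ij}$ for $i \in \tauO_e$.

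First I would record, as a preliminary lemma, exactly how $g_\sigma$ alters the framed embedding $\wt{\iota_e}$. By hypothesis $g_\sigma$ acts only on the coordinates indexed by $\sigma_e$, multiplying each $x_j$ ($j \in \sigma_e$) by the monomial $m_j := \prod_{i \in \tauO_e} x_i^{r_{ij}}$ in the $\Gm$-parameters $x_i$ of the cell. Passing to Thom spaces via the purity theorem, $g_\sigma \circ \wt{\iota_e}$ is $\wt{\iota_e}$ post-composed with the diagonal, base-varying automorphism $(x_j)_{j} \mapsto (m_j x_j)_{j}$ of $\af^{\sigma_e}$ over $\Gm^{\tauO_e}$; on $\af^1$-homology this induces multiplication by the product of its fibrewise Morel degrees, that is, by the unit $\prod_{j \in \sigma_e} \aBra{m_j} = \prod_{i \in \tauO_e} \aBra{x_i}^{\,N_i} \in \cO(\Gm^{\tauO_e})^{\times}$, where I have used $\aBra{ab} = \aBra{a}\aBra{b}$ in $\KMW_0$. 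Thus $g_{\sigma*}[e]$ is the image of $[e]$ under multiplication by this unit, and the task is to expand that operator in the cubical basis of $\bfZa[\Gm^{\tauO_e}] \cong \bigotimes_{i}(\KMW_1 \oplus \mathbb{Z})$.

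Since $x_i^2$ is a square, $\aBra{x_i}^{N_i} = \aBra{x_i}^{\chi(N_i)}$, so only the parities $\chi(N_i)$ survive — this is the source of the $\chi$-factors in the statement. Writing $\aBra{x_i} = 1 + \eta[x_i]$ and distributing $\prod_{i \in \tauO_e}(1 + \eta[x_i])^{\chi(N_i)}$, each monomial is indexed by the subset $\omega \subset \tauO_e$ of parameters for which one selects the constant term $1$; a parameter $i$ with $\chi(N_i) = 0$ is forced into $\omega$, which accounts for the factor $\prod_{i \in \tauO_e \setminus \omega} \chi(N_i)$. For each discarded index $i \notin \omega$, the chosen factor $\eta[x_i]$ is absorbed, through the splitting $\bfZa[\Gm] \cong \KMW_1 \oplus \mathbb{Z}$ of the $i$-th parameter slot, by replacing the cell $[\{x_i \neq 0\}]$ with $[\{x_i = 1\}]$ and emitting one factor of $\eta$; hence the corresponding term lies in $[e_\omega]$ and carries $\eta^{\,t_e - |\omega|}$. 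Finally, commuting the retained $\Gm$-factors past the Thom coordinates contributes the sign $\aBra{-1}^{\sum_{i \in \omega} N_i}$, and the cross terms that would mix distinct $e_\omega$'s cancel for the same reason they do in the rank-one case. Collecting terms gives exactly the asserted formula.

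The step I expect to be delicate is the rank-one base case underlying the last paragraph: checking that "multiplication by a single parameter $x_i$ in one normal direction" acts on the direct summand $[e] \subset \bfZa[Y_{nj}]$ as $\aBra{-1}[e] + \eta\,[e_{\tauO_e \setminus \{i\}}]$ — in particular that the naive identity contribution is corrected by the sign $\aBra{-1}$ coming from the transposition in $\Gm \wedge \Gm$, and that no further terms occur. This is precisely \cite[Lemma 2.48]{MOREL2023109346}, enhanced to allow the exponent $N_i$ and several normal directions at once; once it is available, the rest is the multiplicative bookkeeping above, together with the (immediate) remark that the product decomposition $g = g_\tau \times g_\sigma$ licenses treating $g_\sigma$ in isolation.
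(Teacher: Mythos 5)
Your proposal is correct and takes essentially the same route as the paper: both arguments reduce to the rank-one content of \cite[Lemma 2.48]{MOREL2023109346} (the paper by factoring $g_\sigma$ into elementary sections $M_{ij}$ with $M_{ij*}[e]=\aBra{-1}[e]+\eta[e|_{\hat{i}}]$ and iterating to get $g_{i\sigma*}[e]=\aBra{-1}^{N_i}[e]+\chi(N_i)\eta[e|_{\hat{i}}]$, you by passing directly to the determinant unit $\prod_{i}\aBra{x_i}^{N_i}$) and then expand multiplicatively over the $\Gm$-parameters to obtain the sum over $\omega\subset\tauO_e$. One small correction to your bookkeeping: the factor $\aBra{-1}^{\sum_{i\in\omega}N_i}$ does not arise from commuting retained $\Gm$-factors past the Thom coordinates, but from the diagonal terms of the expansion via $\eta[\lambda_i][\lambda_i]=\eta[-1][\lambda_i]$, so that each retained slot with $N_i$ odd contributes $1+\eta[-1]=\aBra{-1}$ --- which is exactly the content of the rank-one lemma you already defer to, so nothing essential is missing.
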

\begin{proof}
First, we observe that $ g_{\sigma} = \prod_{i \in \tau^{\odot}_i} g_{i\sigma} = \prod_{i \in \tau^{\odot}_e} \prod_{j \in \sigma_e} g_{ij} = \prod_{i \in \tau^{\odot}_e} \prod_{j \in \sigma_e} M_{ij}^{r_{ij}} $. Therefore, it suffices to understand the action of $ M_{ij} $. 

We can interpret $ M_{ij} $ as being induced by the mapping $ (\ldots, x_i, \ldots, x_j, \ldots) \mapsto (\ldots, x_i, \ldots, x_ix_j, \ldots) $. Utilizing a similar argument as in \cite[Lemma 2.48]{MOREL2023109346}, we find:

\begin{align*}
M_{ij*} &: \cdots[\lambda_i]\cdots[\lambda_j]\cdots \mapsto \cdots\left([\lambda_i] \otimes \cdots \otimes 1 + 1 \otimes \cdots \otimes [\lambda_i] + [-1] \otimes \cdots \otimes [\lambda_i]\right)\cdots[\lambda_j]\cdots \\ 
&\mapsto \cdots[\lambda_i]\cdots[\lambda_j]\cdots + \cdots \widehat{[\lambda_i]}\cdots\eta[\lambda_i][\lambda_j]\cdots + \cdots[-1]\cdots\eta[\lambda_i][\lambda_j]\cdots \\ 
&= (1 + [-1]\eta)\cdots[\lambda_i]\cdots[\lambda_j]\cdots + \eta\cdots[\lambda_i]\cdots[\lambda_j]\cdots
\end{align*}

Thus, we have 
\[
M_{ij*}[e](\cdots[\lambda_i]\cdots[\lambda_j]\cdots) = \left(\aBra{-1}[e] + \eta[e|_{\hat{i}}]\right)(\cdots[\lambda_i]\cdots[\lambda_j]\cdots)
\]
where the cell $ [e|_{\hat{i}}] = \cdots \otimes [\{ x_i=1 \}] \otimes \cdots $, which represents the transference of $ i $ from $ \tau^\odot $ to $ \tau^1 $.

Since $ i \notin \tau^\odot_{e|_{\hat{i}}} $, $ M_{ij} $ acts trivially on $ [e|_{\hat{i}}] $. Consequently, we can conclude 
\[
g_{ij*}[e] = \aBra{-1}^{r_{ij}}[e] + \chi(r_{ij})\eta[e|_{\hat{i}}].
\]
It then follows that
\[
g_{i\sigma*}[e] = \aBra{-1}^{\sum_{j \in \sigma_e} r_{ij}}[e] + \chi\left(\sum_{j \in \sigma_e} r_{ij}\right)\eta[e|_{\hat{i}}].
\]
Finally, we arrive at
\begin{align*}
g_{\sigma*}[e] &= \sum_{\omega \subset \tau^\odot_e} \aBra{-1}^{\sum_{i \in \tau^\odot_e \setminus \omega} \sum_{j \in \sigma_e} r_{ij}} \left( \prod_{i \in \omega} \chi\left(\sum_{j \in \sigma_e} r_{ij}\right) \right) \eta^{|\omega|} [e|_{\hat{\omega}}].
\end{align*}

\begin{align*}
  &=\sum_{\omega \subset \tau^\odot_e} \aBra{-1}^{\sum_{i \in \omega} \sum_{j \in \sigma_e} r_{ij}} \left( \prod_{i \in \tau^\odot_e \setminus \omega} \chi\left(\sum_{j \in \sigma_e} r_{ij}\right) \right) \eta^{t_e - |\omega|} [e_{\omega}]
\end{align*}
\end{proof}

The action of $ g_\tau $ induces a new morphism, denoted $ g_\tau \cdot e : \Gm^{t_e} \to \Gm^{|\tau_e|} $. This morphism indeed defines a cell (although it may not be cubical), represented as $ [g_\tau \cdot e]: \KMW_{t_e} \to \mathbf{Z}_{\mathfrak{a}^1}[\Gm^{|\tau_e|}] $. By definition, we know that $ g_{\tau*}[e] = [g_\tau \cdot e] $. By combining this with the previous lemma, we obtain the following results:
\[
g_{*}[e] = \sum_{\omega \subset \tau^\odot_e} \aBra{-1}^{\sum_{i \in \omega} \sum_{j \in \sigma_e} r_{ij}} \left( \prod_{i \in \tau^\odot_e \setminus \omega} \chi\left(\sum_{j \in \sigma_e} r_{ij}\right) \right) \eta^{t_e - |\omega|} [g_\tau \cdot e_{\omega}]
\]

Formulating expressions in terms of cubical cells is generally more complex. Let $ h = 1 + \aBra{-1} $ and $ r'_{ij} = r_{ij} + \delta_{ij} $.

\begin{proposition}
  \label{tActMain}
  Let $ g: \Gm^{t_e} \to \Gm^{n} $ be a group section. Then, the induced action by $ g $ is given by:

  \[
  g_*[e] = \sum_{\omega \subset \tau_e} c(g,e)_\omega [e_{\omega}].
  \]

  For subsets $ \omega \subset \tau_e $ such that $ t_e = |\omega| $, we have:

  \[
  c(g,e)_\omega = \frac{1}{2} \left( \mathrm{det}([r'_{ij}]_{i \in \tauO_e}^{j \in \omega}) h - \eta[-1] \sum_{\pi \subset \omega} (-1)^{|\omega| - |\pi|} \prod_{i \in \tauO_e} \chi\left(\sum_{j \in \omega \sqcup \sigma_e} r'_{ij}\right) \right).
  \]

  Furthermore, if $ t_e > |\omega| $, then:

  \[
  c(g,e)_\omega = \eta^{t_e - |\omega|} \sum_{\pi \subset \omega} (-1)^{|\omega| - |\pi|} \prod_{i \in \tauO_e} \chi\left(\sum_{j \in \omega \sqcup \sigma_e} r'_{ij}\right).
  \]

  Conversely, if $ t_e < |\omega| $, then:

  \[
  c(g,e)_\omega = [-1]^{|\omega| - t_e} (-2)^{t_e - |\omega|} \sum_{\pi \subset \omega} (-1)^{|\omega| - |\pi|} \prod_{i \in \tauO_e} \chi\left(\sum_{j \in \omega \sqcup \sigma_e} r'_{ij}\right).
  \]
\end{proposition}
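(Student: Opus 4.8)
The plan is to reduce the Proposition to one computation about the functor $\bfZa=\tilde{\mathbf H}^{\af^1}_0$ applied to a monomial (character) map of split tori, and then to carry out that computation by decomposing an arbitrary integer matrix into elementary pieces whose effect is already available.

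First the reduction. Since $\bfZa$ is a functor and $g=g_\tau\times g_\sigma$, the identity recorded just above the statement already rewrites $g_*[e]$ as $\sum_{\omega'\subseteq\tauO_e}\aBra{-1}^{\sum_{i\in\omega'}\sum_{j\in\sigma_e}r_{ij}}\bigl(\prod_{i\in\tauO_e\setminus\omega'}\chi(\sum_{j\in\sigma_e}r_{ij})\bigr)\eta^{\,t_e-|\omega'|}[g_\tau\cdot e_{\omega'}]$, so it is enough to expand each $[g_\tau\cdot e_{\omega'}]$ in the cubical basis and then collect like terms. By construction $[g_\tau\cdot e_{\omega'}]$ is $\bfZa$ applied to the monomial map $\Gm^{|\omega'|}\to\Gm^{|\tau_e|}$ whose integer matrix is $[r'_{ij}]_{i\in\omega'}^{j\in\tau_e}$ — the passage from $r$ to $r'=r+\delta$ is exactly the contribution of the tautological embedding $\iota_{e_{\omega'}}$ that is built into the cell. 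Hence everything rests on the following auxiliary claim: for a monomial map $\phi\colon\Gm^a\to\Gm^b$ with integer matrix $R$, decompose $\phi_*$ on the top generator $[e_{\mathrm{full}}]\colon\KMW_a\hookrightarrow\bfZa[\Gm^a]$ with respect to the cubical splitting $\bfZa[\Gm^b]\cong\bigoplus_{\omega\subseteq\{1,\dots,b\}}[e_\omega](\KMW_{|\omega|})$.

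To prove the auxiliary claim I would put $R$ in Smith normal form, so that $\phi_*$ is a composite of $\bfZa$ of invertible monomial maps, power maps $x\mapsto x^{r}$, and coordinate‑trivialization maps. Every invertible map is a product of the transvections $M_{ij}\colon x_j\mapsto x_ix_j$ and one inversion $x\mapsto x^{-1}$, and the effect of each $M_{ij}$ is precisely the Leibniz‑type identity worked out inside the proof of Lemma~\ref{tActSigma}; a power map acts on $\KMW_1\subseteq\mathbf H$ as multiplication by Morel's $\epsilon$‑integer $r_\epsilon:=\tfrac12(rh-\chi(r)\eta[-1])\in\mathrm{GW}(k)$; and trivializing a coordinate deletes the corresponding tensor factor of $\mathbf H$. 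Composing and repeatedly using $h^2=2h$, $(\eta[-1])^2=-2\eta[-1]$ and $\eta h=0$, one obtains $\phi_*[e_{\mathrm{full}}]=\sum_\omega d(R)_\omega[e_\omega]$, where $d(R)_\omega$ is a sum over ways of routing the $a$ source variables among the $b$ target variables, each routing carrying one $[-1]$ for every duplication of a source variable, one $r_\epsilon$ for every power, and one $\eta$ for every multiplication that merges two branches. When $|\omega|=a$ these routings are the bijections $\{1,\dots,a\}\to\omega$; multilinearity together with the signs coming from reordering tensor factors turns the $h$‑part into $\det(R_\omega)\,h$ (with $R_\omega$ the $a\times a$ submatrix of $R$ on the columns in $\omega$) and the $\eta[-1]$‑part into the signed permanent‑type sum $\sum_{\pi\subseteq\omega}(-1)^{|\omega|-|\pi|}\prod_i\chi(\sum_{j\in\pi}R_{ij})$, an inclusion–exclusion over which target columns receive an odd total exponent; this gives $d(R)_\omega=\tfrac12(\det(R_\omega)h-\eta[-1]\cdot(\text{that sum}))$. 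When $|\omega|<a$ some multiplication merges more branches than there are source variables, forcing extra factors of $\eta$ and, through $\eta h=0$, killing the $h$‑part, so the answer has the $\eta^{a-|\omega|}$‑form. When $|\omega|>a$ some target branch must duplicate another, so $d(R)_\omega$ is divisible by $[-1]^{|\omega|-a}$, and the remaining coefficient is exactly the $(-2)^{a-|\omega|}$‑normalized integer claimed, because the affine‑linear map over $\F_2$ recording the residues of the partial column sums forces the relevant signed $\chi$‑sum to be divisible by $2^{|\omega|-a}$.

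Finally I would reassemble: substitute $R=[r'_{ij}]_{i\in\omega'}$ into the auxiliary decomposition, sum over $\omega'\subseteq\tauO_e$ against the weight‑factors of Lemma~\ref{tActSigma} (the $\aBra{-1}$‑power, the product of $\chi(\sum_{j\in\sigma_e}r_{ij})$, and $\eta^{\,t_e-|\omega'|}$), and simplify with $\aBra{-1}^r=1+\chi(r)\eta[-1]$, $\aBra{-1}h=h$, $\eta h=0$ and $(\eta[-1])^m=(-2)^{m-1}\eta[-1]$. The terms with $\omega'\subsetneq\tauO_e$ then collapse into $\eta[-1]$‑contributions when $|\omega|=t_e$ and into the $[-1]^{|\omega|-t_e}(-2)^{t_e-|\omega|}$‑prefactor when $|\omega|>t_e$, while the $\sigma_e$‑columns get absorbed into the $\chi$‑sums, so that $\pi$ is replaced throughout by $\pi\sqcup\sigma_e$ and $r$ by $r'$; this is exactly $c(g,e)_\omega$ in each of the three ranges. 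The step I expect to be the main obstacle is precisely this reassembly together with its sign bookkeeping: pinning down all powers of $\aBra{-1}$ and of $\epsilon=-\aBra{-1}$ (which must also be made compatible with the boundary orientations fixed in Example~\ref{cubicalAn}), checking that the inclusion–exclusion over $\pi$ truly collapses the naive sum over routings to the stated closed form in all three ranges, and — for the two degenerate‑dimension ranges — establishing the $2$‑divisibility of the $\chi$‑sums needed to make sense of the formal $\tfrac12$ and the formal negative power of $-2$, which I would do by the $\F_2$‑affine‑fibre argument above. A lesser but real subtlety is to keep the elementary‑matrix identities of Lemma~\ref{tActSigma} confined to the genuinely invertible stages of the Smith factorization, since it is the non‑invertible middle stage that produces the trichotomy among $|\omega|<a$, $|\omega|=a$, $|\omega|>a$.
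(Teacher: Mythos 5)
Your global plan --- reduce via $g=g_\tau\times g_\sigma$ and Lemma \ref{tActSigma} to the action of a single monomial map on the top cubical generator, then expand in the cubical basis of the target --- matches the architecture of the paper's argument, and your algebraic toolkit ($h^2=2h$, $\eta h=0$, $(\eta[-1])^2=-2\eta[-1]$, $n_\epsilon$, the $2$-divisibility of the alternating $\chi$-sums) is the right one. The problem is that the decisive step is asserted rather than proved. The paper does not factor the matrix at all: it expands $g_*\bigl(\prod_i[\lambda_i]\bigr)$ directly as a product over target coordinates $j$ of factors $\bigotimes_i(h_{ij}[\lambda_i]+1)$, collects the multilinear terms into a sum over routing functions $f:\tauO_e\to\omega\sqcup\{\infty\}$ with weights $\epsilon^{\mathrm{sgn}(f)}\eta^{t_e-|f(\tauO_e)_{<\infty}|}(-[-1])^{|\omega|-|f(\tauO_e)_{<\infty}|}\prod_j\chi(\cdot)\prod_i h'_{if(i)}$ (Proposition \ref{tActCompute}), and only then contracts this to the three closed forms by multiplying by $2^{|\omega|-t_e}$ resp. $h-\eta[-1]$ and dividing back, using that $\Z+\Z h$ and $\Z[-1]^{|\omega|-t_e}$ have no $2$-power torsion. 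Your sentence ``Composing and repeatedly using \dots one obtains $\phi_*[e_{\mathrm{full}}]=\sum_\omega d(R)_\omega[e_\omega]$, where $d(R)_\omega$ is a sum over ways of routing\dots'' is exactly this Proposition \ref{tActCompute}; it is the entire content of the proof and cannot be deferred as an ``expected obstacle.''

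Moreover, the specific tool you chose --- Smith normal form --- works against you. The closed forms are expressed in quantities attached to the columns of the original matrix: the minors $\mathrm{det}([r'_{ij}]_{i\in\tauO_e}^{j\in\omega})$ for the distinguished subset $\omega$, and the mod-$2$ partial column sums $\chi(\sum_{j\in\pi\sqcup\sigma_e}r'_{ij})$ for $\pi\subset\omega$. None of these is an invariant of the two-sided $\GL(\Z)$-equivalence class of $R$: the target-side automorphism $V$ mixes the columns in $\omega$ with the complementary ones, so after writing $R=U^{-1}DV^{-1}$ and composing the elementary effects you obtain an expression in the entries of $U,D,V$ that still has to be matched against the stated formula in the entries of $R$. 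That matching requires either a separate induction showing the closed form is stable under each elementary row and column move (which you neither carry out nor flag as needed), or it forces you back to a direct column-by-column expansion --- i.e., to the paper's computation. Your divisibility claims (integrality of $\tfrac12(\mathrm{det}\cdot h-\eta[-1]S)$ and of $(-2)^{t_e-|\omega|}S$) are true and your $\F_2$-fibre argument for them is viable, but the paper gets them for free from $c(g,e)_\omega$ lying in the $2$-torsion-free groups $\Z+\Z h$ resp. $\Z[-1]^{|\omega|-t_e}$, which is cleaner. As written, the proposal is a plausible plan whose central combinatorial identity is unestablished, and whose chosen factorization makes establishing it strictly harder than the direct expansion.
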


The proof of this proposition will be presented in \nameref{sec:appendix}. The results can be further simplified if additional information regarding $ g $ or $ r_{ij} $ is available.

\begin{corollary}
  \label{tActSupp}
  Let $ \omega_0 \subset \tau_e $ be the subset such that $ j \in \omega_0 $ if $ \forall i \in \tauO_e, r'_{ij} = 0 $. Then we can express $ g_*[e] $ as follows:

  \[
  g_*[e] = \sum_{\omega \subset \tau_e \setminus \omega_0} c(g,e)_\omega [e_{\omega}].
  \]

  Furthermore, if $ t_e > |\tau_e| - |\omega_0| $, then:

\[
g_*[e] = \eta^{t_e - |\tau_e| + |\omega_0|} \sum_{\omega \subset \tau_e \setminus \omega_0} \eta^{|\tau_e| - |\omega_0| - |\omega|} \sum_{\pi \subset \omega} (-1)^{|\omega| - |\pi|} \prod_{i \in \tauO_e} \chi\left(\sum_{j \in \pi \sqcup \sigma_e} r'_{ij}\right) [e_{\omega}]
\]
\end{corollary}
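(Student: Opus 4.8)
The plan is to derive both assertions directly from Proposition \ref{tActMain} by isolating the role of the distinguished subset $\omega_0 \subset \tau_e$, on which the matrix $[r'_{ij}]$ vanishes identically in the rows indexed by $\tauO_e$. First I would observe the following bookkeeping fact: if $j \in \omega_0$ then $r'_{ij} = 0$ for all $i \in \tauO_e$, so in every product $\prod_{i \in \tauO_e} \chi\bigl(\sum_{j \in \pi \sqcup \sigma_e} r'_{ij}\bigr)$ appearing in the formulas of Proposition \ref{tActMain}, the indices in $\omega_0 \cap \pi$ contribute nothing to the sums $\sum_{j} r'_{ij}$, and likewise $\det([r'_{ij}]_{i\in\tauO_e}^{j\in\omega})$ vanishes whenever $\omega \cap \omega_0 \neq \emptyset$ (a zero column). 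The main point to nail down is therefore that every coefficient $c(g,e)_\omega$ with $\omega \cap \omega_0 \neq \emptyset$ is zero, which kills all terms outside the range $\omega \subset \tau_e \setminus \omega_0$.

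Concretely, I would split into the three cases of Proposition \ref{tActMain}. For $\omega$ with $|\omega| > t_e$ or $|\omega| < t_e$, the coefficient is a scalar multiple of $\sum_{\pi \subset \omega}(-1)^{|\omega|-|\pi|}\prod_{i\in\tauO_e}\chi(\cdots)$; if $\omega$ contains some $j_0 \in \omega_0$, then pairing each $\pi$ not containing $j_0$ with $\pi \sqcup \{j_0\}$ gives two terms with the same $\chi$-product (since adjoining $j_0$ does not change any $\sum_j r'_{ij}$) but opposite signs $(-1)^{|\omega|-|\pi|}$, so the alternating sum telescopes to $0$. For $|\omega| = t_e$, the same telescoping annihilates the $\eta[-1]\sum_\pi(\cdots)$ part, while the determinant term $\det([r'_{ij}])h$ vanishes because the column $j_0$ is identically zero; hence $c(g,e)_\omega = 0$ here as well. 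This establishes the first displayed formula, $g_*[e] = \sum_{\omega \subset \tau_e \setminus \omega_0} c(g,e)_\omega [e_\omega]$.

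For the second (``$\eta$-divisible'') assertion, I would note that if $t_e > |\tau_e| - |\omega_0|$ then for every $\omega \subset \tau_e \setminus \omega_0$ we have $|\omega| \le |\tau_e| - |\omega_0| < t_e$, so only the case $t_e > |\omega|$ of Proposition \ref{tActMain} contributes and $c(g,e)_\omega = \eta^{t_e - |\omega|}\sum_{\pi\subset\omega}(-1)^{|\omega|-|\pi|}\prod_{i\in\tauO_e}\chi(\sum_{j\in\pi\sqcup\sigma_e}r'_{ij})$. Writing $t_e - |\omega| = (t_e - |\tau_e| + |\omega_0|) + (|\tau_e| - |\omega_0| - |\omega|)$ and factoring out the common power $\eta^{t_e - |\tau_e| + |\omega_0|}$ (using that $|\tau_e| - |\omega_0| - |\omega| \ge 0$ for the remaining exponent to make sense) yields exactly the claimed expression. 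The only genuinely delicate step is the sign-cancellation/telescoping argument in the middle paragraph; once that is in place, the rest is substitution and reindexing, so I would present that cancellation carefully and treat the exponent bookkeeping as routine.
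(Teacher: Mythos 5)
Your proposal is correct, but it reaches the vanishing of $c(g,e)_\omega$ for $\omega\cap\omega_0\neq\emptyset$ by a different mechanism than the paper. The paper's proof is a one-liner appealing to the unsimplified formula of Proposition \ref{tActCompute}: there, each summand indexed by $f:\tauO_e\to\omega\sqcup\{\infty\}$ dies termwise, because a $j_0\in\omega\cap\omega_0$ either lies in $f(\tauO_e)$, in which case the factor $h'_{ij_0}=(r'_{ij_0})_\epsilon=0$ appears, or lies in $\omega\setminus f(\tauO_e)$, in which case the factor $\chi\bigl(\sum_{k\in S_{j_0}}r'_{kj_0}\bigr)=\chi(0)=0$ appears. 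You instead work with the already-packaged case formulas of Proposition \ref{tActMain} and kill the alternating $\pi$-sum by the involution $\pi\leftrightarrow\pi\sqcup\{j_0\}$ (equal $\chi$-products since $r'_{ij_0}=0$, opposite signs), together with the zero-column observation for the determinant when $t_e=|\omega|$; this is a valid and self-contained argument at the level of the main-text statement, at the cost of treating the three cases separately, whereas the paper's termwise argument is uniform but requires going back to the appendix formula. Your handling of the second display (only the case $t_e>|\omega|$ occurs when $t_e>|\tau_e|-|\omega_0|$, then split $\eta^{t_e-|\omega|}=\eta^{t_e-|\tau_e|+|\omega_0|}\cdot\eta^{|\tau_e|-|\omega_0|-|\omega|}$) matches what the paper leaves implicit. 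One caveat: your cancellation argument tacitly uses the $\pi$-dependent form of the coefficients, with $\chi\bigl(\sum_{j\in\pi\sqcup\sigma_e}r'_{ij}\bigr)$ (equivalently $(\omega\setminus\pi)\sqcup\sigma_e$, as derived in the appendix), rather than the expression $\sum_{j\in\omega\sqcup\sigma_e}r'_{ij}$ printed in the statement of Proposition \ref{tActMain}, which is evidently a typo (taken literally it is independent of $\pi$ and the alternating sum would vanish identically); your reading is the intended one and is the same normalization used in the statement of Corollary \ref{tActSupp}, but you should flag explicitly that you are using that corrected form, since your pairing argument is vacuous under the literal wording.
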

\begin{proof}
  It is evident from the formula presented in Proposition \ref{tActCompute} that $ c(g,e)_{\omega} = 0 $ if $ \omega \cap \omega_0 \neq \emptyset $.
\end{proof}

\section{Cellular Structures of Smooth Toric Varieties}

We begin by defining an $ m $-edge, $ n $-dimensional fan $ \Sigma = (K, \lambda) $ based on the following data: 
\begin{enumerate}
  \item $ K $ is a simplicial complex on $ \llBra{m} $, where we denote $ c \in K $ as a $ |c| $-dimensional cone;
  \item $ \lambda: \mathbb{Z}^m \to \mathbb{Z}^n $ is a group morphism, referred to as the characteristic function, allowing us to define the induced morphism $ \exp(\lambda): \mathbb{G}_m^m \to \mathbb{G}_m^n $.
\end{enumerate}
For our purposes, we require the fan to satisfy a non-degeneracy condition: for any $ \sigma \in K $, the vectors $ \{ \lambda(v_i) \}_{i \in \sigma} $ must be linearly independent in $ \mathbb{Z}^n $ and form a basis for the lattice generated by the cone. We denote $ K_{\max} $ (the facets) as the subset of maximal elements of $ K $, and we let $ K_i $ represent the subset of $ c \in K $ such that $ |c| = i $.

For our objectives, we conceptualize a smooth toric variety $ X_{\Sigma} $ associated with the fan $ \Sigma $ as a variety that admits a ``good" covering $ \bigcup_{c \in K} U_c $, where we have isomorphisms $ \phi_c: U_c \xrightarrow{\cong} \mathbb{G}_m^{n - |c|} \times \mathbb{A}^{|c|} $. This covering possesses the following properties: if $ a \subset b $, then $ U_a \subset U_b $, and $ U_{a} \cap U_{b} = U_{a \cap b} $. Moreover, if $ c \subset \sigma_1 $ and $ c \subset \sigma_2 $ for $ \sigma_i \in K_{\max} $, then the embeddings $ \phi_{\sigma_i} \phi^{-1}_{c}: \mathbb{G}_m^{n - |c|} \times \mathbb{A}^{|c|} \to \phi_{\sigma_i}(U_{\sigma_i}) \hookrightarrow \mathbb{A}^n $ differ by a group section $ g_{\sigma_1 \sigma_2}: \mathbb{G}_m^{n - |c|} \to \mathbb{G}_m^n $, which can be computed from $ \lambda $.

Thus, we establish an evident cellular structure for $ X_{\Sigma} $:

\[
\Omega_i(X_{\Sigma}) := \bigcup_{\sigma \in K_i} U_{\sigma}
\]
The closed complement is represented as 
\[
\Omega_i \setminus \Omega_{i-1} = \bigsqcup_{\sigma \in K_i} Z_{\sigma} \text{, where } Z_{\sigma} := U_{\sigma} \setminus \bigcup_{\tau \in K, \tau \subset \sigma} U_{\tau}
\]
Each $ i $-dimensional cone $ \sigma \in K_i $ also generates an embedding $ \iota_{\sigma}: \mathbb{G}_m^{n-i} \times \mathbb{A}^{i} \xrightarrow{\cong} U_{\sigma} \hookrightarrow X_{\Sigma} $. In fact, the embedding $ \iota_{\sigma} $ induces another embedding $ \iota_{\sigma}: \mathbb{G}_m^{n-i} \xrightarrow{\cong} Z_{\sigma} $. Consequently, for any cubical cell $ [e_\omega] $ in $ \mathbb{G}_m^{n-i} $, it results in an unoriented cell $ [e^{\sigma}_\omega] $. It is noteworthy that if we restrict these cells to a maximal affine toric subvariety $ U_{\sigma_{\max}} \hookrightarrow \mathbb{A}^n $, they coincide with the cubical cells illustrated in Example \ref{cubicalAn}.

The $i$-th term of the cellular $\af^1$-chain complex for the space $X_\Sigma$ is defined as follows:

\[
  C^{cell}_i(X_\Sigma) \cong \bigoplus_{\sigma \in K_i} \bigoplus_{e^\sigma_\omega \subset Z_{\sigma} \text{ cubical}} [e^{\sigma}_\omega](\KMW_{i+|\omega|}).
\]

The assignment of orientations and the definition of differentials in this context are more complex. To begin, let us impose a total order on $K_{max}$. For each $\sigma \in K$, we define $f(\sigma) \in K_{max}$ as the first maximal cone such that $\sigma \subset f(\sigma)$. The orientation of $ [e^\sigma] $ is then determined by the morphism $\phi_{f(\sigma)} \phi^{-1}_{\sigma}: \Gm^{n-|\sigma|} \times \af^{|\sigma|} \to \phi_{f(\sigma)}(U_{f(\sigma)}) \hookrightarrow \af^n$. However, when a cone is shared among several maximal cones, each cone induces a distinct orientation. Consequently, it becomes necessary to establish a method for comparing different orientations to compute the differentials in the cellular $\af^1$-chain complex. It is important to note that changes in orientation are consistently induced by the action of a group section $g: \Gm^{n-|\sigma|} \to \Gm^{n}$. We can thus apply Proposition \ref{tActMain} to obtain the following result:

\begin{proposition}
  For a smooth toric variety $X_\Sigma$, the cellular $\af^1$-chain complex is defined as follows:

\[
  \wt{C}^{cell}_*(X_\Sigma) = \left( \bigoplus_{\sigma \in K_i} \bigoplus_{e^\sigma_\omega \subset Z_{\sigma} \text{ cubical}} [e^{\sigma}_\omega](\KMW_{i+|\omega|}), \partial \right)_i,
\]

where the differentials $\partial$ take the following form:

\[
  \partial[e^\sigma_\omega] = \sum_{j \in \sigma} \sum_{\substack{\pi \subset \{j\} \sqcup f(\sigma) \setminus \sigma \\ |\pi| = |\omega| + 1}} c_{j,\pi} [e^{\sigma \setminus \{j\}}_{\pi}] + [-1](\ldots) + \eta(\ldots).
\]

Here, $c_{j,\pi} = \mathrm{det}([r'_{ij}(g)]^{j \in \pi}_{i \in \omega \sqcup \{j\}})$ represents the change of orientation induced by the morphism $g: \Gm^{n-|\sigma|+1} \to \Gm^{n}$.
\end{proposition}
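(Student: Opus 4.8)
The plan is to assemble the differential from the pieces already developed in the excerpt: the cofibration-sequence definition of $\partial_n$, the identification of $\Omega_i/\Omega_{i-1}$ with a wedge of Thom spaces, the cubical-cell decomposition of $C^{cell}_*(\af^n)$ from Example \ref{cubicalAn}, and the change-of-orientation formula of Proposition \ref{tActMain}. The first step is to fix a cell $[e^\sigma_\omega]$ sitting in $Z_\sigma$, with $\sigma\in K_i$, and to compute its boundary by working inside the affine chart $U_{f(\sigma)}\hookrightarrow\af^n$. On that chart, $Z_\sigma$ is cut out exactly as in Example \ref{cubicalAn}: the coordinates indexed by $\sigma$ are the ``$x_j=0$'' directions, those in $f(\sigma)\setminus\sigma$ are the extra $\Gm$-directions, and the cubical differential there is the known formula $\wt\partial[e]=\sum_{j\in\sigma_e}\aBra{-1}^{(\sigma_e,j)}\epsilon^{(\tauO_e,j)}\partial_j[e]$. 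This gives the boundary of $[e^\sigma_\omega]$ as a sum over $j\in\sigma$ of cells whose $\sigma$-label has dropped to $\sigma\setminus\{j\}$, still expressed in the orientation coming from $f(\sigma)$.

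The second step is a bookkeeping issue: the target cells $[e^{\sigma\setminus\{j\}}_\pi]$ live in $Z_{\sigma\setminus\{j\}}$, and the complex $C^{cell}_*(X_\Sigma)$ records each such cell using \emph{its own} preferred orientation, namely the one induced by $f(\sigma\setminus\{j\})$ rather than by $f(\sigma)$. Since $\sigma\setminus\{j\}\subset\sigma\subset f(\sigma)$, we have $f(\sigma\setminus\{j\})\le f(\sigma)$ in the chosen total order, and the two orientations differ precisely by the action of a group section $g\colon\Gm^{n-|\sigma|+1}\to\Gm^n$ computed from $\lambda$ (this is the ``good covering'' compatibility recorded just before the proposition). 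So I would apply Proposition \ref{tActMain} — or rather its cleaner specialization Corollary \ref{tActSupp} — to rewrite each boundary term $[e^{\sigma\setminus\{j\}}_\pi]$ of the $f(\sigma)$-orientation in terms of the $f(\sigma\setminus\{j\})$-oriented cubical cells. Tracking the leading (lowest $\eta$-order) coefficient through that formula, the term with $|\pi|=|\omega|+1$ acquires the determinant $c_{j,\pi}=\mathrm{det}([r'_{ij}(g)]^{j\in\pi}_{i\in\omega\sqcup\{j\}})$, while the $[-1]$-divisible and $\eta$-divisible corrections are exactly the lower-order tails of Proposition \ref{tActMain}; collecting over $j\in\sigma$ and over admissible $\pi\subset\{j\}\sqcup(f(\sigma)\setminus\sigma)$ yields the displayed formula. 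One also checks the decomposition $C^{cell}_i(X_\Sigma)\cong\bigoplus_{\sigma\in K_i}\bigoplus_{e^\sigma_\omega}[e^\sigma_\omega](\KMW_{i+|\omega|})$ is the one already stated, so nothing new is needed there, and $\partial^2=0$ is automatic since $\partial$ is the intrinsic cellular differential.

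The main obstacle is the orientation-comparison step: one must be careful that the group section $g$ that intertwines the $f(\sigma)$- and $f(\sigma\setminus\{j\})$-orientations is correctly identified and that its matrix entries $r'_{ij}=r_{ij}+\delta_{ij}$ are the ones fed into Proposition \ref{tActMain}, including the sign conventions $(\sigma_e,j)$, $(\tauO_e,j)$ inherited from the cubical differential of $\af^n$. A secondary subtlety is indexing: the subset $\pi$ ranges over subsets of $\{j\}\sqcup(f(\sigma)\setminus\sigma)$, because after deleting $j$ the cell $[e^{\sigma\setminus\{j\}}_\pi]$ may pick up $j$ itself as a new $\tauO$-direction (that is the ``$x_j\neq0$'' face appearing in $\partial_j[e]$) in addition to the $\Gm$-directions already present in $f(\sigma)$; one must verify these are exactly the coordinates on which $g$ can act nontrivially, so that Corollary \ref{tActSupp}'s support statement (coefficients vanish unless $\omega\cap\omega_0=\emptyset$) produces precisely this index set. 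Once these are pinned down, the remaining computation is the routine expansion already carried out in Lemma \ref{tActSigma} and Proposition \ref{tActMain}, and the formula follows.
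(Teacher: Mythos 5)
Your proposal is correct and follows essentially the same route as the paper, whose proof is a one-line citation of Example \ref{cubicalAn} (the cubical differential on the chart) and Proposition \ref{tActMain} (the change-of-orientation action); you have simply spelled out the orientation-comparison bookkeeping that the paper leaves implicit, including the correct identification of the determinant as the leading coefficient and the $\eta$- and $[-1]$-divisible tails.
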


\begin{proof}
  This result follows from the combination of Example \ref{cubicalAn} and Proposition \ref{tActMain}.
\end{proof} 

\begin{corollary}\label{rationDecomp}
  In $\DM(k)$ (resp. $\DMt(k)$), the motive $\M(X_{\Sigma})$ (resp. $\Mt(X_{\Sigma})$) is Tate, and the extensions consist exclusively of $l \in \mathbb{Z}$, $l$-extensions, and $[-1]^{p}$-extensions (resp. as well as $\eta^{q}$-extensions). In particular, if we consider the rational motive in $\DM(k)_{\mathbb{Q}}$ where $l$ is invertible and $\eta = [-1] = 0$, then $\M(X_\Sigma)_{\mathbb{Q}}$ is expressed as a direct sum of terms of the form $\mathbb{Q}(q)[p]$.
\end{corollary}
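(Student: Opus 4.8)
\section*{Proof proposal for Corollary \ref{rationDecomp}}

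The plan is to push the cellular chain complex $\wt C^{cell}_*(X_\Sigma)$, together with the explicit differentials just computed, through the canonical motivic realizations and to read off Tateness from the shape of the terms and the list of extension classes from the shape of the differentials. In $\Daba{k}$ the complex $\wt C^{cell}_*(X_\Sigma)$ represents $C^{\A^1}_*(X_\Sigma)$; applying the canonical (symmetric monoidal, triangulated) realization $\mathbf{L}\tilde\gamma^*\colon\Daba{k}\to\DMt(k)$ (cf. \cite{bachmann2020milnor}) and its composite with the canonical functor $\DMt(k)\to\DM(k)$ under which $\eta$ becomes zero, one gets $\Mt(X_\Sigma)$, resp. $\M(X_\Sigma)$, as the image of $\wt C^{cell}_*(X_\Sigma)$. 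A summand $[e^\sigma_\omega](\KMW_q)$ sitting in chain-degree $i$ (so $q=i+|\omega|$) goes to the Tate object $\tbZ(q)[q]$, resp. $\Z(q)[q]$, placed in chain-degree $i$ (the normalization is pinned down by the known value on $\mathbb{P}^n$ recalled above, where $\KMW_i$ in chain-degree $i$ totalizes to $\Z(i)[2i]$), and $\eta,[-1],h=1+\aBra{-1},\aBra{-1}$ realize to the corresponding operations, with $\eta\mapsto 0$, $\aBra{-1}\mapsto 1$, $h\mapsto 2$ in $\DM(k)$. Since each term of $\wt C^{cell}_*(X_\Sigma)$ is a finite direct sum of shifted Milnor--Witt sheaves, $\M(X_\Sigma)$ and $\Mt(X_\Sigma)$ are totalizations of bounded complexes of finite sums of Tate objects, hence lie in the thick subcategory generated by the Tate twists; that is, they are Tate.

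Next I identify the extension data, which are the differentials $\partial$ of $\wt C^{cell}_*(X_\Sigma)$. By the Proposition above together with Proposition \ref{tActMain} and Corollary \ref{tActSupp}, $\partial[e^\sigma_\omega]$ is a $\KMW_*(k)$-linear combination of the face maps $\partial_j$ whose coefficients fall into three classes: (i) ``main'' terms, of target twist $|\omega|+1$, with coefficient an integer determinant $c_{j,\pi}=\det[r'_{ij}]$ or a $\Z$-combination of $h$ and $\aBra{-1}$; (ii) terms of coefficient a power $[-1]^s$, raising the target twist by $s\ge 1$; (iii) terms of coefficient a power $\eta^s$, lowering the target twist. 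Tracking the bidegrees through the realization, case (i) becomes an integer-scalar map $\Z(q)[q]\to\Z(q)[q]$, case (ii) a map $\Z(q)[q]\to\Z(q+s)[q+s]$ given by $[-1]^s\in\KM_s(k)=H^{s,s}_M(k)$, and case (iii) lands in $H^{-s,-s}_M(k)=0$ in $\DM(k)$ (it survives in $\DMt(k)$, where it lies in $\widetilde H^{-s,-s}_{\rMW}(k)=\KMW_{-s}(k)$ and realizes to $\eta^s$). Hence in $\DM(k)$ the extensions occurring are $l$-extensions and $[-1]^p$-extensions, and in $\DMt(k)$ one has in addition $\eta^q$-extensions, which is the first assertion.

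Finally, for the rational statement I work in $\DM(k)_\Q$. There $\eta=0$ already, every nonzero integer is invertible, and $[-1]$ is $2$-torsion in $\KM_*(k)$ (because $2[-1]=[(-1)^2]=0$ in $k^\times$, so $2[-1]^p=0$), whence $[-1]^p=0$ in $\KM_p(k)_\Q$. Therefore, after realization, all differentials of types (ii) and (iii) vanish and the complex takes the form $\bigoplus_q\bigl(V^q_\bullet\otimes\Q(q)[q]\bigr)$ with each $V^q_\bullet$ a bounded complex of finite-dimensional $\Q$-vector spaces. Each $V^q_\bullet$ is quasi-isomorphic to its homology, so the totalization is $\bigoplus_{q,n}\Q(q)[q+n]^{\oplus\dim H_n(V^q_\bullet)}$, a direct sum of Tate twists $\Q(q)[p]$; this is the ``in particular''.

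I expect the main obstacle to be the bookkeeping underlying Step 2: carefully matching the chain degree $i$, the twist $|\omega|$ and the Milnor--Witt index $q$ with the weight and the triangulated degree of the Tate object under the realization, and verifying that the $[-1]^p$- and $\eta^q$-coefficients of Proposition \ref{tActMain} occur exactly with the twist-shifts claimed, so that over $\Q$ precisely the type-(i) subcomplex survives. A secondary technical point is the compatibility of the realization functors with the cubical tensor decompositions $\bfZa[Y_j]\cong\bigoplus_e[e](\KMW_{t_e})$ used to write down $\wt C^{cell}_*(X_\Sigma)$, so that $\M(X_\Sigma)$ is genuinely the totalization of the termwise-realized cellular complex rather than merely an abstract iterated extension of Tate objects.
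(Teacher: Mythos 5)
Your proposal is correct and follows exactly the route the paper intends: the paper states Corollary \ref{rationDecomp} without proof, as an immediate consequence of the preceding proposition describing the differentials of $\wt{C}^{cell}_*(X_\Sigma)$ as integer/determinant terms plus $[-1](\ldots)$ plus $\eta(\ldots)$ terms, and your argument simply makes explicit the realization bookkeeping and the observation that $\eta=0$, $[-1]$ is $2$-torsion, and nonzero integers are invertible in $\DM(k)_{\mathbb{Q}}$. The details you supply (weight/degree matching under $\mathbf{L}\tilde\gamma^*$ and the splitting of the weight-graded pieces over $\mathbb{Q}$) are the ones the paper leaves implicit, and they check out.
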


To compute the change of orientation $g$ and the cellular $\af^1$-homology explicitly, we must introduce the concept of the moment-angle complex.

\subsection{Moment-Angle Complex and Toric Quotient}

As noted in Example \ref{moAng}, we defined $\AZ_K$. There exists an induced morphism given by the homogeneous coordinate $p: \AZ_K \to X_\Sigma$. In fact, it induces an isomorphism, as stated in \cite[Theorem 2.1]{cox1995homogeneous}:

\[
  \mathrm{CoKer} \exp(\lambda) \times \left( \AZ_K / \Ker \exp(\lambda) \right) \cong X_\Sigma.
\]

We will focus on the cases where $\lambda$ is surjective and $m > n$. For the remainder of this discussion, we assume that the space $ K $ has pure dimension $ n $, that is, $ K_{\max} = K_{n} $.

The projection also induces a morphism of complexes in $ \Daba{k} $:
\[
  p_*: C^{cell}_*(\AZ_K) \to C^{cell}_* (X_\Sigma).
\]

For a cell $[e]$ of $\AZ_K$, let $\sigma_e \in K$. This cell is termed \textit{canonical} if $\tauO_e \subset f(\sigma_e$), which is equivalent to $\tau^1_e \supset \llBra{m} \setminus f(\sigma_e)$. We denote the subgroup generated by canonical cells as $ C^{can}_i(\AZ_K) \subset C^{cell}_i(\AZ_K) $.

For any cell $[e]$, we define the canonical transformation as a group section $ T_e: \Gm^{t_e} \to \Ker \exp(\lambda) $ with the following properties:
\[
  \forall j \in \tau^1_e \setminus f(\sigma_e), \forall i \in \tauO_e, \ r_{ij}(T_e) = 0 ; \quad \forall j \in \tauO_e \setminus f(\sigma_e), \forall i \in \tauO_e, \ r_{ij}(T_e) = -\delta_{ij}.
\]

\begin{lemma}
  For any cell $[e]$ of $[\AZ_K]$, $ T_e $ is uniquely defined and $ T_{e*}[e] \in C^{can}_{|e|}(\AZ_K) $.
\end{lemma}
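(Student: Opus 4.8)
The statement has two parts: (i) existence and uniqueness of the canonical transformation $T_e$, and (ii) the assertion that $T_{e*}[e]$ lands in the subgroup of canonical cells. I would handle these separately.

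For the uniqueness and existence in (i), the key observation is that the data defining $T_e$ is a linear one: a group section $T_e\colon \Gm^{t_e} \to \Ker\exp(\lambda)$ corresponds to a $t_e \times m$ integer matrix $[r_{ij}(T_e)]_{i \in \tauO_e, j \in \llBra{m}}$ whose rows lie in the lattice $\Ker(\lambda) \subset \Z^m$ (since $\Ker\exp(\lambda)$ is the subtorus with cocharacter lattice $\Ker(\lambda)$; here I use that $\lambda$ is surjective, so $\Ker(\lambda)$ is a saturated sublattice of rank $m-n$). The prescribed conditions pin down, for each $i \in \tauO_e$, the entries $r_{ij}$ for all $j \notin f(\sigma_e)$ (they are $0$ on $\tau^1_e \setminus f(\sigma_e)$ and $-\delta_{ij}$ on $\tauO_e \setminus f(\sigma_e)$; note $\tauO_e \setminus f(\sigma_e)$ makes sense precisely because $[e]$ need not be canonical). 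So the $i$-th row of the matrix is determined on the coordinates indexed by $\llBra{m}\setminus f(\sigma_e)$, which has $m - n$ elements, and its remaining $n$ entries (those indexed by $f(\sigma_e)$) are free — but constrained to make the whole row lie in $\Ker(\lambda)$. Since $\{\lambda(v_j)\}_{j \in f(\sigma_e)}$ is a basis of $\Z^n$ (the non-degeneracy/smoothness hypothesis applied to the facet $f(\sigma_e) \in K_{\max}$), the linear map $\Z^{f(\sigma_e)} \to \Z^n$ induced by $\lambda$ is an isomorphism, so for any prescribed values on the complementary coordinates there is a unique completion to a vector in $\Ker(\lambda)$. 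This gives existence and uniqueness of $T_e$ simultaneously.

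For part (ii), I would compute $T_{e*}[e]$ using the formula of Proposition~\ref{tActMain} (or more conveniently Corollary~\ref{tActSupp}). Writing $r'_{ij} = r_{ij}(T_e) + \delta_{ij}$, the defining conditions of $T_e$ say exactly that $r'_{ij} = 0$ for all $i \in \tauO_e$ and all $j \in \tau_e \setminus f(\sigma_e)$ (checking both the $\tau^1_e$ case and the $\tauO_e$ case: in the latter $r'_{ij} = -\delta_{ij} + \delta_{ij} = 0$ for $j \in \tauO_e \setminus f(\sigma_e)$, and also $r'_{ii}=0$ forces $i \notin \omega_0$ considerations appropriately). Hence the set $\omega_0$ of Corollary~\ref{tActSupp} contains $\tau_e \setminus f(\sigma_e)$, and consequently every term $[e_\omega]$ appearing in $T_{e*}[e]$ has $\omega \subset \tau_e \setminus \omega_0 \subset \tau_e \cap f(\sigma_e) \subset f(\sigma_e)$. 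Since $\sigma_{e_\omega} = \sigma_e$ and $\tauO_{e_\omega} = \omega \subset f(\sigma_e) = f(\sigma_{e_\omega})$, each such $[e_\omega]$ is by definition canonical, so $T_{e*}[e] \in C^{can}_{|e|}(\AZ_K)$.

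The main obstacle I anticipate is bookkeeping rather than anything deep: one must be careful that $T_e$ as constructed really is a group section into $\Ker\exp(\lambda)$ and not merely into $\Gm^m$ — i.e. that the rows genuinely lie in $\Ker(\lambda)$, which requires the saturation of $\Ker(\lambda)$ (so that an integer solution exists, not just a rational one) and this is exactly where surjectivity of $\lambda$ and the smoothness hypothesis on the facet $f(\sigma_e)$ enter. A secondary subtlety is making sure the indices behave when $[e]$ is already canonical (then $\tauO_e \setminus f(\sigma_e) = \emptyset$ and the second family of conditions is vacuous, so $T_e$ may be taken trivial and the claim is immediate), and confirming that the formula of Corollary~\ref{tActSupp} applies verbatim with $g = T_e$. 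I would also double-check that $\omega_0 \supseteq \tau_e \setminus f(\sigma_e)$ is all that is needed — we do not need equality — so the argument is robust.
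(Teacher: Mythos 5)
Your proposal is correct and follows essentially the same route as the paper: uniqueness and existence of $T_e$ come from the fact that $\{\lambda(v_j)\}_{j\in f(\sigma_e)}$ is a $\Z$-basis of $\Z^n$, so the prescribed entries on $\llBra{m}\setminus f(\sigma_e)$ admit a unique completion to a row in $\Ker(\lambda)$, and canonicity of $T_{e*}[e]$ follows from $r'_{ij}(T_e)=0$ for $j\in\tau_e\setminus f(\sigma_e)$ together with Corollary~\ref{tActSupp}, forcing every surviving $[e_\omega]$ to have $\omega\subset f(\sigma_e)$. Your side remark about saturation of $\Ker(\lambda)$ is not actually needed, since the completion is solved directly in the basis $\{\lambda(v_j)\}_{j\in f(\sigma_e)}$, but this does not affect the argument.
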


\begin{proof}
  Let $ r_{ij} = r_{ij}(T_e) $. To define $ T_e: \Gm^{t_e} \to \Ker \exp(\lambda) $, we need to verify that for any $ i \in \tauO_e $, the vector $ \sum_{j \in \llBra{m}} r_{ij}v_j \in \Ker \lambda $. We can decompose this sum as:
  \[
  \sum_{j \in \llBra{m}} r_{ij} v_j = \sum_{j \in \llBra{m} \setminus f(\sigma_e)} r_{ij} v_j + \sum_{j \in f(\sigma_e)} r_{ij} v_j.
  \]
  For $ j \in \llBra{m} \setminus f(\sigma_e) = \tau_e \setminus f(\sigma_e) $, the value of $ r_{ij} $ is determined as $ r_{ij} = -\delta_{ij} $, leading us to define $ u_i = \lambda\left(\sum_{j \in \llBra{m} \setminus f(\sigma_e)} r_{ij} v_j\right) $. 

  For $ j \in f(\sigma_e) $, with $ \lambda(v_j) $ forming a basis, there exists a unique solution ensuring that $ u_i + \sum_{j \in f(\sigma_e)} r_{ij} \lambda(v_j) = 0 $.

  We observe that for all $ j \in \tau_e \setminus f(\sigma_e) $ and all $ i \in \tauO_e $, $ r'_{ij}(T_e) = 0 $. Therefore, according to Corollary \ref{tActSupp}, we conclude that $ \omega_0(T_e) \supset \tau_e \setminus f(\sigma_e) $, which implies that $ T_{e*}[e] = \sum_{\omega \subset f(\sigma_e)} c(T_e, e)_\omega [e_{\omega}] $, where $ [e_{\omega}] $ is canonical by definition.

\end{proof}

It is clear that if $[e]$ is a canonical cell, then $ T_e = \mathrm{id} $. We now define a differential for $ C^{can}_*(\AZ_K) $:
\[
  \partial[e] = \sum_{j \in \sigma_e} (-1)^{(\sigma_e,j)} \epsilon^{(\tauO_e,j)} T_{\partial_j[e]*} \partial_j[e].
\]

Let $ T_*[e] = T_{e*}[e] $. This results in an induced morphism of complexes:
\[
  T_*: C^{cell}_*(\AZ_K) \to C^{can}_*(\AZ_K).
\]
Additionally, $ p_* $ induces a morphism of complexes:
\[
  p_{can}: C^{can}_*(\AZ_K) \to C^{cell}_* (X_\Sigma).
\]

\begin{proposition}\label{canIso}
  The map $ p_{can} $ is an isomorphism of complexes; furthermore, we have $ p_{can} \circ T_* = p_* $.  
\end{proposition}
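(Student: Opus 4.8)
The plan is to prove the two assertions ($p_{can}$ is an isomorphism of complexes, and $p_{can}\circ T_* = p_*$) essentially in reverse: first establish the compatibility $p_{can}\circ T_* = p_*$ at the level of graded objects, then leverage the known decomposition of $C^{cell}_*(X_\Sigma)$ into cubical cells $[e^\sigma_\omega](\KMW_{i+|\omega|})$ together with the fact that the cells of $X_\Sigma$ are exactly the images under $p$ of the \emph{canonical} cells of $\AZ_K$. Concretely, recall that $C^{cell}_i(X_\Sigma)\cong\bigoplus_{\sigma\in K_i}\bigoplus_{e^\sigma_\omega}[e^\sigma_\omega](\KMW_{i+|\omega|})$, while $C^{can}_i(\AZ_K)$ is the subgroup generated by those cells $[e]$ with $\tauO_e\subset f(\sigma_e)$. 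The key observation is that the homogeneous coordinate map $p:\AZ_K\to X_\Sigma$ restricts, on the stratum $Z_\sigma\subset X_\Sigma$, to precisely the quotient by $\Ker\exp(\lambda)$, and the canonical cells of $\AZ_K$ lying over $Z_\sigma$ are a set of representatives for the $\Ker\exp(\lambda)$-orbits of all cells lying over $Z_\sigma$ — this is the content of the preceding lemma (any cell can be moved into $C^{can}$ by the unique $T_e$, and canonical cells have $T_e=\mathrm{id}$).

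In detail, I would proceed in the following steps. First, \emph{degreewise bijection}: fix $i$ and $\sigma\in K_i$; show that $p$ sends the canonical cell $[e_\omega]$ with $\sigma_e=\sigma$ (so $\tauO_e=\omega\subset f(\sigma)$, $\tau^1_e\supset\llBra{m}\setminus f(\sigma)$) to the cell $[e^\sigma_\omega]$ of $X_\Sigma$, and that this correspondence $[e_\omega]\mapsto[e^\sigma_\omega]$ is a bijection between the canonical generators over $\sigma$ and the cubical generators of $C^{cell}_i(X_\Sigma)$ indexed by $(\sigma,\omega)$. This amounts to unpacking the construction of $\iota_\sigma:\Gm^{n-i}\xrightarrow{\cong}Z_\sigma$ and comparing it with the restriction of $p$ to the corresponding coordinate subspace of $\af^m$; the non-degeneracy hypothesis on $\Sigma$ guarantees $\exp(\lambda)$ restricts to an isomorphism on the relevant torus, so $p_{can}$ is an isomorphism on each summand $\KMW_{i+|\omega|}$, hence a graded isomorphism. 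Second, \emph{factorization}: for an arbitrary cell $[e]$ of $\AZ_K$, observe that $p_*[e]=p_*(g_*[e])$ for any $g\in\Ker\exp(\lambda)$ since $p$ is $\Ker\exp(\lambda)$-invariant and $\A^1$-homology is functorial; applying this with $g=T_e$ gives $p_*[e]=p_*(T_{e*}[e])=p_{can}(T_*[e])$, i.e. $p_*=p_{can}\circ T_*$ on generators, hence on all of $C^{cell}_*(\AZ_K)$. Third, \emph{chain map compatibility}: since $p_*$ is a morphism of complexes (it is induced by a morphism of cellular structures) and $T_*$ is a morphism of complexes (by construction of the differential on $C^{can}_*$, which was defined precisely so that $T_*$ commutes with $\partial$), and $p_{can}$ is the composite factoring $p_*$ through the surjection $T_*$ onto $C^{can}_*$, it follows that $p_{can}$ is automatically a chain map; combined with the degreewise isomorphism from the first step, $p_{can}$ is an isomorphism of complexes.

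The main obstacle I anticipate is the first step: making precise that $p$ identifies the canonical cells over $Z_\sigma$ with the cubical cells of $Z_\sigma\subset X_\Sigma$, including keeping track of \emph{orientations}. The cells of $X_\Sigma$ were oriented via $\phi_{f(\sigma)}\phi_\sigma^{-1}$, i.e. via the maximal cone $f(\sigma)$, and the definition of ``canonical'' ($\tauO_e\subset f(\sigma_e)$) was tailored to exactly this choice; so one must check that under $p$ the framing $\wt\iota_{e,\theta}$ of a canonical cell maps to the chosen framing $\wt\iota_{e^\sigma_\omega}$ of the corresponding cell of $X_\Sigma$, with no stray twist by a unit or by $\langle-1\rangle$. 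This is where the bookkeeping of the previous lemma (the vanishing $r'_{ij}(T_e)=0$ for $j\in\tau_e\setminus f(\sigma_e)$, forcing $\omega_0(T_e)\supset\tau_e\setminus f(\sigma_e)$ and hence $T_{e*}[e]$ supported on canonical cells) pays off: it shows both that $T_*$ lands in $C^{can}_*$ and that on canonical cells it is the identity, so there is a clean splitting $C^{cell}_*(\AZ_K)=C^{can}_*(\AZ_K)\oplus(\text{rest})$ compatible with $p_*$. I would verify the orientation matching on a single affine chart $U_{f(\sigma)}\hookrightarrow\af^n$, where everything reduces to the cubical cellular structure of $\af^n$ from Example \ref{cubicalAn} and the comparison is a direct computation with the coordinate monomials defining $\exp(\lambda)$.
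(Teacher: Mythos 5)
Your proposal is correct and follows essentially the same route as the paper: the key point in both is that a canonical cell meets each $\Ker\exp(\lambda)$-orbit in at most one point, so $p\circ e$ is an embedding and $p_{can}$ is a degreewise isomorphism onto the cubical generators of $C^{cell}_*(X_\Sigma)$, while $p_{can}\circ T_*=p_*$ follows from the $\Ker\exp(\lambda)$-invariance of $p$. Your version is more explicit about the orientation bookkeeping (via $\phi_{f(\sigma)}\phi_\sigma^{-1}$ and the vanishing $r'_{ij}(T_e)=0$), which the paper leaves implicit, but the argument is the same.
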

\begin{proof}
  It suffices to observe that for a canonical element $[e]$, the cycle $ e: \Gm^{t_e} \to \AZ_K $ intersects at most one point with the orbit of $ \Ker \exp(\lambda) $. Consequently, the composition $ p \circ e: \Gm^{t_e} \to \AZ_K \to X_\Sigma $ is an embedding. Therefore, $ p_{can}: C^{can}_i(\AZ_K) \to C^{cell}_i (X_\Sigma) $ is an isomorphism for each $ i $, establishing that it is indeed an isomorphism of complexes.
\end{proof}

Consequently, to understand the space $ X_\Sigma $, we need only to examine the complex $ C^{can}_*(\AZ_K) $. Furthermore, we can simplify $ C^{can}_*(\AZ_K) $ by considering the expansion and restriction of simplicial complexes.

Given the ordered set $ K_{\max} =\{\sigma_1,\ldots,\sigma_l\} $, we define the restriction $ r(\sigma_i) = \bigcup_{ \tau \in \min( \sigma_i )} \tau $ as the union of the sets in $ \min( \sigma_i ) = \{ \tau \subset \sigma_i \mid \tau \text{ is minimal for } f(\tau)=\sigma_i \} $. Recall that $ f(\tau)=\sigma_i  \Leftrightarrow \tau \subset \sigma_i, \forall j<i, \tau \not\subset \sigma_j $.

Next, we define the subcomplex $ \overline{ C }^{can}_*(\AZ_K) \subset C^{can}_*(\AZ_K) $ as $ [e^\sigma_\omega] \in \overline{C}^{can}_*(\AZ_K) $ if $ \omega \sqcup \sigma \subset r(f(\sigma)) $, with the differential given by projection onto the direct summand.

\begin{proposition}\label{restrQis}
The inclusion $ \overline{ C }^{can}_*(\AZ_K) \subset C^{can}_*(\AZ_K) $ is a retract; specifically, this inclusion is a quasi-isomorphism.
\end{proposition}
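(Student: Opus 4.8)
The plan is to prove both claims at once by exhibiting, on the chain level, a deformation retraction organized by the order on $K_{\max}$. Equip $C^{can}_*(\AZ_K)$ with the increasing, finite, exhaustive filtration in which $F_p C^{can}_*$ is spanned by the canonical cells $[e^\sigma_\omega]$ with $f(\sigma) \in \{\sigma_1,\dots,\sigma_g\}$ having index $\le p$. That $F_p$ is a subcomplex follows from two observations: $\tau \subseteq \tau'$ forces $f(\tau) \le f(\tau')$ in the chosen order, so $\partial_j[e]$ never sits higher in the filtration than $[e]$; and the canonical transformation $T_{\partial_j[e]}$ leaves the $\sigma$-set of a cell unchanged (it only modifies the $\tauO$-coordinates, by the lemma preceding Proposition \ref{canIso}), hence all cells occurring in $T_{\partial_j[e]*}\partial_j[e]$ have $\sigma$-set $\sigma_e\setminus\{j\}$ and therefore the same $f$-level. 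The subcomplex $\overline C^{can}_*$ is filtered compatibly, so it suffices to analyse the map on each associated graded piece and then glue.

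Next I would identify $\mathrm{gr}^F_p C^{can}_*$: it is spanned by the cells $[e^\sigma_\omega]$ with $f(\sigma)=\sigma_p$, equivalently with $\sigma$ in the up-set $N_p := \{\sigma\subseteq\sigma_p : \sigma \not\subseteq\sigma_q \text{ for all } q<p\}$ of ``new faces'', together with arbitrary $\omega\subseteq\sigma_p\setminus\sigma$. On this graded piece the canonical transformations in $\partial$ become identities: if $\sigma\setminus\{j\}\in N_p$ then $\omega\cup\{j\}\subseteq\sigma_p=f(\sigma\setminus\{j\})$, so $\partial_j[e]$ is already canonical; hence $\bar\partial$ is just the cubical differential of Example \ref{cubicalAn}, with the terms leaving $N_p$ deleted. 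The crucial point is a tensor splitting. Write $A=r(\sigma_p)$ and $B=\sigma_p\setminus A$. Since every $\tau\in\min(\sigma_p)$ lies in $A$, one checks that $\sigma\in N_p$ iff $\sigma\cap A\in N_p$, while $\sigma\cap B$ is unconstrained; splitting each cell $(\sigma,\omega)$ into its $A$- and $B$-parts and tracking the signs of Example \ref{cubicalAn} then gives an isomorphism of complexes
\[
\mathrm{gr}^F_p C^{can}_* \;\cong\; D_A\otimes \wt{C}^{cell}_*(\af^{B}),
\]
where $D_A$ is the cubical complex on the coordinates $A$ restricted to $\{\sigma\cap A\in N_p\}$ and $\wt{C}^{cell}_*(\af^{B})$ is the full cubical (Koszul) complex of affine space on the coordinates $B$. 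Under this isomorphism $\mathrm{gr}^F_p\overline C^{can}_*$ corresponds to $D_A\otimes\Z$, the inclusion being $\mathrm{id}_{D_A}$ tensored with the inclusion of the point cell $\Z\hookrightarrow\wt{C}^{cell}_*(\af^{B})$; the $\KMW$-degree bookkeeping is exactly that of Example \ref{cubicalAn}.

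Now $\wt{C}^{cell}_*(\af^{B})\cong\Z$ is contractible, and in fact the inclusion of the point cell is a split monomorphism admitting an explicit chain-homotopy retraction $h_B$ (the standard contraction of the Koszul complex). Tensoring with the black box $D_A$ therefore shows that $\mathrm{gr}^F_p\overline C^{can}_* \hookrightarrow \mathrm{gr}^F_p C^{can}_*$ is a split monomorphism with chain-homotopy retraction $\mathrm{id}_{D_A}\otimes h_B$, in particular a quasi-isomorphism. That the original inclusion $\overline C^{can}_*\subseteq C^{can}_*$ is a quasi-isomorphism is then immediate from the comparison of the two (convergent, the filtration being finite) spectral sequences, or equivalently a downward induction over $p$ using the five lemma on $0\to F_{p-1}\to F_p\to\mathrm{gr}^F_p\to 0$.

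For the stronger statement that the inclusion is a retract, I would upgrade the stagewise retractions to a global one. Each term $C^{can}_n$ is literally a direct sum over generating cells, so in every degree $F_pC^{can}_* = F_{p-1}C^{can}_*\oplus\mathrm{gr}^F_pC^{can}_*$ and $C^{can}_n=\overline C^{can}_n\oplus(\text{rest})$ are split. Using these splittings one builds inductively up the filtration a chain map $\rho\colon C^{can}_*\to\overline C^{can}_*$ with $\rho\iota=\mathrm{id}$, together with a homotopy $\iota\rho\simeq\mathrm{id}$, correcting at the $p$-th stage by the homotopy $\mathrm{id}_{D_A}\otimes h_B$ to absorb the lower-filtration part $\delta$ of the differential. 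This gluing is the only genuinely delicate point: on each associated graded the assertion is a formal consequence of the contractibility of the affine-space cubical complex, but propagating a single retraction and homotopy through the full differential of $C^{can}_*$ requires this (finite, hence harmless) perturbation argument; everything else is the sign and $\KMW$-degree bookkeeping already carried out in Example \ref{cubicalAn}.
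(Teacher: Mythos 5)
Your proof follows essentially the same route as the paper's: your filtration by the $f$-level of $\sigma$ is exactly the paper's induction on $K_{\leq t}$, your identification $\mathrm{gr}^F_p C^{can}_* \cong D_A \otimes \wt{C}^{cell}_*(\af^{B})$ is the paper's isomorphism $C^{can}_*(\AZ_{K_{\leq t}})/C^{can}_*(\AZ_{K_{\leq t-1}}) \cong C^{can}_*(\AZ_{R_{\sigma_t}}) \cong \overline{C}^{can}_*(\AZ_{R_{\sigma_t}}) \otimes C^{cell}_*(\af^{n-|r(\sigma_t)|})$, and both arguments conclude from the contractibility of the cubical complex of affine space. Your write-up is merely more explicit about why the filtration is compatible with the differential and about gluing the stagewise retractions into a global one, points the paper leaves implicit.
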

\begin{proof}
  We will prove this by induction on the length $ l $ of $ K_{\max} = \{\sigma_1,\ldots,\sigma_l\} $. Let $ K_{\leq t} $ be the simplicial complex defined by $ \{\sigma_1,\ldots,\sigma_t\} $.
  
  For $ t=1 $, we have $ r(\sigma_1) = \emptyset $, and it follows that $ C^{can}_*(\AZ_{K_{\leq 1}}) \cong C^{cell}_*(\mathbb{A}^n) $. It is evident that this is quasi-isomorphic to $ C^{can}_*(\AZ_{K_{\leq 1}}) = [e^{\emptyset}_{\emptyset}] \mathbb{Z} $.

  Assuming the statement holds for $ t-1 $, we must demonstrate that the quotient $ C^{can}_*(\AZ_{K_{\leq t}})/C^{can}_*(\AZ_{K_{\leq t-1}}) $ is quasi-isomorphic to $ \overline{C}^{can}_*(\AZ_{K_{\leq t}})/\overline{C}^{can}_*(\AZ_{K_{\leq t-1}}) $. In particular, let $ R_{\sigma_t} = \{ \tau \subset \sigma_t \mid f(\tau) = \sigma_t \} $. Therefore, we have isomorphism $ C^{can}_*(\AZ_{K_{\leq t}})/C^{can}_*(\AZ_{K_{\leq t-1}}) \cong C^{can}_*(\AZ_{R_{\sigma_t}}) $, with the same relationship holding for $ \overline{C}^{can}_* $. Finally, we note that $ C^{can}_*(\AZ_{R_{\sigma_t}}) \cong \overline{C}^{can}_*(\AZ_{R_{\sigma_t}}) \otimes C^{cell}_*(\mathbb{A}^{n-|r(\sigma_t)|}) $.
\end{proof}

Consequently, we can project $C^{cell}_*(\AZ_K)$ to the restriction complex, yielding the following morphism:
\[
  \overline{T}_*: C^{cell}_*(\AZ_K) \to C^{can}_*(\AZ_K) \xr{qis} \overline{C}^{can}_*(\AZ_K).
\]

The set $\{\sigma_1,\ldots,\sigma_l\}$ is referred to as a regular expanding sequence of $K$ if for all $i$, the minimum element $\min(\sigma_i) = \{r(\sigma_i)\}$ contains a unique element. In this case, we have $f(\tau) = \sigma_i$ if and only if $\tau \supset r(\sigma_i)$. In regular cases, it follows that $[e^\sigma_\omega] \in \overline{C}^{can}_*(\AZ_K)$ if and only if $[e^\sigma_\omega] = [e^{r(\sigma_i)}_\emptyset]$ for some facet $\sigma_i$. This condition simplifies computations significantly.

From this point forward, we will concentrate on shellable simplicial complexes that admit a regular expanding sequence.
\subsection{Shellable Toric Variety}
\label{sec:shellable}

We adopt the definition provided in \cite{cai2021integral}. An ordering $K_{\max} = \{\sigma_1,\ldots,\sigma_l\}$ is defined as a shelling if the complex $B_j := (\bigcup_{1\leq i<j} \sigma_i) \cap \sigma_j$ is pure of dimension $n-1$; that is, $B_{j,\max} = B_{j,n-1}$. It is readily apparent that a shelling constitutes a regular expanding sequence as well. If $K$ possesses a shelling, we then refer to $K$ as shellable. From this point onward, we will assume that $K$ is shellable.

The following proposition is analogous to \cite[Prop 3.6]{cai2021integral}. Given a mod-$2$ linear function $\kappa: \mathbb{Z}^n \to \mathbb{Z}_2^n \to \mathbb{Z}_2$, we define the row set $\omega_{\kappa} \subset \llBra{m}$ as the subset $\{j \in \llBra{m} \mid \kappa \lambda(v_j) \equiv 1 \mod 2\}$, where $v_j$ are the basis vectors of $\mathbb{Z}^m$. Let $\mathrm{row} \lambda \subset 2^{\llBra{m}}$ denote the set of all row sets; thus, we can observe that $|\mathrm{row} \lambda| = 2^n$.

\begin{proposition}\label{cellTrans}
  For any cell $[e]$ of $\AZ_K$, we have that $\overline{T}_*[e] \neq 0$ if and only if $\sigma_e = r(\sigma_e) := r(f(\sigma_e))$ and $\tauO_e \cap f(\sigma_e) = \emptyset$. In such cases, we have:
  \[
    \overline{T}_*[e] = \eta^{t_e}[e_{\emptyset}] \text{ or } 0.
  \]
  Furthermore, if for some $\omega_\kappa \in \mathrm{row} \lambda$, the conditions $\sigma_e = \omega_\kappa \cap f(\sigma_e)$ and $\tauO_e \subset \omega_\kappa$ hold, then it follows that:
  \[
    \overline{T}_*[e] = \eta^{t_e}[e_{\emptyset}].
  \]
\end{proposition}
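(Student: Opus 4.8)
The plan is to compute $\overline{T}_*[e]$ by following the definition of $\overline{T}_*$ as the composite of the canonical transformation $T_*$ with the quasi-isomorphism $C^{can}_*(\AZ_K) \xrightarrow{qis} \overline{C}^{can}_*(\AZ_K)$ of Proposition \ref{restrQis}, and then invoking the explicit toric action formula of Corollary \ref{tActSupp}. First I would record the structure of $T_{e*}[e]$: by the lemma preceding Proposition \ref{canIso}, $T_e$ is the canonical transformation, so $\omega_0(T_e) \supset \tau_e \setminus f(\sigma_e)$, and Corollary \ref{tActSupp} gives $T_{e*}[e] = \sum_{\omega \subset f(\sigma_e)} c(T_e,e)_\omega [e_\omega]$ where the $c(T_e,e)_\omega$ are the coefficients of Proposition \ref{tActMain}. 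The retraction onto $\overline{C}^{can}_*$ then picks out only those summands $[e_\omega]$ with $\omega \sqcup \sigma_e \subset r(f(\sigma_e))$; in the shellable (regular expanding) situation this forces $\omega = \emptyset$ and $\sigma_e \subset r(f(\sigma_e))$, and more precisely, since $r(\sigma_i)$ is the minimal face with $f(r(\sigma_i)) = \sigma_i$, the only surviving generator is $[e^{r(\sigma_e)}_\emptyset]$ exactly when $\sigma_e = r(f(\sigma_e))$. So the first step is to argue that $\overline{T}_*[e]$ vanishes unless $\sigma_e = r(\sigma_e)$, which follows from the description of $\overline{C}^{can}_*$ in the regular case given just before Section \ref{sec:shellable}.

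Next I would extract the coefficient of $[e_\emptyset]$ in $T_{e*}[e]$. Because $t_e = |\tauO_e|$ and we are looking at $\omega = \emptyset$ with $|\omega| = 0$, the case $t_e > |\omega|$ of Proposition \ref{tActMain} applies (assuming $t_e > 0$; the case $t_e = 0$ is trivial since then $[e]$ is already canonical and $\overline{T}_*[e] = [e_\emptyset]$ or $0$ by the retraction), giving
\[
c(T_e,e)_\emptyset = \eta^{t_e} \prod_{i \in \tauO_e} \chi\!\left(\sum_{j \in \sigma_e} r'_{ij}(T_e)\right).
\]
Here I must handle the remaining condition $\tauO_e \cap f(\sigma_e) = \emptyset$: this is precisely what is needed so that for $i \in \tauO_e$ and $j \in \sigma_e \subset f(\sigma_e)$, the entries $r_{ij}(T_e)$ and the Kronecker correction $\delta_{ij}$ behave as dictated by the definition of the canonical transformation, which (combined with $\tauO_e \subset \llBra{m}\setminus f(\sigma_e)$) forces $r'_{ij}(T_e) = 0$ for $j \notin f(\sigma_e)$ and leaves the sum $\sum_{j \in \sigma_e} r'_{ij}(T_e)$ a well-defined integer whose parity determines whether the product is $0$ or $1$. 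Thus $\overline{T}_*[e] = \eta^{t_e}[e_\emptyset]$ or $0$, matching the claim. If $\tauO_e \cap f(\sigma_e) \neq \emptyset$, then one of the $[e_\omega]$-summands that could survive the retraction has $\omega$ forced to meet $f(\sigma_e)$ nontrivially, and combined with $\sigma_e = r(f(\sigma_e))$ being minimal one checks the restriction condition $\omega \sqcup \sigma_e \subset r(f(\sigma_e))$ fails, so $\overline{T}_*[e] = 0$; I would spell this out carefully.

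For the final assertion, suppose $\omega_\kappa \in \mathrm{row}\lambda$ with $\sigma_e = \omega_\kappa \cap f(\sigma_e)$ and $\tauO_e \subset \omega_\kappa$. The point is that $\omega_\kappa$ is defined via a mod-$2$ linear functional $\kappa$ vanishing on $\Ker\lambda$ after reduction, so applying $\kappa$ to the relation $\sum_{j} r_{ij}(T_e) v_j \in \Ker\lambda$ that defines $T_e$ yields $\sum_{j \in \llBra{m}} r_{ij}(T_e)\,\kappa\lambda(v_j) \equiv 0 \bmod 2$, i.e. $\sum_{j \in \omega_\kappa} r_{ij}(T_e) \equiv 0 \bmod 2$ for each $i \in \tauO_e$. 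Splitting $\omega_\kappa$ into $\omega_\kappa \setminus f(\sigma_e)$ and $\omega_\kappa \cap f(\sigma_e) = \sigma_e$, and using that on $\tau_e \setminus f(\sigma_e)$ the entries $r'_{ij}(T_e)$ vanish while on $\tauO_e \subset \omega_\kappa$ the diagonal contributes the needed correction, one rewrites $\sum_{j \in \sigma_e} r'_{ij}(T_e) \equiv 1 \bmod 2$ for every $i \in \tauO_e$, so every factor $\chi(\cdots)$ equals $1$ and $c(T_e,e)_\emptyset = \eta^{t_e}$. Hence $\overline{T}_*[e] = \eta^{t_e}[e_\emptyset]$, ruling out the ``$0$'' alternative.

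The main obstacle I anticipate is bookkeeping the interaction between three index conventions at once: the Kronecker shift $r'_{ij} = r_{ij} + \delta_{ij}$, the partition $\llBra{m} = f(\sigma_e) \sqcup (\llBra{m}\setminus f(\sigma_e))$ used to define $T_e$, and the mod-$2$ row set $\omega_\kappa$. Getting the parity computation $\sum_{j \in \sigma_e} r'_{ij}(T_e) \equiv 1$ right hinges on correctly tracking which indices $i \in \tauO_e$ lie in $\omega_\kappa$ versus $f(\sigma_e)$ (they lie in the former by hypothesis and the latter's complement by canonicity), and on the linearity of $\kappa$ being used against the defining lattice relation of $T_e$ rather than against $\lambda$ directly. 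Once that identity is in hand, the rest is a direct substitution into Proposition \ref{tActMain} and an appeal to the regular-case description of $\overline{C}^{can}_*$.
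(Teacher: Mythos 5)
Your overall route is the paper's: reduce to the coefficient of $[e_\emptyset]$ using Corollary \ref{tActSupp} together with the regular-case description of $\overline{C}^{can}_*$ (which discards every $[e_\omega]$ with $\omega\neq\emptyset$ and forces $\sigma_e=r(\sigma_e)$), read off $c(T_e,e)_\emptyset=\eta^{t_e}\prod_{i\in\tauO_e}\chi\bigl(\sum_{j\in\sigma_e}r'_{ij}\bigr)$ from Proposition \ref{tActMain}, and, for the last assertion, apply the mod-$2$ functional $\kappa$ to the lattice relation defining $T_e$ to obtain $\sum_{j\in\sigma_e}r_{ij}\equiv 1\pmod 2$ for each $i\in\tauO_e$. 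Those parts are correct and essentially identical to the paper's argument.

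The gap is in your justification of the ``only if'' direction, namely that $\tauO_e\cap f(\sigma_e)\neq\emptyset$ forces $\overline{T}_*[e]=0$. You argue that some surviving summand $[e_\omega]$ would have $\omega$ meeting $f(\sigma_e)$ and hence fail the restriction condition $\omega\sqcup\sigma_e\subset r(f(\sigma_e))$. But the retraction already kills every summand with $\omega\neq\emptyset$ irrespective of this hypothesis; the only summand at stake is $\omega=\emptyset$, whose indexing set trivially satisfies the restriction condition, so no argument about $\omega$ meeting $f(\sigma_e)$ can make it vanish. What actually happens --- and what the paper uses --- is that the coefficient itself dies: for $i\in\tauO_e\cap f(\sigma_e)$ the entire row $r_{i\bullet}(T_e)$ vanishes (for $j\notin f(\sigma_e)$ one has $r_{ij}=-\delta_{ij}=0$ because $i\in f(\sigma_e)$ while $j\notin f(\sigma_e)$, hence $u_i=0$, and the unique solution on the basis $\{\lambda(v_j)\}_{j\in f(\sigma_e)}$ is then also zero), so the factor $\chi\bigl(\sum_{j\in\sigma_e}r_{ij}\bigr)=\chi(0)=0$ annihilates $c(T_e,e)_\emptyset$. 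You need this coefficient computation, not a restriction-condition argument, to close the ``only if'' direction; the rest of your proposal stands.
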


\begin{proof}
  Let $\omega_0 = \omega_0(T_e)$ and $r_{ij} = r_{ij}(T_e)$. First, observe that $\llBra{m} \setminus f(\sigma_e) \subset \omega_0$. By applying Corollary \ref{tActSupp}, we obtain:
  \[
    T_*[e] = \sum_{\omega \subset f(\sigma_e) \setminus \sigma_e} c(T_e,e)_{\omega} [e_\omega].
  \]
  Recall that $[e^\sigma_\omega] \in \overline{C}^{can}_*(\AZ_K)$ if and only if $[e^\sigma_\omega] = [e^{r(\sigma)}_\emptyset]$. This indicates that $\overline{T}_*[e] = 0$ if $\sigma_e \neq r(\sigma_e)$. For the case where we assume $\sigma_e = r(\sigma_e)$, we derive:
  \[
    \overline{T}_*[e] = c(T_e,e)_{\emptyset} [e_\emptyset].
  \]

We now apply Corollary \ref{tActMain}. If $ t_e = 0 $, then $ T_*[e] = [e] $, as $ [e] $ is already canonical. We can assume $ t_e > 0 $; note that $ r_{ij} = r'_{ij} $ for $ j \in \sigma_e $. Thus, we have

\[
\overline{T}_*[e] = \eta^{t_e} \prod_{i \in \tauO_e} \chi\left( \sum_{j \in \sigma_e} r_{ij} \right) [e_\emptyset].
\]

If $ \tauO_e \cap f(\sigma_e) \neq \emptyset $, then by the definition of $ T_e $, for $ i \in \tauO_e \cap f(\sigma_e) $, it follows that $ r_{ij} = 0 $ for all $ j $. This indicates that $ \overline{T}_e[e] = 0 $.

Finally, if $ \sigma_e = \omega_{\kappa} \cap f(\sigma_e) $, considering $ i \in \tauO_e \subset \omega_\kappa $, we have by the definition of $ T_e $:

\[
-\lambda(v_i) + \sum_{j \in f(\sigma_e)} r_{ij} \lambda(v_j) = 0.
\]

Furthermore, by the definition of $ \omega_{\kappa} $, we have 

\[
1 \equiv \kappa \lambda(v_i) = \sum_{j \in f(\sigma_e)} r_{ij} \kappa \lambda(v_j) \equiv \sum_{j \in \sigma_e = \omega_{\kappa} \cap f(\sigma_e)} r_{ij} \mod 2.
\]

This leads to the conclusion that $ \overline{T}_*[e] = \eta^{t_e}[e_{\emptyset}] $.

\end{proof}

\begin{remark}\label{uniIntersect}
By \cite[Lemma 3.5]{cai2021integral}, for each $ \sigma \in K $, there exists a unique $ \omega_\kappa \in \mathrm{row}\lambda $ such that $ \sigma = f(\sigma) \cap \omega_\kappa $. Moreover, by definition, we have $ r(\sigma) = \sigma \cap \omega_\kappa $ if $ \sigma \in K_{\max} $.
\end{remark}

\subsection{The Cellular Chain Complexes of $ K_\omega $}

For a simplicial complex $ K $ over $ \llBra{m} $, we construct an augmented simplicial chain complex $ C_*(K) $: 

\[
C_i(K) = \bigoplus_{\sigma \in K_i} [\sigma] \mathbb{Z},
\]

where $ [\sigma] = [j_1, \ldots, j_i] $ with $ j_1 < \cdots < j_i \in \sigma $ represents the oriented simplex, and the natural differentials are defined as 

\[
\partial[\sigma] = \sum_{j \in \sigma} (-1)^{(\sigma, j)} [\sigma \setminus \{ j \}].
\]

Similarly, we can define the restriction subcomplex $ \overline{C}_*(K) \subset C_*(K) $, which consists of the set $ \{[\sigma] \mid \sigma \subset r(\sigma) \} $ (in regular cases, $ \sigma = r(\sigma) $). It is evident that this inclusion serves as a quasi-isomorphism.

For a regular expanding sequence $ K_{\max} = \{ \sigma_1, \ldots, \sigma_l \} $, the facet $ \sigma_i $ is termed critical if $ \sigma_i = r(\sigma_i) $. Let $ \mathrm{Cri}(K) $ denote the set of critical facets. The homotopy type only alters at critical facets; therefore, we can further simplify by considering the critical complex $ \overline{C}^{cri}_*(K) $ generated by $ \mathrm{Cri}(K) $ (i.e., the facets $ [\sigma] \in \overline{C}_*(K) $ where $ [\sigma] \in K_{\max} $), along with a differential $ \overline{\partial}^{cri} $. There exists a chain morphism $ \rho : \overline{C}^{cri}_*(K) \xrightarrow{\sim} C_*(K) $ which is a quasi-isomorphism \cite[Lemma 4.4]{cai2021integral}.

Let $ \omega \subset \llBra{m} $. We define the restriction simplicial complex $ K_\omega $ over $ \omega $, where $ \sigma \in K_{\omega} $ if $ \sigma \in K $ and $ \sigma \subset \omega $. Subsequently, we can define a morphism of complexes:

\[
\varphi_{\omega}: C_*(K_\omega) \otimes \KMW_{|\omega|} \to C^{cell}_*(\AZ_K), \quad [\sigma] \mapsto \epsilon^{\sum_{j \in \omega \setminus \sigma} (\omega, j)} [e^{\sigma}_{\omega \setminus \sigma}]
\]
It is straightforward to verify that this morphism is compatible with the differentials. In a similar fashion, we can define the morphism for $ \overline{C}_*(K) \subset C_*(K) $:

\[
\overline{\varphi}_{\omega}: \overline{C}_*(K_\omega) \otimes \KMW_{|\omega|} \to C^{cell}_*(\AZ_K)
\]

For $ \omega \in \mathrm{row}_{\lambda} $, the critical complex $ (\overline{C}^{cri}_*(K_\omega), \overline{\partial}) $ is generated by the set $ \{ r(\sigma_i) \mid r(\sigma_i) = \sigma_i \cap \omega, \text{ for some facet } \sigma_i \in K_{\max} \} $ \cite[Lemma 4.6]{cai2021integral}. This also induces a morphism by composing with $ \rho $:

\[
\overline{\varphi}^{cri}_{\omega}: \overline{C}^{cri}_*(K_\omega) \otimes \KMW_{|\omega|} \to C^{cell}_*(\AZ_K)
\]

Next, we define a morphism $ \phi_{i,\omega} $ for each degree $ i = |r(\sigma)| $, which can be formally expressed as $ ``\frac{1}{\eta^{|\omega|-i}} \overline{T}_* \circ \overline{\varphi}^{cri}_{\omega}" $:

\[
\phi_{i,\omega}: \overline{C}^{cri}_i(K_\omega) \otimes \KMW_{i} \to \overline{C}^{can}_i(\AZ_K), \quad [r(\sigma)] \mapsto [e^{r(\sigma)}_{\emptyset}]
\]

\begin{lemma}\label{phiMor}
The morphism $ \phi_{i,\omega} $ indeed defines a morphism of complexes:
\[
\phi_\omega : (\overline{C}^{cri}_i(K_\omega) \otimes \KMW_{i}, \eta \overline{\partial}^{cri}) \to (\overline{C}^{can}_i(\AZ_K), \overline{\partial})
\]
\end{lemma}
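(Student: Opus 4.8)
The plan is to obtain $\phi_\omega$ as the ``$\eta^{|\omega|-i}$-divided'' version of the composite $\Psi_\omega := \overline{T}_* \circ \overline{\varphi}^{cri}_\omega$, which is already a morphism of complexes, and then to transfer the chain-map property. First I would record that $\Psi_\omega$ is a morphism of complexes $(\overline{C}^{cri}_*(K_\omega)\otimes\KMW_{|\omega|},\ \overline{\partial}^{cri}\otimes\mathrm{id}) \to (\overline{C}^{can}_*(\AZ_K),\ \overline{\partial})$. Indeed $\overline{\varphi}^{cri}_\omega$ is a composite of chain maps (the map $\varphi_\omega$ was checked to be compatible with differentials, and $\rho$ is a chain map by \cite[Lemma 4.4]{cai2021integral}), while $\overline{T}_*$ is a morphism of complexes by the discussion following Proposition \ref{restrQis}.

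Next I would compute $\Psi_\omega$ on generators. By \cite[Lemma 4.6]{cai2021integral} the complex $\overline{C}^{cri}_*(K_\omega)$ is generated by the faces $r(\sigma)$ with $r(\sigma) = \sigma\cap\omega$. Fix such a $\tau = r(\sigma)$ of homological degree $i = |r(\sigma)|$. Unwinding $\overline{\varphi}^{cri}_\omega([\tau]\otimes\mu)$ yields a $\Z$-linear combination of cubical cells $[e^{\tau'}_{\omega\setminus\tau'}]$ (carrying $\mu\in\KMW_{|\omega|}$), where $\tau'$ ranges over the faces occurring in $\rho([\tau])$; for each such cell $\sigma_e = \tau'\subseteq\omega$ and $f(\tau') = \sigma$, so Proposition \ref{cellTrans} applies with $\omega_\kappa = \omega$ (legitimate since $\omega\in\mathrm{row}\lambda$), giving $\overline{T}_*[e^{\tau'}_{\omega\setminus\tau'}] = \eta^{|\omega|-|\tau'|}[e^{\tau'}_\emptyset]$ when $\tau' = \sigma''\cap\omega = r(\sigma'')$ for some facet $\sigma''$, and $0$ otherwise. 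Collecting the surviving terms and matching the $\epsilon$-signs of $\varphi_\omega$ against those of $\overline{\partial}^{cri}$, one obtains $\Psi_\omega([\tau]\otimes\mu) = [e^{\tau}_\emptyset](\eta^{|\omega|-i}\mu)$, i.e.\ in homological degree $i$ one has $\Psi_\omega = \phi_{i,\omega}\circ\psi_i$, where $\psi_i := \mathrm{id}\otimes\eta^{|\omega|-i}\colon \overline{C}^{cri}_i(K_\omega)\otimes\KMW_{|\omega|}\to \overline{C}^{cri}_i(K_\omega)\otimes\KMW_i$. A direct check shows that $(\psi_i)_i$ is itself a chain map from $(\overline{C}^{cri}_*(K_\omega)\otimes\KMW_{|\omega|},\ \overline{\partial}^{cri}\otimes\mathrm{id})$ to $(\overline{C}^{cri}_i(K_\omega)\otimes\KMW_i,\ \eta\overline{\partial}^{cri})$, since passing from degree $i$ to degree $i-1$ multiplies the simplicial boundary by the same extra factor of $\eta$ on both sides.

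Combining $\Psi_\omega = \phi_\omega\circ\psi$ with the fact that $\Psi_\omega$ and $\psi$ are chain maps yields $\overline{\partial}\circ\phi_{i,\omega}\circ\psi_i = \phi_{i-1,\omega}\circ(\eta\overline{\partial}^{cri})\circ\psi_i$, i.e.\ the sought identity $\overline{\partial}\circ\phi_{i,\omega} = \phi_{i-1,\omega}\circ(\eta\overline{\partial}^{cri})$ holds after restriction along $\psi_i$. \textbf{The main obstacle} is to remove this restriction: because multiplication by $\eta$ is not injective on Milnor-Witt $K$-theory, the image of $\psi_i$ is in general a proper subsheaf of $\overline{C}^{cri}_i(K_\omega)\otimes\KMW_i$, so the equality on $\mathrm{im}(\psi_i)$ does not formally imply the full identity. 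I would close this gap by a short direct computation of both sides on the generator $[e^{r(\sigma)}_\emptyset]$: this cell has twist degree $0$, and each codimension-one face $\partial_l[e^{r(\sigma)}_\emptyset] = [e^{r(\sigma)\setminus\{l\}}_{\{l\}}]$ has twist degree $1$, so the canonical transformation attached to it acts on a single $\Gm$-factor and, by Proposition \ref{tActMain}, its contribution to the twist-$0$ canonical summands $[e^{r(\sigma')}_\emptyset]$ equals $(-1)^{(r(\sigma),l)}\,\eta\,\chi(\cdots)$ with $\chi(\cdots)\in\{0,1\}$; one then verifies that the resulting integer coefficient $(-1)^{(r(\sigma),l)}\chi(\cdots)$ of $[e^{r(\sigma')}_\emptyset]$ is precisely the incidence coefficient of $r(\sigma)$ and $r(\sigma')$ in the critical differential $\overline{\partial}^{cri}$ of \cite[Lemmas 4.4 and 4.6]{cai2021integral}. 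Hence $\overline{\partial}[e^{r(\sigma)}_\emptyset]$ and $\phi_{i-1,\omega}(\eta\overline{\partial}^{cri}[r(\sigma)])$ coincide already as $\KMW$-linear operations out of $\KMW_i$, which is exactly the assertion of Lemma \ref{phiMor}; all the remaining steps are routine diagram chases with chain maps that are already available.
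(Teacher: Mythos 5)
Your construction of $\Psi_\omega=\overline{T}_*\circ\overline{\varphi}^{cri}_\omega$, the observation that it is a chain map, and the computation $\Psi_{i,\omega}=\phi_{i,\omega}\circ\eta^{|\omega|-i}$ on generators reproduce the paper's argument exactly (there $\Psi_\omega$ is denoted $\widetilde{\phi}_\omega$). The divergence is in the last step, and there the proposal goes astray in two ways.

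First, the ``main obstacle'' you identify is not an obstacle. The two maps to be compared, $\overline{\partial}\circ\phi_{i,\omega}$ and $\phi_{i-1,\omega}\circ\eta\overline{\partial}^{cri}$, are morphisms of strictly $\A^1$-invariant sheaves from a copy of $\KMW_i$ to a direct sum of copies of $\KMW_{i-1}$ (in the shellable case every cell of $\overline{C}^{can}_*$ has twist degree zero). Each component therefore lies in $\rHom_{Ab_{\A^1}(k)}(\KMW_i,\KMW_{i-1})\cong\KMW_{-1}(k)=W(k)$, and precomposition with $\eta^{N}\colon\KMW_{i+N}\to\KMW_i$ corresponds under this identification to multiplication by $\eta^{N}$ on the negative-degree part $\KMW_{\leq -1}(k)=W(k)$, which is the identity; hence it is injective on this Hom-group and the factor $\eta^{|\omega|-i}$ may be cancelled. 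Equivalently, in the paper's phrasing: both sides of $\eta^{|\omega|-i+1}\phi_{i-1,\omega}(\overline{\partial}^{cri}[r(\sigma)])=\eta^{|\omega|-i}\,\overline{\partial}\phi_{i,\omega}([r(\sigma)])$ are visibly divisible by $\eta$ (the left by construction, the right because every surviving term of $\overline{\partial}[e^{r(\sigma)}_{\emptyset}]$ carries a factor $\eta^{t_e}$ with $t_e=1$ by Proposition \ref{cellTrans}), and $\eta$ has no zero divisors on $\eta\KMW_*\cong W$. Your worry would be warranted for sections of $\KMW_*$ in nonnegative degrees, but not for morphisms between these sheaves, which live in the $\eta$-periodic range.

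Second, the substitute you offer is not a proof. The assertion that the integer coefficient $(-1)^{(r(\sigma),l)}\chi(\cdots)$ produced by Proposition \ref{cellTrans} ``is precisely the incidence coefficient of $r(\sigma)$ and $r(\sigma')$ in $\overline{\partial}^{cri}$'' is exactly the combinatorial identity that the chain-map-plus-cancellation argument is designed to establish without hand computation; verifying it directly requires unwinding the retraction $\rho$ and the differential $\overline{\partial}^{cri}$ of \cite{cai2021integral} together with the canonical transformations $T_{\partial_j[e]}$, and you do not carry this out. As written, then, your argument discards a valid one-line cancellation and replaces it with an unproven claim. Restoring the cancellation with the justification above makes your proof coincide with the paper's.
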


\begin{proof}
We only need to demonstrate the compatibility of differentials:
\[
\phi_{i-1,\omega}(\eta \overline{\partial}^{cri}[r(\sigma)]) = \overline{\partial} \phi_{i,\omega}([r(\sigma)])
\]
We define the morphism $ \widetilde{\phi}_\omega = \overline{T}_* \circ \overline{\varphi}^{cri}_{\omega} $:
\[
\widetilde{\phi}_\omega : (\overline{C}^{cri}_i(K_\omega) \otimes \KMW_{|\omega|}, \overline{\partial}^{cri}) \to (\overline{C}^{can}_i(\AZ_K), \overline{\partial})
\]

By Proposition \ref{cellTrans}, we have:
\[
\widetilde{\phi}_{i,\omega}([r(\sigma)]) = \epsilon^{\sum_{j \in \omega \setminus r(\sigma)} (\omega, j)} T_* [e^{r(\sigma)}_{\omega \setminus r(\sigma)}] = \eta^{|\omega|-i} [e^{r(\sigma)}_{\emptyset}] = \eta^{|\omega|-i} \phi_{i,\omega}([r(\sigma)])
\] 

Since  $ \widetilde{\phi}_\omega $ is a morphism of complexes, we have
\[
\eta^{|\omega| - i + 1} \phi_{i - 1, \omega}(\overline{\partial}^{\text{cri}}[r(\sigma)]) = \widetilde{\phi}_{i - 1, \omega}(\overline{\partial}^{\text{cri}}[r(\sigma)]) = \overline{\partial} \widetilde{\phi}_{i, \omega}([r(\sigma)]) = \eta^{|\omega| - i} \overline{\partial} \phi_{i, \omega}([r(\sigma)]).
\]
Finally, note that since $\eta$ has no zero divisors in $\eta \KMW_* \cong W_*$, we obtain that
\[
\phi_{i - 1, \omega}(\eta \overline{\partial}^{\text{cri}}[r(\sigma)]) = \eta \phi_{i - 1, \omega}(\overline{\partial}^{\text{cri}}[r(\sigma)]) = \overline{\partial} \phi_{i, \omega}([r(\sigma)]).
\]
\end{proof}

We now define 
\[
\overline{C}^{\lambda}_i = \bigoplus_{\omega \in \mathrm{row} \lambda} \overline{C}^{\text{cri}}_i(K_\omega) \otimes \KMW_{i}.
\]
With this definition, we can state and prove our main result:

\begin{theorem}\label{mainComplex}
The morphism of complexes 
\[
\phi_\lambda : (\overline{C}^{\lambda}_i, \eta \overline{\partial}^{\text{cri}}) \to (\overline{C}^{\text{can}}_i(\mathbb{A}^1_K), \overline{\partial}) 
\]
is an isomorphism.

Furthermore, this morphism induces a quasi-isomorphism to the cellular $\mathbb{A}^1$-chain complex $ C^{\text{cell}}_*(X_\Sigma) $ of the smooth shellable toric variety $ X_\Sigma $:
\[
p_{\text{can}} \circ \phi_\lambda : (\overline{C}^{\lambda}_i, \eta \overline{\partial}^{\text{cri}}) \to (C^{\text{cell}}_i(X_\Sigma), \partial). 
\]
\end{theorem}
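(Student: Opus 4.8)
The plan is to establish the two assertions separately. The first, that $\phi_\lambda$ is an isomorphism of complexes, reduces to checking that $\phi_\lambda$ is a degreewise bijection, since it is already known to be a chain map; the second, that the composite lands quasi-isomorphically in $C^{cell}_*(X_\Sigma)$, is then a formal concatenation of results already available.

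First I would note that $\phi_\lambda = \bigoplus_{\omega \in \mathrm{row}\,\lambda} \phi_\omega$ is a morphism of complexes: each $\phi_\omega$ is one by Lemma \ref{phiMor}, and both differentials $\eta\overline{\partial}^{cri}$ and $\overline{\partial}$ respect the direct sum decomposition over $\omega$. It then remains to match up generators on the two sides. On the target, since $K$ is shellable it carries a regular expanding sequence, so a generator $[e^\sigma_\omega]$ of $\overline{C}^{can}_i(\AZ_K)$ is forced to have $\omega = \emptyset$ and $\sigma = r(\sigma_j)$ for a facet $\sigma_j$ with $|r(\sigma_j)| = i$, carrying a free copy of $\KMW_i$ (here $t_{e}=0$); moreover $\sigma_j \mapsto r(\sigma_j)$ is injective on $K_{\max}$, because $f(r(\sigma_j)) = \sigma_j$. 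Thus the degree-$i$ generators of $\overline{C}^{can}_i(\AZ_K)$ are indexed by $\{\sigma_j \in K_{\max} : |r(\sigma_j)| = i\}$. On the source, the generators of $\overline{C}^{cri}_i(K_\omega)$ are the $[r(\sigma_j)]$ with $r(\sigma_j) = \sigma_j \cap \omega$ and $|r(\sigma_j)| = i$, by \cite[Lemma 4.6]{cai2021integral}; by the uniqueness in Remark \ref{uniIntersect}, each facet $\sigma_j$ determines exactly one admissible $\omega$, so the degree-$i$ generators of $\overline{C}^{\lambda}_i$ are likewise indexed by $\{\sigma_j \in K_{\max} : |r(\sigma_j)| = i\}$, each tensored with $\KMW_i$. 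Since $\phi_{i,\omega}$ sends $[r(\sigma_j)] \mapsto [e^{r(\sigma_j)}_{\emptyset}]$ and is the identity on the $\KMW_i$-factor, it identifies these two indexed families of $\KMW_i$-summands bijectively; hence $\phi_\lambda$ is an isomorphism in each degree, and therefore an isomorphism of complexes.

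For the second assertion I would simply compose. By Proposition \ref{restrQis} the inclusion $\overline{C}^{can}_*(\AZ_K) \hookrightarrow C^{can}_*(\AZ_K)$ is a quasi-isomorphism, and by Proposition \ref{canIso} the map $p_{can} : C^{can}_*(\AZ_K) \to C^{cell}_*(X_\Sigma)$ is an isomorphism of complexes; writing $p_{can}$ also for the composite $\overline{C}^{can}_*(\AZ_K) \hookrightarrow C^{can}_*(\AZ_K) \xrightarrow{p_{can}} C^{cell}_*(X_\Sigma)$, this composite is a quasi-isomorphism. Precomposing with the isomorphism $\phi_\lambda$ from the first part yields the desired quasi-isomorphism $p_{can} \circ \phi_\lambda : (\overline{C}^{\lambda}_i, \eta\overline{\partial}^{cri}) \to (C^{cell}_i(X_\Sigma), \partial)$.

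The step I expect to be the crux is the generator bookkeeping of the second paragraph: one must be sure that \cite[Lemma 4.6]{cai2021integral} together with Remark \ref{uniIntersect} genuinely yields a partition of $K_{\max}$ indexed by the admissible pairs $(\omega, r(\sigma))$ — neither omitting a facet nor counting one twice — and that on the target side the regular/shellable hypothesis really does kill every surviving cell except the $[e^{r(\sigma)}_{\emptyset}]$ with empty twist. The remaining ingredients — that $\phi_\lambda$ is a chain map (Lemma \ref{phiMor}), and that a quasi-isomorphism composed with isomorphisms is a quasi-isomorphism — are immediate from the preceding results.
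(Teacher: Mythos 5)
Your proposal is correct and follows essentially the same route as the paper: decompose $\overline{C}^{\lambda}_i$ via Remark \ref{uniIntersect} into a single sum over facets $\sigma$ with $|r(\sigma)|=i$, match these against the generators $[e^{r(\sigma)}_{\emptyset}]$ of $\overline{C}^{can}_i(\AZ_K)$ (which in the regular/shellable case are the only surviving cells), and obtain the second assertion by composing with Propositions \ref{restrQis} and \ref{canIso}. Your bookkeeping is in fact slightly more explicit than the paper's — notably the injectivity of $\sigma\mapsto r(\sigma)$ via $f(r(\sigma))=\sigma$ and the spelled-out treatment of the quasi-isomorphism in the second part, both of which the paper leaves implicit.
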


\begin{proof}
Firstly, we establish that $ \phi_\lambda $ is a well-defined chain morphism, which requires verification that it induces an isomorphism at each degree. In fact, we can express $ \overline{C}^{\lambda}_i $ as follows:
\[
\overline{C}^{\lambda}_i = \bigoplus_{\omega \in \mathrm{row} \lambda} \overline{C}^{\text{cri}}_i(K_\omega) \otimes \KMW_{i} = \bigoplus_{\omega \in \mathrm{row} \lambda} \bigoplus_{\substack{|r(\sigma)| = i \\ r(\sigma) = \omega \cap \sigma \\ \sigma \in K_{\max}}} [r(\sigma)] \KMW_{i}.
\]
From Remark \ref{uniIntersect}, we observe that for each $ \sigma \in K_{\max} $, there exists a unique $ \omega \in \mathrm{row} \lambda $ such that $ r(\sigma) = \omega \cap \sigma $. This uniqueness allows us to conclude that
\[
\overline{C}^{\lambda}_i = \bigoplus_{\substack{|r(\sigma)| = i \\ \sigma \in K_{\max}}} [r(\sigma)] \KMW_{i},
\]
thus leading to the morphism defined by
\[
\phi_{i, \lambda} : \overline{C}^{\lambda}_i \to \overline{C}^{\text{can}}_i(\mathbb{A}^1_K), \quad [r(\sigma)] \mapsto [e^{r(\sigma)}_{\emptyset}].
\]
is an isomorphism for each degree.
\end{proof}

\begin{remark}
  It should be noted that $ \bigoplus_{\omega \in \mathrm{row}\lambda} \overline{C}^{cri}_i(K_\omega) $ is entirely combinatorial; thus, we obtain a result consistent with \cite[Theorem 5.2]{cai2021integral}. Moreover, we can readily utilize computations from real toric manifolds by substituting $ 2 $ with $ \eta $.
\end{remark}

Next, we observe that the reduced homology group $ \widetilde{H}_i(\overline{C}^{cri}_*(K_\omega)) $ is equivalent to $ \widetilde{H}_i(C_*(K_\omega)) = \widetilde{H}_{i-1}(|K_\omega|) $. We define $ G^{\lambda}_{i} = \bigoplus_{\omega \in \mathrm{row}\lambda}\widetilde{H}_{i}(|K_\omega|) $. Let $ B(0) \subset K_{\max} $ serve as a basis for the free part of $ G^{\lambda}_{*,free} $. For $ l > 1 $, let $ B(l) \subset K_{\max} $ be a basis for the $ l $-torsion part of $ G^{\lambda}_{*,l-tor} $. Similarly, we define $ B(l)_i $ for $ G^{\lambda}_{i,free/l-tor} $.

We now introduce $ \KMW_i \sslash l \eta := [ \KMW_{i+1} \xr{l_\sigma \eta}  \KMW_{i} ] \in \Daba{k} $ for $ l \in \mathbb{N}^+ $, and we set $ \KMW_i \sslash 0 = \KMW_i $. This leads us to the following decomposition:

\begin{corollary}
  \label{mainA1cellular}
In the category $\Daba{k}$, for a smooth pure shellable toric variety $X_{\Sigma}$, we have a quasi-isomorphism:
\[
C^{cell}_*(X_\Sigma) \cong \bigoplus_{l \in \mathbb{N}} \bigoplus_{\sigma \in B(l)} \KMW_{|r(\sigma)|} \sslash l \eta [|r(\sigma)|],
\]
for a certain subset $B(1) \subset K_{\max}$, which can be derived from the complex $\bigoplus_{\omega \in \mathrm{row}\lambda} \overline{C}^{cri}_i(K_\omega)$.

Specifically, we establish the correspondences between:
\begin{itemize}
  \item the free summands of $G^{\lambda}_{i-1,free}$ and the free summands of $ \KMW_i[i] $.
  \item the torsion summands of $G^{\lambda}_{i-1,tor}$ and the torsion summands $ \KMW_i \sslash l \eta [i] $ for $l > 1$.
\end{itemize}
As a result, the cellular $\mathbb{A}^1$-homology of $X_\Sigma$ is given by
\[
\mathbf{H}^{cell}_i(X_{\Sigma}) = \bigoplus_{l \in \mathbb{N}} \bigoplus_{B(l)_{i-1}} \KMW_i / l \eta \oplus \bigoplus_{l \in \mathbb{N}^+} \bigoplus_{B(l)_{i-2}} (_{l\eta}\KMW_i).
\]
Moreover, we can establish correspondences between:
\begin{itemize}
  \item the free summands of $G^{\lambda}_{i-1,free}$ and the free summands of $ \KMW_i $.
  \item the torsion summands of $G^{\lambda}_{i-1,tor}$ and the torsion summands of $\KMW_i / l \eta $ for $l > 1$. 
  \item the torsion summands of $G^{\lambda}_{i-2,tor}$ and the torsion summands of ${_{\l \eta}\KMW_i}$ for $l > 1$. 
\end{itemize}
\end{corollary}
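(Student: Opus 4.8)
The plan is to deduce the corollary from Theorem~\ref{mainComplex} by a homological-algebra argument over $\bZ$. Theorem~\ref{mainComplex} supplies a quasi-isomorphism in $\Daba{k}$ from $(\overline{C}^{\lambda}_*,\eta\overline{\partial}^{cri})$ to $C^{cell}_*(X_\Sigma)$, where $\overline{C}^{\lambda}_i=\bigoplus_{\omega\in\mathrm{row}\lambda}\overline{C}^{cri}_i(K_\omega)\otimes\KMW_i$ carries the differential $\eta\overline{\partial}^{cri}$. Setting $V_i:=\bigoplus_{\omega\in\mathrm{row}\lambda}\overline{C}^{cri}_i(K_\omega)$, the first point I would make is that $(\overline{C}^{\lambda}_*,\eta\overline{\partial}^{cri})$ is the image of the bounded complex of finitely generated free abelian groups $(V_\bullet,\overline{\partial}^{cri})$ under the additive functor $\Phi$ sending a complex $(W_\bullet,\delta)$ of free $\bZ$-modules to the complex in $\Daba{k}$ with $W_i\otimes_{\bZ}\KMW_i$ in degree $i$ and differential $\delta\otimes\eta\colon W_i\otimes\KMW_i\to W_{i-1}\otimes\KMW_{i-1}$. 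The functor $\Phi$ commutes with finite direct sums but is crucially \emph{not} exact, and that failure is exactly the origin of the extra summands $B(1)$.

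The core algebraic input is the elementary-divisor decomposition: any bounded complex of finitely generated free abelian groups is isomorphic \emph{as a chain complex} --- not merely in the derived category --- to a finite direct sum of \emph{elementary} complexes, each of the form $\bZ[n]$ (one copy of $\bZ$ in degree $n$) or $\mathcal{E}(l)_n:=[\bZ\xr{l}\bZ]$ with the two copies in degrees $n$ and $n-1$, for an integer $l\geq 1$. I would prove this by downward induction on the top nonzero degree: Smith-normalize the top differential $\delta_N$, use $\delta_{N-1}\delta_N=0$ to conclude that $\delta_{N-1}$ annihilates the image of $\delta_N$, and peel off the resulting multiplication-by-$l$ blocks and kernel generators of $\delta_N$ as honest direct summands. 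Applied to $(V_\bullet,\overline{\partial}^{cri})=\bigoplus_{\omega}(\overline{C}^{cri}_*(K_\omega),\overline{\partial}^{cri})$, and using $H_n(\overline{C}^{cri}_*(K_\omega))\cong\widetilde{H}_{n-1}(|K_\omega|)$ so that $H_n(V_\bullet)=G^{\lambda}_{n-1}$: the $\bZ[n]$-summands match a basis $B(0)$ of the free part of $G^{\lambda}_{n-1}$, the $\mathcal{E}(l)_n$-summands with $l\geq 2$ match a basis $B(l)$ of the $l$-torsion of $G^{\lambda}_{n-2}$, and the $\mathcal{E}(1)_n$-summands --- acyclic over $\bZ$, hence invisible to homology --- are counted by a further set $B(1)$ read directly off the combinatorial complex $\bigoplus_\omega\overline{C}^{cri}_i(K_\omega)$. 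In each case the degree-$n$, resp. degree-$(n-1)$, generator involves facets $\sigma$ with $|r(\sigma)|=n$, resp. $|r(\sigma)|=n-1$, which fixes the indexing of $B(l)\subset K_{\max}$.

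Applying $\Phi$ to this decomposition splits $\overline{C}^{\lambda}_*$, since $\Phi(\bZ[n])=\KMW_n[n]$ and $\Phi(\mathcal{E}(l)_n)=[\KMW_n\xr{l\eta}\KMW_{n-1}]$ in degrees $n,n-1$, that is $\KMW_{n-1}\sslash l\eta\,[n-1]$, for \emph{every} $l\geq 1$; in particular the $\mathcal{E}(1)$-pieces become the non-acyclic summands $\KMW_{n-1}\sslash\eta\,[n-1]$. Composing with Theorem~\ref{mainComplex} yields
\[
C^{cell}_*(X_\Sigma)\;\cong\;\bigoplus_{l\in\mathbb{N}}\;\bigoplus_{\sigma\in B(l)}\KMW_{|r(\sigma)|}\sslash l\eta\,[\,|r(\sigma)|\,]
\]
in $\Daba{k}$ (with $\KMW_i\sslash 0=\KMW_i$), together with the stated correspondences: a free class of $G^{\lambda}_{i-1}$ gives a $\KMW_i[i]$, and an $l$-torsion class of $G^{\lambda}_{i-1}$ ($l>1$) gives a $\KMW_i\sslash l\eta\,[i]$, the index $i$ being $|r(\sigma)|$ for the relevant facet. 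For the last assertion I would read off $\mathbf{H}^{cell}_i(X_\Sigma)$ termwise: $\KMW_j[j]$ contributes $\KMW_j$ in degree $j$, while $\KMW_j\sslash l\eta\,[j]=[\KMW_{j+1}\xr{l\eta}\KMW_j]$ contributes $\KMW_j/l\eta$ in degree $j$ and ${}_{l\eta}\KMW_{j+1}$ in degree $j+1$; summing the degree-$i$ contributions --- summands with $|r(\sigma)|=i$ (indexed by $B(l)_{i-1}$, all $l\in\mathbb{N}$) giving $\KMW_i/l\eta$, and two-term summands with $|r(\sigma)|=i-1$ (indexed by $B(l)_{i-2}$, $l\in\mathbb{N}^+$) giving ${}_{l\eta}\KMW_i$ --- produces the displayed formula for $\mathbf{H}^{cell}_i(X_\Sigma)$ and the three homology-level correspondences.

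The step I expect to be the main obstacle is precisely the insistence on an honest chain-level direct sum in the second step, together with the attendant bookkeeping: because $\Phi$ is not exact, the contractible $\mathcal{E}(1)$-pieces cannot be discarded before applying $\Phi$, so one must determine exactly how many of them occur in $\bigoplus_\omega\overline{C}^{cri}_i(K_\omega)$ in order to pin down the subsets $B(1)$; this is the one piece of data in the answer that is not recoverable from the reduced homology groups $G^{\lambda}_*$.
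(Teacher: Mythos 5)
Your proposal is correct and is essentially the argument the paper intends (the paper leaves the deduction from Theorem \ref{mainComplex} implicit): split the integral complex $\bigoplus_{\omega}\overline{C}^{cri}_*(K_\omega)$ chain-level into elementary pieces $\bZ[n]$ and $[\bZ\xr{l}\bZ]$ via Smith normal form, then transport the splitting through the additive, non-exact operation $W_i\mapsto W_i\otimes\KMW_i$, $\delta\mapsto\delta\otimes\eta$, which is exactly why the integrally acyclic $l=1$ pieces survive as the $\KMW_{|r(\sigma)|}\sslash\eta$ summands indexed by $B(1)$. The indexing and the degree bookkeeping in your final homology computation agree with the statement.
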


\begin{remark}\label{Coxeter}
  For any irreducible root system of type $R$ in a finite dimensional Euclidean space $E$, one can construct the fan $\Sigma_R$, whose maximal cones are Weyl chambers of type $R$. It is also a 
fact that $\Sigma_R$ is a regular complete fan for any irreducible root system of type $R$ \cite{procesi}. Hence, each irreducible root system of type 
$R$ defines a smooth compact toric variety $X_R$ which is known as the Coxeter toric variety of type 
$R$. Let $K_R$ denote the underlying simplicial complex of $X_R$ and $\lambda_R$ denotes the corresponding characteristic map. The Coxeter toric variety of type $A$ is also known as the permutohedral toric variety and is isomorphic to a specific De Concini-Procesi wonderful model as well \cite{CP}. 
In the case of permutohedral toric varieties, $K_\omega$ has no torsion for $\omega\in \mathrm{row}\lambda_{A_n}$ \cite[Remark 2.5]{choi2024cohomologyringsrealpermutohedral}, so the group $\mathbf{H}^{cell}_i(X_{A_n})$ contains only $\eta$-torsion elements by Corollary \ref{mainA1cellular}. 
This is also true for Coxeter toric varieties of type $R=G_2, F_4, E_6$ \cite[Remark 3.5]{cho2019geometricrepresentationsfinitegroups}.
\end{remark}

\begin{remark}
For smooth fans that are not pure or not shellable, as discussed in Section \ref{exotic}, the results presented in Corollary \ref{mainA1cellular} do not hold. However, if we focus solely on the Chow group, we can still derive a basis by removing the non-algebraic geometric components (i.e., the summands $ \bZ(q)[p] $ with $ 2q > p $) and annihilating the higher Chow groups. The following proposition can be viewed as a generalization of \cite[\S 5.2, Theorem p.102-104]{fulton1993introduction} for non-complete cases.
\end{remark}

\begin{proposition}\label{chowDecomp}
  For a smooth toric variety $ X_{\Sigma} $, consider an order on $ K_{\max} = \{\sigma_1, \ldots, \sigma_g\} $. Recall the definition of the sets $ \min(\sigma_i) = \{ \tau \subset \sigma_i \mid \tau \text{ is minimal for } \tau \not\subset \sigma_j \text{ for all } j < i\} $. We then have the following decomposition of the Chow group:

\[
  \mathrm{CH}^*(X_\Sigma) \cong \bigoplus_{\sigma \in K_{\max}} \bigoplus_{\tau \in \min(\sigma)} \bZ [e^{\tau}]
\]

The generators are given by $ [e^{\tau}] \in \mathrm{CH}^{|\tau|}(X_\Sigma) $.
\end{proposition}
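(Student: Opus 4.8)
The plan is to identify $\mathrm{CH}^i(X_\Sigma)$ with the cokernel of an explicit integer matrix assembled from the change-of-orientation data, and then to reduce that matrix to the desired form by induction along the chosen ordering of $K_{\max}$, in the spirit of \cite[\S 5.2]{fulton1993introduction} and of the proof of Proposition \ref{restrQis}.

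By Proposition \ref{cellCohomology} with $M = \KM_i$,
\[
\mathrm{CH}^i(X_\Sigma) = H^i_{Nis}(X_\Sigma,\KM_i) \cong H^i\big(\mathrm{Hom}_{Ab_{\A^1}(k)}(C^{cell}_*(X_\Sigma),\KM_i)\big),
\]
and since each $\KMW_m = \bfZ_{\A^1}(\Gm^{\wedge m})$ is a direct summand of the projective sheaf $\bfZ_{\A^1}[\Gm^{\times m}]$, the complex $C^{cell}_*(X_\Sigma) \cong C^{can}_*(\AZ_K)$ (Proposition \ref{canIso}) consists of projective objects, so it may be replaced by any quasi-isomorphic such model. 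I would then use that $\KM_\bullet = \KMW_\bullet/\eta$, so $\eta$ acts trivially on $\KM$-coefficients, together with $\KM_{<0}(k) = 0$; a computation of the relevant $\mathrm{Hom}$-sheaves gives $\mathrm{Hom}_{Ab_{\A^1}(k)}(\KMW_a,\KM_i) = 0$ for $a > i$ and $= \Z$ for $a = i$. Substituting $C^{cell}_n(X_\Sigma) = \bigoplus_{\sigma\in K_n}\bigoplus_\omega[e^\sigma_\omega](\KMW_{n+|\omega|})$, the cochain group in degree $i$ collapses to $\bigoplus_{\sigma\in K_i}\Z\cdot[e^\sigma_\emptyset]^\vee$, the degree $i+1$ group vanishes, and the only part of the differential $C^{i-1}\to C^i$ that can be nonzero comes from the twist-$1$ summands of $C^{cell}_{i-1}(X_\Sigma)$, with matrix entries the integers $c_{j,\pi} = \det([r'_{ab}(g)])$ of the Proposition preceding Corollary \ref{rationDecomp} — the $[-1](\dots)$- and $\eta(\dots)$-parts of $\partial$ carry a factor of $\eta$ in this bidegree and thus die in $\KM$-coefficients. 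Hence
\[
\mathrm{CH}^i(X_\Sigma)\;\cong\;\mathrm{coker}\Big(\Z^{\{\text{twist-}1\text{ cells of }C^{cell}_{i-1}(X_\Sigma)\}}\xrightarrow{\ (c_{j,\pi})\ }\Z^{K_i}\Big).
\]
(The same presentation also follows from Corollary \ref{rationDecomp}: in $\mathrm{Hom}_{\DM(k)}(\M(X_\Sigma),\bZ(i)[2i])$ the vanishing of $H^{a,b}_{\mathrm{M}}(k)$ for $a<0$ and for $a>b$ forces only $\bZ(i)[2i]$ from codimension-$i$ twist-$0$ cells and $\bZ(i)[2i-1]$, $\bZ(i-1)[2i-2]$ from codimension-$(i-1)$ cells to contribute in the relevant cohomological range.)

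It remains to diagonalise this cokernel, i.e. to exhibit the basis $\{[e^\tau] : \sigma\in K_{\max},\ \tau\in\min(\sigma),\ |\tau| = i\}$. I would induct on the length $g$ of $K_{\max} = \{\sigma_1,\dots,\sigma_g\}$, mirroring the proof of Proposition \ref{restrQis}: the relative complex $C^{can}_*(\AZ_{K_{\leq t}})/C^{can}_*(\AZ_{K_{\leq t-1}})$ is spanned by the cells supported on $R_{\sigma_t} = \{\tau\subset\sigma_t : f(\tau) = \sigma_t\}$, the up-set of $\min(\sigma_t)$ among faces of $\sigma_t$. Running the reduction of the previous step on this relative complex — killing $\eta$ and discarding the $\KM_{>0}(k)$-valued summands, which only feed higher Chow groups — leaves, in each diagonal bidegree $|\tau|$, exactly one new free generator $[e^\tau] \in \mathrm{CH}^{|\tau|}(X_\Sigma)$ for every $\tau\in\min(\sigma_t)$, while every other face of $R_{\sigma_t}$ either bounds in the relative complex or was already accounted for at step $t-1$. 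Summing over $t$ gives the decomposition; freeness and $\Z$-independence of the $[e^\tau]$ follow because, ordering the rows of $(c_{j,\pi})$ by the ordering of $K_{\max}$, the presentation is block upper-triangular with unit leading entries, so each non-minimal face is eliminated against strictly earlier ones and no torsion or collapsing appears.

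I expect the main obstacle to be this last, order-theoretic step in the general (non-shellable, non-pure) case. Without a regular expanding sequence, $\min(\sigma_t)$ may contain several faces and $R_{\sigma_t}$ is no longer a product of an affine factor with a single cell, so one must show that step $t$ contributes exactly $|\min(\sigma_t)|$ free generators — neither fewer, through an accidental relation among the minimal faces, nor more — and that they are independent of those from earlier steps. Equivalently, one must make rigorous that the $\KM_{>0}(k)$-valued part of the cellular cochain complex genuinely does not interact with its $\Z$-valued diagonal part: this is the input available for $\mathrm{CH}^*$ but not for the finer $\mathbf{H}^{cell}_*$, which is precisely why Corollary \ref{mainA1cellular} fails for exotic fans while this proposition survives.
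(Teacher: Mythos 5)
Your first step is correct and is essentially the paper's own reduction in different clothing: the paper computes $\mathrm{CH}^i(X_\Sigma)=\mathrm{Hom}_{\DM(k)}(\M(X_\Sigma),\Z(i)[2i])$ and uses the vanishing of $\mathrm{H}^{p,q}_{\mathrm{M}}(k)$ outside $0\le p\le q$, while you apply Proposition \ref{cellCohomology} with $M=\KM_i$ together with $\mathrm{Hom}_{Ab_{\A^1}(k)}(\KMW_a,\KM_i)=\KM_{i-a}(k)$ and the $\eta$-divisibility of the twist-preserving differentials; these give the same cokernel presentation $\mathrm{CH}^i(X_\Sigma)\cong\mathrm{coker}\big(\Z^{T_{i-1}}\to\Z^{K_i}\big)$ with $T_{i-1}$ the twist-one canonical cells in degree $i-1$ and matrix entries the determinants $c_{j,\pi}$.

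The genuine gap is in the elimination step, and it sits exactly where you say you expect trouble. Filtering by the level $t$ of a face (the index with $f(\tau)=\sigma_t$), the matrix is block triangular, and the diagonal block $D_t$ — the relative complex on $R_{\sigma_t}$, where the orientation change is the identity so the entries are $\pm\delta_{jj'}$ — surjects with unit entries onto the span of the non-minimal faces of $R_{\sigma_t}$ and never touches the minimal ones. That yields \emph{generation}: every non-minimal $[e^\tau]$ is rewritten, modulo the image, in terms of faces of strictly higher level, and the recursion terminates. It does not yield \emph{freeness}. For a block-triangular presentation the cokernel is not the direct sum of the cokernels of the diagonal blocks: any $b\in\ker(D_t)$ is pushed by the off-diagonal entries (the genuine orientation-change determinants) to a new relation among the generators at levels $>t$, and nothing in your argument excludes that these induced relations collapse or torsion some $[e^{\mu}]$ with $\mu\in\min(\sigma_{t'})$, $t'>t$. "Unit leading entries" controls only the diagonal blocks, not the syzygies they create. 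This is precisely what the paper supplies by a different device: it adapts the chain-level retraction of Proposition \ref{restrQis} to the $(-)_{ag}$-reduction, exhibiting the subcomplex spanned by the $[e^\tau]$, $\tau\in\min(\sigma)$ (whose differential vanishes in $\DM(k)$ by Proposition \ref{cellTrans}) as a retract of $C^{can}_*(\AZ_K)_{ag}$; a retraction gives the injectivity of $\Z^{\{\min\}}\to\mathrm{coker}$ for free, which Gaussian elimination alone does not. To close your argument you must either construct such a splitting for a general ordering of $K_{\max}$, or prove directly that the image of $\ker(D_t)$ under the off-diagonal blocks lies in the subgroup generated by the relations already present at levels $>t$.
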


\begin{proof}
  We need to adjust the proof of Proposition \ref{restrQis} to obtain a quasi-isomorphism $ \overline{C}^{can}_*(\AZ_K)_{ag} \subset C^{can}_*(\AZ_K)_{ag} $, where the subscript $(-)_{ag}$ indicates reduction modulo the direct summands of $ \KMW_{q}[p] $ for $ q > p $. It is important to note that $ \overline{C}^{can}_*(\AZ_K)_{ag} $ is generated by $ [e^{\tau}] $ where $ \tau \in \min(\sigma) $ and $ \sigma \in K_{\max} $. By Proposition \ref{cellTrans}, we conclude that the differential will always be zero when transitioning to $ \DM(k) $. Since for $ 2j < i $, we have $ \mathrm{H}_{\mathrm{M}}^{i,j}(k) \cong \mathrm{Hom}_{\DM(k)}(\mathbb{Z}, \mathbb{Z}(j)[i]) = 0 $, we can deduce that 
  \[
    \mathrm{CH}^i(X_\Sigma) \cong \mathrm{Hom}_{\DM(k)}(\M(X_\Sigma), \mathbb{Z}(i)[2i]) \cong \mathrm{Hom}_{\DM(k)}(\M(X_\Sigma)_{ag}, \mathbb{Z}(i)[2i]).
  \]
\end{proof}

\section{Examples of Toric Surfaces}
For toric surfaces, we can provide a more explicit description. Given a two-dimensional fan $ \Sigma $ with $ l $ edges, let $ v_i $ denote the basis vector of the $ 1 $-dimensional edge, and let $ \sigma_i $ represent the cone spanned by $ v_i $ and $ v_{i+1} $ (if $ \Sigma $ is complete, we let $ v_{l+1} = v_1 $). If the corresponding toric variety $ X_{\Sigma} $ is smooth, then it holds that the volume $ \mathrm{vol}(v_i, v_{i+1}) = 1 $. Therefore, there exists an integer $ a_{i} $ such that $ v_{i-1} + v_{i+1} = a_{i}v_i $.

\begin{center}

  \tikzset{every picture/.style={line width=0.75pt}} %set default line width to 0.75pt        
  
  \begin{tikzpicture}[x=0.75pt,y=0.75pt,yscale=-1,xscale=1]
  %uncomment if require: \path (0,300); %set diagram left start at 0, and has height of 300r
  %Straight Lines [id:da9588738262164693] 
  \draw    (100,101) -- (100,42) ;
  \draw [shift={(100,40)}, rotate = 90] [color={rgb, 255:red, 0; green, 0; blue, 0 }  ][line width=0.75]    (10.93,-3.29) .. controls (6.95,-1.4) and (3.31,-0.3) .. (0,0) .. controls (3.31,0.3) and (6.95,1.4) .. (10.93,3.29)   ;
  %Straight Lines [id:da4691467714167439] 
  \draw    (100,101) -- (148.45,61.27) ;
  \draw [shift={(150,60)}, rotate = 140.65] [color={rgb, 255:red, 0; green, 0; blue, 0 }  ][line width=0.75]    (10.93,-3.29) .. controls (6.95,-1.4) and (3.31,-0.3) .. (0,0) .. controls (3.31,0.3) and (6.95,1.4) .. (10.93,3.29)   ;
  %Straight Lines [id:da009672673236926999] 
  \draw    (100,101) -- (148.27,129) ;
  \draw [shift={(150,130)}, rotate = 210.11] [color={rgb, 255:red, 0; green, 0; blue, 0 }  ][line width=0.75]    (10.93,-3.29) .. controls (6.95,-1.4) and (3.31,-0.3) .. (0,0) .. controls (3.31,0.3) and (6.95,1.4) .. (10.93,3.29)   ;
  %Straight Lines [id:da9914764744152396] 
  \draw    (100,101) -- (109.6,148.04) ;
  \draw [shift={(110,150)}, rotate = 258.47] [color={rgb, 255:red, 0; green, 0; blue, 0 }  ][line width=0.75]    (10.93,-3.29) .. controls (6.95,-1.4) and (3.31,-0.3) .. (0,0) .. controls (3.31,0.3) and (6.95,1.4) .. (10.93,3.29)   ;
  %Straight Lines [id:da568191540974148] 
  \draw    (100,101) -- (61.62,128.83) ;
  \draw [shift={(60,130)}, rotate = 324.06] [color={rgb, 255:red, 0; green, 0; blue, 0 }  ][line width=0.75]    (10.93,-3.29) .. controls (6.95,-1.4) and (3.31,-0.3) .. (0,0) .. controls (3.31,0.3) and (6.95,1.4) .. (10.93,3.29)   ;
  %Curve Lines [id:da34229899598119284] 
  \draw    (230,50) .. controls (256.25,78.5) and (307.25,76) .. (330,50) ;
  %Curve Lines [id:da4009619760514709] 
  \draw    (320,40) .. controls (292.25,63.5) and (290.25,108) .. (320,140) ;
  %Curve Lines [id:da7250063922929411] 
  \draw    (340,120) .. controls (310.25,109) and (276.75,118.5) .. (260,150) ;
  %Curve Lines [id:da9632569599027274] 
  \draw    (280,150) .. controls (273.75,129) and (231.75,108) .. (210,120) ;
  
  % Text Node
  \draw (111,52.4) node [anchor=north west][inner sep=0.75pt]    {$\sigma _{1}$};
  % Text Node
  \draw (131,81.4) node [anchor=north west][inner sep=0.75pt]    {$\sigma _{2}$};
  % Text Node
  \draw (81,111.4) node [anchor=north west][inner sep=0.75pt]    {$\sigma _{l}$};
  % Text Node
  \draw (101,22.4) node [anchor=north west][inner sep=0.75pt]    {$v_{1}$};
  % Text Node
  \draw (141,42.4) node [anchor=north west][inner sep=0.75pt]    {$v_{2}$};
  % Text Node
  \draw (151,121.4) node [anchor=north west][inner sep=0.75pt]    {$v_{3}$};
  % Text Node
  \draw (102,151.4) node [anchor=north west][inner sep=0.75pt]    {$v_{l}$};
  % Text Node
  \draw (41,130.4) node [anchor=north west][inner sep=0.75pt]    {$v_{l+1}$};
  % Text Node
  \draw (101,113.4) node [anchor=north west][inner sep=0.75pt]    {$\cdots $};
  % Text Node
  \draw (261,52.4) node [anchor=north west][inner sep=0.75pt]    {$D_{1}$};
  % Text Node
  \draw (301,81.4) node [anchor=north west][inner sep=0.75pt]    {$D_{2}$};
  % Text Node
  \draw (226,121.4) node [anchor=north west][inner sep=0.75pt]    {$D_{l+1}$};
  % Text Node
  \draw (281,123.4) node [anchor=north west][inner sep=0.75pt]    {$\cdots $};

  \end{tikzpicture}
  
  \end{center}    

Consider the values of $a_i$, which are derived from the self-intersection of divisors. Let $X_{\Sigma}$ denote a smooth and compact toric variety of dimension $n$, corresponding to a regular and complete fan $\Sigma$. Denote the 1-cones of $\Sigma$ as $\rho_1,...,\rho_n$ and the corresponding divisors as $D_{\rho_1},...,D_{\rho_n}$. If $\sigma:=\rho_1+\cdots+\rho_n$ constitutes an $n$-cone of $\Sigma$, then the divisors $D_{\rho_i}$ serve as closures of the coordinate hyperplanes of $X_{\check{\sigma}}$, resulting in a transversal intersection at a single point. Conversely, if the sum $\rho_1+\cdots+\rho_n$ does not lie within a cell of $\Sigma$, the intersection of the subvarieties $D_{\rho_i}$ will be empty, yielding an intersection number of zero.

\begin{lemma}\cite[Lemma 6.2]{ewald1996combinatorial}
    For $n=2$, let $\tau$ be a 1-cone of a regular complete fan $\Sigma$ defined by $\tau = \mathbb{R}_{\geq 0} a$. Let $\sigma_1 := \mathbb{R}_{\geq 0} a + \mathbb{R}_{\geq 0} b$ and $\sigma_2 := \mathbb{R}_{\geq 0} a + \mathbb{R}_{\geq 0} c$ denote the adjacent 2-cones of $\tau$ within $\Sigma$, where $a, b, c$ are simple. If $\alpha \in \mathbb{Z}$ is the integer for which
    \begin{equation*}
        \alpha a + b + c = 0,
    \end{equation*}
    then, the self-intersection number of $D_{\tau}$ is given by
    \begin{equation*}
        (D_{\tau}, D_{\tau}) = \alpha.
    \end{equation*}
\end{lemma}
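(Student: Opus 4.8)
The plan is to reduce the self-intersection of $D_\tau$ to a one-line linear-algebra identity using the two standard inputs of toric intersection theory on a smooth complete surface: the behaviour of principal divisors and the transversality of the boundary divisors at the torus-fixed points.

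First I would record the intersection numbers of $D_\tau$ with the other boundary divisors. Completeness of $\Sigma$ makes $D_\tau$ a complete curve (the star of $\tau$ is a complete fan in the rank-one quotient $N/(\Z a)$, so in fact $D_\tau\cong\mathbb{P}^1$), so these numbers are well defined. For a ray $\rho=\R_{\geq 0}w\neq\tau$ of $\Sigma$ with $w$ primitive: if $\tau+\rho$ is a $2$-cone of $\Sigma$, then smoothness forces $\{a,w\}$ to be a $\Z$-basis of $N$, the torus-fixed chart $U_{\tau+\rho}$ is $\A^2$ with $D_\tau$ and $D_\rho$ its two coordinate axes, so they meet transversally at one point and $D_\tau\cdot D_\rho=1$; if $\tau+\rho$ is not a cone of $\Sigma$, then $D_\tau$ and $D_\rho$ have disjoint closures and $D_\tau\cdot D_\rho=0$. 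In a complete $2$-dimensional fan $\tau$ is a face of exactly two maximal cones, namely $\sigma_1$ and $\sigma_2$, so the only rays spanning a cone with $\tau$ are $\R_{\geq 0}b$ and $\R_{\geq 0}c$; hence $D_\tau\cdot D_b=D_\tau\cdot D_c=1$ and $D_\tau\cdot D_\rho=0$ for every other ray of $\Sigma$.

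Next I would bring in principal divisors. For $m\in M=\mathrm{Hom}(N,\Z)$ the divisor $\mathrm{div}(\chi^m)=\sum_\rho\langle m,v_\rho\rangle D_\rho$ is principal, hence linearly equivalent to $0$; intersecting it with $D_\tau$ and inserting the numbers above gives
\[
0=\langle m,a\rangle\,(D_\tau\cdot D_\tau)+\langle m,b\rangle+\langle m,c\rangle .
\]
Since $a$ is primitive I may choose $m\in M$ with $\langle m,a\rangle=1$, and then, using the wall relation $\alpha a+b+c=0$,
\[
(D_\tau\cdot D_\tau)=-\langle m,\,b+c\rangle=-\langle m,\,-\alpha a\rangle=\alpha\,\langle m,a\rangle=\alpha,
\]
which is the claim.

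The only genuinely nontrivial ingredient here is the first step — the local computation of $D_\tau\cdot D_\rho$ — which rests on the explicit description of the affine toric charts and on the fact that in the smooth case every torus-fixed chart is an $\A^2$ whose coordinate axes are exactly the two boundary divisors through that point; everything afterwards is formal. For completeness I would mention an alternative route for that step: compute $\deg N_{D_\tau/X_\Sigma}$ directly from the transition function between the two charts $U_{\sigma_1}$ and $U_{\sigma_2}$ covering $D_\tau\cong\mathbb{P}^1$, which again yields $\alpha$ and makes the role of the relation $\alpha a+b+c=0$ transparent.
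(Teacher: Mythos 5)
Your argument is correct, but note that the paper does not actually prove this lemma: it is imported verbatim from Ewald (\cite[Lemma 6.2]{ewald1996combinatorial}), so there is no in-paper proof to compare against. What you give is the standard self-contained toric-surface proof: (i) the local computation $D_\tau\cdot D_{\rho}=1$ when $\tau+\rho\in\Sigma$ (transversality in the smooth chart $U_{\tau+\rho}\cong\mathbb{A}^2$) and $D_\tau\cdot D_\rho=0$ otherwise, which is exactly the content of the paragraph the authors place just before the lemma; (ii) the vanishing of $\mathrm{div}(\chi^m)\cdot D_\tau=\sum_\rho\langle m,v_\rho\rangle\,(D_\rho\cdot D_\tau)$ for the complete curve $D_\tau$; and (iii) elimination via the wall relation. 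The sign bookkeeping with the paper's convention $\alpha a+b+c=0$ (rather than $b+c=\alpha a$) comes out right — your formula gives $\alpha=1$ for $\mathbb{P}^2$, matching Example \ref{P2}. Your closing alternative, computing $\deg\mathcal{N}_{D_\tau/X_\Sigma}$ from the transition function between $U_{\sigma_1}$ and $U_{\sigma_2}$, is the route the authors gesture at in the remark following the lemma (and is the one that feeds directly into their Lemma \ref{changeOriD2}), so either derivation would serve the paper's purposes; your principal-divisor argument is the more elementary of the two and is complete as written.
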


\begin{example}\label{P2}
    The projective plane $\mathbb{P}^2$ is represented by a fan with generators $a = e_1$, $b = e_2$, and $c = -e_1 - e_2$. As such, we have the relation $a + b + c = 0$. According to the aforementioned lemma, the self-intersection number $(D_{\tau}, D_{\tau}) = 1$ for each coordinate line of $\mathbb{P}^2$. This outcome arises from the fact that any two distinct lines in $\mathbb{P}^2$ are projectively equivalent and intersect at a single point.
\end{example}

\begin{example}\label{blowdown}
    If $ a = b + c $, we can derive the fan $ \Sigma' $ from $ \Sigma $ by removing the cones $ \tau, \sigma_1, \sigma_2 $ and introducing a new cone formed by $ \sigma_1 \cup \sigma_2 $. Consequently, the variety $ X_{\Sigma} $ is obtained from $ X_{\Sigma'} $ through a blowup, where the exceptional projective line exhibits a self-intersection number of -1.
\end{example}

\begin{remark}
  Let $ D $ be any divisor on the surface $ X $. We can define the self-intersection number $ (D, D) $. For a nonsingular curve $ C $ on $ X $, the self-intersection number is expressed as $ (C, C) = \deg_C(\mathcal{L}(C) \otimes \mathcal{O}_C) $. This can also be interpreted through the fact that the ideal sheaf $ \mathcal{I} $ of $ C $ on $ X $ is given by $ \mathcal{L}(-C) $. Consequently, we have $ \mathcal{I}/\mathcal{I}^2 \cong \mathcal{L}(-C) \otimes \mathcal{O}_C $. Thus, its dual, $ \mathcal{L}(C) \otimes \mathcal{O}_C $, is isomorphic to the normal sheaf $ \mathcal{N}_{C/X} $, which is defined as $ \mathcal{H}om(\mathcal{I}/\mathcal{I}^2, \mathcal{O}_C) $. Therefore, it follows that $ (C, C) = \deg_{C} \mathcal{N}_{C/X} $.
\end{remark}

Through the construction of the cellular complex, the self-intersection numbers $ (D_{v_i}, D_{v_i}) $ provide significant insights into the cellular structure of $ X_{\Sigma} $ as these values are intrinsically linked to the corresponding normal bundles.

To compute the cellular homology, we can assign orientations to the cellular complex: we can assign the compatible orientation to $ v_i $ that originates from $ \sigma_i $. Subsequently, we can compare the two orientations of $ v_i $, which arise from $ \sigma_{i-1} $ and $ \sigma_i $.

The following lemma directly follows from Proposition \ref{tActSigma}:

\begin{lemma}\label{changeOriD2}
  Let $ \iota_{\sigma_i}, \iota_{\sigma_{i-1}}: \mathrm{Th}_{X_{v_i}}(Z_{v_i}) \xrightarrow{\cong} \af^1/\afnz{1} \wedge \mathbb{G}_m $ be the trivializations induced by the orientations from $ \sigma_{i} $ and $ \sigma_{i-1} $. These isomorphisms give rise to an automorphism:
  \[
    \tau_i: \KMW_2 \oplus \KMW_1 \xrightarrow{\cong} \KMW_2 \oplus \KMW_1 
  \]
  which is represented by the matrix $ \begin{bmatrix}
  \aBra{-1}^{a_i} & (a_i)_{\epsilon}\eta \\
  0 & 1
\end{bmatrix} $.
\end{lemma}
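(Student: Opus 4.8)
The plan is to realize the comparison automorphism $\tau_i$ as the action of a single, explicitly identified group section on one cubical cell of $Z_{v_i}$, and then to read the matrix straight off Lemma \ref{tActSigma}. Note that $Z_{v_i}\cong\Gm$ carries exactly two cubical cells: the ``big'' cell $e$, with $\tauO_e$ equal to the $\Gm$-factor of $Z_{v_i}$, which contributes the summand $\KMW_2$ of $\wt{\mathbf{H}}^{\af^1}_1(\mathrm{Th}_{X_{v_i}}(Z_{v_i}))$, and the degenerate cell $e_\emptyset$ with $\tauO_{e_\emptyset}=\emptyset$, which contributes $\KMW_1$. Then $\tau_i$ is precisely the endomorphism of $\KMW_2\oplus\KMW_1$ obtained by comparing the two trivializations, and this comparison is effected by the change-of-chart group section.

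First I would set up the two charts. The ray $v_i$ lies in precisely the two maximal cones $\sigma_{i-1}=\langle v_{i-1},v_i\rangle$ and $\sigma_i=\langle v_i,v_{i+1}\rangle$, so $U_{\sigma_{i-1}}\cap U_{\sigma_i}=U_{v_i}\cong\Gm\times\af^1$. On $U_{\sigma_i}\cong\af^2$ I use the coordinates dual to the basis $(v_i,v_{i+1})$; then $Z_{v_i}$ is the locus where the first coordinate vanishes and the second is a unit, and $\iota_{\sigma_i}$ is the trivialization that uses the first coordinate to frame the normal bundle and the second as the torus coordinate of $Z_{v_i}$. I would do the same on $U_{\sigma_{i-1}}\cong\af^2$ with the basis $(v_{i-1},v_i)$ to obtain $\iota_{\sigma_{i-1}}$.

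Next I would compute the transition. Using the smoothness relation $v_{i-1}+v_{i+1}=a_iv_i$, i.e.\ $v_{i-1}=a_iv_i-v_{i+1}$, the induced change of dual basis shows that on $U_{v_i}$ the $\sigma_{i-1}$-coordinates are $(x_1x_2^{a_i},\,x_2)$ in terms of the $\sigma_i$-coordinates $(x_1,x_2)$ (normal, torus), up to the choice of generator of the rank-one lattice $v_i^{\perp}\cap M$. Hence passing from $\iota_{\sigma_i}$ to $\iota_{\sigma_{i-1}}$ is induced by the group section $g=M_{i_2i_1}^{\,a_i}\colon\Gm^{t_e}\to\Gm^{2}$, where $i_1$ is the normal coordinate ($i_1\in\sigma_e$), $i_2$ is the torus coordinate ($i_2\in\tauO_e$), $t_e=1$, and $r_{i_2i_1}=a_i$ with all other entries zero --- precisely the situation covered by Lemma \ref{tActSigma}.

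Finally, since $t_e=1$, the sum in Lemma \ref{tActSigma} has only the two terms $\omega=\tauO_e$ and $\omega=\emptyset$, which give $g_*[e]=\aBra{-1}^{a_i}[e]+\chi(a_i)\,\eta\,[e_\emptyset]$, while $g_*[e_\emptyset]=[e_\emptyset]$ because $M_{i_2i_1}$ acts trivially on any cell whose $i_2$-th coordinate is $\{x_{i_2}=1\}$. Reading this off as a matrix on $\KMW_2\oplus\KMW_1$, and rewriting $\chi(a_i)\,\eta=(a_i)_\epsilon\,\eta$ by means of $\eta\aBra{-1}=-\eta$ (equivalently $\eta h=0$) in $\KMW$, yields the asserted form of $\tau_i$. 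The only genuine work is the middle step: reading the group section, and in particular the exponent $a_i$, correctly off the change of dual basis; after that the result is a direct substitution into Lemma \ref{tActSigma}, with $\aBra{-1}^{a_i}$ coming from $r_{i_2i_1}=a_i$ sitting in the exponent of $\aBra{-1}$ and the $\eta$-term coming from the single remaining summand.
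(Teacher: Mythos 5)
Your proposal is correct and follows the same route the paper intends: the paper offers no separate argument for this lemma beyond the remark that it ``directly follows from'' Lemma \ref{tActSigma}, and you supply exactly the missing chart computation (identifying the transition $x'_2=x_1x_2^{a_i}$ from $v_{i-1}=a_iv_i-v_{i+1}$, hence the group section with $r_{\text{torus},\text{normal}}=a_i$) before substituting into that lemma. The only point to keep an eye on is the one you already flag — the possible inversion of the torus coordinate of $Z_{v_i}$ between the two charts, which the paper's convention (group sections act on the framed embedding without reparametrizing the cell) likewise suppresses.
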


With this lemma established, we can proceed to discuss the cellular complex of a complete toric surface $ X_{\Sigma} $. 

\subsection{Complete Toric Surfaces}

\begin{proposition}\label{examplesurface}
  Let $\Sigma$ be a $2$-dimensional complete fan consisting of $l$ cones and $l$ edges. The cellular complex $\wt{C}^{cell}_*(X_{\Sigma})$ can be expressed as follows: 

% https://q.uiver.app/#q=WzAsOSxbMCwwLCIgKFxcS01XXzIpX3tcXHNpZ21hXzF9Il0sWzEsMSwiIChcXEtNV18xIFxcb3RpbWVzIFxcbWF0aGJme0h9KV97dl8yfSJdLFsxLDAsIiAoXFxLTVdfMSBcXG90aW1lcyBcXG1hdGhiZntIfSlfe3ZfMX0iXSxbMCwxLCIgKFxcS01XXzIpX3tcXHNpZ21hXzJ9Il0sWzAsMiwiXFx2ZG90cyJdLFsxLDIsIlxcdmRvdHMiXSxbMiwxLCJcXG1hdGhiZntIfSBcXG90aW1lcyBcXG1hdGhiZntIfSJdLFswLDMsIiAoXFxLTVdfMilfe1xcc2lnbWFfbH0iXSxbMSwzLCIgKFxcS01XXzEgXFxvdGltZXMgXFxtYXRoYmZ7SH0pX3t2X2x9Il0sWzAsMSwiZF97YV97MX19IiwxXSxbMCwyLCIoXFxtYXRocm17SWR9LCAwKSJdLFszLDEsIihcXG1hdGhybXtJZH0sIDApIl0sWzQsNSwiKFxcbWF0aHJte0lkfSwgMCkiXSxbMyw1LCJkX3thXzJ9IiwxXSxbMiw2XSxbMSw2XSxbNSw2XSxbNyw4LCIoXFxtYXRocm17SWR9LCAwKSJdLFs0LDgsImRfe2Ffe2wtMX19IiwxXSxbNywyLCJkX3thX3tsfX0iLDEseyJsYWJlbF9wb3NpdGlvbiI6MTB9XSxbOCw2XSxbMywwLCJcXG9wbHVzIiwxLHsic3R5bGUiOnsiYm9keSI6eyJuYW1lIjoibm9uZSJ9LCJoZWFkIjp7Im5hbWUiOiJub25lIn19fV0sWzQsMywiXFxvcGx1cyIsMSx7InN0eWxlIjp7ImJvZHkiOnsibmFtZSI6Im5vbmUifSwiaGVhZCI6eyJuYW1lIjoibm9uZSJ9fX1dLFs3LDQsIlxcb3BsdXMiLDEseyJzdHlsZSI6eyJib2R5Ijp7Im5hbWUiOiJub25lIn0sImhlYWQiOnsibmFtZSI6Im5vbmUifX19XSxbMSwyLCJcXG9wbHVzIiwxLHsic3R5bGUiOnsiYm9keSI6eyJuYW1lIjoibm9uZSJ9LCJoZWFkIjp7Im5hbWUiOiJub25lIn19fV0sWzUsMSwiXFxvcGx1cyIsMSx7InN0eWxlIjp7ImJvZHkiOnsibmFtZSI6Im5vbmUifSwiaGVhZCI6eyJuYW1lIjoibm9uZSJ9fX1dLFs4LDUsIlxcb3BsdXMiLDEseyJzdHlsZSI6eyJib2R5Ijp7Im5hbWUiOiJub25lIn0sImhlYWQiOnsibmFtZSI6Im5vbmUifX19XV0=
\[\begin{tikzcd}[ampersand replacement=\&,cramped]
	{ (\KMW_2)_{\sigma_1}} \& { (\KMW_1 \otimes \mathbf{H})_{v_1}} \\
	{ (\KMW_2)_{\sigma_2}} \& { (\KMW_1 \otimes \mathbf{H})_{v_2}} \& {\mathbf{H} \otimes \mathbf{H}} \\
	\vdots \& \vdots \\
	{ (\KMW_2)_{\sigma_l}} \& { (\KMW_1 \otimes \mathbf{H})_{v_l}}
	\arrow["{(\mathrm{Id}, 0)}", from=1-1, to=1-2]
	\arrow["{d( a_{2} )}"{description}, from=1-1, to=2-2]
	\arrow[from=1-2, to=2-3]
	\arrow["\bigoplus"{description}, draw=none, from=2-1, to=1-1]
	\arrow["{(\mathrm{Id}, 0)}", from=2-1, to=2-2]
	\arrow["{d( a_3 )}"{description}, from=2-1, to=3-2]
	\arrow["\bigoplus"{description}, draw=none, from=2-2, to=1-2]
	\arrow[from=2-2, to=2-3]
	\arrow["\bigoplus"{description}, draw=none, from=3-1, to=2-1]
	\arrow["{(\mathrm{Id}, 0)}", from=3-1, to=3-2]
	\arrow["{d( a_{l} )}"{description}, from=3-1, to=4-2]
	\arrow["\bigoplus"{description}, draw=none, from=3-2, to=2-2]
	\arrow[from=3-2, to=2-3]
	\arrow["\bigoplus"{description}, draw=none, from=4-1, to=3-1]
	\arrow["{(\mathrm{Id}, 0)}", from=4-1, to=4-2]
	\arrow[from=4-2, to=2-3]
	\arrow["\bigoplus"{description}, draw=none, from=4-2, to=3-2]
	\arrow["{d( a_{1} )}"{description, pos=0.1}, from=4-1, to=1-2]
\end{tikzcd}\]
where the morphism $d(a_i)$ is defined as $d(a_i) := (\aBra{-1}^{a_i}, (a_i)_{\epsilon}\eta)$. Furthermore, let $ a_{\Sigma} = \mathrm{gcd}(a_1,\ldots,a_l) $. This complex is quasi-isomorphic to the following:

% https://q.uiver.app/#q=WzAsNCxbMCwwLCJcXEtNV18yIl0sWzEsMCwiXFxLTVdfMSJdLFsxLDEsIihcXEtNV18xKV57XFxvcGx1cyBsLTN9Il0sWzIsMSwiXFxiWiJdLFswLDEsIihhX3tcXFNpZ21hfSlfe1xcZXBzaWxvbn1cXGV0YSJdLFsyLDMsIjAiLDJdLFsyLDEsIlxcYmlnb3BsdXMiLDIseyJzdHlsZSI6eyJib2R5Ijp7Im5hbWUiOiJub25lIn0sImhlYWQiOnsibmFtZSI6Im5vbmUifX19XSxbMCwyLCIwIiwyXSxbMSwzLCIwIl1d
\[\begin{tikzcd}[ampersand replacement=\&,cramped]
	{\KMW_2} \& {\KMW_1} \\
	\& {(\KMW_1)^{\oplus l-3}} \& \bZ
	\arrow["{(a_{\Sigma})_{\epsilon}\eta}", from=1-1, to=1-2]
	\arrow["0"', from=1-1, to=2-2]
	\arrow["0", from=1-2, to=2-3]
	\arrow["\bigoplus"', draw=none, from=2-2, to=1-2]
	\arrow["0"', from=2-2, to=2-3]
\end{tikzcd}\]

\end{proposition}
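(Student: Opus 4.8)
The statement has two parts: to read off the first complex from the general machinery, and to collapse it to the second. For the first part, the plan is to specialize the description of $\wt{C}^{cell}_*(X_\Sigma)$ and its cubical refinement (Example \ref{cubicalAn}) to $n=2$. A complete $2$-dimensional fan has $K_0=\{\emptyset\}$, $K_1=\{v_1,\dots,v_l\}$ and $K_2=K_{\max}=\{\sigma_1,\dots,\sigma_l\}$, and since $Z_\emptyset=\Gm^2$, $Z_{v_i}\cong\Gm$ and $Z_{\sigma_i}$ is a point, the terms in degrees $0,1,2$ are $\mathbf{H}\otimes\mathbf{H}$, $\bigoplus_i(\KMW_1\otimes\mathbf{H})_{v_i}$ and $\bigoplus_i(\KMW_2)_{\sigma_i}$, with $\KMW_1\otimes\mathbf{H}\cong\KMW_2\oplus\KMW_1$ recording the twist-$1$ and twist-$0$ cubical cells of $Z_{v_i}$. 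To obtain $\partial_2$ I would invoke Lemma \ref{changeOriD2}: in the chart $U_{\sigma_i}\cong\af^2$ the top cell of $\sigma_i$ has exactly the two faces $v_i,v_{i+1}$; one of them carries the orientation induced by $\sigma_i$ itself, contributing $(\mathrm{Id},0)$, while the orientation the other carries in the complex differs from $\sigma_i$'s by the automorphism of Lemma \ref{changeOriD2}, so that composing the standard cubical boundary with it yields $d(a_{i+1})=(\aBra{-1}^{a_{i+1}},(a_{i+1})_\epsilon\eta)$, the integer $a_i$ defined by $v_{i-1}+v_{i+1}=a_iv_i$ being precisely the input of that lemma. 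The map $\partial_1$ is the boundary attached to the open immersion of the torus; I would not need it explicitly beyond the observation that, $X_\Sigma$ being connected, its cokernel $\mathbf{H}^{cell}_0(X_\Sigma)=\mathbf{Z}_{\af^1}[X_\Sigma]$ is $\bZ$.

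For the second part the plan is to reduce the complex above by Gaussian elimination (discrete Morse collapses). Each component $(\mathrm{Id},0)\colon(\KMW_2)_{\sigma_i}\to(\KMW_1\otimes\mathbf{H})_{v_i}$ is an isomorphism onto the twist-$1$ summand, so I would first collapse $l-1$ of these pairs, say $\sigma_1,\dots,\sigma_{l-1}$ against the twist-$1$ summands of $v_1,\dots,v_{l-1}$. One cannot collapse the $l$-th pair as well: after those collapses the residual map $(\KMW_2)_{\sigma_l}\to(\KMW_2)_{v_l}$ becomes $1-(-1)^{l-1}\aBra{-1}^{a_1+\dots+a_l}$, which by the identity $a_1+\dots+a_l\equiv l\bmod 2$ (valid for smooth complete toric surfaces, e.g.\ from $K_X^2=12-l$) is $0$ for $l$ even and $h=1+\aBra{-1}$ for $l$ odd, in neither case a unit; so degree $2$ is reduced to a single $\KMW_2$. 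Next I would use $\mathbf{H}^{cell}_0(X_\Sigma)=\bZ$ to conclude that $\partial_1$ is surjective onto the three positive-twist summands $\KMW_1\oplus\KMW_1\oplus\KMW_2$ of $\mathbf{H}\otimes\mathbf{H}$ (only positive-twist cells in degree $1$ can reach them, and on the top $\KMW_2$ the map is an isomorphism, the top-weight part of $\mathbf{Z}_{\af^1}[\Gm^2]$ being detected chart-independently); since $\KMW_1\oplus\KMW_1\oplus\KMW_2$ is a projective object, this surjection splits, the splitting collapses against it, and $\partial_2$ is unaffected because it already lands in $\ker\partial_1$. One is left with $\KMW_2\xrightarrow{\ \partial\ }(\KMW_1)^{\oplus l-2}\xrightarrow{\ 0\ }\bZ$, which already has the shape of the second display.

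It then remains to identify $\partial$, a column of $l-2$ maps $\KMW_2\to\KMW_1$, each given by multiplication by an element of $\mathrm{Hom}(\KMW_2,\KMW_1)=\KMW_{-1}\cong W$. Following the residual contributions produced by the collapses, and using the Milnor--Witt relations $\aBra{-1}\eta=-\eta$, $h\eta=0$ and $(n)_\epsilon\eta=\chi(n)\eta$, each entry of this column lies in $\{0,\pm\eta\}\subset W$ and is controlled by the parity $\chi(a_j)$; column operations over $W$, induced by automorphisms of $(\KMW_1)^{\oplus l-2}$, then normalize it to $\bigl((a_\Sigma)_\epsilon\eta,0,\dots,0\bigr)$ with $a_\Sigma=\gcd(a_1,\dots,a_l)$, since $\chi(a_\Sigma)\eta$ and $\{\chi(a_j)\eta\}_j$ generate the same submodule of $W$ (a gcd of integers is odd exactly when one of them is odd). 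This produces the second complex. The step I expect to be the main obstacle is precisely this last one: one must keep the bookkeeping of the collapses tight enough to be sure that it is the gcd of \emph{all} the $a_i$ that survives (not of a proper subset, and with no leftover factor of $h$) --- this is the $2\leftrightarrow\eta$ counterpart of the mod-$2$ computation for real toric surfaces. As a cross-check and an alternative route, one may instead deduce the quasi-isomorphism from Theorem \ref{mainComplex}, a complete $2$-dimensional fan being pure and shellable, together with the easy determination of $\bigoplus_{\omega\in\mathrm{row}\lambda}\widetilde{H}_*(|K_\omega|)$ for the boundary complex of an $l$-gon.
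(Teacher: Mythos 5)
Your first half (reading off the cyclic complex with differentials $(\mathrm{Id},0)$ and $d(a_i)$ from Lemma \ref{changeOriD2}) is exactly what the paper does. For the second half you take a genuinely different route: a direct Gaussian elimination of the explicit complex, whereas the paper deduces the reduced complex from Theorem \ref{mainComplex} by writing down the shelling $12,2\underline3,\dots,\underline{1l}$ of the $l$-gon, computing the mod-$2$ matrix $\Lambda$ and its row sets $\omega$, and locating the critical faces of each $K_\omega$; the case $l$ odd is then handled by quoting the classification of smooth complete toric surfaces (iterated blow-ups of $\mathbb P^2$ or $\mathbb F_a$) to force $a_\Sigma$ odd. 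Your route, if completed, would be more self-contained; the paper's buys the answer from combinatorics already set up for the general theorem.

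However, your elimination argument has a genuine gap at precisely the step you flag as ``the main obstacle,'' and it is not merely bookkeeping. After collapsing $\sigma_1,\dots,\sigma_{l-1}$ the residual map $\KMW_2\to(\KMW_1)^{\oplus l}$ has $j$-th entry $\pm\chi(a_j)\eta$, and the final reduced column is the \emph{restriction} of this to whichever $l-2$ coordinates survive the collapse against the two middle $\KMW_1$-summands of $\mathbf H\otimes\mathbf H$ (Gaussian elimination does not otherwise correct $\partial_2$, since $\partial_1\partial_2=0$ determines the eliminated components). If one were free to collapse any two coordinates, the surviving column could be $0$ even when some $\chi(a_j)=1$ --- e.g.\ if exactly the two coordinates with $\chi(a_j)=1$ were eliminated --- which would contradict the claimed answer; so the constraint imposed by the actual $\partial_1$ is doing essential work, and you cannot get away with ``not needing $\partial_1$ explicitly beyond $\mathbf H_0^{cell}=\bZ$.'' You must either compute $\partial_1$ on the twist-$1$ cells of each $Z_{v_i}$ and verify that an admissible pair of coordinates can always be chosen avoiding (or normalizing) the odd $a_j$'s, or fall back on Theorem \ref{mainComplex} as you suggest in your last sentence --- which is the paper's proof. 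Two smaller points: your residual endomorphism of $(\KMW_2)_{\sigma_l}$ should be $1+(-1)^{l-1}\aBra{-1}^{a_1+\cdots+a_l}$ (with your sign it evaluates to $2$, resp.\ $1-\aBra{-1}$, not to $0$, resp.\ $h$); and for $l$ odd the term $h$ is disposed of only because the reduced $\partial_2$ is the plain projection away from the collapsed $(\KMW_2)_{v_l}$ --- worth saying explicitly, since $h$ is not zero.
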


\begin{corollary}
  The motivic decomposition for $\M(X_{\Sigma}) \in \DM(k)$ is given by:
  \[
    \M(X_\Sigma) \cong \bZ(2)[4] \oplus (\bZ(1)[2])^{\oplus l-2} \oplus \bZ.
  \]
\end{corollary}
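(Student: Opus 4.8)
The Corollary is the $\DM(k)$-version of Proposition \ref{examplesurface}, so the plan is just to apply the motivic realization to the explicit complex found there. First I would recall the comparison already used to obtain the shellable decomposition: for a smooth cellular scheme $X$ the object $\wt{C}^{cell}_*(X)\in\Daba{k}$ is carried, under $\mathbf{L}\tilde{\gamma}^*\colon\Daba{k}\to\DMt(k)$ followed by the forgetful functor $\DMt(k)\to\DM(k)$, to $\M(X)$, with the normalization that a copy of $\KMW_i$ sitting in homological degree $i$ is sent to $\bZ(i)[2i]$ (and to $\tbZ(i)[2i]$ in the intermediate category). A complete toric surface $X_\Sigma$ is smooth, and its complex $K$ is the boundary of an $l$-gon, hence pure of dimension $2$ and shellable, so this comparison applies.

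Next I would apply this composite functor to the small complex of Proposition \ref{examplesurface},
\[
\wt{C}^{cell}_*(X_\Sigma)\;\cong\;\Big(\;\KMW_2\;\xr{((a_{\Sigma})_{\epsilon}\eta,\,0)}\;\KMW_1\oplus(\KMW_1)^{\oplus l-3}\;\xr{0}\;\bZ\;\Big)
\]
with $\KMW_2$ in homological degree $2$. The only thing to check is that $\eta$ dies in $\DM(k)$: the morphism $\eta\colon\KMW_{n+1}\to\KMW_n$ realizes to an element of $\mathrm{Hom}_{\DM(k)}(\bZ(n+1)[n+1],\bZ(n)[n])=\mathrm{H}^{-1,-1}_{\mathrm{M}}(k)$, which vanishes for a field since $\mathrm{H}^{p,q}_{\mathrm{M}}(k)=0$ for $q<0$. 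Hence the lone differential $(a_{\Sigma})_{\epsilon}\eta$ becomes zero, the realized complex has vanishing differentials and splits as the direct sum of its terms, and the degree rule $\KMW_i$ in degree $i\mapsto\bZ(i)[2i]$ applied row by row gives
\[
\M(X_\Sigma)\;\cong\;\bZ(2)[4]\;\oplus\;\bZ(1)[2]\;\oplus\;(\bZ(1)[2])^{\oplus l-3}\;\oplus\;\bZ\;=\;\bZ(2)[4]\oplus(\bZ(1)[2])^{\oplus l-2}\oplus\bZ.
\]

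There is no genuine obstacle here; the only care needed is the twist/shift bookkeeping of $\Daba{k}\to\DM(k)$ and the vanishing of $\eta$ in $\DM(k)$ --- the latter being precisely what collapses the $\KMW_i\sslash l\eta$ terms of the general statement to plain $\bZ(i)[2i]$. As a consistency check one may instead invoke the general shellable motivic decomposition $\M(X_\Sigma)\cong\bigoplus_{\sigma\in K_{\max}}\bZ(|r(\sigma)|)[2|r(\sigma)|]$ and compute restrictions for a shelling $\sigma_1,\dots,\sigma_l$ of the cycle $K$: one finds $|r(\sigma_1)|=0$, $|r(\sigma_i)|=1$ for $2\le i\le l-1$, and $|r(\sigma_l)|=2$, which reproduces the same answer.
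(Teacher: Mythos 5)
Your argument is correct and is exactly the intended one: the paper's corollary is obtained by passing the complex of Proposition \ref{examplesurface} to $\DM(k)$, where $\eta$ vanishes (as $\mathrm{H}^{-1,-1}_{\mathrm{M}}(k)=0$), so the single nonzero differential dies and the Tate terms split off, giving $\bZ(2)[4]\oplus(\bZ(1)[2])^{\oplus l-2}\oplus\bZ$. Your cross-check via the general shellable decomposition $\bigoplus_{\sigma\in K_{\max}}\bZ(|r(\sigma)|)[2|r(\sigma)|]$ with restriction sizes $0,1,\ldots,1,2$ is also consistent with the paper.
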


\begin{corollary}
  The MW-motivic decomposition for $\Mt(X_{\Sigma}) \in \DMt(k)$ is described as follows:
  \[
      \Mt(X_\Sigma) \cong \begin{cases}
              \tbZ(2)[4] \oplus (\tbZ(1)[2])^{\oplus l-2} \oplus \tbZ, & \text{if } a_{\Sigma} \text{ is even,} \\
              
              \tbZ(1)[2]\sslash \eta \oplus (\tbZ(1)[2])^{\oplus l-3} \oplus \tbZ, & \text{if } a_{\Sigma} \text{ is odd.} \\
          \end{cases}    
\]

\end{corollary}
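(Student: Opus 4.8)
The plan is to deduce the corollary directly from Proposition~\ref{examplesurface}, which already does the geometric work. That proposition provides, in $\Daba{k}$, a quasi-isomorphism between $C^{cell}_*(X_\Sigma)$ and the complex concentrated in degrees $0,1,2$ with terms $\bZ$, $\KMW_1\oplus(\KMW_1)^{\oplus l-3}$, $\KMW_2$, whose only possibly nonzero differential is $\KMW_2\xrightarrow{((a_\Sigma)_\epsilon\eta,\,0)}\KMW_1\oplus(\KMW_1)^{\oplus l-3}$, where $a_\Sigma=\gcd(a_1,\dots,a_l)$. So the whole question comes down to (i) evaluating the map $(a_\Sigma)_\epsilon\eta\colon\KMW_2\to\KMW_1$ in terms of the parity of $a_\Sigma$, and (ii) transporting the resulting splitting along the functor $\mathbf{L}\tilde{\gamma}^*\colon\Daba{k}\to\DMt(k)$.

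For (i), recall from the Notations that $h=2+\eta[-1]$ and $\eta h=0$, whence $\eta\aBra{-1}=\eta(h-1)=-\eta$. Writing $n_\epsilon=\sum_{i=1}^{n}\aBra{(-1)^{i-1}}$, we get $n_\epsilon=\tfrac n2 h$ for $n$ even and $n_\epsilon=\tfrac{n-1}2 h+1$ for $n$ odd, hence $n_\epsilon\eta=0$ if $n$ is even and $n_\epsilon\eta=\eta$ if $n$ is odd. Thus if $a_\Sigma$ is even the differential vanishes and the complex is the direct sum $\KMW_2[2]\oplus(\KMW_1)^{\oplus l-2}[1]\oplus\bZ$, while if $a_\Sigma$ is odd the differential is $\eta$ and the top two terms combine into $[\KMW_2\xrightarrow{\eta}\KMW_1]=\KMW_1\sslash\eta\,[1]$, so the complex is $(\KMW_1\sslash\eta)[1]\oplus(\KMW_1)^{\oplus l-3}[1]\oplus\bZ$.

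For (ii), I would invoke the triangulated symmetric monoidal functor $\mathbf{L}\tilde{\gamma}^*$ of \cite[\S 1.3 (MW4)]{bachmann2020milnor} already used above, which sends the unit $\bZ$ to $\tbZ$, sends a $\KMW_i$-generator placed in homological degree $i$ to $\tbZ(i)[2i]$, and intertwines $\eta$ with $\eta$; consequently it carries $\KMW_i\sslash l\eta\,[i]$ to the cone $\tbZ\sslash l\eta(i)[2i]=C(l\eta)(i)[2i]$ of \cite{fasel2023tate}. Being exact, it preserves the two decompositions of (i); applying it to $\Mt(X_\Sigma)=\mathbf{L}\tilde{\gamma}^*\bigl(C^{cell}_*(X_\Sigma)\bigr)$ gives $\tbZ(2)[4]\oplus(\tbZ(1)[2])^{\oplus l-2}\oplus\tbZ$ when $a_\Sigma$ is even and $\tbZ(1)[2]\sslash\eta\oplus(\tbZ(1)[2])^{\oplus l-3}\oplus\tbZ$ when $a_\Sigma$ is odd, as claimed. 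I do not expect a genuine obstacle here beyond Proposition~\ref{examplesurface}; the only points needing care are the Milnor--Witt identities $\eta h=0$, $\eta\aBra{-1}=-\eta$ in (i), and checking in (ii) that $\mathbf{L}\tilde{\gamma}^*$ matches $\eta$ with $\eta$ on the nose, so that the two-term complex $[\KMW_2\xrightarrow{\eta}\KMW_1]$ is sent to $[\tbZ(2)[4]\xrightarrow{\eta}\tbZ(1)[2]]$ without a unit discrepancy that could alter the mapping cone.
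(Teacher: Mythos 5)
Your proposal is correct and is essentially the deduction the paper intends: the corollary is meant to follow immediately from Proposition \ref{examplesurface} by noting $(a_\Sigma)_\epsilon\eta=\chi(a_\Sigma)\eta$ (the identity $n_\epsilon=\frac{n-\chi(n)}{2}h+\chi(n)$, $\eta h=0$ recorded in the appendix) and then transporting the resulting splitting of the cellular complex along $\mathbf{L}\tilde{\gamma}^*$, under which $\KMW_i[i]$ becomes $\tbZ(i)[2i]$ and $\KMW_1\sslash\eta\,[1]$ becomes $\tbZ(1)[2]\sslash\eta$. The two points you flag at the end (the Milnor--Witt identities and the compatibility of $\eta$ under $\mathbf{L}\tilde{\gamma}^*$) are exactly the inputs the paper also relies on, so there is no divergence in method.
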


\begin{remark}
  This result directly parallels the computations concerning orientable and non-orientable surfaces in topology. The orientability is contingent upon the parity of $a_{\Sigma}$, and we can retrieve the homology of surfaces by considering the real realization (let $\eta = 2$).
\end{remark}

Before presenting the proof, we will compute several examples manually to elucidate these concepts.

\begin{example}[Projective Space]
  For $\mathbb{P}^2$, we examine the following fan and dual cones. Each cone corresponds to an affine toric variety, and we highlight the generators of the coordinate ring, which contain the information regarding orientation.
    
\begin{center}

\tikzset{every picture/.style={line width=0.75pt}} %set default line width to 0.75pt        

\begin{tikzpicture}[x=0.75pt,y=0.75pt,yscale=-1,xscale=1]
%uncomment if require: \path (0,281); %set diagram left start at 0, and has height of 281

%Shape: Rectangle [id:dp28598449831538875] 
\draw  [draw=none][fill={rgb, 255:red, 245; green, 166; blue, 35 }  ,fill opacity=0.5 ] (310,10) -- (410,10) -- (410,60) -- (310,60) -- cycle ;
%Shape: Rectangle [id:dp27517916386670316] 
\draw  [draw=none][fill={rgb, 255:red, 245; green, 166; blue, 35 }  ,fill opacity=0.5 ] (390,100) -- (440,100) -- (440,200) -- (390,200) -- cycle ;
%Shape: Rectangle [id:dp4731933942705693] 
\draw  [draw=none][fill={rgb, 255:red, 245; green, 166; blue, 35 }  ,fill opacity=0.5 ] (135,175) -- (170,140) -- (270,240) -- (235,275) -- cycle ;
%Shape: Right Triangle [id:dp322915795165563] 
\draw  [draw=none][fill={rgb, 255:red, 74; green, 144; blue, 226 }  ,fill opacity=0.5 ] (290,80) -- (340,130) -- (290,130) -- cycle ;
%Shape: Right Triangle [id:dp9025258600421093] 
\draw  [draw=none][fill={rgb, 255:red, 74; green, 144; blue, 226 }  ,fill opacity=0.5 ] (240,80) -- (290,129) -- (240,129) -- cycle ;
%Shape: Right Triangle [id:dp5914659846320849] 
\draw  [draw=none][fill={rgb, 255:red, 74; green, 144; blue, 226 }  ,fill opacity=0.5 ] (290,130) -- (340,180) -- (290,180) -- cycle ;
%Straight Lines [id:da3070757815102432] 
\draw    (100,81) -- (100,22) ;
\draw [shift={(100,20)}, rotate = 90] [color={rgb, 255:red, 0; green, 0; blue, 0 }  ][line width=0.75]    (10.93,-3.29) .. controls (6.95,-1.4) and (3.31,-0.3) .. (0,0) .. controls (3.31,0.3) and (6.95,1.4) .. (10.93,3.29)   ;
%Straight Lines [id:da40380455792579495] 
\draw    (100,81) -- (158,80.03) ;
\draw [shift={(160,80)}, rotate = 179.05] [color={rgb, 255:red, 0; green, 0; blue, 0 }  ][line width=0.75]    (10.93,-3.29) .. controls (6.95,-1.4) and (3.31,-0.3) .. (0,0) .. controls (3.31,0.3) and (6.95,1.4) .. (10.93,3.29)   ;
%Straight Lines [id:da9939409681495992] 
\draw    (100,81) -- (61.43,118.6) ;
\draw [shift={(60,120)}, rotate = 315.73] [color={rgb, 255:red, 0; green, 0; blue, 0 }  ][line width=0.75]    (10.93,-3.29) .. controls (6.95,-1.4) and (3.31,-0.3) .. (0,0) .. controls (3.31,0.3) and (6.95,1.4) .. (10.93,3.29)   ;
%Straight Lines [id:da27358465801725984] 
\draw    (290,129) -- (290,81) ;
\draw [shift={(290,79)}, rotate = 90] [color={rgb, 255:red, 0; green, 0; blue, 0 }  ][line width=0.75]    (10.93,-3.29) .. controls (6.95,-1.4) and (3.31,-0.3) .. (0,0) .. controls (3.31,0.3) and (6.95,1.4) .. (10.93,3.29)   ;
%Straight Lines [id:da49347413818880437] 
\draw    (290,129) -- (338,129) ;
\draw [shift={(340,129)}, rotate = 180] [color={rgb, 255:red, 0; green, 0; blue, 0 }  ][line width=0.75]    (10.93,-3.29) .. controls (6.95,-1.4) and (3.31,-0.3) .. (0,0) .. controls (3.31,0.3) and (6.95,1.4) .. (10.93,3.29)   ;
%Straight Lines [id:da07412879722959897] 
\draw    (290,129) -- (241.43,81.4) ;
\draw [shift={(240,80)}, rotate = 44.42] [color={rgb, 255:red, 0; green, 0; blue, 0 }  ][line width=0.75]    (10.93,-3.29) .. controls (6.95,-1.4) and (3.31,-0.3) .. (0,0) .. controls (3.31,0.3) and (6.95,1.4) .. (10.93,3.29)   ;
%Straight Lines [id:da9651899572137801] 
\draw    (290,129) -- (338.6,178.57) ;
\draw [shift={(340,180)}, rotate = 225.57] [color={rgb, 255:red, 0; green, 0; blue, 0 }  ][line width=0.75]    (10.93,-3.29) .. controls (6.95,-1.4) and (3.31,-0.3) .. (0,0) .. controls (3.31,0.3) and (6.95,1.4) .. (10.93,3.29)   ;
%Straight Lines [id:da7393293118324304] 
\draw    (290,129) -- (242,129) ;
\draw [shift={(240,129)}, rotate = 360] [color={rgb, 255:red, 0; green, 0; blue, 0 }  ][line width=0.75]    (10.93,-3.29) .. controls (6.95,-1.4) and (3.31,-0.3) .. (0,0) .. controls (3.31,0.3) and (6.95,1.4) .. (10.93,3.29)   ;
%Straight Lines [id:da9909261851643973] 
\draw    (290,129) -- (290,177) ;
\draw [shift={(290,179)}, rotate = 270] [color={rgb, 255:red, 0; green, 0; blue, 0 }  ][line width=0.75]    (10.93,-3.29) .. controls (6.95,-1.4) and (3.31,-0.3) .. (0,0) .. controls (3.31,0.3) and (6.95,1.4) .. (10.93,3.29)   ;
%Straight Lines [id:da4491858967455493] 
\draw    (360,60) -- (360,12) ;
\draw [shift={(360,10)}, rotate = 90] [color={rgb, 255:red, 0; green, 0; blue, 0 }  ][line width=0.75]    (10.93,-3.29) .. controls (6.95,-1.4) and (3.31,-0.3) .. (0,0) .. controls (3.31,0.3) and (6.95,1.4) .. (10.93,3.29)   ;
%Straight Lines [id:da006379406355517148] 
\draw    (360,60) -- (408,60) ;
\draw [shift={(410,60)}, rotate = 180] [color={rgb, 255:red, 0; green, 0; blue, 0 }  ][line width=0.75]    (10.93,-3.29) .. controls (6.95,-1.4) and (3.31,-0.3) .. (0,0) .. controls (3.31,0.3) and (6.95,1.4) .. (10.93,3.29)   ;
%Straight Lines [id:da463993432741886] 
\draw  [dash pattern={on 4.5pt off 4.5pt}]  (360,60) -- (311.41,11.41) ;
\draw [shift={(310,10)}, rotate = 45] [color={rgb, 255:red, 0; green, 0; blue, 0 }  ][line width=0.75]    (10.93,-3.29) .. controls (6.95,-1.4) and (3.31,-0.3) .. (0,0) .. controls (3.31,0.3) and (6.95,1.4) .. (10.93,3.29)   ;
%Straight Lines [id:da49067736182656385] 
\draw    (360,60) -- (312,60) ;
\draw [shift={(310,60)}, rotate = 360] [color={rgb, 255:red, 0; green, 0; blue, 0 }  ][line width=0.75]    (10.93,-3.29) .. controls (6.95,-1.4) and (3.31,-0.3) .. (0,0) .. controls (3.31,0.3) and (6.95,1.4) .. (10.93,3.29)   ;
%Straight Lines [id:da34114488901114304] 
\draw    (390,149) -- (390,101) ;
\draw [shift={(390,99)}, rotate = 90] [color={rgb, 255:red, 0; green, 0; blue, 0 }  ][line width=0.75]    (10.93,-3.29) .. controls (6.95,-1.4) and (3.31,-0.3) .. (0,0) .. controls (3.31,0.3) and (6.95,1.4) .. (10.93,3.29)   ;
%Straight Lines [id:da8203776196075194] 
\draw  [dash pattern={on 4.5pt off 4.5pt}]  (390,149) -- (438,149) ;
\draw [shift={(440,149)}, rotate = 180] [color={rgb, 255:red, 0; green, 0; blue, 0 }  ][line width=0.75]    (10.93,-3.29) .. controls (6.95,-1.4) and (3.31,-0.3) .. (0,0) .. controls (3.31,0.3) and (6.95,1.4) .. (10.93,3.29)   ;
%Straight Lines [id:da3244597717976545] 
\draw    (390,149) -- (438.6,198.57) ;
\draw [shift={(440,200)}, rotate = 225.57] [color={rgb, 255:red, 0; green, 0; blue, 0 }  ][line width=0.75]    (10.93,-3.29) .. controls (6.95,-1.4) and (3.31,-0.3) .. (0,0) .. controls (3.31,0.3) and (6.95,1.4) .. (10.93,3.29)   ;
%Straight Lines [id:da6237070703074004] 
\draw    (390,149) -- (390,197) ;
\draw [shift={(390,199)}, rotate = 270] [color={rgb, 255:red, 0; green, 0; blue, 0 }  ][line width=0.75]    (10.93,-3.29) .. controls (6.95,-1.4) and (3.31,-0.3) .. (0,0) .. controls (3.31,0.3) and (6.95,1.4) .. (10.93,3.29)   ;
%Straight Lines [id:da3600185569778267] 
\draw    (220,189) -- (171.43,141.4) ;
\draw [shift={(170,140)}, rotate = 44.42] [color={rgb, 255:red, 0; green, 0; blue, 0 }  ][line width=0.75]    (10.93,-3.29) .. controls (6.95,-1.4) and (3.31,-0.3) .. (0,0) .. controls (3.31,0.3) and (6.95,1.4) .. (10.93,3.29)   ;
%Straight Lines [id:da07561600010075864] 
\draw    (220,189) -- (268.6,238.57) ;
\draw [shift={(270,240)}, rotate = 225.57] [color={rgb, 255:red, 0; green, 0; blue, 0 }  ][line width=0.75]    (10.93,-3.29) .. controls (6.95,-1.4) and (3.31,-0.3) .. (0,0) .. controls (3.31,0.3) and (6.95,1.4) .. (10.93,3.29)   ;
%Straight Lines [id:da38506724586908825] 
\draw    (220,189) -- (172,189) ;
\draw [shift={(170,189)}, rotate = 360] [color={rgb, 255:red, 0; green, 0; blue, 0 }  ][line width=0.75]    (10.93,-3.29) .. controls (6.95,-1.4) and (3.31,-0.3) .. (0,0) .. controls (3.31,0.3) and (6.95,1.4) .. (10.93,3.29)   ;
%Straight Lines [id:da6207470815300733] 
\draw  [dash pattern={on 4.5pt off 4.5pt}]  (220,189) -- (220,237) ;
\draw [shift={(220,239)}, rotate = 270] [color={rgb, 255:red, 0; green, 0; blue, 0 }  ][line width=0.75]    (10.93,-3.29) .. controls (6.95,-1.4) and (3.31,-0.3) .. (0,0) .. controls (3.31,0.3) and (6.95,1.4) .. (10.93,3.29)   ;
%Straight Lines [id:da8819796867911303] 
\draw    (160,98.5) .. controls (161.67,96.83) and (163.33,96.83) .. (165,98.5) .. controls (166.67,100.17) and (168.33,100.17) .. (170,98.5) .. controls (171.67,96.83) and (173.33,96.83) .. (175,98.5) .. controls (176.67,100.17) and (178.33,100.17) .. (180,98.5) .. controls (181.67,96.83) and (183.33,96.83) .. (185,98.5) .. controls (186.67,100.17) and (188.33,100.17) .. (190,98.5) -- (193,98.5)(160,101.5) .. controls (161.67,99.83) and (163.33,99.83) .. (165,101.5) .. controls (166.67,103.17) and (168.33,103.17) .. (170,101.5) .. controls (171.67,99.83) and (173.33,99.83) .. (175,101.5) .. controls (176.67,103.17) and (178.33,103.17) .. (180,101.5) .. controls (181.67,99.83) and (183.33,99.83) .. (185,101.5) .. controls (186.67,103.17) and (188.33,103.17) .. (190,101.5) -- (193,101.5) ;
\draw [shift={(200,100)}, rotate = 180] [color={rgb, 255:red, 0; green, 0; blue, 0 }  ][line width=0.75]    (10.93,-4.9) .. controls (6.95,-2.3) and (3.31,-0.67) .. (0,0) .. controls (3.31,0.67) and (6.95,2.3) .. (10.93,4.9)   ;

% Text Node
\draw (121,42.4) node [anchor=north west][inner sep=0.75pt]    {$\sigma _{1}$};
% Text Node
\draw (101,92.4) node [anchor=north west][inner sep=0.75pt]    {$\sigma _{2}$};
% Text Node
\draw (61,61.4) node [anchor=north west][inner sep=0.75pt]    {$\sigma _{3}$};
% Text Node
\draw (92,2.4) node [anchor=north west][inner sep=0.75pt]    {$v_{1}$};
% Text Node
\draw (162,71.4) node [anchor=north west][inner sep=0.75pt]    {$v_{2}$};
% Text Node
\draw (42,121.4) node [anchor=north west][inner sep=0.75pt]    {$v_{3}$};
% Text Node
\draw (301,95.4) node [anchor=north west][inner sep=0.75pt]    {$\check{\sigma }_{1}$};
% Text Node
\draw (291,145.4) node [anchor=north west][inner sep=0.75pt]    {$\check{\sigma }_{2}$};
% Text Node
\draw (251,104.4) node [anchor=north west][inner sep=0.75pt]    {$\check{\sigma }_{3}$};
% Text Node
\draw (338,113.4) node [anchor=north west][inner sep=0.75pt]    {$x$};
% Text Node
\draw (281,62.4) node [anchor=north west][inner sep=0.75pt]    {$y$};
% Text Node
\draw (221,112.4) node [anchor=north west][inner sep=0.75pt]    {$x^{-1}$};
% Text Node
\draw (281,180.4) node [anchor=north west][inner sep=0.75pt]    {$y^{-1}$};
% Text Node
\draw (336,161.4) node [anchor=north west][inner sep=0.75pt]    {$xy^{-1}$};
% Text Node
\draw (231,62.4) node [anchor=north west][inner sep=0.75pt]    {$x^{-1} y$};
% Text Node
\draw (361,35.4) node [anchor=north west][inner sep=0.75pt]    {$\check{v}_{1}$};
% Text Node
\draw (408,43.4) node [anchor=north west][inner sep=0.75pt]    {$x$};
% Text Node
\draw (351,-7.6) node [anchor=north west][inner sep=0.75pt]    {$y$};
% Text Node
\draw (301,42.4) node [anchor=north west][inner sep=0.75pt]    {$x^{-1}$};
% Text Node
\draw (381,82.4) node [anchor=north west][inner sep=0.75pt]    {$y$};
% Text Node
\draw (381,200.4) node [anchor=north west][inner sep=0.75pt]    {$y^{-1}$};
% Text Node
\draw (411,122.4) node [anchor=north west][inner sep=0.75pt]    {$\check{v}_{2}$};
% Text Node
\draw (436,171.4) node [anchor=north west][inner sep=0.75pt]    {$xy^{-1}$};
% Text Node
\draw (161,172.4) node [anchor=north west][inner sep=0.75pt]    {$x^{-1}$};
% Text Node
\draw (266,221.4) node [anchor=north west][inner sep=0.75pt]    {$xy^{-1}$};
% Text Node
\draw (161,122.4) node [anchor=north west][inner sep=0.75pt]    {$x^{-1} y$};
% Text Node
\draw (192,195.4) node [anchor=north west][inner sep=0.75pt]    {$\check{v}_{3}$};

\end{tikzpicture}

\end{center}

We now translate this information into coordinate rings: 

% https://q.uiver.app/#q=WzAsMTAsWzAsMSwia1tcXGNoZWNre1xcc2lnbWF9XzFdXFxjb25nIGtbeCx5XSJdLFsxLDEsImtbeSx4LHheey0xfV1cXGNvbmcga1tcXGNoZWNre3Z9XzFdIl0sWzEsMywia1t4eV57LTF9LHkseV57LTF9XSBcXGNvbmcga1tcXGNoZWNre3Z9XzJdIl0sWzIsMywia1t4LHheey0xfSx5LHleey0xfV0iXSxbMCwzLCJrW1xcY2hlY2t7XFxzaWdtYX1fMl0gXFxjb25nIGtbeHleey0xfSx5XnstMX1dIl0sWzEsNSwia1t4XnstMX0seHleey0xfSx4XnstMX15XVxcY29uZyBrW1xcY2hlY2t7dn1fM10iXSxbMCw1LCJrW1xcY2hlY2t7XFxzaWdtYX1fM10gXFxjb25nIGtbeF57LTF9LHheey0xfXldIl0sWzEsMiwiIGtbeCx5LHleey0xfV0iXSxbMSw0LCJrW3leey0xfSx4eV57LTF9LHheey0xfXldIl0sWzEsMCwia1t4XnstMX15LHgseF57LTF9XSJdLFswLDEsIiIsMCx7InN0eWxlIjp7ImhlYWQiOnsibmFtZSI6Im5vbmUifX19XSxbNCwyLCIiLDAseyJzdHlsZSI6eyJoZWFkIjp7Im5hbWUiOiJub25lIn19fV0sWzYsNSwiIiwwLHsic3R5bGUiOnsiaGVhZCI6eyJuYW1lIjoibm9uZSJ9fX1dLFs2LDksIiIsMCx7InN0eWxlIjp7ImhlYWQiOnsibmFtZSI6Im5vbmUifX19XSxbMCw3LCIiLDIseyJzdHlsZSI6eyJoZWFkIjp7Im5hbWUiOiJub25lIn19fV0sWzQsOCwiIiwyLHsic3R5bGUiOnsiaGVhZCI6eyJuYW1lIjoibm9uZSJ9fX1dLFs3LDIsIlxcY29uZyJdLFs5LDEsIlxcY29uZyJdLFs4LDUsIlxcY29uZyJdLFsxLDMsIiIsMCx7InN0eWxlIjp7ImhlYWQiOnsibmFtZSI6Im5vbmUifX19XSxbMiwzLCIiLDAseyJzdHlsZSI6eyJoZWFkIjp7Im5hbWUiOiJub25lIn19fV0sWzUsMywiIiwxLHsic3R5bGUiOnsiaGVhZCI6eyJuYW1lIjoibm9uZSJ9fX1dXQ==
\[\begin{tikzcd}[ampersand replacement=\&,cramped]
	\& {k[x^{-1}y,x,x^{-1}]} \\
	{k[\check{\sigma}_1]\cong k[x,y]} \& {k[y,x,x^{-1}]\cong k[\check{v}_1]} \\
	\& { k[x,y,y^{-1}]} \\
	{k[\check{\sigma}_2] \cong k[xy^{-1},y^{-1}]} \& {k[xy^{-1},y,y^{-1}] \cong k[\check{v}_2]} \& {k[x,x^{-1},y,y^{-1}]} \\
	\& {k[y^{-1},xy^{-1},x^{-1}y]} \\
	{k[\check{\sigma}_3] \cong k[x^{-1},x^{-1}y]} \& {k[x^{-1},xy^{-1},x^{-1}y]\cong k[\check{v}_3]}
	\arrow["\cong", from=1-2, to=2-2]
	\arrow[no head, from=2-1, to=2-2]
	\arrow[no head, from=2-1, to=3-2]
	\arrow[no head, from=2-2, to=4-3]
	\arrow["\cong", from=3-2, to=4-2]
	\arrow[no head, from=4-1, to=4-2]
	\arrow[no head, from=4-1, to=5-2]
	\arrow[no head, from=4-2, to=4-3]
	\arrow["\cong", from=5-2, to=6-2]
	\arrow[no head, from=6-1, to=1-2]
	\arrow[no head, from=6-1, to=6-2]
	\arrow[no head, from=6-2, to=4-3]
\end{tikzcd}\]

It is noteworthy that the isomorphism $ k[x^{-1}y,x,x^{-1}] \xr{\cong} k[y,x,x^{-1}]$ induces a morphism $ \af^1 / \afnz{1} \wedge \Gm \xr{\cong} \af^1 / \afnz{1} \wedge \Gm $, thereby producing a morphism $ \KMW_1 \otimes \mathbf{H} \to \KMW_1 \otimes \mathbf{H} $. According to Proposition \ref{tActMain} (also Lemma \ref{changeOriD2}), this morphism can be expressed as:

\[
\begin{bmatrix}
  \aBra{-1} & \eta \\
  0 & 1
\end{bmatrix}
\]
In this case, the cellular complex takes the following form:
% https://q.uiver.app/#q=WzAsNyxbMCwwLCIgKFxcS01XXzIpX3tcXHNpZ21hXzF9Il0sWzEsMSwiIChcXEtNV18xIFxcb3RpbWVzIFxcbWF0aGJme0h9KV97dl8yfSJdLFsxLDAsIiAoXFxLTVdfMSBcXG90aW1lcyBcXG1hdGhiZntIfSlfe3ZfMX0iXSxbMCwxLCIgKFxcS01XXzIpX3tcXHNpZ21hXzJ9Il0sWzIsMSwiXFxtYXRoYmZ7SH0gXFxvdGltZXMgXFxtYXRoYmZ7SH0iXSxbMCwyLCIgKFxcS01XXzIpX3tcXHNpZ21hXzN9Il0sWzEsMiwiIChcXEtNV18xIFxcb3RpbWVzIFxcbWF0aGJme0h9KV97dl8zfSJdLFswLDEsImRfezF9IiwxXSxbMCwyLCIoXFxtYXRocm17SWR9LCAwKSJdLFszLDEsIihcXG1hdGhybXtJZH0sIDApIl0sWzIsNF0sWzEsNF0sWzUsNiwiKFxcbWF0aHJte0lkfSwgMCkiXSxbNSwyLCJkX3sxfSIsMSx7ImxhYmVsX3Bvc2l0aW9uIjoyMH1dLFs2LDRdLFszLDAsIlxcYmlnb3BsdXMiLDEseyJzdHlsZSI6eyJib2R5Ijp7Im5hbWUiOiJub25lIn0sImhlYWQiOnsibmFtZSI6Im5vbmUifX19XSxbMSwyLCJcXGJpZ29wbHVzIiwxLHsic3R5bGUiOnsiYm9keSI6eyJuYW1lIjoibm9uZSJ9LCJoZWFkIjp7Im5hbWUiOiJub25lIn19fV0sWzUsMywiXFxiaWdvcGx1cyIsMSx7InN0eWxlIjp7ImJvZHkiOnsibmFtZSI6Im5vbmUifSwiaGVhZCI6eyJuYW1lIjoibm9uZSJ9fX1dLFs2LDEsIlxcYmlnb3BsdXMiLDEseyJzdHlsZSI6eyJib2R5Ijp7Im5hbWUiOiJub25lIn0sImhlYWQiOnsibmFtZSI6Im5vbmUifX19XSxbMyw2LCJkX3sxfSIsMV1d
\[\begin{tikzcd}[ampersand replacement=\&,cramped]
	{ (\KMW_2)_{\sigma_1}} \& { (\KMW_1 \otimes \mathbf{H})_{v_1}} \\
	{ (\KMW_2)_{\sigma_2}} \& { (\KMW_1 \otimes \mathbf{H})_{v_2}} \& {\mathbf{H} \otimes \mathbf{H}} \\
	{ (\KMW_2)_{\sigma_3}} \& { (\KMW_1 \otimes \mathbf{H})_{v_3}}
	\arrow["{(\mathrm{Id}, 0)}", from=1-1, to=1-2]
	\arrow["{d( 1 )}"{description}, from=1-1, to=2-2]
	\arrow[from=1-2, to=2-3]
	\arrow["\bigoplus"{description}, draw=none, from=2-1, to=1-1]
	\arrow["{(\mathrm{Id}, 0)}", from=2-1, to=2-2]
	\arrow["{d( 1 )}"{description}, from=2-1, to=3-2]
	\arrow["\bigoplus"{description}, draw=none, from=2-2, to=1-2]
	\arrow[from=2-2, to=2-3]
	\arrow["{d( 1 )}"{description, pos=0.2}, from=3-1, to=1-2]
	\arrow["\bigoplus"{description}, draw=none, from=3-1, to=2-1]
	\arrow["{(\mathrm{Id}, 0)}", from=3-1, to=3-2]
	\arrow["\bigoplus"{description}, draw=none, from=3-2, to=2-2]
	\arrow[from=3-2, to=2-3]
\end{tikzcd}\]

After performing some reductions, this cellular complex becomes quasi-isomorphic to:
\[
  \wt{C}^{cell}_*(\mathbb{P}^2) \cong \KMW_2 \xr{\eta} \KMW_1 \xr{0} \bZ
\]
\end{example}

\begin{example}[Hirzebruch surfaces]
\label{Hirzebruch}
The Hirzebruch surface, denoted as $\mathbb{F}_a$, is a toric surface defined by the associated fan:

\begin{center}

\tikzset{every picture/.style={line width=0.75pt}} %set default line width to 0.75pt        

\begin{tikzpicture}[x=0.75pt,y=0.75pt,yscale=-1,xscale=1]
%uncomment if require: \path (0,300); %set diagram left start at 0, and has height of 300

%Straight Lines [id:da2613900441838797] 
\draw    (120,110) -- (120,52) ;
\draw [shift={(120,50)}, rotate = 90] [color={rgb, 255:red, 0; green, 0; blue, 0 }  ][line width=0.75]    (10.93,-3.29) .. controls (6.95,-1.4) and (3.31,-0.3) .. (0,0) .. controls (3.31,0.3) and (6.95,1.4) .. (10.93,3.29)   ;
%Straight Lines [id:da8279240507019359] 
\draw    (120,110) -- (178,110) ;
\draw [shift={(180,110)}, rotate = 180] [color={rgb, 255:red, 0; green, 0; blue, 0 }  ][line width=0.75]    (10.93,-3.29) .. controls (6.95,-1.4) and (3.31,-0.3) .. (0,0) .. controls (3.31,0.3) and (6.95,1.4) .. (10.93,3.29)   ;
%Straight Lines [id:da4698283566945274] 
\draw    (120,110) -- (120,168) ;
\draw [shift={(120,170)}, rotate = 270] [color={rgb, 255:red, 0; green, 0; blue, 0 }  ][line width=0.75]    (10.93,-3.29) .. controls (6.95,-1.4) and (3.31,-0.3) .. (0,0) .. controls (3.31,0.3) and (6.95,1.4) .. (10.93,3.29)   ;
%Straight Lines [id:da23453916124327479] 
\draw    (120,110) -- (81.25,61.56) ;
\draw [shift={(80,60)}, rotate = 51.34] [color={rgb, 255:red, 0; green, 0; blue, 0 }  ][line width=0.75]    (10.93,-3.29) .. controls (6.95,-1.4) and (3.31,-0.3) .. (0,0) .. controls (3.31,0.3) and (6.95,1.4) .. (10.93,3.29)   ;
%Shape: Circle [id:dp8738125210617069] 
\draw  [fill={rgb, 255:red, 0; green, 0; blue, 0 }  ,fill opacity=1 ] (97.68,85) .. controls (97.68,86.28) and (98.72,87.32) .. (100,87.32) .. controls (101.28,87.32) and (102.32,86.28) .. (102.32,85) .. controls (102.32,83.72) and (101.28,82.68) .. (100,82.68) .. controls (98.72,82.68) and (97.68,83.72) .. (97.68,85) -- cycle ;

% Text Node
\draw (141,72.4) node [anchor=north west][inner sep=0.75pt]    {$\sigma _{1}$};
% Text Node
\draw (141,122.4) node [anchor=north west][inner sep=0.75pt]    {$\sigma _{2}$};
% Text Node
\draw (81,121.4) node [anchor=north west][inner sep=0.75pt]    {$\sigma _{3}$};
% Text Node
\draw (112,32.4) node [anchor=north west][inner sep=0.75pt]    {$v_{1}$};
% Text Node
\draw (182,101.4) node [anchor=north west][inner sep=0.75pt]    {$v_{2}$};
% Text Node
\draw (112,171.4) node [anchor=north west][inner sep=0.75pt]    {$v_{3}$};
% Text Node
\draw (53,83.4) node [anchor=north west][inner sep=0.75pt]    {$( -1,a)$};
% Text Node
\draw (101,61.4) node [anchor=north west][inner sep=0.75pt]    {$\sigma _{4}$};
% Text Node
\draw (62,41.4) node [anchor=north west][inner sep=0.75pt]    {$v_{4}$};

\end{tikzpicture}

\end{center}

In line with our previous considerations, we can manually compute the transform morphism for different orientations. It is important to note that in this instance, we have $a_1=a$, $a_2=0$, $a_3=-a$, and $a_4=0$. Consequently, the cellular complex can be expressed in the following form:

% https://q.uiver.app/#q=WzAsOSxbMCwwLCIgKFxcS01XXzIpX3tcXHNpZ21hXzF9Il0sWzEsMSwiIChcXEtNV18xIFxcb3RpbWVzIFxcbWF0aGJme0h9KV97dl8yfSJdLFsxLDAsIiAoXFxLTVdfMSBcXG90aW1lcyBcXG1hdGhiZntIfSlfe3ZfMX0iXSxbMCwxLCIgKFxcS01XXzIpX3tcXHNpZ21hXzJ9Il0sWzIsMSwiXFxtYXRoYmZ7SH0gXFxvdGltZXMgXFxtYXRoYmZ7SH0iXSxbMCwyLCIgKFxcS01XXzIpX3tcXHNpZ21hXzN9Il0sWzEsMiwiIChcXEtNV18xIFxcb3RpbWVzIFxcbWF0aGJme0h9KV97dl8zfSJdLFswLDMsIiAoXFxLTVdfMilfe1xcc2lnbWFfNH0iXSxbMSwzLCIgKFxcS01XXzEgXFxvdGltZXMgXFxtYXRoYmZ7SH0pX3t2XzR9Il0sWzAsMSwiKFxcbWF0aHJte0lkfSwgMCkiLDFdLFswLDIsIihcXG1hdGhybXtJZH0sIDApIl0sWzMsMSwiKFxcbWF0aHJte0lkfSwgMCkiXSxbMiw0XSxbMSw0XSxbNSw2LCIoXFxtYXRocm17SWR9LCAwKSJdLFs2LDRdLFszLDAsIlxcYmlnb3BsdXMiLDEseyJzdHlsZSI6eyJib2R5Ijp7Im5hbWUiOiJub25lIn0sImhlYWQiOnsibmFtZSI6Im5vbmUifX19XSxbMSwyLCJcXGJpZ29wbHVzIiwxLHsic3R5bGUiOnsiYm9keSI6eyJuYW1lIjoibm9uZSJ9LCJoZWFkIjp7Im5hbWUiOiJub25lIn19fV0sWzUsMywiXFxiaWdvcGx1cyIsMSx7InN0eWxlIjp7ImJvZHkiOnsibmFtZSI6Im5vbmUifSwiaGVhZCI6eyJuYW1lIjoibm9uZSJ9fX1dLFs2LDEsIlxcYmlnb3BsdXMiLDEseyJzdHlsZSI6eyJib2R5Ijp7Im5hbWUiOiJub25lIn0sImhlYWQiOnsibmFtZSI6Im5vbmUifX19XSxbMyw2LCJkX3thfSIsMSx7ImxhYmVsX3Bvc2l0aW9uIjozMH1dLFs4LDRdLFs3LDgsIihcXG1hdGhybXtJZH0sIDApIl0sWzUsOCwiKFxcbWF0aHJte0lkfSwgMCkiLDFdLFs3LDIsImRfYSIsMSx7ImxhYmVsX3Bvc2l0aW9uIjoxMH1dLFs3LDUsIlxcYmlnb3BsdXMiLDEseyJzdHlsZSI6eyJib2R5Ijp7Im5hbWUiOiJub25lIn0sImhlYWQiOnsibmFtZSI6Im5vbmUifX19XSxbOCw2LCJcXGJpZ29wbHVzIiwxLHsic3R5bGUiOnsiYm9keSI6eyJuYW1lIjoibm9uZSJ9LCJoZWFkIjp7Im5hbWUiOiJub25lIn19fV1d
\[\begin{tikzcd}[ampersand replacement=\&,cramped]
	{ (\KMW_2)_{\sigma_1}} \& { (\KMW_1 \otimes \mathbf{H})_{v_1}} \\
	{ (\KMW_2)_{\sigma_2}} \& { (\KMW_1 \otimes \mathbf{H})_{v_2}} \& {\mathbf{H} \otimes \mathbf{H}} \\
	{ (\KMW_2)_{\sigma_3}} \& { (\KMW_1 \otimes \mathbf{H})_{v_3}} \\
	{ (\KMW_2)_{\sigma_4}} \& { (\KMW_1 \otimes \mathbf{H})_{v_4}}
	\arrow["{(\mathrm{Id}, 0)}", from=1-1, to=1-2]
	\arrow["{(\mathrm{Id}, 0)}"{description}, from=1-1, to=2-2]
	\arrow[from=1-2, to=2-3]
	\arrow["\bigoplus"{description}, draw=none, from=2-1, to=1-1]
	\arrow["{(\mathrm{Id}, 0)}", from=2-1, to=2-2]
	\arrow["{d( a )}"{description, pos=0.3}, from=2-1, to=3-2]
	\arrow["\bigoplus"{description}, draw=none, from=2-2, to=1-2]
	\arrow[from=2-2, to=2-3]
	\arrow["\bigoplus"{description}, draw=none, from=3-1, to=2-1]
	\arrow["{(\mathrm{Id}, 0)}", from=3-1, to=3-2]
	\arrow["{(\mathrm{Id}, 0)}"{description}, from=3-1, to=4-2]
	\arrow["\bigoplus"{description}, draw=none, from=3-2, to=2-2]
	\arrow[from=3-2, to=2-3]
	\arrow["{d( a )}"{description, pos=0.1}, from=4-1, to=1-2]
	\arrow["\bigoplus"{description}, draw=none, from=4-1, to=3-1]
	\arrow["{(\mathrm{Id}, 0)}", from=4-1, to=4-2]
	\arrow[from=4-2, to=2-3]
	\arrow["\bigoplus"{description}, draw=none, from=4-2, to=3-2]
\end{tikzcd}\]

Based on the identification established in Theorem \ref{mainComplex}, we can construct the cellular complex as follows:

Let $ K $ denote the boundary complex of a Hirzebruch surface $ \mathbb{F}_a $. This complex has a shelling represented by
\[ 12, 2\underline{3}, 3\underline{4}, \underline{14}, \]
with restrictions indicated accordingly. Define a characteristic function $ \lambda: \mathbb{Z}^4 \to \mathbb{Z}^2 $ as represented by the following bordermatrix:
\begin{equation*}
    \lambda = \bordermatrix{
     & 1 & 2 & 3 & 4 \cr
     & 0 & 1 & 0 & -1\cr
     & 1 & 0 & -1 & a\cr
     }
\end{equation*}
This function satisfies the non-degenerate condition over the complex $ K $, where the values above the matrix represent the indices of the vertices of $ K $. It can be observed that the moment complex $ \mathcal{A}Z_K $ is isomorphic to $ \mathbb{P}^1 \times \mathbb{P}^1 $.

For the case when $ a $ is odd, we have $ \omega \in \{\emptyset, 134, 24, 123\} $. According to the rule $ r(\sigma) = \omega \cap \sigma $ for $ \sigma \in K_{\text{max}} $, we find that $ r(12) = \emptyset \in K_{\emptyset} $, $ r(34) = 4 \in K_{24} $, while $ r(23) = 3 $ and $ r(14) = 14 $ belong to $ K_{134} $ (with no restrictions in $ K_{123} $). The defined shelling provides a regular expanding sequence for $ K_{134} $, specifically $ 1, 3, 34, 14 $, as established in \cite[Proposition 4.5]{cai2021integral}, where both 3 and 14 are marked as critical. The simplicial retraction yields $ \rho([34]) = 0 $ and $ \rho([4]) = [3] $, as outlined in \cite[Lemma 4.2 and 4.4]{cai2021integral}. Consequently, we obtain the following relation:
\begin{equation*}
    \overline{\partial}^{\text{cri}}([14]) = [3].
\end{equation*}
This leads to the formation of the following complex, as dictated by Theorem \ref{mainComplex}:
\[
\wt{C}^{cell}_*(\mathbb{F}_a) \cong \KMW_2 \xr{(\eta,0)} \KMW_1 \oplus \KMW_1 \xr{0} \mathbb{Z}.
\]

In the case where $ a $ is even, we then have $ \omega \in \{\emptyset, 13, 24, 1234\} $. Under these conditions, it follows that $ r(12) = \emptyset \in K_{\emptyset} $, $ r(23) = 3 \in K_{13} $, $ r(34) = 4 \in K_{24} $, and $ r(14) = 14 \in K_{1234} $. Again, invoking Theorem \ref{mainComplex}, we find that

\[
\wt{C}^{cell}_*(\mathbb{F}_a) \cong \KMW_2 \xr{(0,0)} \KMW_1 \oplus \KMW_1 \xr{0} \bZ.
\]

By summing the outcomes from the previous two cases, we obtain 
\[
\wt{C}^{cell}_*(\mathbb{F}_a) \cong \KMW_2 \xr{(a_{\epsilon}\eta,0)} \KMW_1 \oplus \KMW_1 \xr{0} \bZ.
\]
Using this complex, we can derive the Chow-Witt group of Hirzebruch surfaces, which is given by the following formulation:

\begin{equation*}
    \widetilde{\mathrm{CH}}^i(\mathbb{F}_a) =
    \begin{cases}
        \text{GW}(k) & \text{if } i=0 \text{ or } i=2 \text{ and } a \text{ is even;}\\
        2\Z \oplus \text{GW}(k) & \text{if } i=1 \text{ and } a \text{ is odd; }\\
        \text{GW}(k)^{\oplus 2} & \text{if } i=1 \text{ and } a \text{ is even; }\\
        \Z & \text{if } i=2 \text{ and } a \text{ is odd.}
    \end{cases}
\end{equation*}
\end{example}

In the subsequent analysis, we turn our attention to the general case of smooth complete toric surfaces. Utilizing the notation introduced in this section, the boundary complex $ K $ of the toric surface $ X_{\Sigma} $ possesses a shelling defined as follows:
\[
12, 2\underline{3}, 3\underline{4}, ..., (l-1)\underline{l}, \underline{1l}
\]
with appropriate marked restrictions. The characteristic function associated with this shelling is represented by $ \lambda: \bZ^l \to \bZ^2 $. The relations $ v_{i-1} + v_{i+1} = a_i v_i $ will generate the kernel of $ \lambda $. Leveraging the non-degeneracy of $ \lambda $ and the definition of $ \omega_{\kappa} $, we can consider $ \Lambda: \bZ_2^l \to \bZ_2^n $, which has the following kernel:

\begin{equation*}
  \begin{pmatrix}
    1 & 0 & \cdots & 0\\
    \chi(a_2)& 1 & \cdots &0\\
    1 & \chi(a_3) & \cdots &0 \\
    0 & 1 & \cdots &0\\
    \vdots& \vdots& \ddots &\vdots\\
    0 & 0 & \cdots & 1\\
    0 & 0 & \cdots & \chi(a_{l-1})\\
    0 & 0 & \cdots & 1\\
  \end{pmatrix}.
\end{equation*}
Now, if $ l $ is even and $ a_{\Sigma} = \mathrm{gcd}(a_1, \ldots, a_l) $ is even, it follows that $ \chi(a_i) = 0 $ for all $ i $. Under these conditions, $ \Lambda $ can be represented as follows:
\begin{equation*}
  \begin{pmatrix}
    1 & 0 & 1& 0 &\cdots &0\\
    0 & 1 & 0 & 1 &\cdots &1
  \end{pmatrix}.
\end{equation*}
In this scenario, $ \omega \in \{ \emptyset, 135\cdots(l-1), 246\cdots l, 123\cdots(l-1)l \} $, which implies that $ r(1l) = 1l \in K_{123\cdots(l-1)l} $ and identifies it as the only critical face contained in $ K_{123\cdots(l-1)l} $. Consequently, by Theorem \ref{mainComplex}, the cellular complex $ \wt{C}^{cell}_*(X_{\Sigma}) $ is represented as follows:

\[\begin{tikzcd}[ampersand replacement=\&,cramped]
	{\KMW_2} \& {\KMW_1} \\
	\& {(\KMW_1)^{\oplus l-3}} \& \bZ
	\arrow["0", from=1-1, to=1-2]
	\arrow["0"', from=1-1, to=2-2]
	\arrow["0", from=1-2, to=2-3]
	\arrow["\bigoplus"', draw=none, from=2-2, to=1-2]
	\arrow["0"', from=2-2, to=2-3]
\end{tikzcd}\]

On the other hand, if $ a_{\Sigma} = \mathrm{gcd}(a_1, \ldots, a_l) $ is odd, we may assume, without loss of generality, that at least one $ a_i $ is odd for $ 2 \leq i \leq l-1 $ and that $ i $ is odd. In this case, $ \Lambda $ can be represented by the following matrices:

\begin{equation*}
  \bordermatrix{%
   & 1 & 2 & \cdots & i-1 \cdots & l\cr
   & 1 & 0 & \cdots& 1  \cdots & 0\cr
   & 0 & 1 & \cdots& 1  \cdots & 1\cr
   }
\end{equation*}
or
\begin{equation*}
  \bordermatrix{%
   & 1 & 2 & \cdots & i+1 \cdots & l\cr
   & 1 & 0 & \cdots& 1  \cdots & 0\cr
   & 0 & 1 & \cdots& 1  \cdots & 1\cr
  }.
\end{equation*}

In both scenarios, $\omega$ cannot be expressed as $123\cdots(l-1)l$, and it follows that $r(1l)=1l\in K_{123\cdots(i-2)i\cdots(l-1)l}$ or $r(1l)=1l\in K_{123\cdots i(i+2)\cdots(l-1)l}$. Given that the reasoning is analogous, we can concentrate on the first case, which indicates the presence of two critical faces, namely $i$ and $1l$, in $K_{123\cdots(i-2)i\cdots(l-1)l}$. By employing similar reasoning as in Example \ref{Hirzebruch}, it can be established that these faces are connected by a differential in $\overline{C}^{\lambda}_i$. Consequently, we obtain the cellular complex $\wt{C}^{cell}_*(X_{\Sigma})$ represented as follows:

\[\begin{tikzcd}[ampersand replacement=\&,cramped]
	{\KMW_2} \& {\KMW_1} \\
	\& {(\KMW_1)^{\oplus l-3}} \& \bZ
	\arrow["{\eta}", from=1-1, to=1-2]
	\arrow["0"', from=1-1, to=2-2]
	\arrow["0", from=1-2, to=2-3]
	\arrow["\bigoplus"', draw=none, from=2-2, to=1-2]
	\arrow["0"', from=2-2, to=2-3]
\end{tikzcd}\]

When $l$ is odd, we assert that $a_{\Sigma}$ must also be odd, as stated in \cite[Theorem 7.5]{ewald1996combinatorial}, which posits that any smooth, complete toric surface can be successively transformed into either a Hirzebruch surface $\mathbb{F}_a$ (where $a\neq \pm 1$) or the projective plane $\mathbb{P}^2$. However, according to Examples \ref{P2} and \ref{blowdown}, if $l$ is odd, the surface can either be $\mathbb{P}^2$ or a blowup of another surface; in both instances, $a_\Sigma$ will be odd. Thus, we arrive at the complex $\wt{C}^{cell}_*(X_{\Sigma})$ described as follows:
\[\begin{tikzcd}[ampersand replacement=\&,cramped]
	{\KMW_2} \& {\KMW_1} \\
	\& {(\KMW_1)^{\oplus l-3}} \& \bZ
	\arrow["{\eta}", from=1-1, to=1-2]
	\arrow["0"', from=1-1, to=2-2]
	\arrow["0", from=1-2, to=2-3]
	\arrow["\bigoplus"', draw=none, from=2-2, to=1-2]
	\arrow["0"', from=2-2, to=2-3]
\end{tikzcd}\]

To summarize all the cases discussed above, we can express $\wt{C}^{cell}_*(X_{\Sigma})$ as

\[\begin{tikzcd}[ampersand replacement=\&,cramped]
	{\KMW_2} \& {\KMW_1} \\
	\& {(\KMW_1)^{\oplus l-3}} \& \bZ
	\arrow["{(a_{\Sigma})_{\epsilon}\eta}", from=1-1, to=1-2]
	\arrow["0"', from=1-1, to=2-2]
	\arrow["0", from=1-2, to=2-3]
	\arrow["\bigoplus"', draw=none, from=2-2, to=1-2]
	\arrow["0"', from=2-2, to=2-3]
\end{tikzcd}\]

By Proposition \ref{cellCohomology}, if we take $ M $ to be any $ \KMW_i $ or $ \KM_i $, the computations of the motivic cohomology groups, or the Milnor-Witt motivic cohomology groups, can be derived from the cellular complex of $ X $. For instance, utilizing the cellular complex of complete toric surfaces as discussed in Proposition \ref{examplesurface}, we obtain:

\begin{equation*}
        \widetilde{\mathrm{CH}}^i(X_{\Sigma})=
        \begin{cases}
            \text{GW}(k) & \text{if } i=0 \text{ or } i=2 \text{ and } a_{\Sigma} \text{ is even;}\\
            2\Z \oplus \text{GW}(k)^{\oplus l-3} & \text{if } i=1 \text{ and } a_{\Sigma} \text{ is odd;}\\
            \text{GW}(k)^{\oplus l-2} & \text{if } i=1 \text{ and } a_{\Sigma} \text{ is even;}\\
            \Z & \text{if } i=2 \text{ and } a_{\Sigma} \text{ is odd.}\\
        \end{cases}
\end{equation*}

In the context of general Milnor-Witt motivic cohomology, $ \text{GW}(k) $ can be substituted with $ \KMW_i $, $ 2\Z $ can be replaced by $ _{\eta}\KMW_i $, and $ \Z $ can be substituted with $ \KMW_i/\eta $. When we set $ M = \KM_i $, all differentials within the complex will vanish, thereby recovering the classical results regarding the Chow groups of toric varieties \cite{ewald1996combinatorial}.

\subsection{Exotic Examples}
\label{exotic}
In the subsequent examples, Theorem \ref{mainComplex} does not hold. Specifically, the cycle class map $ \mathrm{CH}^*(X_\Sigma) \to H^*(X_\Sigma(\mathbb{C}), \bZ) $ fails to be surjective. In fact, there is a contribution from the motivic cohomology groups (higher Chow groups) to $ H^*(X_\Sigma(\mathbb{C}), \bZ) $.

\begin{center}

\tikzset{every picture/.style={line width=0.75pt}} %set default line width to 0.75pt        

\begin{tikzpicture}[x=0.75pt,y=0.75pt,yscale=-1,xscale=1]
%uncomment if require: \path (0,300); %set diagram left start at 0, and has height of 300

%Straight Lines [id:da3858117690677141] 
\draw    (120,110) -- (120,52) ;
\draw [shift={(120,50)}, rotate = 90] [color={rgb, 255:red, 0; green, 0; blue, 0 }  ][line width=0.75]    (10.93,-3.29) .. controls (6.95,-1.4) and (3.31,-0.3) .. (0,0) .. controls (3.31,0.3) and (6.95,1.4) .. (10.93,3.29)   ;
%Straight Lines [id:da5713517512918656] 
\draw    (120,110) -- (178,110) ;
\draw [shift={(180,110)}, rotate = 180] [color={rgb, 255:red, 0; green, 0; blue, 0 }  ][line width=0.75]    (10.93,-3.29) .. controls (6.95,-1.4) and (3.31,-0.3) .. (0,0) .. controls (3.31,0.3) and (6.95,1.4) .. (10.93,3.29)   ;
%Straight Lines [id:da49652726185108254] 
\draw    (120,110) -- (120,168) ;
\draw [shift={(120,170)}, rotate = 270] [color={rgb, 255:red, 0; green, 0; blue, 0 }  ][line width=0.75]    (10.93,-3.29) .. controls (6.95,-1.4) and (3.31,-0.3) .. (0,0) .. controls (3.31,0.3) and (6.95,1.4) .. (10.93,3.29)   ;
%Straight Lines [id:da4104425331075212] 
\draw    (120,110) -- (62,110) ;
\draw [shift={(60,110)}, rotate = 360] [color={rgb, 255:red, 0; green, 0; blue, 0 }  ][line width=0.75]    (10.93,-3.29) .. controls (6.95,-1.4) and (3.31,-0.3) .. (0,0) .. controls (3.31,0.3) and (6.95,1.4) .. (10.93,3.29)   ;
%Straight Lines [id:da19334590773124405] 
\draw    (309,110) -- (309,52) ;
\draw [shift={(309,50)}, rotate = 90] [color={rgb, 255:red, 0; green, 0; blue, 0 }  ][line width=0.75]    (10.93,-3.29) .. controls (6.95,-1.4) and (3.31,-0.3) .. (0,0) .. controls (3.31,0.3) and (6.95,1.4) .. (10.93,3.29)   ;
%Straight Lines [id:da4295631194413243] 
\draw    (309,110) -- (367,110) ;
\draw [shift={(369,110)}, rotate = 180] [color={rgb, 255:red, 0; green, 0; blue, 0 }  ][line width=0.75]    (10.93,-3.29) .. controls (6.95,-1.4) and (3.31,-0.3) .. (0,0) .. controls (3.31,0.3) and (6.95,1.4) .. (10.93,3.29)   ;
%Straight Lines [id:da24570123964565238] 
\draw    (309,110) -- (309,168) ;
\draw [shift={(309,170)}, rotate = 270] [color={rgb, 255:red, 0; green, 0; blue, 0 }  ][line width=0.75]    (10.93,-3.29) .. controls (6.95,-1.4) and (3.31,-0.3) .. (0,0) .. controls (3.31,0.3) and (6.95,1.4) .. (10.93,3.29)   ;
%Straight Lines [id:da9415887156979126] 
\draw    (309,110) -- (251,110) ;
\draw [shift={(249,110)}, rotate = 360] [color={rgb, 255:red, 0; green, 0; blue, 0 }  ][line width=0.75]    (10.93,-3.29) .. controls (6.95,-1.4) and (3.31,-0.3) .. (0,0) .. controls (3.31,0.3) and (6.95,1.4) .. (10.93,3.29)   ;

% Text Node
\draw (141,72.4) node [anchor=north west][inner sep=0.75pt]    {$\sigma _{1}$};
% Text Node
\draw (81,132.4) node [anchor=north west][inner sep=0.75pt]    {$\sigma _{2}$};
% Text Node
\draw (112,32.4) node [anchor=north west][inner sep=0.75pt]    {$v_{1}$};
% Text Node
\draw (182,101.4) node [anchor=north west][inner sep=0.75pt]    {$v_{2}$};
% Text Node
\draw (112,171.4) node [anchor=north west][inner sep=0.75pt]    {$v_{3}$};
% Text Node
\draw (41,102.4) node [anchor=north west][inner sep=0.75pt]    {$v_{4}$};
% Text Node
\draw (330,72.4) node [anchor=north west][inner sep=0.75pt]    {$\sigma _{1}$};
% Text Node
\draw (330,130.4) node [anchor=north west][inner sep=0.75pt]    {$\sigma _{2}$};
% Text Node
\draw (301,32.4) node [anchor=north west][inner sep=0.75pt]    {$v_{1}$};
% Text Node
\draw (371,101.4) node [anchor=north west][inner sep=0.75pt]    {$v_{2}$};
% Text Node
\draw (301,171.4) node [anchor=north west][inner sep=0.75pt]    {$v_{3}$};
% Text Node
\draw (230,102.4) node [anchor=north west][inner sep=0.75pt]    {$v_{4}$};
% Text Node
\draw (78,191) node [anchor=north west][inner sep=0.75pt]   [align=left] {Non-shellable};
% Text Node
\draw (274,192) node [anchor=north west][inner sep=0.75pt]   [align=left] {Non-pure};

\end{tikzpicture}
\end{center}
\begin{example}[Non-shellable]
For a fan represented on the left-hand side, we consider the associated cellular complex, which is depicted as follows:
% https://q.uiver.app/#q=WzAsNyxbMCwwLCIgKFxcS01XXzIpX3tcXHNpZ21hXzF9Il0sWzEsMSwiIChcXEtNV18xIFxcb3RpbWVzIFxcbWF0aGJme0h9KV97dl8yfSJdLFsxLDAsIiAoXFxLTVdfMSBcXG90aW1lcyBcXG1hdGhiZntIfSlfe3ZfMX0iXSxbMiwxLCJcXG1hdGhiZntIfSBcXG90aW1lcyBcXG1hdGhiZntIfSJdLFswLDIsIiAoXFxLTVdfMilfe1xcc2lnbWFfMn0iXSxbMSwyLCIgKFxcS01XXzEgXFxvdGltZXMgXFxtYXRoYmZ7SH0pX3t2XzN9Il0sWzEsMywiIChcXEtNV18xIFxcb3RpbWVzIFxcbWF0aGJme0h9KV97dl80fSJdLFswLDEsIihcXG1hdGhybXtJZH0sIDApIiwyXSxbMCwyLCIoXFxtYXRocm17SWR9LCAwKSJdLFsyLDNdLFsxLDNdLFs0LDUsIihcXG1hdGhybXtJZH0sIDApIl0sWzUsM10sWzEsMiwiXFxiaWdvcGx1cyIsMSx7InN0eWxlIjp7ImJvZHkiOnsibmFtZSI6Im5vbmUifSwiaGVhZCI6eyJuYW1lIjoibm9uZSJ9fX1dLFs1LDEsIlxcYmlnb3BsdXMiLDEseyJzdHlsZSI6eyJib2R5Ijp7Im5hbWUiOiJub25lIn0sImhlYWQiOnsibmFtZSI6Im5vbmUifX19XSxbNCw2LCIoXFxtYXRocm17SWR9LCAwKSIsMix7ImxhYmVsX3Bvc2l0aW9uIjoyMH1dLFs1LDYsIlxcYmlnb3BsdXMiLDEseyJzdHlsZSI6eyJib2R5Ijp7Im5hbWUiOiJub25lIn0sImhlYWQiOnsibmFtZSI6Im5vbmUifX19XSxbNiwzXV0=
\[\begin{tikzcd}[ampersand replacement=\&]
	{ (\KMW_2)_{\sigma_1}} \& { (\KMW_1 \otimes \mathbf{H})_{v_1}} \\
	\& { (\KMW_1 \otimes \mathbf{H})_{v_2}} \& {\mathbf{H} \otimes \mathbf{H}} \\
	{ (\KMW_2)_{\sigma_2}} \& { (\KMW_1 \otimes \mathbf{H})_{v_3}} \\
	\& { (\KMW_1 \otimes \mathbf{H})_{v_4}}
	\arrow["{(\mathrm{Id}, 0)}", from=1-1, to=1-2]
	\arrow["{(\mathrm{Id}, 0)}"', from=1-1, to=2-2]
	\arrow[from=1-2, to=2-3]
	\arrow["\bigoplus"{description}, draw=none, from=2-2, to=1-2]
	\arrow[from=2-2, to=2-3]
	\arrow["{(\mathrm{Id}, 0)}", from=3-1, to=3-2]
	\arrow["{(\mathrm{Id}, 0)}"'{pos=0.2}, from=3-1, to=4-2]
	\arrow["\bigoplus"{description}, draw=none, from=3-2, to=2-2]
	\arrow[from=3-2, to=2-3]
	\arrow["\bigoplus"{description}, draw=none, from=3-2, to=4-2]
	\arrow[from=4-2, to=2-3]
\end{tikzcd}\]
We find that this cellular complex is quasi-isomorphic to:
\[
  \wt{C}^{cell}_*(X_\Sigma) \cong  \KMW_2 \oplus \KMW_1\oplus \KMW_1\xr{0} \bZ
\]
In terms of MW-motive, we have 
\[  
  \Mt(X_\Sigma)\cong \tbZ(2)[3] \oplus \tbZ(1)[2] \oplus \tbZ(1)[2] \oplus \tbZ
\]
\end{example}
\begin{example}[Non-pure]
For a fan represented on the right-hand side, we consider the associated cellular complex, which is depicted as follows:
  % https://q.uiver.app/#q=WzAsNyxbMCwwLCIgKFxcS01XXzIpX3tcXHNpZ21hXzF9Il0sWzEsMSwiIChcXEtNV18xIFxcb3RpbWVzIFxcbWF0aGJme0h9KV97dl8yfSJdLFsxLDAsIiAoXFxLTVdfMSBcXG90aW1lcyBcXG1hdGhiZntIfSlfe3ZfMX0iXSxbMiwxLCJcXG1hdGhiZntIfSBcXG90aW1lcyBcXG1hdGhiZntIfSJdLFswLDIsIiAoXFxLTVdfMilfe1xcc2lnbWFfMn0iXSxbMSwyLCIgKFxcS01XXzEgXFxvdGltZXMgXFxtYXRoYmZ7SH0pX3t2XzN9Il0sWzEsMywiIChcXEtNV18xIFxcb3RpbWVzIFxcbWF0aGJme0h9KV97dl80fSJdLFswLDEsIihcXG1hdGhybXtJZH0sIDApIiwyXSxbMCwyLCIoXFxtYXRocm17SWR9LCAwKSJdLFsyLDNdLFsxLDNdLFs0LDUsIihcXG1hdGhybXtJZH0sIDApIiwyXSxbNSwzXSxbMSwyLCJcXGJpZ29wbHVzIiwxLHsic3R5bGUiOnsiYm9keSI6eyJuYW1lIjoibm9uZSJ9LCJoZWFkIjp7Im5hbWUiOiJub25lIn19fV0sWzUsMSwiXFxiaWdvcGx1cyIsMSx7InN0eWxlIjp7ImJvZHkiOnsibmFtZSI6Im5vbmUifSwiaGVhZCI6eyJuYW1lIjoibm9uZSJ9fX1dLFs1LDYsIlxcYmlnb3BsdXMiLDEseyJzdHlsZSI6eyJib2R5Ijp7Im5hbWUiOiJub25lIn0sImhlYWQiOnsibmFtZSI6Im5vbmUifX19XSxbNiwzXSxbNCwxLCIoXFxtYXRocm17SWR9LCAwKSIsMCx7ImxhYmVsX3Bvc2l0aW9uIjoyMH1dXQ==
\[\begin{tikzcd}[ampersand replacement=\&,cramped]
	{ (\KMW_2)_{\sigma_1}} \& { (\KMW_1 \otimes \mathbf{H})_{v_1}} \\
	\& { (\KMW_1 \otimes \mathbf{H})_{v_2}} \& {\mathbf{H} \otimes \mathbf{H}} \\
	{ (\KMW_2)_{\sigma_2}} \& { (\KMW_1 \otimes \mathbf{H})_{v_3}} \\
	\& { (\KMW_1 \otimes \mathbf{H})_{v_4}}
	\arrow["{(\mathrm{Id}, 0)}", from=1-1, to=1-2]
	\arrow["{(\mathrm{Id}, 0)}"', from=1-1, to=2-2]
	\arrow[from=1-2, to=2-3]
	\arrow["\bigoplus"{description}, draw=none, from=2-2, to=1-2]
	\arrow[from=2-2, to=2-3]
	\arrow["{(\mathrm{Id}, 0)}"{pos=0.2}, from=3-1, to=2-2]
	\arrow["{(\mathrm{Id}, 0)}"', from=3-1, to=3-2]
	\arrow["\bigoplus"{description}, draw=none, from=3-2, to=2-2]
	\arrow[from=3-2, to=2-3]
	\arrow["\bigoplus"{description}, draw=none, from=3-2, to=4-2]
	\arrow[from=4-2, to=2-3]
\end{tikzcd}\]
Again we find that this complex is quasi-isomorphic to:
\[
  \wt{C}^{cell}_*(X_\Sigma) \cong  \KMW_1 \oplus \KMW_2 \oplus \KMW_1\xr{0} \bZ
\]
In terms of MW-motive, we have 
\[  
  \Mt(X_\Sigma)\cong \tbZ(1)[2] \oplus \tbZ(2)[3] \oplus \tbZ(1)[2] \oplus \tbZ
\]
\end{example}

\section*{Appendix}
\label{sec:appendix}
First, we observe that the morphism induced by $ \Gm \xr{(\cdot)^n} \Gm $ is given by
\[
n_{\epsilon}:\KMW_1\to \KMW_1, \quad \lambda \mapsto n_{\epsilon}\lambda, \quad n_{\epsilon}= \frac{n-\chi(n)}{2}h+ \chi(n) \in \KMW_0,
\]
where $ h = 1+\aBra{-1} $. We define $ h_{ij} := ( r_{ij} )_\epsilon $. It should be noted that $ n_{\epsilon} \eta = \chi(n)\eta $ and $ n_{\epsilon}[-1] = \chi(n)[-1] $.

For a subset $ \omega \subset \llBra{n} $, let $ \omega_{>j} \subset \omega $ denote the subset $ \{i\in \omega \mid i>j\} $. Similar notations apply for $ \omega_{<j}, \omega_{\leq j}, $ and $ \omega_{\geq j} $. 

\begin{lemma}
Let $ g_{\tau}: \Gm^{t_e}\to \Gm^{|\tau_e|} $ be a group section. Then, we have:
\begin{align*}
  g_{\tau*}[e] &= \sum_{\omega \subset \tau_e} \sum_{\substack{ f: \tauO_e \to \omega \\ I_j := f^{-1}(j) \\ S_j := f^{-1}(\omega_{<j}) }} \epsilon^{\mathrm{sgn}(f)} \eta^{t_e - |f(\tauO_e)|} (-[-1])^{|\omega|-|f(\tauO_e)|} \\ 
               & \prod_{j \in \omega \setminus f(\tauO_e)} \chi\left( \sum_{k \in S_j} r'_{kj} \right) \prod_{i \in \tauO_e} \aBra{-1}^{\sum_{j \in \omega_{>f(i)}} r'_{ij}} h'_{if(i)} [e_{\omega}].
\end{align*}
\end{lemma}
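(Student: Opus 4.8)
The plan is to compute $g_{\tau*}[e]$ directly, by reducing it to the pushforward of a fundamental class along a monomial map of tori and then to the elementary computations already recorded at the start of this appendix and in the proof of Lemma~\ref{tActSigma}. First, since $g_\tau$ acts trivially on the $\sigma_e$-coordinates, only the $\tau_e=\tau^1_e\sqcup\tauO_e$ part matters; on these coordinates $g_\tau\cdot e\colon\Gm^{\tauO_e}\to\Gm^{\tau_e}$ is the monomial map with exponent matrix $(r'_{ij})_{i\in\tauO_e,\,j\in\tau_e}$, the $\delta_{ij}$ in $r'_{ij}=r_{ij}+\delta_{ij}$ recording the underlying inclusion $\iota_e$. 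Thus $g_{\tau*}[e]$ is the image of the fundamental class of $\Gm^{\tauO_e}$ under this monomial map, and the task is to re-expand it in the basis of cubical cells $\{[e_\omega]\}_{\omega\subset\tau_e}$ of $\bfZa[\Gm^{\tau_e}]\cong\mathbf{H}^{\otimes\tau_e}$.

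Next I would factor the monomial map as the $|\tau_e|$-fold diagonal $\Gm^{\tauO_e}\to\prod_{j\in\tau_e}\Gm^{\tauO_e}$, followed on the $j$-th copy by the coordinatewise power maps $(\cdot)^{r'_{ij}}$ (for $i\in\tauO_e$) and then the $t_e$-fold multiplication $\Gm^{\tauO_e}\to\Gm$. Each factor has a known effect on $\mathbf{H}$: the power map $(\cdot)^n$ acts by the identity on the $\mathbb{Z}$-summand and by $n_\epsilon$ on $\KMW_1$; the binary multiplication $\mathbf{H}^{\otimes 2}\to\mathbf{H}$ is forced by the Milnor--Witt relation $[\lambda\mu]=[\lambda]+[\mu]+\eta[\lambda][\mu]$, so it sends $\KMW_1\otimes\KMW_1$ to $\KMW_1$ by $\eta$ times the product; and the binary diagonal comultiplication $\mathbf{H}\to\mathbf{H}^{\otimes 2}$ sends $[\lambda]\mapsto[\lambda]\otimes1+1\otimes[\lambda]+[-1]\,[\lambda]\otimes[\lambda]$, which is exactly the computation behind Lemma~\ref{tActSigma} (compare \cite[Lemma 2.48]{MOREL2023109346}).

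Then I would assemble these pieces. Iterating the comultiplication distributes, for each input $i\in\tauO_e$, a choice of the output slots receiving a copy of $[\lambda_i]$; recording the lowest such slot produces the function $f\colon\tauO_e\to\omega$, while the copies at higher slots come from the $[-1]$-term. The power map at slot $f(i)$ contributes the factor $h'_{if(i)}=(r'_{if(i)})_\epsilon$; the secondary copies of $[\lambda_i]$ at the slots of $\omega_{>f(i)}$, after applying the power maps and using $n_\epsilon\eta=\chi(n)\eta$, $n_\epsilon[-1]=\chi(n)[-1]$ and $[\lambda]^2=[\lambda][-1]$, collapse into the factors $\aBra{-1}^{\sum_{j\in\omega_{>f(i)}}r'_{ij}}$ and, at each slot $j\in\omega\setminus f(\tauO_e)$ with no primary input, into a single factor $-[-1]\cdot\chi\bigl(\sum_{k\in S_j}r'_{kj}\bigr)$, accounting for the product over such $j$ and the power $(-[-1])^{|\omega|-|f(\tauO_e)|}$; the $t_e$-fold multiplication at a slot with $|I_j|\geq1$ primary inputs merges them at cost $\eta^{|I_j|-1}$, hence $\eta^{t_e-|f(\tauO_e)|}$ in total; and reordering the tensor factors into the fixed order on $\tau_e$ produces the sign $\epsilon^{\mathrm{sgn}(f)}$. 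Collecting everything gives the displayed formula. One may instead carry this step out by induction on $|\tau_e|$, peeling off the largest element of $\tau_e$ via one binary comultiplication $\mathbf{H}^{\otimes\tauO_e}\to\mathbf{H}^{\otimes\tauO_e}\otimes\mathbf{H}^{\otimes\tauO_e}$ and applying the inductive hypothesis to the first factor, provided the inductive statement is set up so that $f$, the truncations $\omega_{<j}$ and $\omega_{>f(i)}$, and the sets $S_j$ restrict compatibly.

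The main obstacle is precisely this assembly: there is no conceptual shortcut past it, and one must be scrupulous about the exponents of $\eta$ and $[-1]$, the $\chi(\cdot)$ factors at the ``secondary'' slots, and the permutation sign $\epsilon^{\mathrm{sgn}(f)}$. These bookkeeping details are the actual content of the lemma, and the Milnor--Witt identities $\eta h=0$, $[\lambda]^2=[\lambda][-1]$ and $n_\epsilon\eta=\chi(n)\eta$ are what force the ``secondary copies'' of generators to collapse into the stated closed form.
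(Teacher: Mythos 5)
Your proposal is correct and follows essentially the same route as the paper: the paper's proof likewise starts from the expansion $\prod_{i\in\tauO_e}\bigl(\bigotimes_{j\in\tau_e}(h_{ij}[\lambda_i]+1)\bigr)([\lambda_i]+1)-1$ (i.e.\ your diagonal--power--multiplication factorization) and then carries out exactly the bookkeeping you describe, indexing terms by the function $f$ that records where each $[\lambda_i]$ is consumed, with the secondary contributions collapsing into the $\aBra{-1}$-twists and the $-[-1]\,\chi(\cdot)$ factors at the slots of $\omega\setminus f(\tauO_e)$. The only difference is organizational: the paper expands slot by slot (peeling off $j_1,j_2,\dots$ and the sets $I_1,I_2,\dots$) rather than variable by variable, but the combinatorial data and the Milnor--Witt identities invoked are identical.
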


\begin{proof}

Now, we consider the action of $ g_\tau $ restricted to the coordinates of $ \tau $:
   \begin{align*}
     g_{\tau*} &: \prod_{i\in \tauO_e}[\lambda_i] \mapsto \prod_{i\in \tauO_e} \left( \bigotimes_{j\in \tau_e} (h_{ij} [\lambda_i]+1) \right) ([\lambda_i]+1) -1 \\ 
               &\mapsto {  \prod_{j \in \tau_e} }'\left( \bigotimes_{i\in \tauO_e} (h_{ij} [\lambda_i]+1) \right)\otimes \begin{cases}
                 ([\lambda_j]+1), &\text{ if } j\in \tauO_e\\
                 1, &\text{ if } j\in \tau^1_e\\
               \end{cases}  \\ 
& \text{(Let ${ \prod}' $ sum only those monomials which include all $[\lambda_i]$ terms after expanding the product)}\\
            &\mapsto {  \prod_{j \in \tau_e} }' \left( \bigotimes_{i\in \tauO_e} (h_{ij} [\lambda_i]+1) \right)+ \begin{cases}
              \left( \prod_{i \in \tauO_e} ( h_{ij}\eta [\lambda_i]+1) \right)[\lambda_j], &\text{ if } j\in \tauO_e\\
                 0, &\text{ if } j\in \tau^1_e\\
               \end{cases}   \\
            &= {  \prod_{j \in \tau_e} }' 1+\left[ \prod_{i \in \tauO_e} \lambda_i^{r_{ij}} \right] + \begin{cases}
              \left( \prod_{i \in \tauO_e} \aBra{\lambda_i}^{r_ij} \right)[\lambda_j], &\text{ if } j\in \tauO_e\\
                 0, &\text{ if } j\in \tau^1_e\\
               \end{cases}   \\
            &= \sum_{\omega \subset \tau_e} { \prod_{j \in \omega} }'  \begin{cases}
                \left[ ( \prod_{i \in \tauO_e} \lambda_i^{r_{ij}} )\lambda_j \right] , &\text{ if } j\in \tauO_e\\
                 \left[ \prod_{i \in \tauO_e} \lambda_i^{r_{ij}} \right], &\text{ if } j\in \tau^1_e\\
               \end{cases}   \\
            & \text{(Let $r'_{ij}=r_{ij}+\delta_{ij}, h'_{ij}=(r'_{ij})_\epsilon $)}\\  
            &=  \sum_{\omega \subset \tau_e} { \prod_{j \in \omega} }' \left[ \prod_{i \in \tauO_e} \lambda_i^{r'_{ij}} \right]\\
            &=  \sum_{\omega \subset \tau_e} \sum_{ \emptyset \neq I_1 \subset \tauO_e} \eta^{|I_1|-1}  \prod_{i \in I_1} h'_{ij_1}[\lambda_i]{ \prod_{j \in \omega \setminus \{j_1\}} }' \left[ (-1)^{\sum_{k\in I_1} r'_{kj}}\prod_{i \in \tauO_e \setminus I_1} \lambda_i^{r'_{ij}} \right]\\
            &=  \sum_{\omega \subset \tau_e} \sum_{ \emptyset \neq I_1 \subset \tauO_e} \eta^{|I_1|-1}  \prod_{i \in I_1} h'_{ij_1}[\lambda_i] \left( \sum_{  I_2 \subset \tauO_e \setminus I_1} \aBra{ -1 }^{\sum_{k\in I_1} r'_{kj_2}}\eta^{|I_2|-1}  \prod_{i \in I_2} h'_{ij_2}[\lambda_i]\right) \\ 
            & { \prod_{j \in \omega \setminus \{j_1,j_2\}} }' \left[ (-1)^{\sum_{k\in I_1 \sqcup I_2} r'_{kj}}\prod_{i \in \tauO_e \setminus ( I_1\sqcup I_2 )} \lambda_i^{r'_{ij}} \right]\\
            & \text{(For the case $I_2=\emptyset$, we formally define $\aBra{ -1 }^{\sum_{k\in I_1} r'_{kj}}\eta^{-1}:= [(-1)^{\sum_{k\in I_1} r'_{kj}}]$)} \\
            &=  \sum_{\omega \subset \tau_e} \sum_{\substack{ f: \tauO_e \to \omega  \\ I_j:=f^{-1}(j) \\ S_j:=f^{-1}(\omega_{<j}) }} \eta^{t_e - |f(\tauO_e)| }\prod_{j \in \omega  } \begin{cases}
              \aBra{ -1 }^{\sum_{k\in S_j} r'_{kj}}\prod_{i \in I_j} h'_{ij}[\lambda_i], &\text{ if } |I_j|\neq0\\
              -\aBra{ -1 }^{\sum_{k\in S_j} r'_{kj}}[(-1)^{\sum_{k\in S_j} r'_{kj}}], &\text{ if } |I_j|=0\\
            \end{cases}   \\
            &=  \sum_{\omega \subset \tau_e} \sum_{\substack{ f: \tauO_e \to \omega  \\ S_j:=f^{-1}(\omega_{<j}) }} \epsilon^{ \mathrm{sgn}(f)} \eta^{ t_e - |f(\tauO_e)| } (-[-1])^{|\omega|-|f(\tauO_e)|} \aBra{ -1 }^{\sum_{j \in  \omega}\sum_{k\in S_j} r'_{kj}}\\ 
  &  \prod_{j \in \omega \setminus f(\tauO_e)} \chi( \sum_{k\in S_j} r'_{kj} ) \prod_{i \in \tauO_e} h'_{if(i)} \prod_{i\in \tauO_e} [\lambda_i] \\ 
   & \text{(Notice that $ \sum_{j \in  \omega}\sum_{k\in S_j} r'_{kj}=\sum_{k\in \tauO_e} \sum_{\substack{j \in  \omega\\ \ j>f(k)} } r'_{kj}$) }\\
   &=  \sum_{\omega \subset \tau_e} \sum_{\substack{ f: \tauO_e \to \omega  \\ S_j:=f^{-1}(\omega_{<j}) }} \epsilon^{ \mathrm{sgn}(f)} \eta^{ t_e - |f(\tauO_e)| } (-[-1])^{|\omega|-|f(\tauO_e)|} \\ 
   &  \prod_{j \in \omega \setminus f(\tauO_e)} \chi( \sum_{k\in S_j} r'_{kj} ) \prod_{i \in \tauO_e}\aBra{-1}^{\sum_{j \in  \omega_{>f(i)}   } r'_{ij}} h'_{if(i)} \prod_{i\in \tauO_e} [\lambda_i]  
   \end{align*}
Here, $\mathrm{sgn}(f)= \chi\left( |\{(i_1, i_2) \in (\tauO_e)^2 \mid i_1<i_2, f(i_1)>f(i_2)\}| \right)$. When $f$ is a bijection, $\mathrm{sgn}(f)$ corresponds to the standard sign of a permutation between ordered sets. Consequently, the coefficient $\epsilon^{ \mathrm{sgn}(f)}$ arises from the graded commutative structure of $\KMW_*$. Furthermore, $\prod_{i \in \tauO_e} h'_{if(i)}$ can be simplified to $\prod_{i \in \tauO_e} \chi( r'_{if(i)} )$, except in cases where $f$ is a bijection.

Finally, we obtain the action:
\begin{align*}
  g_{\tau*}[e] &= \sum_{\omega \subset \tau_e} \sum_{\substack{ f: \tauO_e \to \omega   \\ S_j:=f^{-1}(\omega_{<j}) }} \epsilon^{ \mathrm{sgn}(f)} \eta^{ t_e - |f(\tauO_e)| } (-[-1])^{|\omega|-|f(\tauO_e)|} \\ 
            &  \prod_{j \in \omega \setminus f(\tauO_e)} \chi( \sum_{k\in S_j} r'_{kj} ) \prod_{i \in \tauO_e}\aBra{-1}^{\sum_{j \in  \omega_{>f(i)}  } r'_{ij}} h'_{if(i)}  [e_{\omega}] 
\end{align*}
\end{proof}

Now, we can derive a general result regarding the action of $ g $:

\begin{proposition}
  \label{tActCompute}
  Let $ g: \Gm^{t_e} \to \Gm^{n} $ be a group section. The action induced by $ g $ is given by $ g_*[e] = \sum_{\omega \subset \tau_e} c(g,e)_\omega [e_{\omega}] $, where
  \begin{align*}
    c(g,e)_\omega &=  \sum_{\substack{ f: \tauO_e \to \omega \sqcup \{\infty\}  \\ S_j:=f^{-1}(\omega_{<j}) }} \epsilon^{ \mathrm{sgn}(f)} \eta^{ t_e - |f(\tauO_e)_{<\infty} | } ( -[-1] )^{|\omega|-|f(\tauO_e)_{<\infty}|} \\ 
  &  \prod_{j \in \omega \setminus f(\tauO_e)} \chi( \sum_{k\in S_j} r'_{kj} )  \prod_{i \in \tauO_e} \aBra{-1}^{\sum_{j \in (\omega \sqcup \{\infty\})_{>f(i)} } r'_{ij}} h'_{if(i)}  
  \end{align*}
  
  Additionally, we have 
  \[
    c(g,e)_\omega \in \begin{cases}
      \bZ \eta^{t_e - |\omega|}, & \text{if } t_e > |\omega|\\
      \bZ + \bZ h, & \text{if } t_e = |\omega|\\
      \bZ [-1]^{|\omega| - t_e}, & \text{if } t_e < |\omega|\\
    \end{cases} 
    \subset \KMW_{|\omega| - t_e}
  \]
\end{proposition}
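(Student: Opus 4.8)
The plan is to obtain the formula for $c(g,e)_\omega$ by composing the two partial actions that have already been computed, and then to read off the membership claim by a degree count in $\KMW_*$. \emph{Step 1: reduction to the component actions.} As recorded just before Lemma~\ref{tActSigma}, the section $g$ factors as $g = g_\tau \times g_\sigma$, where $g_\sigma\colon \Gm^{t_e}\to\Gm^{|e|}$ acts only on the coordinates indexed by $\sigma_e$ and $g_\tau\colon\Gm^{t_e}\to\Gm^{|\tau_e|}$ acts only on those indexed by $\tau_e$; hence $g_*[e] = g_{\tau*}g_{\sigma*}[e]$. I would first expand $g_{\sigma*}[e]$ by Lemma~\ref{tActSigma}, obtaining a sum over $\omega'\subset\tauO_e$ of terms $\aBra{-1}^{\sum_{i\in\omega'}\sum_{j\in\sigma_e}r_{ij}}\bigl(\prod_{i\in\tauO_e\setminus\omega'}\chi(\sum_{j\in\sigma_e}r_{ij})\bigr)\,\eta^{t_e-|\omega'|}[e_{\omega'}]$, and then apply the Lemma immediately preceding this Proposition to each $[e_{\omega'}]$, whose free-coordinate set is $\tauO_{e_{\omega'}} = \omega'$ (the entries $r_{ij}$ with $j\in\sigma_e$ play no role there, since they enter only through $g_\sigma$).

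\emph{Step 2: merging the two sums.} The Lemma for $g_{\tau*}$ applied to $[e_{\omega'}]$ runs over $\omega\subset\tau_e$ and over maps $f'\colon\omega'\to\omega$. Extending each $f'$ to a map $f\colon\tauO_e\to\omega\sqcup\{\infty\}$ that sends the ``consumed'' coordinates $\tauO_e\setminus\omega'$ to a formal symbol $\infty$ declared larger than every element of $\omega$ sets up a bijection between the pairs $(\omega',f')$ and the maps $f$ appearing in the statement, with $f(\tauO_e)_{<\infty} = f'(\omega')$. Under this identification the $\eta$-exponents add to $(t_e-|\omega'|)+(|\omega'|-|f'(\omega')|) = t_e - |f(\tauO_e)_{<\infty}|$; the $(-[-1])$-exponents are already $|\omega|-|f'(\omega')|$; and the ``$\infty$-column'' data of the Proposition — namely $h'_{i\infty}$ and the contribution of $\infty$ to $\aBra{-1}^{\sum_{j\in(\omega\sqcup\{\infty\})_{>f(i)}}r'_{ij}}$ — are to be read respectively as $\chi(\sum_{j\in\sigma_e}r_{ij})$ and $\aBra{-1}^{\sum_{j\in\sigma_e}r_{ij}}$ (using $r'_{ij}=r_{ij}$ for $i\in\tauO_e$, $j\in\sigma_e$), which are exactly the two coefficients produced by Lemma~\ref{tActSigma}. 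The sign $\epsilon^{\mathrm{sgn}(f)}$ is forced by the graded-commutative structure of $\KMW_*$ when the factors $[\lambda_i]$ are put back in order, precisely as in the proof of the preceding Lemma; and, as there, the substitution $r\mapsto r' = r+\delta$ is what folds the ``new $[\lambda_j]$'' term uniformly into the matrix. Collecting all terms with a fixed $\omega$ yields the claimed formula for $c(g,e)_\omega$.

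\emph{Step 3: the membership statement.} Comparing sources and targets, $[e]\colon\KMW_{|e|+t_e}\to C^{cell}_{|e|}(\af^n)$ and $[e_\omega]\colon\KMW_{|e|+|\omega|}\to C^{cell}_{|e|}(\af^n)$, so $c(g,e)_\omega\in\KMW_{|\omega|-t_e}$; this is also visible directly in the formula, where every monomial equals $\eta^{t_e-a}(-[-1])^{|\omega|-a}$ times a product of elements of $\KMW_0$ (powers of $\aBra{-1}$ and the $h'_{if(i)}$, each an integer combination of $1$ and $h$), with $a := |f(\tauO_e)_{<\infty}|$ satisfying $a\le t_e$ and $a\le|\omega|$. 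Since the $r_{ij}$ are integers, $c(g,e)_\omega$ lies in the subgroup of $\KMW_*$ generated over $\bZ$ by products of $\eta$'s and $[-1]$'s, and one finishes using the standard relations $\aBra{uv}=\aBra{u}\aBra{v}$, $\eta h = 0$, $h^2 = 2h$, $\eta[-1]=h-2$ and $h[-1]=0$: if $t_e>|\omega|$ then every monomial is divisible by $\eta^{t_e-|\omega|}$ and, since $\eta^{t_e-|\omega|}h=0$, the remaining $\KMW_0$-factor collapses to an integer, so $c(g,e)_\omega\in\bZ\,\eta^{t_e-|\omega|}$; if $t_e=|\omega|$ the $\eta$- and $[-1]$-powers balance in degree and $(\eta[-1])^a\in\bZ+\bZ h$, so $c(g,e)_\omega\in\bZ+\bZ h$; and if $t_e<|\omega|$ then, because $a\le t_e$, every monomial carries at least $|\omega|-t_e$ factors of $[-1]$, and $h[-1]=0$ collapses it to an integer multiple of $[-1]^{|\omega|-t_e}$.

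\emph{Main obstacle.} The substantive point is the bookkeeping in Step~2: one must verify that the two coefficients coming out of Lemma~\ref{tActSigma} — a power of $\aBra{-1}$ and a product of $\chi$'s over $\tauO_e\setminus\omega'$ — match \emph{exactly} the $\infty$-contributions in the Proposition's formula, with every sign agreeing with what graded commutativity dictates, and that no genuine cross-terms between the $\sigma_e$- and $\tau_e$-columns arise (they do not, since $g_\sigma$ and $g_\tau$ target disjoint coordinate blocks, so the two expansions interact only through which free coordinate $[\lambda_i]$ survives). Once this correspondence is pinned down, everything else is routine manipulation in $\KMW_*$.
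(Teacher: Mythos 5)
Your proposal is correct, and it organizes the computation differently from the paper. The paper's appendix proof of Proposition \ref{tActCompute} redoes the full expansion of $g_*$ on $\prod_{i\in\sigma_e\sqcup\tauO_e}[\lambda_i]$ from scratch, treating the $\sigma_e$- and $\tau_e$-columns simultaneously and folding the $\sigma_e$-block into the formal index $\infty$ via $r'_{i\infty}:=\sum_{k\in\sigma_e}r'_{ik}$; you instead compose the two already-established lemmas through $g_*[e]=g_{\tau*}g_{\sigma*}[e]$, matching pairs $(\omega',f')$ with maps $f\colon\tauO_e\to\omega\sqcup\{\infty\}$ via $f^{-1}(\infty)=\tauO_e\setminus\omega'$. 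Your route is more modular and makes the meaning of the symbol $\infty$ transparent, at the cost of having to justify that the two expansions interleave correctly (which they do, since a coordinate $[\lambda_i]$ consumed by the $\sigma_e$-block no longer participates in the $\tau_e$-expansion, exactly as in the step ``$M_{ij}$ acts trivially on $[e|_{\hat i}]$'' in the proof of Lemma \ref{tActSigma}). Two bookkeeping points in your Step 2 should be made explicit, and both are rescued by the same observation: whenever $f^{-1}(\infty)\neq\emptyset$ the monomial carries at least one factor of $\eta$, because $|f(\tauO_e)_{<\infty}|<t_e$. First, for $i$ with $f(i)=\infty$ the Proposition's factor is $h'_{i\infty}=(r'_{i\infty})_\epsilon$, whereas Lemma \ref{tActSigma} produces $\chi\bigl(\sum_{j\in\sigma_e}r_{ij}\bigr)$; these differ in $\KMW_0$ but agree after multiplication by $\eta$, by $n_\epsilon\eta=\chi(n)\eta$. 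Second, $\mathrm{sgn}(f)$ and $\mathrm{sgn}(f')$ differ by the inversions contributed by the elements sent to $\infty$, a discrepancy absorbed by $\epsilon\eta=\eta$; when $f^{-1}(\infty)=\emptyset$ one has $\omega'=\tauO_e$ and $f=f'$, so nothing needs checking. With these identifications pinned down your Step 2 bijection reproduces the stated $c(g,e)_\omega$ exactly, and your Step 3 degree-and-relations argument for the membership claim coincides with the paper's, which likewise concludes from $\eta[-1]\eta=-2\eta$, $h\eta=0$ and $h[-1]=0$.
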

\begin{proof}
  Utilizing a similar methodology, we first analyze the action of $g_*$ on $\KMW_*$. Without loss of generality, we can assume that $ \sigma_e = \llBra{|e|}$. We have the following action:
    \begin{align*}
      g_{*} &: \prod_{i\in \sigma_e \sqcup \tauO_e  }[\lambda_i] \mapsto \prod_{k\in \sigma_e} [\lambda_k]\prod_{i\in \tauO_e} \left( \bigotimes_{j\in \tau_e} (h_{ij} [\lambda_i]+1) \right) ([\lambda_i]+1) -1 \\ 
            &\mapsto {  \prod_{j \in \llBra{n}} }'\left( \bigotimes_{i\in \tauO_e} (h_{ij} [\lambda_i]+1) \right)\otimes \begin{cases}
                 ([\lambda_j]+1), &\text{ if } j\in \tauO_e\\
                 1, &\text{ if } j\in \tau^1_e\\
                 [\lambda_j], &\text{ if } j\in \sigma_e\\
               \end{cases}  \\ 
            &\mapsto {  \prod_{j \in \llBra{n}} }'  \begin{cases}
              \left( \prod_{i \in \tauO_e} \aBra{\lambda_i}^{r_{ij}} \right)[\lambda_j], &\text{ if } j\in \sigma_e\\
                 1+ \left[ \prod_{i \in \tauO_e} \lambda_i^{r'_{ij}} \right], &\text{ if } j\in \tau_e\\
               \end{cases}   \\
            &=  \sum_{\omega \subset \tau_e} \prod_{k\in \sigma_e} [\lambda_k]{ \prod_{j \in \omega} }' \aBra{ \prod_{i \in \tauO_e} \lambda_i^{\chi( \sum_{k\in \sigma_e} r'_{ik} )}} \left[ \prod_{i \in \tauO_e} \lambda_i^{r'_{ij}} \right]\\
            &=  \sum_{\omega \subset \tau_e} \prod_{k\in \sigma_e} [\lambda_k]{ \prod_{j \in \omega} }' \sum_{ \emptyset \neq I_j \subset \tauO_e} \aBra{ \prod_{i \in \tauO_e} \lambda_i^{\chi( \sum_{k\in \sigma_e} r'_{ik} )}}\eta^{|I_j|-1}  \prod_{i \in I_j} h'_{ij}[\lambda_i]\\           
            &=  \sum_{\omega \subset \tau_e} \prod_{k\in \sigma_e} [\lambda_k]\sum_{\substack{ f: \tauO_e \to \omega \sqcup \{ \infty \}  \\ I_j:=f^{-1}(j) \\ S_j:=f^{-1}(\omega_{<j}) }} \eta^{t_e - |f(\tauO_e)_{<\infty}| }  \aBra{-1}^{ \sum_{i\in S_\infty} \sum_{k\in \sigma_e} r'_{ ik }}\\ 
          &\left( \prod_{i\in I_\infty}h'_{i\infty}[\lambda_i] \right)\prod_{j \in \omega  } \begin{cases}
       \aBra{ -1 }^{\sum_{k\in S_j} r'_{kj}}\prod_{i \in I_j} h'_{ij}[\lambda_i], &\text{ if } |I_j|\neq0\\
       -\aBra{ -1 }^{\sum_{k\in S_j} r'_{kj}} [(-1)^{\sum_{k\in S_j} r'_{kj}}], &\text{ if } |I_j|=0\\
        \end{cases}   \\
          & \text{(Let $r_{i\infty}=r'_{i\infty}=\sum_{k\in \sigma_e}r'_{ik}$ and $h_{i\infty}=r_{i\infty \epsilon}$, and notice that $ i\in S_j \Leftrightarrow j \in \omega\sqcup \{\infty\}, j>f(i) $)} \\
          &=  \sum_{\omega \subset \tau_e} \sum_{\substack{ f: \tauO_e \to \omega \sqcup \{\infty\}  \\ S_j:=f^{-1}(\omega_{<j}) }} \epsilon^{ \mathrm{sgn}(f)} \eta^{ t_e - |f(\tauO_e)_{<\infty} | } ( -[-1] )^{|\omega|-|f(\tauO_e)_{<\infty}|} \\ 
  &  \prod_{j \in \omega \setminus f(\tauO_e)} \chi( \sum_{k\in S_j} r'_{kj} )  \prod_{i \in \tauO_e}\aBra{-1}^{\sum_{j \in  (\omega \sqcup \{\infty\})_{>f(i)}  } r'_{ij}} h'_{if(i)} \prod_{i\in \sigma_e \sqcup \tauO_e} [\lambda_i] \\
   \end{align*}
This leads to the expression: 
  \begin{align*}
    g_*[e] &= \sum_{\omega \subset \tau_e} \sum_{\substack{ f: \tauO_e \to \omega \sqcup \{\infty\}  \\ S_j:=f^{-1}(\omega_{<j}) }} \epsilon^{ \mathrm{sgn}(f)} \eta^{ t_e - |f(\tauO_e)_{<\infty} | } ( -[-1] )^{|\omega|-|f(\tauO_e)_{<\infty}|} \\ 
  &  \prod_{j \in \omega \setminus f(\tauO_e)} \chi( \sum_{k\in S_j} r'_{kj} )  \prod_{i \in \tauO_e}\aBra{-1}^{\sum_{j \in  (\omega \sqcup \{\infty\})_{>f(i)}  } r'_{ij}} h'_{if(i)} [e_{\omega}] 
  \end{align*}
  
Finally, we observe that
\[
\eta[-1] \eta = -2 \eta,\ \eta[-1] [-1] = -2 [-1],\ \eta[-1] = h-2,\ \aBra{-1} = h-1,
\]
and that $ n_{\epsilon} \eta = \chi(n)\eta $, $ n_{\epsilon}[-1] = \chi(n)[-1] $. Consequently, we derive that
  \[
    c(g,e)_\omega \in \begin{cases}
      \bZ \eta^{t_e - |\omega|}, &\text{ if }t_e > |\omega|\\
      \bZ + \bZ h, &\text{ if }t_e = |\omega|\\
      \bZ [-1]^{|\omega| - t_e}, &\text{ if }t_e < |\omega|\\
    \end{cases} 
    \subset \KMW_{|\omega| - t_e}
  \]
\end{proof}

\begin{corollary}
   Let $\omega \subset \tau_e$ such that $t_e = |\omega|$. In this case, we have the following expression for $c(g, e)_\omega$:
   \[
   c(g,e)_\omega=   \frac{1}{2}\left( \mathrm{det}( [r'_{ij}]_{i \in \tauO_e}^{ j\in \omega})h  -\eta[-1]\sum_{\pi \subset \omega }  (-1)^{|\omega|-|\pi|}  \prod_{i \in \tauO_e} \chi( \sum_{j \in \omega  \sqcup \sigma_e} r'_{ij} ) \right) 
   \]

   Furthermore, if $t_e > |\omega|$, then the expression for $c(g, e)_\omega$ becomes:
   \[
    c(g,e)_\omega= \eta^{ t_e - |\omega| }\sum_{\pi \subset \omega }  (-1)^{|\omega|-|\pi|}  \prod_{i \in \tauO_e} \chi( \sum_{j \in \omega \sqcup \sigma_e} r'_{ij} )   
   \]

   Conversely, if $t_e < |\omega|$, the expression is given by:
   \[
     c(g,e)_\omega= [-1]^{ |\omega| - t_e}(-2)^{ t_e - |\omega|  }\sum_{\pi \subset \omega }  (-1)^{|\omega|-|\pi|}  \prod_{i \in \tauO_e} \chi( \sum_{j \in \omega \sqcup \sigma_e} r'_{ij} )   
   \]
\end{corollary}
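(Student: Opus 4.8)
The plan is to read off the corollary as a direct specialization of Proposition \ref{tActCompute}: we take the closed formula there for $c(g,e)_\omega$ as a sum over functions $f\colon\tauO_e\to\omega\sqcup\{\infty\}$ and simplify it regime by regime ($t_e>|\omega|$, $t_e=|\omega|$, $t_e<|\omega|$), using only the identities available in $\KMW_*$. The toolbox is $h=1+\aBra{-1}$, $\aBra{-1}=1+\eta[-1]$, $\eta[-1]=h-2$, $\eta h=0$, $h[-1]=0$, $\eta[-1]\eta=-2\eta$, $\eta[-1][-1]=-2[-1]$, together with $h^2=2h$, $\aBra{-1}h=h$, $n_\epsilon\eta=\chi(n)\eta$, $n_\epsilon[-1]=\chi(n)[-1]$, and the decomposition $h'_{ij}=(r'_{ij})_\epsilon=\tfrac{r'_{ij}-\chi(r'_{ij})}{2}h+\chi(r'_{ij})$. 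None of this requires a new idea; the content is the bookkeeping that the many terms of the general formula collapse to the three stated closed forms.

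First I would organize the sum over $f$ by the fibre $I_\infty:=f^{-1}(\infty)$ and by the image $\pi:=f(\tauO_e)\cap\omega\subset\omega$, so that $|f(\tauO_e)_{<\infty}|=|\pi|$ and each term carries the weight $\eta^{t_e-|\pi|}(-[-1])^{|\omega|-|\pi|}$. In the regime $t_e>|\omega|$ every term carries $\eta^{t_e-|\pi|}$ with $t_e-|\pi|\ge t_e-|\omega|\ge1$; using $\eta h=0$ each $h'_{if(i)}$ may be replaced by $\chi(r'_{if(i)})$ and each twist $\aBra{-1}^{\bullet}$ by $1$, so after extracting $\eta^{t_e-|\omega|}$ the residual factor is an integer. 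Summing over all $f$ with a prescribed image $\pi$ and running the elementary inclusion–exclusion that absorbs the factors $\prod_{j\in\omega\setminus\pi}\chi(\cdots)$ yields exactly $\eta^{t_e-|\omega|}\sum_{\pi\subset\omega}(-1)^{|\omega|-|\pi|}\prod_{i\in\tauO_e}\chi(\sum_j r'_{ij})$. The regime $t_e<|\omega|$ is handled symmetrically: now each term carries $[-1]^{\,|\omega|-|\pi|}$ with $|\omega|-|\pi|\ge|\omega|-t_e\ge1$, so $h[-1]=0$ again discards every $h$-part, and the iterated relation $\eta^{a}[-1]^{a}\cdot[-1]^{\ell}=(-2)^{a}[-1]^{\ell}$ for $\ell\ge1$ collapses the stray powers of $\eta$, leaving $[-1]^{|\omega|-t_e}$ times an integer that reassembles by the same inclusion–exclusion.

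The delicate case is $t_e=|\omega|$. Here the bijections $f\colon\tauO_e\to\omega$ contribute a graded ``determinant'' $\sum_{f\text{ bij}}\epsilon^{\mathrm{sgn}(f)}\prod_{i\in\tauO_e}\aBra{-1}^{\bullet}h'_{if(i)}$ of the matrix $[h'_{ij}]$, while the non-bijective $f$ (and the non-$h$ parts of the bijections) must recombine into the term $-\tfrac12\eta[-1]\sum_{\pi\subset\omega}(-1)^{|\omega|-|\pi|}\prod_i\chi(\cdots)$. The device that makes this work is the degree-independence $h^{k}=2^{k-1}h$: a product of $k\ge1$ of the $h$-parts of the $h'_{ij}$ equals $\tfrac12\big(\prod\text{coefficients}\big)h$ no matter what $k$ is, so the expansion of the graded determinant telescopes. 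Expanding each $h'_{if(i)}$ through its $h$-part/$\chi$-part and sorting terms by the set of indices that contribute their $h$-part, the all-$h$ contribution of the bijections gives $\tfrac12\det([r'_{ij}])h$, and the mixed and pure-$\chi$ contributions — fused with those of the non-bijective $f$ via $\eta[-1]=h-2$ and $\aBra{-1}h=h$ — assemble into $-\tfrac12\eta[-1]\sum_{\pi\subset\omega}(-1)^{|\omega|-|\pi|}\prod_{i\in\tauO_e}\chi(\sum_j r'_{ij})$.

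I expect the main obstacle to be exactly this last reorganization in the $t_e=|\omega|$ case: checking that the full graded-determinant expansion together with all non-bijective functions really coalesce into that single closed expression. The two levers I would lean on are (i) $h^k=2^{k-1}h$, which kills the apparent dependence on how many entries contribute their $h$-part, and (ii) an inclusion–exclusion over the contributing index set, which produces the sum over $\pi\subset\omega$ and fixes the coefficient $-\tfrac12\eta[-1]$. Everything else — the $t_e>|\omega|$ and $t_e<|\omega|$ cases, and the membership statement $c(g,e)_\omega\in\bZ\eta^{t_e-|\omega|}$, $\bZ+\bZ h$, or $\bZ[-1]^{|\omega|-t_e}$ — is then a routine consequence of the annihilation relations $\eta h=0$ and $h[-1]=0$ together with the collapse identities above.
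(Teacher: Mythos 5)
Your plan follows essentially the same route as the paper: specialize the closed formula of Proposition \ref{tActCompute} and collapse it case by case using the $\KMW_0$ identities, with the resummation over $f$ (your ``inclusion--exclusion'') being exactly the paper's telescoping $(a+b)_\epsilon = a_\epsilon + \aBra{-1}^a b_\epsilon$, which packages $\sum_{f(i)}\aBra{-1}^{\sum_{j>f(i)}r'_{ij}}h'_{if(i)}$ into $(\sum_j r'_{ij})_\epsilon$ before applying $n_\epsilon\eta=\chi(n)\eta$ or $n_\epsilon[-1]=\chi(n)[-1]$. The one place you diverge is the case $t_e=|\omega|$: you propose expanding each $h'_{if(i)}$ into its $h$-part and $\chi$-part and sorting by the contributing index set, whereas the paper multiplies by $2=h-\eta[-1]$, evaluates $hc$ (giving $\det([r'_{ij}])h$ via $hn_\epsilon=hn$, $h\aBra{-1}=h$, $h\epsilon=-h$) and $-\eta[-1]c$ (giving the $\chi$-sum) separately, and divides by $2$ using that $\bZ+\bZ h$ is $2$-torsion-free; the paper's device is cleaner and sidesteps the ``delicate reorganization'' you anticipate, but your version amounts to the same computation of the coefficients of $1$ and $h$. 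One small correction: since $\eta\aBra{-1}=-\eta$ (from $\eta h=0$), in the regime $t_e>|\omega|$ the twists $\aBra{-1}^{a}$ reduce to $(-1)^{a}$ rather than to $1$; these signs are not spurious --- they are precisely what makes the reduced telescoping $\chi(a+b)=\chi(a)+(-1)^a\chi(b)$ close up into $\prod_i\chi(\sum_j r'_{ij})$ --- so dropping them as you state would break the resummation, though the fix is immediate.
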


\begin{proof}
Notice that $(a+b)_\epsilon = a_\epsilon + \aBra{-1}^{a} b_\epsilon$. If $f(i) \in P$, then the factor $\aBra{-1}^{\sum_{j \in P_{>f(i)}} r'_{ij}} h'_{if(i)}$ can be expressed as $(\sum_{j \in P_{\geq f(i)}} r'_{ij})_\epsilon - (\sum_{j \in P_{>f(i)}} r'_{ij})_\epsilon$. As a result, the factor $c(g,e)_\omega$, under the condition that $t_e \geq |\omega|$, can be simplified as follows:

  \begin{align*} 
    & c(g,e)_\omega \prod_{i\in  \sigma_e \sqcup \tauO_e} [\lambda_i] 
    = \prod_{k\in \sigma_e} [\lambda_k]\sum_{\substack{ f: \tauO_e \to \omega \sqcup \{ \infty \}  \\ I_j:=f^{-1}(j) \\ S_j:=f^{-1}(\omega_{<j}) }} \eta^{t_e - |\omega| }  \aBra{-1}^{ \sum_{i\in S_\infty} \sum_{k\in \sigma_e} r'_{ ik }}\\ 
          &\left( \prod_{i\in I_\infty}h'_{i\infty}[\lambda_i] \right)\prod_{j \in \omega  } \begin{cases}
              \aBra{ -1 }^{\sum_{k\in S_j} r'_{kj}}\prod_{i \in I_j} h'_{ij}[\lambda_i], &\text{ if } |I_j|\neq0\\
              \eta[(-1)^{\sum_{k\in S_j} r'_{kj}}]= \aBra{ -1 }^{\sum_{k\in S_j} r'_{kj}}-1, &\text{ if } |I_j|=0\\
            \end{cases}   \\
          &=   \sum_{\substack{ f: \tauO_e \to \omega \sqcup \{ \infty \}\\ S_j:=f^{-1}(\omega_{<j}) }}\sum_{\pi \subset \omega \setminus f(\tauO_e)} \epsilon^{ \mathrm{sgn}(f)} \eta^{ t_e - |\omega| } (-1)^{|\pi|} \aBra{ -1 }^{\sum_{j \in  (\omega \setminus \pi) \sqcup \{\infty\}}\sum_{k\in S_j} r'_{kj}} \prod_{i \in \tauO_e} h'_{if(i)} \prod_{i\in \sigma_e \sqcup \tauO_e} [\lambda_i] \\ 
   & \text{(Notice that $ \sum_{j \in  (\omega \setminus \pi) \sqcup \{\infty\}}\sum_{k\in S_j} r'_{kj}=\sum_{k\in \tauO_e} \sum_{\substack{j \in  (\omega \setminus \pi) \sqcup \{\infty\}\\ \ j>f(k)} } r'_{kj}$)}\\
            &=   \sum_{ f: \tauO_e \to \omega \sqcup \{\infty\} }\sum_{\pi \subset \omega \setminus f(\tauO_e)} \epsilon^{ \mathrm{sgn}(f)} \eta^{ t_e - |\omega| } (-1)^{|\pi|}  \prod_{i \in \tauO_e} \aBra{-1}^{\sum_{j \in  ( (\omega \setminus \pi) \sqcup \{\infty\})_{>f(i)}   } r'_{ij}}h'_{if(i)} \prod_{i\in \sigma_e \sqcup \tauO_e} [\lambda_i] \\ 
            &=   \eta^{ t_e - |\omega| }\sum_{\pi \subset \omega }\sum_{ f: \tauO_e \to ( \omega \setminus \pi )  \sqcup \{\infty\}} \epsilon^{ \mathrm{sgn}(f)} (-1)^{|\pi|}  \prod_{i \in \tauO_e} \aBra{-1}^{\sum_{j \in  ( (\omega \setminus \pi) \sqcup \{\infty\})_{>f(i)}   } r'_{ij}}h'_{if(i)} \prod_{i\in \sigma_e \sqcup \tauO_e} [\lambda_i] \\  
            &\text{(And if $t_e>|\omega|$, then $ \eta^{ t_e - |\omega| }\epsilon = \eta^{ t_e - |\omega| }$)}\\
            &=   \eta^{ t_e - |\omega| }\sum_{\pi \subset \omega }  (-1)^{|\pi|}  \prod_{i \in \tauO_e} \left( \sum_{f(i) \in ( \omega \setminus \pi ) \sqcup \{\infty\}} \aBra{-1}^{\sum_{j \in  ( ( \omega \setminus \pi ) \sqcup \{\infty\})_{>f(i)}   } r'_{ij}}h'_{if(i)} \right)\prod_{i\in \sigma_e \sqcup \tauO_e} [\lambda_i] \\ 
            &=   \eta^{ t_e - |\omega| }\sum_{\pi \subset \omega }  (-1)^{|\pi|}  \prod_{i \in \tauO_e} ( \sum_{j \in ( \omega \setminus \pi ) \sqcup \sigma_e} r'_{ij} )_\epsilon \prod_{i\in  \sigma_e \sqcup \tauO_e} [\lambda_i] \\ 
            &=   \eta^{ t_e - |\omega| }\sum_{\pi \subset \omega }  (-1)^{|\pi|}  \prod_{i \in \tauO_e} \chi( \sum_{j \in ( \omega \setminus \pi ) \sqcup \sigma_e} r'_{ij} ) \prod_{i\in  \sigma_e \sqcup \tauO_e} [\lambda_i]  \\
   \end{align*}
If $t_e = |\omega|$, we observe from Proposition \ref{tActCompute} that $c(g,e)_{\omega} \in \mathbb{Z} \oplus \mathbb{Z}h$ has no $2$-torsion, and we have the following identities: $h \aBra{-1} = h$, $h \epsilon = -h$, and $h n_\epsilon = h n$.
   \begin{align*}
            & 2c(g,e)_{\omega}= (h-\eta[-1])c(g,e)_\omega = -\eta[-1]\sum_{\pi \subset \omega }  (-1)^{|\pi|}  \prod_{i \in \tauO_e} \chi( \sum_{j \in ( \omega \setminus \pi ) \sqcup \sigma_e} r'_{ij} )   \\
            &+h\sum_{\pi \subset \omega }  (-1)^{|\pi|}    \sum_{\substack{  f: \tauO_e \to \omega \setminus \pi  \sqcup \{\infty\} }} (-1)^{\mathrm{sgn}(f)}  \prod_{i \in \tauO_e} r'_{if(i)}  \\ 
            &= h\sum_{\substack{  f: \tauO_e \to \omega \\ f \text{ is bijective} }} (-1)^{\mathrm{sgn}(f)}  \prod_{i \in \tauO_e} r'_{if(i)}  -\eta[-1]\sum_{\pi \subset \omega }  (-1)^{|\pi|}  \prod_{i \in \tauO_e} \chi( \sum_{j \in ( \omega \setminus \pi ) \sqcup \sigma_e} r'_{ij} ) \\ 
            &=  \mathrm{det}( [r'_{ij}]_{i \in \tauO_e}^{ j\in \omega})h  -\eta[-1]\sum_{\pi \subset \omega }  (-1)^{|\pi|}  \prod_{i \in \tauO_e} \chi( \sum_{j \in ( \omega \setminus \pi ) \sqcup \sigma_e} r'_{ij} ) \\ 
   \end{align*}
Similarly, observe that $ 2[(-1)^a] = -\eta[(-1)^a][-1] = (-[-1])(\aBra{-1}^a - 1) $. For $ t_e < |\omega| $, we have:
   \begin{align*}
    & 2^{|\omega|-t_e}c(g,e)_\omega \prod_{i\in  \sigma_e \sqcup \tauO_e} [\lambda_i] \\
    &= \prod_{k\in \sigma_e} [\lambda_k]\sum_{\substack{ f: \tauO_e \to \omega \sqcup \{ \infty \}  \\ I_j:=f^{-1}(j) \\ S_j:=f^{-1}(\omega_{<j}) }} (-[-1])^{ |\omega| - t_e}  \aBra{-1}^{ \sum_{i\in S_\infty} \sum_{k\in \sigma_e} r'_{ ik }}\\ 
          &\left( \prod_{i\in I_\infty}h'_{i\infty}[\lambda_i] \right)\prod_{j \in \omega  } \begin{cases}
              \aBra{ -1 }^{\sum_{k\in S_j} r'_{kj}}\prod_{i \in I_j} h'_{ij}[\lambda_i], &\text{ if } |I_j|\neq0\\
              \eta[(-1)^{\sum_{k\in S_j} r'_{kj}}]= \aBra{ -1 }^{\sum_{k\in S_j} r'_{kj}}-1, &\text{ if } |I_j|=0\\
            \end{cases}   \\
            &=   (-[-1])^{ |\omega| - t_e}\sum_{\pi \subset \omega }  (-1)^{|\pi|}  \prod_{i \in \tauO_e} \chi( \sum_{j \in (\omega \setminus \pi) \sqcup \sigma_e} r'_{ij} ) \prod_{i\in  \sigma_e \sqcup \tauO_e} [\lambda_i]  
   \end{align*}
Again from Proposition \ref{tActCompute}, we have $ c(g,e)_\omega = n_{g,e,\omega} [-1]^{|\omega| - t_e} $, which indicates that it has no $ 2^p $-torsion.
\end{proof}
\bibliographystyle{plain}
\bibliography{references}

\begin{thebibliography}{10}

\bibitem{bachmann2018motivic}
Tom Bachmann.
\newblock Motivic and real {\'e}tale stable homotopy theory.
\newblock {\em Compositio Mathematica}, 154(5):883--917, 2018.

\bibitem{bachmann2020milnor}
Tom Bachmann, Baptiste Calm{\`e}s, Fr{\'e}d{\'e}ric D{\'e}glise, Jean Fasel,
  and Paul~Arne {\O}stv{\ae}r.
\newblock {Milnor-Witt motives}.
\newblock {\em arXiv preprint arXiv:2004.06634}, 2020.

\bibitem{cai2021integral}
Li~Cai and Suyoung Choi.
\newblock Integral cohomology groups of real toric manifolds and small covers.
\newblock {\em Moscow Mathematical Journal}, 21(3):467--492, 2021.

\bibitem{cho2019geometricrepresentationsfinitegroups}
Soojin Cho, Suyoung Choi, and Shizuo Kaji.
\newblock Geometric representations of finite groups on real toric spaces,
  2019.

\bibitem{choi2024cohomologyringsrealpermutohedral}
Suyoung Choi and Younghan Yoon.
\newblock The cohomology rings of real permutohedral varieties, 2024.

\bibitem{CP}
Corrado Concini and Claudio Procesi.
\newblock Wonderful models of subspace arrangements.
\newblock {\em Selecta Mathematica}, 1, 03 1996.

\bibitem{cox1995homogeneous}
David~A Cox.
\newblock The homogeneous coordinate ring of a toric variety.
\newblock {\em Journal of Algebraic Geometry}, 4(1):17--50, 1995.

\bibitem{ewald1996combinatorial}
G{\"u}nter Ewald.
\newblock {\em Combinatorial convexity and algebraic geometry}, volume 168.
\newblock Springer Science \& Business Media, 1996.

\bibitem{fasel2013projective}
Jean Fasel.
\newblock The projective bundle theorem for {$I^j$-cohomology}.
\newblock {\em Journal of K-Theory}, 11(2):413--464, 2013.

\bibitem{fasel2023tate}
Jean Fasel and Nanjun Yang.
\newblock {On Tate Milnor-Witt Motives}, 2023.

\bibitem{fulton1993introduction}
William Fulton.
\newblock {\em Introduction to toric varieties}.
\newblock Number 131. Princeton university press, 1993.

\bibitem{hornbostel2021real}
Jens Hornbostel, Matthias Wendt, Heng Xie, and Marcus Zibrowius.
\newblock The real cycle class map.
\newblock {\em Annals of K-Theory}, 6(2):239--317, 2021.

\bibitem{hornslien2024}
William Hornslien.
\newblock Polyhedral products in abstract and motivic homotopy theory.
\newblock 2024.

\bibitem{Mor05}
Fabien Morel.
\newblock The stable $\mathbb{A}^1$ connectivity theorems.
\newblock {\em K-theory}, 35:1--68, 06 2005.

\bibitem{Mor10}
Fabien Morel.
\newblock {\em $\mathbb{A}^1$-Algebraic Topology over a Field}, volume 2052.
\newblock Springer, Heidelberg, 11 2010.

\bibitem{MOREL2023109346}
Fabien Morel and Anand Sawant.
\newblock Cellular $\mathbb{A}^1$-homology and the motivic version of
  {Matsumoto}'s theorem.
\newblock {\em Advances in Mathematics}, 434:109346, 2023.

\bibitem{MV99}
Fabien Morel and Vladimir Voevodsky.
\newblock $\mathbb{A}^1$ homotopy theory of schemes.
\newblock {\em Publications Mathématiques de l'Institut des Hautes Études
  Scientifiques}, 90, 12 1999.

\bibitem{procesi}
Claudio Procesi.
\newblock {\em The toric variety associated to Weyl chambers}, pages 153--161.
\newblock 01 1990.

\end{thebibliography}
\end{document}